\numberwithin{equation}{section}
\newcommand{\bDelta}{\mathfrak d}
\newcommand{\bGamma}{\mathfrak g}
\newcommand{\vDelta}{\vec\Delta}
\newcommand{\vGamma}{\vec\Gamma}
\newcommand{\WP}{\mathrm{WP}}
\newcommand{\malpha}{\alpha_0}
\newcommand{\colf}{\textcolor{BrickRed}{$\frozen$}}
\newcommand{\colu}{\textcolor{ForestGreen}{$\unfrozen$}}
\newcommand{\cols}{\textcolor{Blue}{$\slush$}}
\newcommand{\Vp}{V_{\perma}}
\newcommand{\Vf}{V_{\frozen}}
\newcommand{\Vu}{V_{\unfrozen}}
\newcommand{\Cp}{C_{\perma}}
\newcommand{\Cu}{C_{\unfrozen}}
\newcommand{\field}{\FF_2}
\newcommand{\perma}{\mathtt{f}}
\newcommand{\frozen}{\mathtt{f}}
\newcommand{\unfrozen}{\mathtt{u}}
\newcommand{\fu}{\star}
\newcommand{\slush}{\mathtt{s}}
\newcommand\vAnp{\vA}
\newcommand\vAs{\vA_\slush}
\renewcommand{\vec}[1]{\boldsymbol{#1}}
\renewcommand{\subset}{\subseteq}
\renewcommand{\ln}{\log}
\newcommand{\avgR}{\bar R}
\newcommand\disteq{\,\sim\,}
\newcommand\cutm{\Delta_{\Box}}
\newcommand{\Vs}{V_{\slush}}
\newcommand{\Cs}{C_{\slush}}
\newcommand\vX{\vec X}
\newcommand\vZ{\vec Z}
\newcommand\vk{\vec k}
\newcommand\vy{\vec y}
\newcommand\vz{\vec z}
\newcommand\vx{\vec x}
\newcommand\vv{\vec v}
\newcommand\vxi{\vec\xi}
\newcommand\vr{\vec r}
\newcommand\valpha{\vec\alpha}
\newcommand\vt{\vec t}
\newcommand\vh{\vec h}
\newcommand\vi{\vec i}
\newcommand\vell{\vec{\ell}}
\newcommand\CPC{Combinatorics, Probability and Computing}
\newcommand{\TT}{\mathbb T}
\newcommand\vm{{\vec m}}
\newcommand\vs{{\vec s}}
\newcommand\DELTA{{\vec\Delta}}
\newcommand\nix{\,\cdot\,}
\newcommand\vA{\vec A}
\newcommand\vW{\vec W}
\newcommand\G{\vec G}
\newcommand\SIGMA{\vec\sigma}
\newcommand\vsigma{\vec\sigma}
\newcommand\vtau{\vec\tau}
\newcommand\fS{\mathfrak{S}}
\newcommand\fL{\mathfrak{L}}
\newcommand\fC{\mathfrak{C}}
\newcommand\fl{\mathfrak{l}}
\newcommand\fA{\mathfrak{A}}
\newcommand\fF{\mathfrak{F}}
\newcommand\cA{\mathcal{A}}
\newcommand\cB{\mathcal{B}}
\newcommand\cC{\mathcal{C}}
\newcommand\cD{\mathcal{D}}
\newcommand\cF{\mathcal{F}}
\newcommand\cG{\mathcal{G}}
\newcommand\cE{\mathcal{E}}
\newcommand\cU{\mathcal{U}}
\newcommand\cS{\mathcal{S}}
\newcommand\cT{\mathcal{T}}
\newcommand\cI{\mathcal{I}}
\newcommand\cK{\mathcal{K}}
\newcommand\cP{\mathcal{P}}
\newcommand\cX{\mathcal{X}}
\newcommand\cV{\mathcal{V}}
\newcommand\cW{\mathcal{W}}
\newcommand\cZ{\mathcal{Z}}
\newcommand\fm{\mathfrak{m}}
\newcommand\fV{\mathfrak{V}}
\newcommand\fZ{\mathfrak{Z}}
\newcommand\fX{\mathfrak{X}}
\newcommand\fE{\mathfrak{E}}
\newcommand\fp{\mathfrak{p}}
\newcommand\fH{\mathfrak{H}}
\def\cR{{\mathcal R}}
\newcommand\eul{\mathrm{e}}
\newcommand\eps{\varepsilon}
\newcommand\ZZ{\mathbb{Z}}
\newcommand\FF{\mathbb{F}}
\newcommand\NN{\mathbb{N}}
\newcommand\Var{\mathrm{Var}}
\newcommand\Erw{\mathbb{E}}
\newcommand{\vecone}{\vec{1}}
\newcommand{\Po}{{\rm Po}}
\newcommand{\Bin}{{\rm Bin}}
\newcommand\dTV{d_{\mathrm{TV}}}
\newcommand\bc[1]{\left({#1}\right)}
\newcommand\cbc[1]{\left\{{#1}\right\}}
\newcommand\sqbc[1]{\left[{#1}\right]}
\newcommand\bcfr[2]{\bc{\frac{#1}{#2}}}
\newcommand{\bck}[1]{\left\langle{#1}\right\rangle}
\newcommand\brk[1]{\left\lbrack{#1}\right\rbrack}
\newcommand\abs[1]{\left|{#1}\right|}
\newcommand\RR{\mathbb{R}}
\newcommand{\Whp}{W.h.p.}
\newcommand{\whp}{w.h.p.}
\newcommand{\Szemeredi}{Szemer\'edi}
\newcommand\pr{\mathbb{P}} 
\renewcommand\Pr{\pr} 
\newcommand\Lem{Lemma}
\newcommand\Prop{Proposition}
\newcommand\Thm{Theorem}
\newcommand\Def{Definition}
\newcommand\Cor{Corollary}
\newcommand\Sec{Section}
\newtheorem{definition}{Definition}[section]
\newtheorem{claim}[definition]{Claim}
\newtheorem{theorem}[definition]{Theorem}
\newtheorem{lemma}[definition]{Lemma}
\newtheorem{proposition}[definition]{Proposition}
\newtheorem{corollary}[definition]{Corollary}
\newtheorem{fact}[definition]{Fact}
\newtheoremstyle{case}{}{}{}{}{}{:}{ }{}
\theoremstyle{case}
\DeclareMathOperator{\nul}{nul}
\newcommand{\ceil}[1]{\left\lceil#1\right\rceil}
\newcommand\A{\vA}
\def\ex{{\mathbb E}}
\newcommand{\graphprop}{\xi}
\newcommand{\branchprob}{\zeta}
\newcommand{\plaingraphprop}{\underline{\xi}}
\newcommand{\plainbranchprob}{\underline{\zeta}}
\newcommand{\rvx}{\tilde \vx}
\newcommand{\opw}{\tilde w}
\newcommand{\Lu}{L}
\newcommand{\s}{z}
\newcommand{\x}{x}
\newcommand{\y}{y}
\newcommand{\vLu}{\vec{\Lu}}
\newcommand{\indunf}[1]{\epsilon_{#1}}
\newcommand{\newp}{q}
\newcommand{\newx}{\xi}
\newcommand{\stoptime}{T_0}
\newcommand{\Dev}[1]{\cD_{#1}}
\newcommand{\explicit}{\bar\omega}
\begin{document}
	
	\begin{frontmatter}[classification=text]
		
		\title{The Sparse Parity Matrix \titlefootnote{An extended abstract of this work appeared in the proceedings of the 2022 Annual ACM-SIAM Symposium on Discrete Algorithms (SODA)}} 
		
		\author[ACO]{Amin~Coja-Oghlan\thanks{Supported by DFG CO 646/4}}
		\author[OC]{Oliver Cooley\thanks{Supported by Austrian Science Fund (FWF): I3747}}
		\author[MK]{Mihyun Kang\thanks{Supported by Austrian Science Fund (FWF): I3747 and Friedrich Wilhelm Bessel research award of the Alexander von Humboldt Foundation (AUT 1204138 BES)}}
		\author[JL]{Joon~Lee}
		\author[JBR]{Jean Bernoulli Ravelomanana\thanks{Supported by DFG CO 646/4}}
		
		\begin{abstract}
			Let $\vA$ be an $n\times n$-matrix over $\field$ whose every entry equals $1$ with probability $d/n$ independently for a fixed $d>0$.
			Draw a vector $\vy$ randomly from the column space of $\vA$.
			It is a simple observation that the entries of a random solution $\vx$ to $\vA x=\vy$ are asymptotically pairwise independent, i.e., $\sum_{i<j}\Erw|\pr[\vx_i=s,\,\vx_j=t\mid\vA]-\pr[\vx_i=s\mid\vA]\pr[\vx_j=t\mid\vA]|=o(n^2)$ for $s,t\in\field$.
			But what can we say about the {\em overlap} of two random solutions $\vx,\vx'$, defined as $n^{-1}\sum_{i=1}^n\vecone\{\vx_i=\vx_i'\}$?
			We prove that for $d<\eul$ the overlap concentrates on a single deterministic value $\alpha_*(d)$.
			By contrast, for $d>\eul$ the overlap concentrates on a single value once we condition on the matrix $\vA$, while
			over the probability space of $\vA$
			its conditional expectation vacillates between two different values $\alpha_*(d)<\alpha^*(d)$, either of which occurs with probability $1/2+o(1)$.
			This bifurcated non-concentration result provides an instructive contribution to both the theory of random constraint satisfaction problems and of inference problems on random structures.
			\hfill
			{\em MSC:} 05C80, 	60B20, 	94B05 
		\end{abstract}
	\end{frontmatter}

	\section{Introduction}\label{Sec_intro}
	
	\subsection{Motivation and background}
	Sharp thresholds are the hallmark of probabilistic combinatorics.
	The classic, of course, is the giant component threshold, below which the random graph decomposes into many tiny components but above which a unique giant emerges~\cite{ER}. 
	Its (normalised) size concentrates on a deterministic value.
	Similarly, once the edge probability crosses a certain threshold the random graph contains a Hamilton cycle \whp, which fails to be present below that threshold~\cite{KS}. 
	Monotone properties quite generally exhibit sharp thresholds~\cite{Ehud}.
	Only inside the critical windows of phase transitions are we accustomed to deviations from this zero/one behaviour~\cite{BB}.
	
	In this paper we investigate the simplest conceivable model of a sparse random matrix.
	There is one single parameter, the average number $d>0$ of non-zero entries per row.
	Specifically, we obtain the $n\times n$-matrix $\vA=\vA(n,p)$ over $\field$ by setting every entry to one with probability $p=(d/n)\wedge 1$ independently.
	We will show that,
	remarkably, this innocuous random matrix exhibits a critical behaviour, deviant from the usual zero--one law, for all $d$ outside a small interval.
	This result, which is the main focus of this paper, has ramifications for random constraint satisfaction and statistical inference. 
	
	To begin with constraint satisfaction (we will turn to inference in Section~\ref{Sec_intro_overlap}), consider a random vector $\vy$ from the column space of $\vA$.
	The random linear system $\vA x=\vy$ constitutes a random constraint satisfaction problem par excellence.
	Its space of solutions is a natural object of study.
	In fact, the problem is reminiscent of the intensely studied random $k$-XORSAT problem, where we ask for solutions to a Boolean formula whose clauses are XORs of $k$ random literals~\cite{AchlioptasMolloy,CDMM,DuboisMandler,Dietzfelbinger,Ibrahimi,MRTZ,PittelSorkin}.
	Random $k$-XORSAT is equivalent to a random linear system over $\FF_2$ whose every row contains precisely $k$ ones.
	
	The most prominent feature of random $k$-XORSAT is its sharp satisfiability threshold.
	Specifically, for any $k\geq3$ there exists a critical value of the number of clauses up to which the random $k$-XORSAT formula possesses a solution, while for higher number of clauses no solution exists \whp~\cite{Dietzfelbinger,DuboisMandler,PittelSorkin}.
	For $m \times n$ matrices over $\FF_2$, the first candidate for the satisfiability threshold is the point $m/n=1$ which is the obvious point after which the corresponding $\field$-matrix cannot have full row rank anymore because there are more rows than columns (additional constraints are liable to cause conflicts rather than only redundancies). However, it was shown in~\cite[Theorem 1.1]{PittelSorkin} that  the satisfiability threshold is strictly smaller than this obvious point. 
	Instead, for $k$-XORSAT, the satisfiability threshold coincides with the threshold where due to long-range effects a linear number of variables {\em freeze}, i.e., are forced to take the same value in all solutions.
	Clearly, once an extensive number of variables freeze, additional random constraints are apt to cause conflicts.
	
	The precise freezing threshold can be characterised in terms of the 2-core of the random hypergraph underlying the $k$-XORSAT formula.
	We recall that the 2-core is what remains after recursively deleting variables of degree at most one along with the constraint that binds them (if any).
	If the 2-core is non-empty, then its constraints are more tightly interlocked than those of the original problem, which, depending on the precise numbers, may cause freezing.
	Indeed, the precise number of frozen variables can be calculated by way of a message passing process called Warning Propagation~\cite{Ibrahimi,MM}.
	The number of frozen variables concentrates on a deterministic value that comes out in terms of a fixed point problem.
	Although the $k$-XORSAT problem is conceptually far simpler than, say, the $k$-SAT problem, freezing plays a pivotal role in basically all other random constraint satisfaction problems as well~\cite{Barriers,DSS3,pnas,MM,FrozenMolloy,MolloyRestrepo}.
	
	Surprisingly, our linear system $\vA x=\vy$ behaves totally differently as two competing combinatorial forces of exactly equal strength engage in a tug of war.
	As a result, for densities $d>\eul$ the fraction of frozen variables fails to concentrate on a single value.
	Instead, that number and, in effect, the geometry of the solution space vacillate between two very different scenarios that both materialise with asymptotically equal probability.
	In other words, the model perennially remains in a critical state for all $d>\eul$.
	Let us proceed to formulate the result precisely, and to understand how it comes about.
	
	\subsection{Frozen variables}\label{Sec_intro_frozen}
	One of the two forces resembles the emergence of the 2-core in random $k$-XORSAT.
	Indeed, we could run the process of peeling variables appearing in at most one equation of the linear system $\vA x=\vy$ as well.
	The size of the $2$-core and the total number of coordinates that would freeze if the entire 2-core were to freeze can be calculated.
	Specifically, let
	\begin{align}\label{eqphid}
		\phi_d&:[0,1]\to[0,1],&\alpha&\mapsto 1-\exp\bc{-d\exp\bc{-d(1-\alpha)}}
	\end{align}
	and let $\alpha^*=\alpha^*(d)$ be its {\em largest} fixed point. 
	According to the ``2-core heuristic'', the number of frozen coordinates $x_i$ comes to about $\alpha^*n$. 
	A proof that \whp\ precisely this many variables freeze (or actually a more general statement) has been posed as an exercise~\cite{MM},
	perhaps based on the fact that the dimension of the kernel projected onto only the variables of the $2$-core is very small,
		so intuitively one might imagine that very few of these variables are unfrozen.
	But as we shall see momentarily, this conclusion is erroneous.
	
	For on the other hand we could trace the number of variables that freeze because of unary equations.
	Indeed, because the number of ones in a row of $\vA$ has distribution $\Po(d)$, about $d\eul^{-d}n$ equations contain just one variable.
	Naturally, each such variable freezes.
	Substituting these frozen values into the other equations likely produces more equations of degree one, etc.
	Interestingly enough, the number of frozen variables that this ``unary equations heuristic'' predicts equals $\alpha_*n$, with $\alpha_*$ the {\em least} fixed point of $\phi_d$.
	While for $d<\eul$ there is a unique fixed point and thus $\alpha_*=\alpha^*$, for $d>\eul$ the two fixed points $\alpha_*,\alpha^*$ are distinct.
	Indeed, apart from $\alpha_*,\alpha^*$, which are stable fixed points, there occurs a third unstable fixed point $\alpha_*<\malpha<\alpha^*$; see Figure~\ref{Fig_alpha}.
	
	\begin{figure} \centering
		\includegraphics[height=41mm]{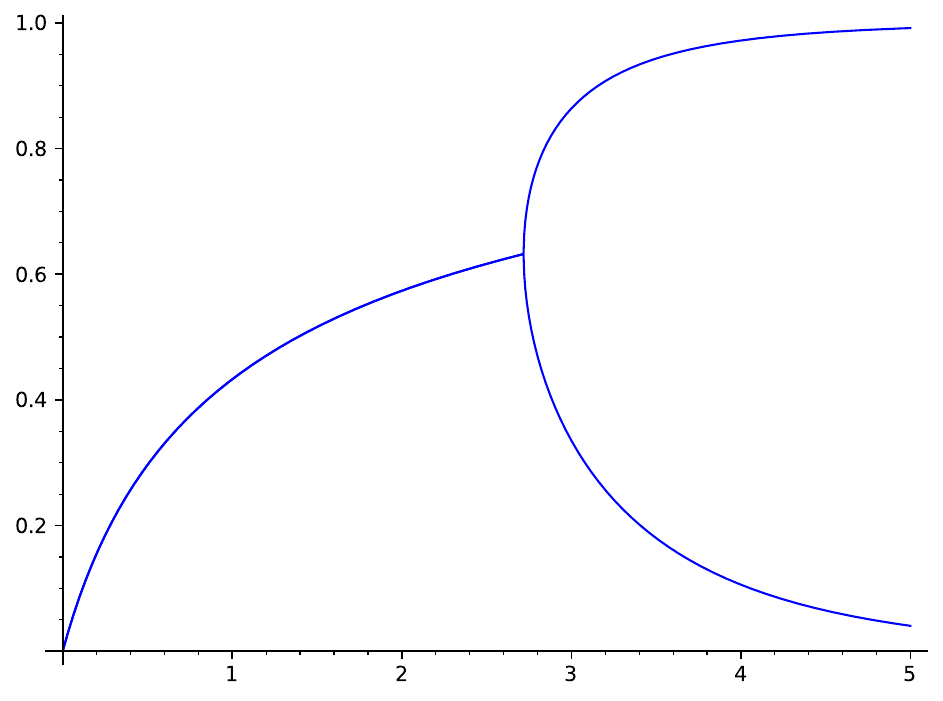}$\quad\quad\quad$
		\includegraphics[height=41mm]{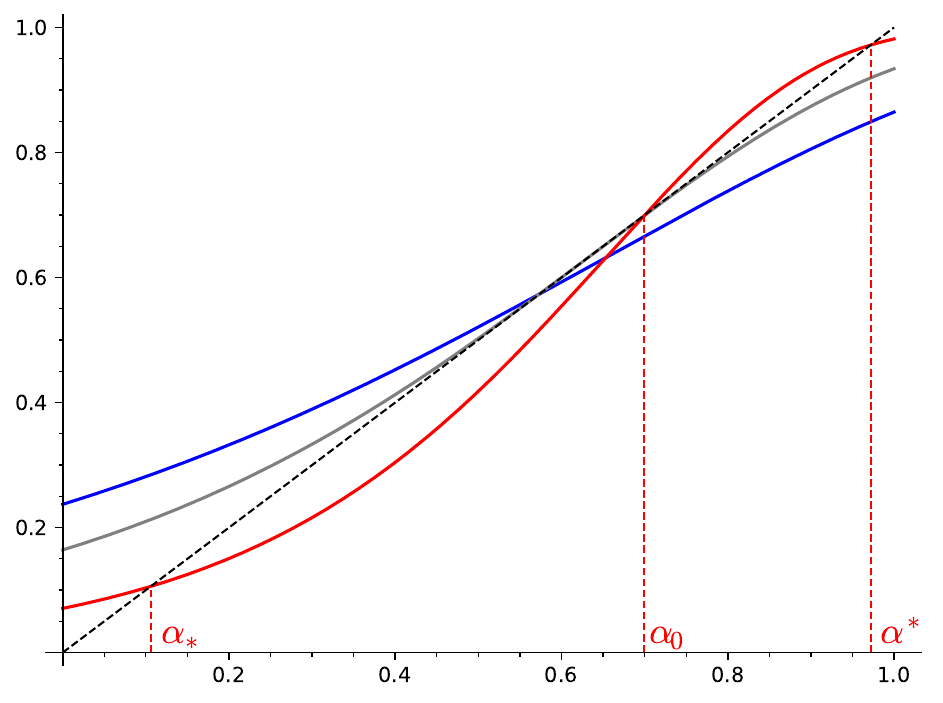}
		\caption{Left: the two fixed points $\alpha_*=\alpha_*(d)$ and $\alpha^*=\alpha^*(d)$ of $\phi_d$.
			Right: the function $\phi_d$ for $d=2.5$ (blue) possesses a unique fixed point, while for $d=3$ (red) there are two stable fixed points and an unstable one in between.}\label{Fig_alpha} 
	\end{figure}
	
	Which one of these heuristics provides the right answer? 
	To find out we could try to assess the total number of solutions that the linear system $\vA x=\vy$ should possess according to either prediction. 
	Indeed, \cite[\Thm~1.1]{Gao} yields an asymptotic formula for the number of solutions to a sparse random linear system in terms of a parameter $\alpha$ that, at least heuristically, should equal the fraction of frozen variables.
	For the random matrix $\vA$ the formula shows that, in probability,
	\begin{align}\label{eqGaoSimple}
		\lim_{n\to\infty}\frac{\nul\vA}{n}&=\max_{\alpha\in[0,1]}\Phi_d(\alpha),
		&&\mbox{where}&
		\Phi_d(\alpha)&=\exp\bc{-d\exp(-d(1-\alpha))}+(1+d(1-\alpha))\exp(-d(1-\alpha))-1
	\end{align}
	and where $\nul\vA$ denotes the nullity, i.e., the dimension of the kernel, of $\vA$.
	Hence, the correct answer should be the value $\alpha\in\{\alpha_*,\alpha^*\}$ that maximises $\Phi_d$.
	But it turns out that $\Phi_d(\alpha_*)=\Phi_d(\alpha^*)$ for all $d>0$.
	Accordingly, the main theorem shows that both predictions $\alpha_*$ and $\alpha^*$ are correct, or more precisely each of them is correct about half of the time. The set of solutions of $\vA x = \vy$ is a translation of the kernel of $\vA$,
		therefore to identify the frozen variables it suffices to identify the frozen variables in $\ker\vA$.
	Formally, let
	$$ f(\vA)=\abs{\cbc{i\in[n]:\forall x\in\ker\vA:x_i=0}}/n $$
	be the fraction of frozen variables.
	
	\begin{theorem}\label{Thm_square}
		\begin{enumerate}[(i)]
			\item For $d\leq\eul$ the function $\phi_d$ has a unique fixed point and
			\begin{align*}
				\lim_{n\to\infty}f(\vA)&=\alpha_*=\alpha^*
				&&\mbox{in probability.}
			\end{align*}
			\item For $d>\eul$ we have $\alpha_*<\alpha^*$ and for all $\eps>0$,
			\begin{align*}
				\lim_{n\to\infty}\pr\brk{|f(\vA)-\alpha_*|<\eps}&=
				\lim_{n\to\infty}\pr\brk{|f(\vA)-\alpha^*|<\eps}=\frac{1}{2}.
			\end{align*}	
		\end{enumerate}
	\end{theorem}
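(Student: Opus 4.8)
\medskip
\noindent\emph{Proof strategy.}
The plan is to translate $f(\vA)$ into a question about a combinatorial ``core'' of the bipartite Tanner graph $G$ of $\vA$ (variables on one side, equations on the other, an edge whenever the corresponding entry is~$1$), which for fixed $d$ looks locally like a two-type Galton--Watson tree with $\Po(d)$ offspring. The key elementary fact is that a variable $i$ is frozen iff the coordinate vector $e_i$ lies in the row space of $\vA$, i.e.\ iff $x_i=0$ for all $x\in\ker\vA$; so $n\bc{1-f(\vA)}$ is the size of $\bigcup_{x\in\ker\vA}\mathrm{supp}(x)$. First I would run, simultaneously and to a fixed point, two monotone reductions on $G$: (a) delete any variable of degree at most one together with the equation containing it (if any) -- such a variable is patently unfrozen and irrelevant to $\ker\vA$ on the rest; and (b) whenever an equation has degree exactly one, freeze its unique variable, substitute it out, and remove both -- this is unit propagation. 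What remains is a random core $\vC$ in which all variable- and equation-degrees are at least two. The bookkeeping step is to show that $n f(\vA)$ equals the number of variables frozen in steps~(b) plus the number of frozen variables of $\vC$, and that the counts produced by (a)+(b) are concentrated and computable through the recursion behind $\phi_d$: it is step~(b) that accounts for the value $\alpha_*$ and the variables surviving into $\vC$ that can push the count up to $\alpha^*$.

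For part~(i): when $d\le\eul$ the map $\phi_d$ has a unique fixed point, the branching recursion is unambiguous, and one shows the core carries no frozen variables (indeed $\vC$ has only $o(n)$ variables here), so $f(\vA)\to\alpha_*=\alpha^*$. This also follows from the known asymptotics $\nul\vA/n\to\max_\alpha\Phi_d(\alpha)$ of \eqref{eqGaoSimple} together with $\alpha_*=\alpha^*$, so part~(i) is the routine half.

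For part~(ii) the mechanism is that for every $d>\eul$ the core $\vC$ sits \emph{exactly} on a rank threshold, which is precisely what the identity $\Phi_d(\alpha_*)=\Phi_d(\alpha^*)$ encodes: either $\vC$ has (near-)full rank, hence -- up to $o(n)$ coordinates -- every variable of $\vC$ is frozen and $f(\vA)\approx\alpha^*$; or $\nul\vC=\Theta(n)$, hence $\ker\vC$ has essentially full support and $f(\vA)\approx\alpha_*$; and the two resulting values of $\nul\vA$ agree to leading order. To decide which alternative occurs I would pass to the second-order behaviour, the natural candidate being a small-subgraph-conditioning limit: show that $\nul\vA$ minus its first-order term (equivalently the weighted partition function $\sum_{x\in\ker\vA}\lambda^{|\mathrm{supp}(x)|}$) is controlled by a product $\prod_{\ell}W_\ell$ of independent functions of the asymptotically Poisson short-cycle counts of $\vC$, with the ``which alternative wins'' indicator equal to the sign of a \emph{symmetric}, non-atomic random variable assembled from the $W_\ell$; symmetry then forces probability $\tfrac{1}{2}+o(1)$ for each. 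If available, an even cleaner route is to exhibit a single explicit statistic of $\vA$ (a parity, or a signed short-cycle sum over $\vC$) that is asymptotically a fair coin and pins down $f(\vA)$.

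The crux -- and where I expect to spend most effort -- is this last step, namely getting the constant $\tfrac{1}{2}$ rather than merely ``two values, each with positive probability''. First-order statements (the convergence $\nul\vA/n\to\max\Phi_d$, the size and local geometry of $\vC$) are standard via peeling, the differential-equation method, the small-subgraph method, or interpolation/Aizenman--Sims--Starr. The hard parts are: (1)~showing $\vC$ genuinely straddles the rank threshold for all $d>\eul$, so that its nullity really does fail to concentrate; (2)~identifying the limiting law of the phase indicator and proving it symmetric and non-atomic at $0$; and (3)~transferring ``which phase carries the solution space'' into ``how many coordinates freeze'', which requires ruling out that a sub-linear kernel contributes and controlling the $o(n)$ of stray small components spawned by the reductions. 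I expect~(2) to be the genuine obstacle, with~(3) close behind.
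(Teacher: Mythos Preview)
Your general architecture --- peel to a ``core'' (the paper calls it the \emph{slush}) and then argue a dichotomy on that core --- is the right one, and your part~(i) is fine. But your route to the constant $\tfrac12$ in part~(ii) misses the actual mechanism and would be very hard to push through as written.

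The paper does \emph{not} use small-subgraph conditioning or any limiting law of a ``phase indicator built from short-cycle counts''. The $\tfrac12$ comes from a much more direct symmetry: $\vA$ and $\vA^\top$ are identically distributed, and the peeling process you describe is invariant under transposition (your reductions (a) and (b) swap roles under $\vA\mapsto\vA^\top$), so the slush minor $\vA_\slush$ has the same distribution as its transpose. In particular the integer $|V_\slush(\vA)|-|C_\slush(\vA)|$ has a symmetric law. The real work is then (a)~to show this difference is \emph{not} near zero \whp, which the paper does by a perturbation/coupling argument (your thimblerig in reverse: add a few trees that shift $|V_\slush|-|C_\slush|$ by a controlled amount without being detectable), and (b)~to show that the sign of $|V_\slush|-|C_\slush|$ determines whether the slush freezes, via an expansion argument in one direction and a moment computation (after contracting degree-two checks) in the other. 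Your proposed small-subgraph-conditioning route gives no reason to expect the limiting object to be symmetric, and indeed there is no obvious parity or signed cycle sum that does the job; the symmetry you need is sitting in the model itself.

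Separately, your dichotomy ``either $\vC$ has near-full rank or $\nul\vC=\Theta(n)$'' is not automatic: one must rule out that $f(\vA)$ lands near the \emph{unstable} fixed point $\alpha_0\in(\alpha_*,\alpha^*)$. The paper devotes an entire section to this (a two-level first-moment argument over ``covers'' encoding WP fixed points, combined with the nullity formula), and it is not a bookkeeping step. Your item~(1) flags this, but the plan underestimates it: it is of comparable difficulty to the $\tfrac12$ step and is logically prior to it.
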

	
	\noindent
	Hence, the fraction of frozen variables fails to exhibit a zero--one behaviour for $d>\eul$.
	Instead, it shows a critical behaviour as one would normally associate only with the critical window of a phase transition.
	
	\subsection{The overlap}\label{Sec_intro_overlap}
	Apart from considering the linear system $\vA x=\vy$ as a random constraint satisfaction problem, the random linear system can also be viewed as an inference problem.
	Indeed, we can think of the vector $\vy$, which is chosen randomly from the column space of $\vA$, as actually resulting from multiplying $\vA$ with a uniformly random vector $\rvx\in\field^n$.
	Then $\vy=\vA\rvx$ turns into a noisy observation of the `ground truth' $\rvx$.
	Thus, it is natural to ask how well we can learn $\rvx$ given $\vA$ and $\vy$.
	
	These two viewpoints are actually equivalent because the posterior of $\rvx$ given $(\vA,\vy)$ is nothing but the uniform distribution on the set of solutions to the linear system $\vA x=\vy$.
	Hence,
	\begin{align}\label{eqposterior}
		\pr\brk{\rvx=x\mid\vA,\vy}&=\frac{\vecone\cbc{\vA x=\vy}}{|\ker\vA|}&&(x\in\field^n).
	\end{align}
	Therefore, the optimal inference algorithm just draws a random solution $\vx$ from among all solutions to the linear system.
	The number of bits that this algorithm recovers correctly reads
	\begin{align*}
		R(\vx,\rvx)=\frac1n\sum_{i=1}^n\vecone\cbc{\vx_i=\rvx_i}.
	\end{align*}
	Adopting mathematical physics jargon, we call $R(\vx,\rvx)$ the {\em overlap} of $\vx,\rvx$.
	Its average given $\vA,\vy$ boils down to
	\begin{align*}
		\avgR(\vA)&=\Erw[R(\vx,\rvx)\mid\vA,\vy]=\frac1{|\ker\vA|^2}\sum_{x,x'\in\ker\vA}R(x,x'),
	\end{align*}
	which is independent of $\vy$.
	
	Conventional wisdom in the statistical physics-inspired study of inference problems holds that the overlap concentrates on a single value given the `disorder', in our case $(\vA,\vy)$ (see~\cite{LF}). 
	This property is called {\em replica symmetry}.
	We will verify that replica symmetry holds for the random linear system \whp\
	Additionally, in all the random inference problems that have been studied over the past 20 years the overlap concentrates on a single value that does not depend on the disorder, except perhaps at a few critical values of the model parameters where phase transitions occur~\cite{JeanDmitry}.
	This enhanced property is called {\em strong replica symmetry}.
	A natural question is whether strong replica symmetry holds universally.
	It does not.
	As the next theorem shows, the random linear system with $d>\eul$ provides a counterexample: it is replica symmetric, but not strongly so.
	
	\begin{theorem}\label{Cor_square}
		\begin{enumerate}[(i)]
			\item If $d<\eul$ then $\lim_{n\to\infty}R(\vx,\rvx)=(1+\alpha_*)/2$ in probability.
			\item For all $d>\eul$ we have $\lim_{n\to\infty}\Erw\abs{R(\vx,\rvx)-\avgR(\vA)}=0$ while
			\begin{align*}
				\lim_{n\to\infty}\pr\brk{\abs{\avgR(\vAnp)-\frac{1+\alpha_*}2}<\eps}&=
				\lim_{n\to\infty}\pr\brk{\abs{\avgR(\vAnp)-\frac{1+\alpha^*}2}<\eps}=\frac{1}{2}&&\mbox{for any $\eps>0$.}
			\end{align*}
		\end{enumerate}
	\end{theorem}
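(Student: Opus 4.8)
The plan is to derive the theorem from Theorem~\ref{Thm_square} by reducing the overlap to the frozen fraction and then proving replica symmetry. The reduction is the exact identity
\[
\avgR(\vA)=\frac{1+f(\vA)}2 .
\]
To see it, recall from \eqref{eqposterior} that, conditionally on $(\vA,\vy)$, the ground truth $\hat\vx$ is uniformly distributed over the affine solution space of $\vA x=\vy$, which is also the law of the independently drawn replica $\vx$; hence $\vz:=\vx-\hat\vx$ is uniform on $\ker\vA$ and $R(\vx,\hat\vx)=n^{-1}\sum_{i=1}^n\vecone\{\vz_i=0\}$. A frozen coordinate contributes $1$ deterministically, whereas for a non-frozen $i$ there is $w\in\ker\vA$ with $w_i=1$, and the involution $z\mapsto z+w$ interchanges $\{\vz_i=0\}$ and $\{\vz_i=1\}$, so $\pr[\vz_i=0\mid\vA]=\tfrac12$; averaging yields the identity. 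Granted the replica-symmetry bound $\Erw\abs{R(\vx,\hat\vx)-\avgR(\vA)}\to0$ for all $d>0$, part~(i) follows from Theorem~\ref{Thm_square}(i), as then $R(\vx,\hat\vx)=(1+f(\vA))/2+o_p(1)\to(1+\alpha_*)/2$ in probability, and part~(ii) follows from Theorem~\ref{Thm_square}(ii) since $\{\abs{\avgR(\vA)-(1+\alpha_*)/2}<\eps\}=\{\abs{f(\vA)-\alpha_*}<2\eps\}$, and likewise for $\alpha^*$.

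It remains to prove the replica-symmetry bound. By Cauchy--Schwarz it suffices that $\Erw[\Var(R(\vx,\hat\vx)\mid\vA)]\to0$, and the conditional variance is computable exactly. For $i\neq j$ the image of $\ker\vA$ under $z\mapsto(z_i,z_j)$ is a subgroup of $\field^2$; running through the cases shows that $\mathrm{Cov}(\vecone\{\vz_i=0\},\vecone\{\vz_j=0\}\mid\vA)=\tfrac14$ exactly when that subgroup equals $\{(0,0),(1,1)\}$, i.e.\ when $i,j$ are non-frozen and \emph{entangled} in the sense that $x_i=x_j$ for every $x\in\ker\vA$ (equivalently $e_i+e_j\in\mathrm{rowsp}(\vA)$), and $=0$ in every other case. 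Hence $\Var(R(\vx,\hat\vx)\mid\vA)=(4n^2)^{-1}\sum_\ell\abs{C_\ell}^2$, where $C_1,\dots,C_m$ are the classes of the entanglement relation on the non-frozen coordinates. As this lies in $[0,\tfrac14]$ and $\Erw\sum_\ell\abs{C_\ell}^2=n^2\,\pr[\,i,j\text{ non-frozen and }i\sim j\,]+O(n)$ for a uniformly random ordered pair $i,j$, replica symmetry is \emph{equivalent} to $\pr[\,i,j\text{ non-frozen and }i\sim j\,]\to0$.

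The hard part is precisely this estimate. One must not bound it by $\pr[e_i+e_j\in\mathrm{rowsp}(\vA)]$, which does not tend to $0$: it is at least $\pr[i,j\text{ both frozen}]$, which tends to $\alpha_{*}^{2}$ or $(\alpha^*)^{2}$; the real content is that among the weight-two vectors of $\mathrm{rowsp}(\vA)$ all but $o(n^2)$ are pairs of frozen coordinates. My plan is to show that entanglement of two non-frozen coordinates is decided, up to $o(1)$, by a bounded neighbourhood of $\{i,j\}$ in the Tanner graph of $\vA$ --- through a two-coordinate coupling of the Warning Propagation recursion that already controls $f(\vA)$ --- and then to pass to the local limit, a two-type $\Po(d)$ Galton--Watson tree, on which one checks that the entanglement class of a random non-frozen variable is almost surely finite. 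Morally this is a subcriticality statement: the graph whose edges are the weight-two equations has mean degree $d^2\eul^{-d}<1$ for every $d>0$ (because $d-2\log d>0$ throughout), and although freezing creates further effective low-weight equations that the branching recursion must absorb, longer-range entanglements are witnessed only by increasingly rare subgraphs --- connected sets of rows whose incidence multiset is odd exactly at $\{i,j\}$ --- so a first-moment count over these witnesses should give the stronger bound $\Erw\sum_\ell\abs{C_\ell}^2=O(n)$. The obstacle I anticipate is that the residual non-frozen structure, and hence the exact branching recursion, genuinely differs between the two basins $\alpha_*$ (unit propagation) and $\alpha^*$ ($2$-core), so the subcriticality count must be carried out in both regimes and kept uniform under the conditioning furnished by Theorem~\ref{Thm_square}.
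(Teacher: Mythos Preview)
Your reduction is clean and correct: the exact identity $\avgR(\vA)=(1+f(\vA))/2$ and the variance computation are right, and the reformulation of replica symmetry as ``a random pair of non-frozen coordinates is unlikely to be entangled'' is exactly what the paper needs and uses (this is \Cor~\ref{Cor_pin} together with the variance bound implicit in its proof). Where you diverge is in how to \emph{prove} this entanglement bound. The paper does not attempt a direct subcriticality or first-moment argument; instead it invokes \Prop~\ref{Prop_pin}, a general pairwise-independence statement derived from the pinning operation of~\cite{Ayre}: one adds a bounded random number of unit rows to $\vA$, observes that this perturbation does not shift the distribution of $\vA$ in total variation (\Lem~\ref{Lemma_cheat}), and then appeals to a deterministic lemma (\Lem~\ref{Lemma_Ayre}) guaranteeing that a bounded number of pins forces near-product marginals. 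This sidesteps all combinatorics of entanglement witnesses and works uniformly in $d$.

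Your proposed direct route is plausible in spirit but, as written, has a real gap. A first-moment count over connected witness subgraphs typically overcounts: an entangled pair $(i,j)$ can admit many minimal row-sets summing to $e_i+e_j$, so $\Erw[\#\text{witnesses}]$ need not control $\pr[i\sim j]$; you would need a canonical witness per entangled pair, which is not obvious to manufacture. More seriously, for $d>\eul$ on the event $f(\vA)\sim\alpha_*$ the entire slush is unfrozen, and its minimum-degree-$\geq 2$ structure places you well outside any naive branching-process/subcriticality regime --- the paper's own analysis of the slush (\Lem s~\ref{Cor_lambda}--\ref{Lemma_rs_slush}) shows how delicate this case is even for coarser questions. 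The pinning approach of \Prop~\ref{Prop_pin} bypasses these issues entirely and is what you should cite to close the argument.
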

	
	The first part of the theorem posits that for $d<\eul$ the overlap concentrates on the single value$(1+\alpha_*)/2$.
	In light of \Thm~\ref{Thm_square} this means that the optimal inference algorithm, while, unsurprisingly, capable of correctly recovering the frozen coordinates, is at a loss when it comes to the unfrozen ones.
	Indeed, we can get only about half the unfrozen coordinates right, no better than a random guess.
	
	The second part of the theorem is more interesting.
	While the random variable $R(\vx,\rvx)$ concentrates on the conditional expectation $\bar R(\vA)$ given $\vA,\vy$, the conditional expectation $\bar R(\vA)$ itself fails to concentrate on its mean $\Erw[\bar R(\vA)]$.
	Instead it vacillates between two different values $(1+\alpha_*)/2$ and $(1+\alpha^*)/2$, each of which occurs with asymptotically equal probability.
	In fact, this failure to concentrate does not just occur at a few isolated points, but throughout the entire regime $d>\eul$.
	This behaviour mirrors the bifurcation of the number of frozen variables from \Thm~\ref{Thm_square}.
	Moreover, as in the case $d<\eul$ the optimal inference algorithm does, of course, correctly recover the frozen variables, but cannot outperform a random guess on the unfrozen ones.
	
	We proceed to outline the key ideas behind the proofs of \Thm s~\ref{Thm_square} and~\ref{Cor_square}.
	Unsurprisingly, to prove the critical behaviour that these theorems assert we will need to conduct a rather subtle, accurate analysis of the random linear system and its space of solutions, far more so than one would normally have to undertake when aiming at a zero-one result.
	On the positive side the proofs reveal novel combinatorial insights that may have an impact on other random constraint satisfaction or inference problems as well.
	Let us thus survey the proof strategy.
	
	\subsection{Techniques}\label{Sec_intro_outline}
	
	The main result of the paper is that for $d>\eul$ the proportion $f(\vA)$ of frozen variables is asymptotically equal to either of the two stable fixed points $\alpha_*,\alpha^*$ of the function $\phi_d$ with probability $1/2+o(1)$ (see Figure~\ref{Fig_alpha}).
	Proving this statement takes three strikes.
	
	\begin{description}
		\item[FIX] $f(\vA)$ concentrates on the fixed points of $\phi_d$, either one of the two stable ones $\alpha_*,\alpha^*$ or the third unstable fixed point $\malpha$.
		\item[STAB] The unstable fixed point is an unlikely outcome.
		\item[EQ] The two stable fixed points are equally likely.
	\end{description}
	
	\subsubsection{Heuristics}
	Why are these three statements plausibly true?
	Let us begin with {\bf FIX}.
	The random matrix $\vA$ naturally induces a bipartite graph called the {\em Tanner graph} $G(\vA)$.
	Its vertex classes are {\em variable nodes} $v_1,\ldots,v_n$ representing the columns of $\vA$ and {\em check nodes} $a_1,\ldots,a_n$ representing the rows.
	There is an edge between $a_i$ and $v_j$ iff $\vA_{ij}=1$.
	The Tanner graph is distributed as a random bipartite graph with edge probability~$d/n$.
	As a consequence, its local structure is roughly that of a $\Po(d)$ Galton-Watson tree.
	
	Exploring the Tanner graph from a given variable node $v_i$, we may view $v_i$ as the root of such a tree.
	The grandchildren of $v_i$, i.e., the variable nodes at distance two, are essentially uniformly random. 
	Therefore, the grandchildren should each be frozen with probability $f(\vA)+o(1)$ and behave very nearly independently.
	Further, for the obvious algebraic reason the root $v_i$ itself is frozen iff it is parent to some check all of whose children are frozen.
	A few lines of calculations based on the Poisson tree structure then show that $v_i$ ought to be frozen with probability $\phi_d(f(\vA))$.
	But at the same time, since $v_i$ was itself chosen randomly, it is frozen with probability $f(\vA)$.
	Hence, we are led to expect that $f(\vA) = \phi_d(f(\vA))$.
	In other words, {\bf FIX} expresses that the local structure of $G(\vA)$ is given by a Poisson tree, and that freezing manifests itself locally.
	
	Apart from the two stable fixed points $\alpha_*,\alpha^*$, Figure~\ref{Fig_alpha} indicates that $\phi_d$ possesses an unstable fixed point $\malpha$ somewhere in between.
	How can we rule out that $f(\vA)$ will take this value?
	The nullity formula~\eqref{eqGaoSimple} suggests that $f(\vA)$ should be a {\em maximiser} of the function $\Phi_d(\alpha)$.
	But its maximisers are precisely the \emph{stable} fixed points $\alpha_*,\alpha^*$,
	while the unstable fixed point is where the function takes its local minimum.
	That is why {\bf STAB} appears plausible.
	However, we will see that this simplistic line of reasoning cannot be turned into a proof easily.
	
	Finally, coming to {\bf EQ}, we need to argue that for $d>\eul$ both stable fixed points are equally likely.
	To this end we employ the Warning Propagation (WP) message passing scheme, where messages are sent along the edges of the Tanner graph in either direction.
	The message from $v_j$ to $a_i$ is updated at each time step according to the messages that $v_j$ receives from its other neighbours, and similarly for the reverse message.
	WP does faithfully describe the local dynamics that cause freezing, but there remains a loose end: we must initialise messages somehow.
	
	Two obvious initialisations suggest themselves.
	First, if we initialise assuming everything to be unfrozen, then because of {\bf FIX} and the local geometry
	approximating a Galton-Watson branching tree, WP reduces to repeated application of the $\phi_d$ function starting from $0$.
	Since $\lim_{t\to \infty} \phi_d^{\circ t}(0) = \alpha_*$, WP then predicts $f(\vA)=\alpha_*$.
	Second, if we initialise assuming everything to be frozen, WP mimics iterating $\phi_d$ from $1$ and thus predicts $f(\vA) = \lim_{t\to \infty}\phi_d^{\circ t}(1) = \alpha^*$.
	
	So which initialisation is correct?
	Neither, unfortunately.
	We thus need a more nuanced version of WP, in which we describe messages and ultimately variables as ``frozen'', ``unfrozen'' and ``slush'', the last meaning uncertain.
	Initialising WP with either all messages frozen or all messages unfrozen still leads to the same results as before. 
	But initialising with all messages being ``slush'', WP predicts that approximately $\alpha_*n$ variables are frozen and $(1-\alpha^*)n$ variables are unfrozen, these predictions indeed being correct for almost all variables, while $(\alpha^*-\alpha_*)n$ variables remain slush,
	i.e., WP makes no prediction about them.
	Thus, there are actually {\em three} distinct categories.
	
	How does this help?
	Since $f(\vA)$ is concentrated around the stable fixed points $\alpha_*,\alpha^*$, we know that actually the slush portion must be either (almost) entirely frozen or unfrozen; it is impossible that, say, half the slush variables freeze.
	To figure out whether the slush freezes, consider the minor $\vA_\slush$ of $\vA$ induced on the corresponding variables and constraints.
	If this minor has fewer rows than columns, then the corresponding linear system is under-constrained.
	In effect, it is inconceivable that the slush freezes completely.
	On the other hand, if $\vA_\slush$ has more rows than columns, then by analogy to the random   $k$-XORSAT problem we expect that the slush freezes.
	Now, crucially, both the random matrix model $\vA$ and the WP message passing process are invariant under transposition of the matrix.
	Hence, $\vA_\slush$ should be over-constrained just as often as it is under-constrained.
	We are thus led to believe that  the slush freezes with probability about half, which explains the peculiar behaviour stated in the theorems.
	Once again, this simple reasoning, while plausible, cannot easily be converted into an actual proof.
	
	\subsubsection{Formalising the heuristics}\label{Sec_formalheuristic}
	Hence, how can we corroborate these heuristics rigorously?
	Concerning {\bf FIX}, consider the following game of ``thimblerig''.
	The opponent generates two random graphs independently: one is simply the Tanner graph $G_1 \sim G(\vA)$ of $\vA$, the other is an independent copy $G_2\sim G(\vA)$ of the Tanner graph, but with some random alterations.
	Specifically, the trickster generates a $\Po(d)$ branching tree of height two, embeds the root and its children onto isolated variable and check nodes respectively, and embeds the remaining leaves onto variables chosen uniformly at random.
	The opponent then presents you with the two graphs and asks you to determine which is which.
	It turns out that the changes are so well-disguised that you can do no better than a random guess.
	To compound your misery, having told you which is the perturbed graph, your opponent asks you to guess which variable is the root of the added tree.
	Again, the changes are so well-disguised that you can do no better than a random guess.
	Not content with winning twice, your opponent wishes to assert their complete dominance and performs the same trick again, this time adding not just one tree but a slowly growing number (of order $o(\sqrt n)$).
	For the third time, you can only resort to a random guess.
	
	The point of this game is to demonstrate that the root variables of the trees added
	behave identically to randomly chosen variables of the original graph.
	In particular, the proportion of variables which are frozen is distributed as $f(\vA)$.
	But we can also calculate this proportion in a different way: by considering whether the \emph{attachment} variables are frozen and tracking the effects down to the roots.
	This tells us that the proportion of frozen roots is $\phi_d(f(\vA)+o(1))$, {\em provided that} the newly added constraints do not dramatically shift the overall number of frozen variables due to long-range effects.
	To rule this out we use a delicate argument drawing on ideas from the study of random factor graph models and involving replica symmetry
	and the cut metric for discrete probability distributions from~\cite{Victor,KathrinWill,CKPZ,Will,BetheLattices}.
	
	Perhaps surprisingly, it takes quite an effort to verify the claim {\bf STAB} that $f(\vA)$ is not likely to be near the unstable fixed point.
	The proof employs a combinatorial construction that we call {\em covers}.
	A cover is basically a designation of the variable nodes, checks and edges of the Tanner graph that encodes which variables are frozen, and because of which constraints they freeze.
	We will then pursue a novel ``hammer and anvil'' strategy to rule out the unstable fixed point.
	On the one hand, we will show that if $f(\vA)$ is near $\malpha$, then the Tanner graph $G(\vA)$ must contain covers that each induce a cluster of solutions with about $\malpha$ frozen variables.
	On the other hand, we will use a moment computation to show that \whp\ the Tanner graph $G(\vA)$ only contains a sub-exponential number $\exp(o(n))$ of covers.
	Furthermore, another moment computation shows that \whp\ each of them only extends to about $2^{\Phi_d(\malpha)n}$ solutions to the linear system $\vA x=\vy$.
	As a consequence, if $f(\vA)$ is near $\malpha$, then the random linear system $\vA x=\vy$ would have far fewer solutions than provided by \eqref{eqGaoSimple}.
	Since the nullity of the random matrix is tightly concentrated, we conclude that the event $f(\vA)\sim\malpha$ is unlikely.
	The novelty of this argument, and the source of its technical intricacy, is the two-step cover--solution consideration: first we verify that the set of solutions actually decomposes into clusters encoded by ``covers''.
	Then we calculate the number of covers (corresponding to solution clusters), and finally we estimate the number of solutions inside each cluster.
	This two-level approach is necessary as a direct first moment calculation of the expected number of solutions with a given Hamming weight seems doomed to fail, at least for $d$ near the critical value $\eul$.
	
	Coming to {\bf EQ}, as indicated in the previous subsection, the ``slush'' portion of the matrix enjoys a symmetry property, in that it is also the slush portion of the transposed matrix.
	We will prove that, depending on the precise aspect ratio of the slush minor, the slush variables either do or do not freeze.
	But there is one subtlety: we need to to show that the number of rows and the number of columns are not {\em exactly} equal \whp\
	Indeed it is not hard to show that the both numbers have standard deviation $\Theta(\sqrt{n})$.
	Hence, if they were independent they would differ by $\Theta(\sqrt{n})$ \whp.
	But this independence is quite clearly not satisfied.
	Thus, we need to argue that at least they have non-trivial covariance.
	
	To show this, we perform a similar trick to the game of thimblerig: we show that the matrix can be randomly perturbed to decrease the number of slush columns, while preserving the number of slush rows.
	Furthermore, this can be achieved without an opponent being able to identify that a change has been made.
	Performing this trick carefully shows that it is unlikely that the slush portion of the matrix is approximately square.
	Symmetry then tells us that with probability asymptotically $1/2$ it has significantly more rows than columns, and also with probability asymptotically $1/2$ it has significantly more columns than rows.
	
	It remains to prove that these two cases are likely to lead to all slush variables being frozen, or all being unfrozen respectively.
	Unfortunately, a simple symmetry argument does not quite suffice.
	Instead we first prove that it is unlikely that there are significantly, say $\omega\gg1$, more slush variables than slush checks, but that almost all slush variables are frozen.
	The number of slush variables that remain unfrozen must certainly be at least $\omega$ due to elementary consideration of the nullity.
	We are thus left to exclude that the number is between $\omega$ and $\eps n$, which we establish by way of an expansion argument.
	
	We finally need to show that it is unlikely that there are significantly more slush checks (say $m_\slush$) than slush variables ($n_\slush$), but that these slush variables remain mostly unfrozen.
	Crucially, 
	using symmetry arguments
	we can indeed show that a ``typical'' kernel vector will set approximately half of the slush variables to $1$ and half to $0$.
	Of course there are approximately $2^{n_\slush}$ such vectors.
	On the other hand, imagine that a check with $k$ slush variable neighbours chooses these neighbours uniformly at random
	(this can be made formally correct 
	with some technical arguments).
	Then the probability that this check is satisfied by a vector of Hamming weight approximately $n_\slush/2$ is approximately $1/2$ (since, e.g., based on the values of the first $k-1$ neighbours, the last must be chosen from the correct class).
	Therefore the expected number of kernel vectors should be approximately $2^{n_\slush-m_\slush} = o(1)$.
	
	The problem with this basic calculation is that error terms occur which turn out to be too significant to ignore.
	These error terms ultimately come from check nodes of degree two in the slush minor.
	To deal with them, we employ a delicate percolation argument in which we contract check nodes of degree exactly two, since they just equalise their two adjacent variable nodes.
	Importantly, we can show that this process neither affects the number of kernel vectors nor the balance $m_\slush-n_\slush$.
	We can thus complete the moment calculation and show that the slush cannot have an excess of rows and still be entirely unfrozen.
	
	\subsection{Discussion}
	How do the techniques that we develop in this paper compare to previously known ones, and how can our techniques be extended to other problems?
	
	The general Warning Propagation message passing scheme captures the local effects of constraint satisfaction problems; for example, in the context of satisfiability WP boils down to Unit Clause Propagation~\cite{MM}.
	WP also yields the $k$-XORSAT threshold~\cite{Ibrahimi} as well as the freezing threshold in random graph colouring~\cite{FrozenMolloy}.
	In addition, WP can also be used to study structural graph properties such as the $k$-core~\cite{Skubch,Pittel}. 
	In all these examples, the ``correct'' initialisation from which to launch WP is obvious, and the proof that random variable of interest converges to the fixed point is based on a direct and straightforward combinatorial analysis.
	Indeed, the standard strategy is then a two-stage one: first, show that WP quickly converges to something close to the conjectured limit; and second, show that
	after this initial convergence, not much else will change~\cite{CLR_WP}.
	
	However, this usual technique is not enough for our purposes, essentially because of the $2$-point rather than $1$-point concentration of $f(\vA)$. 
	Naively one might imagine that WP will converge to one of the two fixed points, each with probability $1/2$. 
	But intriguingly, the dichotomy of the random variable $f(\vA)$ induces a dichotomy for WP in each \emph{instance} of $\vA$ -- WP hedges its bets, identifying the two possible answers, but is unable to tell which is actually correct.
	As such, we are left with the ``uncertain'' portion of the matrix (or its Tanner graph).
	
	To deal with this complication we enhance the WP message passing scheme to expressly identify the portion of the Tanner graph that may go either way.
	Along the way, we develop a versatile {\em indirect} method for proving convergence to {\em some} fixed point to replace the usual direct combinatorial argument.
	This technique is based on the thimblerig game that more or less justifies the WP heuristic in general.
	While the argument appears to be reasonably universal, it fails to identify precisely which fixed point is the correct one.
	As mentioned above, we follow WP up with a novel type of moment calculation based on covers to rule out the unstable fixed point.
	One could envisage a generalisation of this technique to other planted constraint satisfaction problems or, more generally, spin glass models.
	The place of the nullity formula \eqref{eqGaoSimple} would then have to be filled by a formula for the leading exponential order of the partition function.
	
	The thimblerig argument is enabled by the important observation that unfrozen variables, for the most part, behave more or less independently of each other and that the random variable $f(\vA)$ is fairly ``robust'' with respect to small numbers of local changes (see \Prop~\ref{Prop_sym}).
	We establish this robustness by way of a pinning argument, in which unary checks are added that freeze certain previously unfrozen variables, and we analyse the effect that this has on the kernel.
	The thimblerig argument is an extension of arguments used in the study of random factor graph models~\cite{Will,BetheLattices,Montanari}, where the pinning operation also plays a crucial role~\cite{Max,CKPZ}.
	
	Because the slush minor of the matrix displays a peculiar critical phenomenon, such as one would normally associate only with critical regimes around a phase transition, new techniques are required to study it.
	In particular, while it seems intuitively natural that the uncertain proportion is unfrozen if $n_\slush -m_\slush \ge \omega$ is large and positive, but frozen if it is large and negative, proving this formally requires some significant new ideas.
	In particular, to prove the first statement we introduce \emph{flippers}, induced subgraphs of the uncertain portion which could confound expectations by being frozen. 
	These flippers must satisfy various properties, and the proof consists of showing that large flippers (or more precisely, large unions of flippers) are unlikely due to expansion properties.
	This sort of expansion argument appears by no means restricted to the present problem.
	A related combinatorial structure appeared in the proof of limit theorems for cores of random graphs~\cite{Forging}.
	
	Proving the second statement involves a delicate moment calculation. 
	The modification involved in contracting the checks of degree~2, which are the reason that the naive version of the argument fails, is similar to the operation to construct the kernel of a graph from its $2$-core.
	This moment calculation is the single place where we make critical use of the fact that we are studying a problem whose variables range over a finite domain, viz.\ the field $\field$.
	
	What are potential generalisations?
	The random linear system $\vA x=\vy$ is one case of a class of constraint satisfaction problems known as \emph{uniquely extendable problems}~\cite{Connamacher}.
	Such problems are characterised by the property that if all but one of the variables appearing in a constraint are fixed, there is precisely one choice for the value of the remaining variable such that the constraint is satisfied. 
	Some of these problems are intractable, such as, for example, algebraic constraints with variables ranging over finite groups.
	It would be most interesting to see if and how the methods developed in this paper could be extended to uniquely extendable problems.
	Furthermore, since we study a critical phenomenon, namely the two-point concentration of the proportion of frozen variables, our ideas may help to understand the behaviour at the critical point of phase transitions of random constraint satisfaction problems.
	This type of question remains an essentially blank spot on the map.
	
	\subsection{Further related work}\label{Sec_intro_relatedwork}
	Perhaps surprisingly, apart from the article~\cite{Gao} that establishes a nullity formula for general sparse random matrices and in particular \eqref{eqGaoSimple}, there have been no prior studies of the random matrix $\vA(n,p)$.
	However, random $m\times n$-matrix over finite fields $\FF_q$ where every row contains an equal number $k\geq2$ of non-zero entries have been studied extensively.
	In the case $k=q=2$ this model is directly related to the giant component phase transition~\cite{Kolchin2,Kolchin}, because each row constrains two random entries to be equal.
	Moreover, we already saw that for $k\geq3$ and $q=2$ the model is equivalent to random $k$-XORSAT.
	Dubois and Mandler~\cite{DuboisMandler} computed the critical aspect ratio $m/n$ up to which such a matrix has full row rank for $k=3$.
	The result was subsequently extended to $k>3$~\cite{Dietzfelbinger,PittelSorkin}.
	Indeed, the threshold value of $m$ up to which the random matrix has full rank can be interpreted in terms of the Warning Propagation message passing scheme~\cite{CDMM}.
	Beyond its intrinsic interest as a basic model of a random constraint satisfaction problem~\cite{MM}, the random $k$-XORSAT model has found applications in hashing and data compression~\cite{Dietzfelbinger,Wainwright}.
	
	The asymptotic rank of random matrices with a fixed number $k$ of non-zero entries per row over finite fields has been computed independently via two different arguments by Ayre, Coja-Oghlan, Gao and M\"uller~\cite{Ayre} and Cooper, Frieze and Pegden~\cite{CFP}.
	Additionally, Miller and Cohen~\cite{MillerCohen} studied the rank of random matrices in which both the number of non-zero entries in each row and the number of non-zero entries in each column are fixed.
	However, they left out the critical case in which these two numbers are identical, which was solved recently by Huang~\cite{Huang}.
	Additionally, Bordenave, Lelarge and Salez~\cite{BLS} studied the rank over $\RR$
	of the adjacency matrix of sparse random graphs.
	Of course, a crucial difference between the random matrix model that we study here and the adjacency matrix of a random graph is that the latter is symmetric.
	
	A problem that appears to be inherently related to the binomial random matrix problem studied here is the matching problem on random bipartite graphs~\cite{BLS2}.
	It would be interesting to see if in some form the criticality observed in \Thm s~\ref{Thm_square} and~\ref{Cor_square} extends to the matching problem or, equivalently, the independent set problem on random bipartite graphs.
	The critical value $d=\eul$ appears to be related to the uniqueness of the Gibbs measure of the latter problem~\cite{Bandyopadhyay}.
	In the context of the matching problem, our function $\Phi_d(\alpha)$ appears (as $F(1-\alpha)$) in~\cite{BLS2},
	in particular in the appendix where a figure shows the emergence of the two global maxima above the threshold $d=\eul$.
	(In fact the discussion there is about the one-type graph $G(n,d/n)$ rather than the bipartite $G(n,n,d/n)$,
	which is the distribution of $G(\vA)$, but since the two graphs have the same local weak limit the more general results of~\cite{BLS2} show that
	the matching problem displays similar behaviour.) In some sense it is not surprising that the same function should arise in these two problems:
	the Warning propagation process to determine which variables are certainly frozen in essence mimics a one-sided version of the first stage of the Karp-Sipser
	algorithm in which leaves and their neighbours are removed. This removal results in a remaining ``core'', similar to our ``slush'',
	of minimum degree at least $2$. This is where we encounter our first fixed point of $\phi_d$ (or maximum of $\Phi_d$).
	For the matching problem, this first roadblock is easy to overcome: the core turns out to have an almost perfect matching \whp,
	which implies that it is always the same fixed point which
	gives the correct answer. By contrast, our situation is more delicate because the slush need not freeze.

	\section{Organisation}\label{Sec_proofmainresults}
	
	\noindent
	In this section, we state the intermediate results that lead up to the main theorems.
	We also detail where in the following sections the proofs of these intermediate results can be found.
	
	\subsection{The functions $\phi_d$ and $\Phi_d$}
	The formula~\eqref{eqGaoSimple} yields the approximate number of solutions to the linear system $\vA x=\vy$.
	We already discussed the combinatorial intuition behind the maximiser $\alpha$ in \eqref{eqGaoSimple}: we will prove that the function $\Phi_d$ attains its global maxima at the conceivable values of $f(\vA)$.
	However, the proof of \eqref{eqGaoSimple} in~\cite{Gao} falls short of already implying this fact as that proof strategy relies on a purely variational argument.
	For a start, we verify that the function $\phi_d$ actually has a unique fixed point for $d\leq\eul$ and two distinct stable fixed points for $d>\eul$, and that these fixed points coincide with the local maxima of $\Phi_d$.
	
	\begin{lemma}\label{Lemma_stability}
		For all $d>0, d\neq \eul$ the local maxima of $\Phi_{d}$ and the stable fixed points of $\phi_{d}$ coincide.
		For $d=\eul$ the local maximum of $\Phi_{\eul}$ coincides with the lone fixed point, simultaneously the inflection point of $\phi_{\eul}$. 
	\end{lemma}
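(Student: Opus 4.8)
The plan is to derive a single differential identity linking $\Phi_d$ to $\phi_d$ and to read everything off from it. Writing $p=p(\alpha)=\exp(-d(1-\alpha))$, so that $p'=dp$, a direct application of the chain rule to \eqref{eqphid} and \eqref{eqGaoSimple} gives $\phi_d'(\alpha)=d^{2}p\,e^{-dp}$ and
\[
  \Phi_d'(\alpha)=d^{2}p\,(\phi_d(\alpha)-\alpha).
\]
Since $p>0$, this already shows that the critical points of $\Phi_d$ are exactly the fixed points of $\phi_d$ and that, near any fixed point, $\Phi_d'$ has the same sign as $\phi_d(\alpha)-\alpha$; that $\phi_d$ is strictly increasing; and, together with $\phi_d(0)=1-e^{-de^{-d}}>0$ and $\phi_d(1)=1-e^{-d}<1$, that all fixed points and all local maxima of $\Phi_d$ lie in the open interval $(0,1)$. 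Finally $\phi_d-\mathrm{id}$ is real-analytic and non-constant, so its zeros are isolated; hence near a fixed point $a$ it keeps a constant sign on each side of $a$, and ``$a$ is a two-sided attracting fixed point'' is equivalent to ``$\phi_d-\mathrm{id}$ changes sign from $+$ to $-$ at $a$'' (by monotonicity of $\phi_d$), which is in turn equivalent to ``$a$ is a local maximum of $\Phi_d$''.

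For $d\neq\eul$ I would then show that every fixed point $a$ is hyperbolic, i.e.\ $\phi_d'(a)\neq1$. Setting $v=d(1-a)$ and using $\phi_d(a)=a$ (equivalently $e^{-de^{-v}}=v/d$) one finds $\phi_d'(a)=dv\,e^{-v}$, so $\phi_d'(a)=1$ forces $d=e^{v}/v$; feeding this back into the fixed-point equation collapses it to $v-1/v=2\log v$. The map $v\mapsto v-1/v-2\log v$ has derivative $(1-1/v)^{2}\ge0$ and vanishes at $v=1$, so $v=1$ is its only positive root, whence $d=\eul$. Consequently, for $d\neq\eul$ each fixed point $a$ has $(\phi_d-\mathrm{id})'(a)=\phi_d'(a)-1\neq0$, so it is a simple zero and $\Phi_d'$ changes sign across it: if $\phi_d'(a)<1$ the sign change is $+\to-$, so $a$ is a strict local maximum of $\Phi_d$ and a stable (attracting) fixed point, whereas if $\phi_d'(a)>1$ it is a strict local minimum and an unstable fixed point. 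Since every local maximum of $\Phi_d$ is an interior critical point, hence one of these fixed points, the local maxima of $\Phi_d$ are precisely the stable fixed points of $\phi_d$.

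Then I would handle $d=\eul$ directly. From $\phi_d''(\alpha)=d^{2}w(1-w)e^{-w}$ with $w=w(\alpha)=d\,e^{-d(1-\alpha)}$ strictly increasing from $d e^{-d}<1$ to $d>1$, the function $\phi_\eul$ has exactly one inflection point $a$, characterised by $w(a)=1$, i.e.\ $a=1-1/\eul$; one checks directly that $\phi_\eul(a)=1-e^{-1}=a$, so $a$ is also a fixed point, and that $\phi_\eul'(a)=1$. Moreover $\phi_\eul$ is convex on $[0,a]$ and concave on $[a,1]$, so $h:=\phi_\eul-\mathrm{id}$, with $h(a)=h'(a)=0$, satisfies $h'\le0$ on both subintervals; hence $h\ge0$ on $[0,a]$ and $h\le0$ on $[a,1]$, with equality only at $a$ by analyticity. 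Therefore $a$ is the lone fixed point of $\phi_\eul$, $h$ changes sign from $+$ to $-$ at $a$, and so $a$ is the unique local maximum of $\Phi_\eul$. Thus the lone fixed point, the inflection point of $\phi_\eul$, and the local maximum of $\Phi_\eul$ all coincide at $1-1/\eul$.

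The only steps beyond bookkeeping are the collapse of the non-hyperbolicity condition to $v-1/v=2\log v$ together with the observation that its left-minus-right side has derivative $(1-1/v)^{2}$ (this is what pins the degeneracy to $d=\eul$), and, at $d=\eul$, the coincidence of the inflection point of $\phi_\eul$ with the fixed point, which is exactly what lets the convex/concave dichotomy fix the global sign of $\phi_\eul-\mathrm{id}$. I expect the $d=\eul$ case and the careful alignment of ``sign change of $\Phi_d'$'' with ``local extremum'' and with ``(in)stability of $\phi_d$'' to demand the most care; the rest is elementary one-variable calculus. The count of fixed points (one for $d<\eul$, three for $d>\eul$) is not needed for the lemma, but likewise follows from the convex--concave shape of $\phi_d$ and the facts $\phi_d'(0),\phi_d'(1)<1$.
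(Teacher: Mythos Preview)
Your proposal is correct and shares the paper's core identity $\Phi_d'(\alpha)=d^{2}D_d(\alpha)\bigl(\phi_d(\alpha)-\alpha\bigr)$, so critical points of $\Phi_d$ are exactly fixed points of $\phi_d$. From there the paper takes a slightly different route: it computes that at any fixed point $\Phi_d''(\alpha)=d^{2}D_d(\alpha)\bigl(\phi_d'(\alpha)-1\bigr)$ and invokes the second-derivative test together with a separate list of calculus facts (Claim~\ref{claim:calculus1}) to conclude. In particular, the paper does not isolate the hyperbolicity step; the possibility $\phi_d'(a)=1$ for $d\neq\eul$ is absorbed into the surrounding claims rather than ruled out directly.

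Your treatment is more self-contained: the reduction of the non-hyperbolic case to $v-1/v=2\log v$ with derivative $(1-1/v)^{2}$ is a clean way to pin the degeneracy to $d=\eul$, and it lets you use a pure sign-change argument for $\Phi_d'$ instead of the second-derivative test. The paper's $d=\eul$ argument (Claim~\ref{claim:calculus1}\,(v)) is essentially your convex/concave analysis, just packaged differently. The upshot: same mechanism, but your proof is tighter on the borderline case and does not lean on an auxiliary claim list; the paper's version is shorter because it outsources those facts.
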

	
	\noindent
	The proof of \Lem~\ref{Lemma_stability}, based on a bit of calculus, can be found in \Sec~\ref{Sec_Lemma_stability}.
	Additionally, for $d\leq\eul$ we define $\malpha=\alpha_*$, while for $d>\eul$ we let $\malpha$ be the minimiser of $\Phi_d$ on the interval $[\alpha_*,\alpha^*]$.
	The following lemma, which we prove in \Sec~\ref{Sec_Lemma_contract}, shows that the $t$-fold iteration $\phi_d^{\circ t}(x)$ converges to one of the stable fixed points, except if we start right at $x=\malpha$.
	
	\begin{lemma}\label{Lemma_contract}
		For any $d>0$ we have
		\begin{align*}
			\lim_{t\to\infty}\phi^{\circ t}_d(x)&=\alpha_* \quad\mbox{for any }x<[0,\malpha),&
			\lim_{t\to\infty}\phi^{\circ t}_d(x)&=\alpha^* \quad\mbox{for any }x\in(\malpha,1].
		\end{align*}
	\end{lemma}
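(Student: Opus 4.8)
The plan is to treat $\phi_d$ as a one–dimensional discrete dynamical system on $[0,1]$ and to exploit the single structural feature that makes everything go through, namely that $\phi_d$ is strictly increasing. Write $g:=\phi_d-\mathrm{id}$ on $[0,1]$, so that the fixed points of $\phi_d$ are exactly the zeros of $g$. First I would record the soft input. As $t\mapsto\exp(t)$ is increasing, $\phi_d$ is strictly increasing; moreover $\phi_d(0)=1-\exp(-d\exp(-d))>0$ and $\phi_d(1)=1-\exp(-d)<1$, so $\phi_d$ maps $[0,1]$ into $(0,1)$, and in particular $g(0)>0>g(1)$, whence $0<\alpha_*\le\alpha^*<1$. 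Next, for any $x\in[0,1]$ the orbit $(\phi_d^{\circ t}(x))_{t\ge0}$ is monotone: if $g(x)\ge0$ then applying the increasing map $\phi_d$ and inducting gives $\phi_d^{\circ(t+1)}(x)\ge\phi_d^{\circ t}(x)$ for all $t$, and symmetrically the orbit is non-increasing if $g(x)\le0$. Being bounded, the orbit converges, and by continuity its limit $\psi(x):=\lim_t\phi_d^{\circ t}(x)$ is a fixed point of $\phi_d$.

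The second step is to identify $\psi(x)$. I claim $\psi(x)$ equals the least fixed point of $\phi_d$ in $(x,1]$ if $g(x)>0$, the greatest fixed point in $[0,x)$ if $g(x)<0$, and $x$ itself if $g(x)=0$. For the first assertion: such a fixed point exists, since otherwise $g$ would be continuous and non-vanishing, hence positive, on all of $[x,1]$, contradicting $g(1)<0$; call the least one $y$. Then $g$ has no zero on $[x,y)$ and $g(x)>0$, so $g>0$ on $[x,y)$, i.e.\ $\phi_d(z)>z$ there, while $\phi_d(z)<\phi_d(y)=y$ by strict monotonicity; hence $\phi_d$ maps $[x,y)$ into itself, the non-decreasing orbit stays in $[x,y)$, and its limit is a fixed point in $[x,y]$, which can only be $y$. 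The case $g(x)<0$ is symmetric (using $g(0)>0$), and $g(x)=0$ is trivial.

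It remains to pin down the sign of $g$, which is where I would use the structure of the fixed points of $\phi_d$ (discussed in \Sec~\ref{Sec_intro_frozen}, with the calculus details in \Sec~\ref{Sec_Lemma_stability}). If $d\le\eul$, the unique fixed point is $\alpha_*=\alpha^*=\malpha$, so $g$ is continuous and non-vanishing on each of $[0,\malpha)$ and $(\malpha,1]$, hence of constant sign there, equal to $\mathrm{sign}\,g(0)=+1$ and $\mathrm{sign}\,g(1)=-1$ respectively; with the previous step this gives $\psi\equiv\alpha_*$ on $[0,\malpha)$ and $\psi\equiv\alpha^*$ on $(\malpha,1]$, which is the claim for $d\le\eul$. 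If $d>\eul$, the fixed points are exactly $\alpha_*<\malpha<\alpha^*$ with $\alpha_*,\alpha^*$ stable (so $\phi_d'(\alpha_*)<1$ and $\phi_d'(\alpha^*)<1$), hence $g'(\alpha_*)=\phi_d'(\alpha_*)-1<0$ and $g'(\alpha^*)<0$; thus $g<0$ just to the right of $\alpha_*$ and $g>0$ just to the left of $\alpha^*$. Since $g$ has no other zeros it has constant sign on each of $[0,\alpha_*)$, $(\alpha_*,\malpha)$, $(\malpha,\alpha^*)$, $(\alpha^*,1]$, and combined with $\mathrm{sign}\,g(0)=+1$ and $\mathrm{sign}\,g(1)=-1$ the only possibility is
\[ g>0 \text{ on } [0,\alpha_*),\qquad g<0 \text{ on } (\alpha_*,\malpha),\qquad g>0 \text{ on } (\malpha,\alpha^*),\qquad g<0 \text{ on } (\alpha^*,1]. \]
Plugging this into the description of $\psi$: for every $x\in[0,\malpha)$ the least fixed point above $x$ and the greatest fixed point below $x$ are both $\alpha_*$ (and $\psi(\alpha_*)=\alpha_*$), so $\psi(x)=\alpha_*$; symmetrically $\psi(x)=\alpha^*$ for every $x\in(\malpha,1]$. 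This is precisely the assertion.

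The argument is short and, granting the fixed-point structure, essentially soft; the point that rewards attention is the sign bookkeeping for $g$ together with the verification that the monotone orbits stay trapped between consecutive fixed points, which is what forces convergence to the \emph{nearer} stable fixed point rather than to some fixed point further away. One could alternatively obtain the sign pattern $+,-,+,-$ directly from the fact that $\phi_d$ is convex then concave on $[0,1]$, with its single inflection point at $\alpha=1-(\log d)/d$ when $d>1$, which already bounds the number of fixed points by three; either route works and neither is delicate.
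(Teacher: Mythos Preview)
Your proof is correct and follows essentially the same approach as the paper's: both exploit that $\phi_d$ is strictly increasing to obtain monotone bounded orbits converging to fixed points, and then use the known fixed-point structure (one fixed point for $d\le\eul$, three for $d>\eul$) to determine the sign of $\phi_d-\mathrm{id}$ on each subinterval and thereby identify the limit. Your write-up is in fact a bit more careful than the paper's in justifying the sign pattern $+,-,+,-$ for $d>\eul$ via $g'(\alpha_*),g'(\alpha^*)<0$, where the paper simply asserts the monotonicity direction on the inner intervals.
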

	
	The fixed point characterisation of the maximisers of $\Phi_d$ enables us to show that the global maxima of~$\Phi_d$ occur precisely at $\alpha_*=\alpha_*(d),\alpha^*=\alpha^*(d)$, the smallest and the largest fixed points of $\phi_d$.
	
	\begin{proposition}\label{Prop_theta=1}
		\begin{enumerate}[(i)]
			\item\label{Prop_theta=1dle} If $d\leq\eul$ then $\phi_{d}$ has a unique fixed point, which is the unique global maximiser of~$\Phi_{d}$.
			\item\label{Prop_theta=1dge} If $d>\eul$ then the function $\phi_{d}$ has precisely two stable fixed points, namely $0<\alpha_*<\alpha^*<1$, and
			\begin{align*}
				\Phi_{d}(\alpha_*)=\Phi_{d}(\alpha^*)&>\Phi_{d}(\alpha)&&\mbox{ for all }\alpha\in[0,1]\setminus\{\alpha_*,\alpha^*\}.
			\end{align*}
			In addition, $\phi_d$ has its unique unstable fixed point at $\malpha$, which satisfies the equation
			\begin{align}\label{eqmalphaeq}
				1-\malpha=\exp(-d(1-\malpha)).
			\end{align}
		\end{enumerate}
	\end{proposition}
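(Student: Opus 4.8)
Here is how I would attack Proposition~\ref{Prop_theta=1}. The backbone is a single differential identity together with a reduction to the one-dimensional map $g(z)=\eul^{-dz}$. Writing $z=1-\alpha$, so that $\phi_d(\alpha)=1-g(g(z))$ and $\Phi_d(\alpha)=g(g(z))+(1+dz)g(z)-1$, a routine chain-rule computation using $g'=-dg$ collapses the $z$-derivative of $\Phi_d$ to $d^2 g(z)\bc{g(g(z))-z}$, which yields
\[
\Phi_d'(\alpha)=d^2\eul^{-d(1-\alpha)}\bc{\phi_d(\alpha)-\alpha}.
\]
Since the prefactor is strictly positive, the critical points of $\Phi_d$ are exactly the fixed points of $\phi_d$, and $\Phi_d$ strictly increases (resp.\ decreases) on intervals where $\phi_d(\alpha)>\alpha$ (resp.\ $<\alpha$). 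The proposition is thereby reduced to three tasks: count the fixed points of $\phi_d$; determine the sign pattern of $\phi_d-\mathrm{id}$ between them; and evaluate $\Phi_d$ at the fixed points.

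For the count I would pass to $g$. Because $\phi_d(\alpha)=1-g(g(1-\alpha))$, the involution $\alpha\mapsto1-\alpha$ identifies the fixed points of $\phi_d$ in $[0,1]$ with those of $h:=g\circ g$, i.e.\ with the fixed points and the $2$-cycles of $g$. Now $z\mapsto z-\eul^{-dz}$ is strictly increasing with a single zero in $(0,1)$, so $g$ has a unique fixed point $z_0\in(0,1)$; note $\malpha:=1-z_0$ then solves \eqref{eqmalphaeq}, and parametrising $d=\theta\eul^{\theta}$ with $\theta=dz_0=|g'(z_0)|$ shows $dz_0<1\iff d<\eul$. A genuine $2$-cycle $\{z_1,v_1\}$ of $g$ corresponds, on setting $a=dv_1$, $b=dz_1$, to the symmetric system $a=d\eul^{-b}$, $b=d\eul^{-a}$; introducing $\mu(x)=x-\log x$ (strictly decreasing on $(0,1]$, strictly increasing on $[1,\infty)$) one checks this system is equivalent to the pair of conditions $\mu(a)=\mu(b)$ and $a\eul^{b}=d$, and that $\mu(a)=\mu(b)$ with $a\ne b$ forces $a<1<b$. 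Letting $\beta(a)>1$ be defined by $\mu(\beta(a))=\mu(a)$ for $a\in(0,1)$, the $2$-cycles are in bijection with the solutions of $G(a):=a\,\eul^{\beta(a)}=d$, and I would show $G$ is a strictly decreasing bijection of $(0,1)$ onto $(\eul,\infty)$: the boundary limits $G\to\eul$ as $a\to1$ and $G\to\infty$ as $a\to0$ are elementary, while $(\log G)'(a)=1/a+\beta'(a)<0$ precisely because $a\beta(a)<1$, the latter being the elementary inequality $a-a^{-1}-2\log a<0$ on $(0,1)$ (its derivative is $(a-1)^2/a^2\ge0$ and it vanishes at $1$). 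Consequently, for $d\le\eul$ there is no $2$-cycle and $\phi_d$ has the single fixed point $1-z_0$ (so $\alpha_*=\alpha^*=\malpha$), while for $d>\eul$ there is exactly one $2$-cycle, hence exactly three fixed points $\alpha_*<\malpha<\alpha^*$ in $(0,1)$, the outer two being $1-z_1$ and $1-v_1$.

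The remaining assertions are then read off. For $d>\eul$, $\phi_d'=h'$ satisfies $h'(z_0)=(dz_0)^2>1$ and $h'(z_1)=h'(v_1)=d^2 z_1 v_1=ab<1$ (the very inequality that made $G$ monotone), so $\malpha$ is unstable and $\alpha_*,\alpha^*$ are stable --- precisely two stable fixed points. These derivative values also fix the sign pattern of $\phi_d-\mathrm{id}$: it is $>0$ at $0$, $<0$ at $1$, and crosses zero downward at $\alpha_*$, upward at $\malpha$, and downward at $\alpha^*$; so by the first displayed identity $\Phi_d$ increases on $[0,\alpha_*]$, decreases on $[\alpha_*,\malpha]$, increases on $[\malpha,\alpha^*]$, and decreases on $[\alpha^*,1]$. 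For $d\le\eul$, $\phi_d-\mathrm{id}$ has the single zero $\malpha$, is positive before it and negative after it, so $\Phi_d$ increases then decreases and $\malpha$ is its unique global maximiser --- this is part (i). For the balance in part (ii): at any fixed point $g(g(z))=z$, whence $\Phi_d(\alpha)=z+v+dzv-1$ with $z=1-\alpha$ and $v=\eul^{-dz}$, an expression symmetric in $(z,v)$; at $\alpha_*$ the pair $(z,v)$ equals $(z_1,v_1)$ and at $\alpha^*$ it equals $(v_1,z_1)$, so $\Phi_d(\alpha_*)=\Phi_d(\alpha^*)$, and combined with the monotonicity just established this common value strictly exceeds $\Phi_d(\alpha)$ for every $\alpha\in[0,1]\setminus\{\alpha_*,\alpha^*\}$.

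I expect the main obstacle to be the $2$-cycle count: collapsing the two transcendental relations $a=d\eul^{-b}$, $b=d\eul^{-a}$ to the single scalar equation $G(a)=d$ via $\mu$, and then verifying the monotonicity of $G$, which hinges on the one genuinely non-obvious elementary estimate $a\beta(a)<1$. The rest --- the derivative identity for $\Phi_d'$ and the $z\leftrightarrow v$ symmetry of $\Phi_d$ at fixed points --- is short once it has been spotted.
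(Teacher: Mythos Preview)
Your argument is correct and takes a genuinely different route from the paper. Both start from the same identity $\Phi_d'(\alpha)=d^2\eul^{-d(1-\alpha)}\bc{\phi_d(\alpha)-\alpha}$, but diverge thereafter. The paper counts fixed points of $\phi_d$ by showing that $\phi_d''$ changes sign at most once (so $\phi_d-\mathrm{id}$ has at most three zeros), separately locates the unstable fixed point as the root of $1-\alpha-\eul^{-d(1-\alpha)}$ via a direct analysis of $\Phi_d''$ at that point, and then verifies $\Phi_d(\alpha_*)=\Phi_d(\alpha^*)$ by an explicit computation using the pair of relations $1-\alpha^*=\eul^{-d(1-\alpha_*)}$, $1-\alpha_*=\eul^{-d(1-\alpha^*)}$. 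You instead recognise the fixed points of $\phi_d$ as precisely the fixed point and the $2$-cycle of the decreasing map $g(z)=\eul^{-dz}$; existence and uniqueness of a $2$-cycle for $d>\eul$ (and absence for $d\le\eul$) reduce to the monotonicity of $G(a)=a\eul^{\beta(a)}$, which hinges on the single inequality $a\beta(a)<1$. This dynamical viewpoint delivers the stability statements directly via $\phi_d'=d^2 z\,g(z)$ at fixed points (giving $(dz_0)^2>1$ at $\malpha$ and $ab<1$ at $\alpha_*,\alpha^*$), and makes the equality $\Phi_d(\alpha_*)=\Phi_d(\alpha^*)$ transparent, since at any fixed point $\Phi_d$ collapses to the $(z,v)$-symmetric expression $z+v+dzv-1$ with $(z,v)$ the $2$-cycle pair. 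Your approach is more conceptual and packages \eqref{eqmalphaeq}, the stability dichotomy, and the equal-heights identity into one structure; the paper's is more elementary calculus but assembles the conclusion from several independent sub-lemmas (a convexity bound, a dual-fixed-point observation, and an explicit cancellation).
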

	
	\noindent
	Although both the functions $\phi_d,\Phi_d$ are explicit, the proof of \Prop~\ref{Prop_theta=1}, which can be found in \Sec~\ref{Sec_Prop_theta=1},  turns out to be mildly involved.
	
	\subsection{Warning Propagation}\label{Sec_WP}
	One of our principal tools is an enhanced version of the Warning Propagation message passing algorithm that identifies variables as frozen, unfrozen or slush.
	Specifically, we will see that WP identifies about $\alpha_*n$ coordinates as positively frozen and another  $(1-\alpha^*)n$ as likely unfrozen \whp\
	Because \Prop~\ref{Prop_theta=1} shows that $\alpha_*=\alpha^*$ for $d<\eul$, this already nearly suffices to establish the first part of \Thm~\ref{Thm_square}.
	By contrast, in the case $d>\eul$, where $\alpha_*<\alpha^*$, we need to conduct a more detailed investigation of the $(\alpha^*-\alpha_*+o(1))n$ coordinates that WP declares as slush.
	
	To introduce WP, for a given $m\times n$ matrix $A$ over $\FF_2$ we represent the matrix by its bipartite {\em Tanner graph} $G(A)$.
	One of its vertex classes $V(A)=V(G(A))=\{v_1,\ldots,v_n\}$ represents the columns of $A$;
	we refer to the $v_i$ as the {\em variable nodes}.
	The second vertex class $C(A)=C(G(A))=\{a_1,\ldots,a_m\}$ represents the rows of~$A$; 
	we refer to them as {\em check nodes}.
	There is an edge present between $a_i$ and $v_j$ iff $A_{ij}=1$.
	Let $E(A)$ denote the edge set of $G(A)$.
	Moreover, let $\partial u$ signify the set of neighbours of vertex $u\in V(A)\cup C(A)$.
	Further, let $\cF(A)$ be the set of frozen coordinates $i\in[n]$, i.e., coordinates such that $x_i=0$ for all $x\in\ker A$.
	By abuse of notation we identify $\cF(A)$ with the corresponding set $\{v_i:i\in\cF(A)\}$ of variable nodes.
	Also let $f(A)=|\cF(A)|/n$ be the fraction of frozen coordinates.
	Conversely, for a given Tanner graph $G$ we denote by $A(G)$ the adjacency matrix induced by $G$.
	
	Our enhanced WP algorithm associates a pair of $\{\frozen,\slush,\unfrozen\}$-valued messages with every edge of $G(A)$.
	Hence, let $\cW(A)$ be the set of all vectors 
	\begin{align*}
		w&=(w_{v\to a},w_{a\to v})_{v\in V(A), a\in C(A):a\in\partial v}&&\mbox{with entries }w_{v\to a},w_{a\to v}\in\{\frozen,\slush,\unfrozen\}.
	\end{align*}
	We define the operator $\WP_A:\cW(A)\to\cW(A)$, $w\mapsto \opw$, encoding one round of the message updates, by letting
	(for adjacent $v \in V(A)$ and $a\in C(A)$)
	
		\begin{align}\label{eqWP1}
			\opw_{v\to a}&=\begin{cases}
				\unfrozen&\mbox{ if $w_{b\to v}=\unfrozen$ for all $b\in\partial v\setminus\cbc a$},\\
				\frozen&\mbox{ if $w_{b\to v}=\frozen$ for some $b\in\partial v\setminus\cbc a$},\\
				\slush&\mbox{ otherwise,}
			\end{cases}&
			\opw_{a\to v}&=\begin{cases}
				\frozen&\mbox{ if $w_{y\to a}=\frozen$ for all $y\in \partial a\setminus\{v\}$},\\
				\unfrozen&\mbox{ if $w_{y\to a}=\unfrozen$ for some $y\in\partial a\setminus\cbc v$},\\
				\slush&\mbox{ otherwise}
			\end{cases}
	\end{align}%
	as illustrated in Figure~\ref{fig2:WPrules}. 
	Further, let $w(A,t)=\WP_A^t(\slush,\ldots,\slush)$ comprise the messages that result after $t$ iterations of $\WP_A$ launched from the all-$\slush$ message vector $w(A,0)$.
	Additionally, let $w(A)=\lim_{t\to\infty}w(A,t)$ be the fixed point to which $\WP_A$ converges;
	the (pointwise) limit always exists because $\WP_A$ only updates an $\slush$-message to a $\unfrozen$-message or to an  $\frozen$-message,
	while $\unfrozen$-messages and $\frozen$-messages will never change again.

	\begin{figure}\centering 
		\begin{tikzpicture}[decoration={ 
				markings,
				mark=at position 0.5 with {\arrow{>}}}
			] 
			\tikzstyle{var}=[circle,thick,draw,minimum size=6mm]
			\tikzstyle{check}=[rectangle,thick,draw=black,minimum size=5mm]
			\tikzstyle{unfrozen} = [postaction={decorate}, thick,green!75!black]
			\tikzstyle{frozen} = [postaction={decorate}, thick,red!75!black]
			\tikzstyle{slush} = [postaction={decorate}, thick,blue!90!black]
			\draw (1,3)  node[var] (vp0) {$v$};
			\draw (1,1.5) node[check] (ap0) {$a$};
			\draw (0,0) node[var] (vp1) {};
			\draw (1,0) node[var] (vp2) {};
			\draw (2,0) node[var] (vp3) {};
			\draw[frozen] (vp1)--node[left]{\colf}(ap0);
			\draw[frozen] (vp2)--node[left]{\colf}(ap0);
			\draw[frozen] (vp3)--node[left]{\colf}(ap0);
			\draw[frozen] (ap0) -- node[left] {\textcolor{BrickRed}{$\frozen$}} (vp0);
			\draw (4,3)  node[var] (up0) {$v$};
			\draw (4,1.5) node[check] (bp0) {$a$};
			\draw (3,0) node[var] (up1) {};
			\draw (4,0) node[var] (up2) {};
			\draw (5,0) node[var] (up3) {};
			\draw[slush] (up1) --node[left]{\cols}(bp0);
			\draw[unfrozen] (up2) --node[left]{\colu}(bp0);
			\draw[frozen] (up3) --node[left]{\colf} (bp0);
			\draw[unfrozen] (bp0) -- node[left] {\textcolor{ForestGreen}{$\unfrozen$}} (up0);
			\draw (7,3)  node[var] (wp0) {$v$};
			\draw (7,1.5) node[check] (cp0) {$a$};
			\draw (6,0) node[var] (wp1) {};
			\draw (7,0) node[var] (wp2) {};
			\draw (8,0) node[var] (wp3) {}; 
			\draw[slush] (wp1)-- node[left]{\cols}(cp0) {};
			\draw[frozen] (wp2)-- node[left]{\colf}(cp0) {};
			\draw[frozen] (wp3) --node[left]{\colf} (cp0);
			\draw[slush] (cp0) --  node[left] {\textcolor{Blue}{$\slush$}}  (wp0);
			\draw (1,-1.5)  node[check] (a0) {$a$};
			\draw (1,-3) node[var] (v0) {$v$};
			\draw (0,-4.5) node[check] (a1) {};
			\draw (1,-4.5) node[check] (a2) {};
			\draw (2,-4.5) node[check] (a3) {};
			\draw[unfrozen] (a1) -- node[left] {\colu} (v0) ; 
			\draw[unfrozen] (a2) -- node[left] {\colu} (v0);
			\draw[unfrozen] (a3) -- node[left] {\colu}(v0);
			\draw[unfrozen] (v0) -- node[left] {\colu} (a0);
			\draw (3,-4.5) node[check]  (b1) {};
			\draw (4,-4.5) node[check]  (b2) {};
			\draw (5,-4.5) node[check]  (b3) {};
			\draw (4,-3) node[var]  (u0) {$v$};
			\draw (4,-1.5) node[check]  (b0) {$a$};
			\draw[frozen] (b1) --node[left]{\colf} (u0);
			\draw[slush] (b2) -- node[left]{\cols} (u0);
			\draw[slush] (b3) -- node[left]{\cols} (u0);
			\draw[frozen] (u0) -- node[left] {\textcolor{BrickRed}{$\frozen$}} (b0);
			\draw (6,-4.5) node[check]  (c1) {};
			\draw (7,-4.5) node[check]  (c2) {};
			\draw (8,-4.5) node[check]  (c3) {};
			\draw (7,-3) node[var]  (w0) {$v$};
			\draw (7,-1.5) node[check]  (c0) {$a$};
			\draw[unfrozen] (c1) -- node[left]{\colu}(w0);
			\draw[slush] (c2)--node[left]{\cols} (w0);
			\draw[unfrozen] (c3)--node[left]{\colu}(w0);
			\draw[slush] (w0) -- node[left] {\textcolor{Blue}{$\slush$}} (c0);
		\end{tikzpicture}
		\caption{A local snapshot of the Warning Propagation rules. The check and variable nodes are represented by squares and circles respectively. } \label{fig2:WPrules}
	\end{figure}
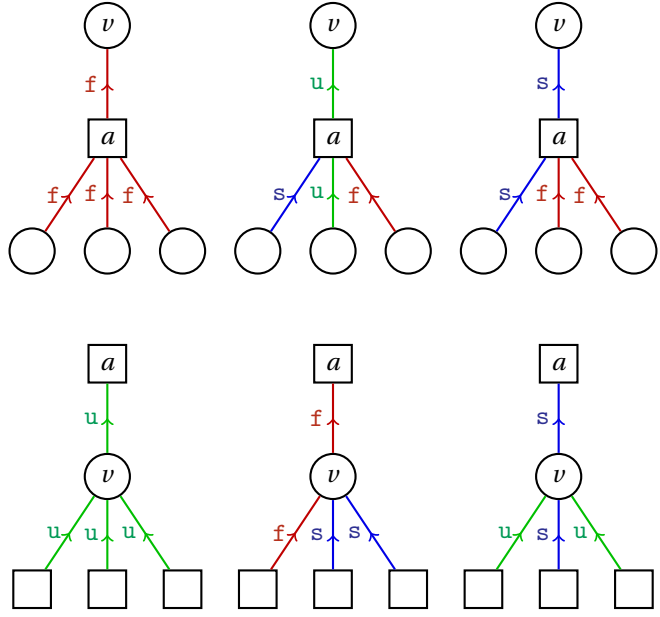
	
	What is the combinatorial idea behind WP?
	The intended semantics of the messages is that $\frozen$ stands for `frozen', $\unfrozen$ for `unfrozen' and $\slush$ for `slush'.
	Since we launch from all-$\slush$ messages, \eqref{eqWP1} shows that in the first round $\frozen$-messages only emanate from check nodes of degree one, where the `for all'-condition on the left of \eqref{eqWP1} is empty and therefore trivially satisfied.
	Hence, if a check node $a_i$ is adjacent to $v_j\in V(A)$ only, then $w_{a_i\to v_j}(A,1)=\frozen$.
	This message reflects that the $i$-th row of $A$, having only one single non-zero entry, fixes the $j$-th entry of every vector of $\ker A$ to zero.
	Further, turning to the updates of the variable-to-check messages, if $w_{a_i\to v_j}(A,1)=\frozen$, then $v_j$ signals its being forced to zero by passing to all its other neighbours $a_h\neq a_i$ the message  $w_{v_j\to a_h}(A,1)=\frozen$.
	Now suppose that check $a_i$ is adjacent to $v_h$ and  $w_{v_k\to a_i}(A,1)=\frozen$ for all $v_k\in\partial a_i\setminus\{v_h\}$.
	Thus, the $k$-th coordinate of every vector in $\ker A$ equals zero for all neighbours $v_k\neq v_h$ of $a_i$.
	Then the only way to satisfy the $i$-th row of $A$ is by setting the $h$-th coordinate to zero as well.
	Accordingly, \eqref{eqWP1} provides that $w_{a_i\to v_h}(A,2)=\frozen$, and so on.
	Hence, defining
	\begin{align}\label{eqLemma_WP1}
		\Vp(A)&=\cbc{v\in V(A):\exists a\in\partial v:w_{a\to v}(A)=\frozen},
		&\mbox{we see that}&&
		\Vp(A)\subset\cF(A).
	\end{align}
	
	The mechanics of the $\unfrozen$-messages is similar.
	In the first round any variable node $v_j$ of degree one, for which the `for all' condition on the right of \eqref{eqWP1} is trivially satisfied, starts to send out $\unfrozen$-messages.
	Subsequently, any check node $a_i$ with an adjacent variable $v_j$ of degree one will send a message $w_{a_i\to v_k}(A,2)=\unfrozen$ to all its other neighbours $v_k\neq v_j$.
	Further, if a variable node $v_j$ adjacent to a check $a_i$ receives $\unfrozen$-messages from all its other neighbours $a_h\neq a_i$, then $v_j$ sends a $\unfrozen$-message to $a_i$.
	Consequently, WP deems the variables
	\begin{equation}\label{eqLemma_WP2}
		\Vu(A)=\cbc{v\in V(A):\forall a\in\partial v:w_{a\to v}(A)=\unfrozen}
	\end{equation}
	unfrozen.
	But while \eqref{eqLemma_WP1} shows that WP's designation of the variables in the set $\Vp(A)$ as frozen is deterministically correct, matters are more subtle when it comes to the set $\Vu(A)$.
	For example, short cycles might lead  WP to include a variable in the set $\Vu(A)$ that is actually frozen.
	Yet the following lemma shows that on the random matrix $\vA$ such misclassifications are rare.
	
	\begin{proposition}\label{claim_littleintersection}
		For any $d>0$ we have	$ \vert \cF(\vA)\cap \Vu(\vA)  \vert=o(n)$ \whp\ 
	\end{proposition}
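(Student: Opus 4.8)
The plan is to use the local-weak-convergence picture outlined in the introduction and argue that a variable $v$ that WP declares unfrozen (i.e.\ $v\in\Vu(\vA)$) is, with high probability over the choice of $v$, genuinely unfrozen, i.e.\ lies outside $\cF(\vA)$. Since $\Erw|\Vu(\vA)\cap\cF(\vA)|=n\cdot\pr[v_I\in\Vu(\vA)\cap\cF(\vA)]$ for $I$ uniform in $[n]$, it suffices to show $\pr[v_I\in\Vu(\vA),\ v_I\in\cF(\vA)]=o(1)$; Markov's inequality then yields the $o(n)$ bound w.h.p. First I would recall the deterministic inclusion $\Vp(\vA)\subseteq\cF(\vA)$ from \eqref{eqLemma_WP1} and the obvious converse-flavoured fact that if $v$ is frozen then \emph{every} kernel vector vanishes at $v$; the obstruction to $v\in\Vu(\vA)$ being frozen is therefore a genuinely global one (a long-range dependency not visible to WP after finitely many rounds).

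The key step is a \emph{pinning / perturbation} argument exactly of the flavour advertised after the statement of \Prop~\ref{Prop_sym}. Fix a large constant $t$ and work with the $t$-round WP messages $w(\vA,t)$, which are determined by the depth-$2t$ neighbourhood of $v_I$ in $G(\vA)$; this neighbourhood is, with probability $1-o(1)$, a $\Po(d)$ Galton--Watson tree, and on a tree WP is exact, so on this event $v_I\in\Vu(\vA)$ can be read off locally and, were the graph actually a tree, $v_I$ would genuinely be unfrozen. The point is to transfer this to the actual random graph. Here I would invoke the robustness of the kernel under a bounded number of local pinning operations: adding $O(1)$ unary checks that freeze a bounded set of variables changes $f(\vA)$ by $o(1)$ and, more importantly, does not create long-range freezing of a uniformly random further variable (this is precisely the content of the pinning lemmas behind \Prop~\ref{Prop_sym} and the thimblerig argument). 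Concretely, one shows that conditioning on $v_I\in\Vu(\vA)$ and on the local tree structure, the probability that $v_I$ is nonetheless frozen is bounded by the probability that pinning the boundary of the depth-$2t$ ball forces $v_I$ to zero, and the latter tends to $0$ as first $n\to\infty$ and then $t\to\infty$ because $\phi_d^{\circ t}$ applied to the initial unfrozen fraction converges (by \Lem~\ref{Lemma_contract}) and the tree recursion shows the ``escape probability'' from the root decays.

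I would then assemble this into a two-parameter limit: for every $\eps>0$ there is $t=t(\eps)$ and $n_0=n_0(\eps,t)$ such that for $n\ge n_0$, $\pr[v_I\in\Vu(\vA)\cap\cF(\vA)]<\eps$, whence $\Erw|\Vu(\vA)\cap\cF(\vA)|<\eps n$ and Markov gives $|\Vu(\vA)\cap\cF(\vA)|<\sqrt\eps\,n$ with probability $1-\sqrt\eps$; since $\eps$ is arbitrary this is the claimed w.h.p.\ statement. The main obstacle is the transfer step from the Galton--Watson tree to the genuine random graph: a priori a rare global event (a ``flipper''-like subgraph forcing the root to freeze despite all-unfrozen incoming WP messages) could inflate the frozen count, and ruling this out is exactly where one must import the cut-metric / replica-symmetry machinery and the pinning argument rather than a naive union bound. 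I expect the bookkeeping of the order of quantifiers ($n\to\infty$ before $t\to\infty$) and the precise form of the pinning lemma to be the delicate points; the rest is the routine local-limit computation that the Poisson tree recursion is $\phi_d$.
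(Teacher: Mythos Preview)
Your overall framework (local tree structure, Markov, two-parameter limit in $t$ then $n$) is sound, but you misidentify the ``main obstacle'' and thereby make the argument far heavier than necessary. The transfer from the local tree to the actual graph requires \emph{no} global machinery whatsoever---no pinning, no cut metric, no replica symmetry. The paper's proof is a short deterministic construction that you are overlooking.

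The key observation is this: if $v_0\in\Vu(A,t_0)$ and the depth-$t_0$ ball around $v_0$ is acyclic, then $v_0\notin\cF(A)$ \emph{deterministically}. One builds an explicit witness $x\in\ker A$ with $x_{v_0}=1$ by tracing the $\unfrozen$-messages backwards. Put all of $\partial v_0$ into a subtree $T$; each such check $a$ sends $\unfrozen$ to $v_0$ at time $t_0$, hence received $\unfrozen$ from some $y\in\partial a\setminus\{v_0\}$ at time $t_0-1$; add one such $y$ to $T$ and recurse. In the resulting $T$, every variable node has \emph{all} of its $G(A)$-neighbours in $T$ (we always branch on all of them), while every check node has exactly two $T$-neighbours (parent and one chosen child). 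The leaves, sending $\unfrozen$ at time $1$, are degree-one variables. Setting $x_v=1$ for $v\in T$ and $x_v=0$ otherwise gives a kernel vector: checks in $T$ see exactly two ones, and no check outside $T$ touches any $T$-variable because $T$-variables have all neighbours in $T$.

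Granting this claim, the proposition is immediate: $\Vu(A,t_0)\cap\cF(A)\subset\cB_{t_0}$, the set of variables whose $t_0$-ball contains a cycle, and $|\cB_{t_0}|=o(n)$ \whp\ by a first-moment count. Then $|\Vu(\vA)\triangle\Vu(\vA,t_0)|=o(n)$ for $t_0$ large by the WP stabilisation lemma. Your proposed pinning route might in principle be salvageable, but the step ``pinning the boundary of the depth-$2t$ ball forces $v_I$ to zero with vanishing probability'' is not actually argued, and in any case it is an unnecessary detour around a two-line combinatorial fact.
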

	
	\noindent
	Further, tracing WP on the random graph $G(\vA)$, we will establish the following bounds.
	
	\begin{proposition}\label{Prop_frozen}
		For any $d>0$ we have $|V_\frozen(\vA)|/n\geq\alpha_*+o(1)$ and  $|V_\unfrozen(\vA)|/n\geq 1-\alpha^*+o(1)$\whp{}
	\end{proposition}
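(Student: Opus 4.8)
\noindent\emph{Proof proposal for \Prop~\ref{Prop_frozen}.}
The plan is to run $\WP_{\vA}$ for a bounded number $t$ of rounds, read off the fraction of variables that have picked up an $\frozen$- (resp.\ $\unfrozen$-) message by round $t$, and then let $t\to\infty$. Writing
\[
\Vf^{(t)}(\vA)=\cbc{v\in V(\vA):\exists a\in\partial v,\ w_{a\to v}(\vA,t)=\frozen},\qquad
\Vu^{(t)}(\vA)=\cbc{v\in V(\vA):\forall a\in\partial v,\ w_{a\to v}(\vA,t)=\unfrozen},
\]
we have $\Vf^{(t)}(\vA)\subseteq\Vf(\vA)$ and $\Vu^{(t)}(\vA)\subseteq\Vu(\vA)$ for every $t$, since $\WP$ never overwrites an $\frozen$- or $\unfrozen$-message. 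Hence it suffices to prove that for each fixed $t$ the fractions $|\Vf^{(t)}(\vA)|/n$ and $|\Vu^{(t)}(\vA)|/n$ converge in probability to $\phi_d^{\circ t}(0)$ and $1-\phi_d^{\circ t}(1)$, respectively. Indeed, $\phi_d$ is increasing with $0<\phi_d(0)$ and $\phi_d(1)<1$, so $\phi_d^{\circ t}(0)$ increases to $\alpha_*$ and $1-\phi_d^{\circ t}(1)$ increases to $1-\alpha^*$ by \Lem~\ref{Lemma_contract} (the starting points $0$ and $1$ lie in $[0,\malpha)$ and $(\malpha,1]$ because $\malpha\in(0,1)$), and then taking $t$ large followed by $n\to\infty$ gives the two bounds.

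\noindent
The second step is the local computation. For fixed $t$, the event $v_i\in\Vf^{(t)}(\vA)$ (and likewise $v_i\in\Vu^{(t)}(\vA)$) depends only on the subgraph of $G(\vA)$ induced on the ball of radius $2t+2$ around $v_i$, since by \eqref{eqWP1} the value $w_{a\to v}(\cdot,t)$ is a function of the messages carried by edges within distance $2t+1$ of $v$. As $G(\vA)$ is the binomial random bipartite graph $G(n,n,d/n)$, the law of this ball converges to that of the depth-$(2t+2)$ neighbourhood of the root in the two-type $\Po(d)$ Galton--Watson tree, which is a tree. On the tree the update rules collapse to a one-dimensional recursion: if $p_t$ (resp.\ $q_t$) is the probability that a non-root variable (resp.\ check) forwards an $\frozen$-message to its parent after $t$ rounds, then the Poisson offspring law yields $q_t=\exp(-d(1-p_{t-1}))$ and $p_t=1-\exp(-dq_t)$, whence $p_t=\phi_d^{\circ t}(0)$; symmetrically, for $\unfrozen$-messages $q_t'=1-\exp(-dp_{t-1}')$ and $p_t'=\exp(-d(1-q_t'))$, so $1-p_t'=\phi_d^{\circ t}(1)$. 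The root is declared $\frozen$ at time $t$ with probability $1-\exp(-dq_t)=p_t$ and $\unfrozen$ with probability $\exp(-d(1-q_t'))=p_t'$. This gives $\Erw|\Vf^{(t)}(\vA)|/n\to\phi_d^{\circ t}(0)$ and $\Erw|\Vu^{(t)}(\vA)|/n\to1-\phi_d^{\circ t}(1)$.

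\noindent
The final step upgrades convergence of expectations to convergence in probability via a second-moment estimate: two independent uniformly random variable nodes have vertex-disjoint radius-$(2t+2)$ balls with probability $1-O(1/n)$ (each ball having $O_{d,t}(1)$ vertices in expectation), and on that event the indicators $\vecone\{v_i\in\Vf^{(t)}(\vA)\}$ are independent, so $\Var(|\Vf^{(t)}(\vA)|/n)=o(1)$ and Chebyshev applies (and likewise for $\Vu^{(t)}$). I expect this variance bound to be the only genuinely technical point: one must absorb the contribution of the rare event that a radius-$O(t)$ neighbourhood is atypically large or contains a short cycle, which is routine for sparse random bipartite graphs but requires a truncation argument; everything else is the explicit $\phi_d$-recursion above together with \Lem~\ref{Lemma_contract}. (Should one want the exact limits $|\Vf(\vA)|/n\to\alpha_*$, $|\Vu(\vA)|/n\to1-\alpha^*$ rather than the lower bounds, one could instead analyse via Wormald's method the peeling process that iteratively freezes variables in unary checks and deletes them; but the finite-$t$ truncation is all \Prop~\ref{Prop_frozen} needs.)
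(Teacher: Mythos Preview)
Your argument is correct and complete; the monotonicity $\Vf^{(t)}\subseteq\Vf$, $\Vu^{(t)}\subseteq\Vu$ together with local weak convergence to the $\Po(d)$ tree and the explicit recursion $p_t=\phi_d(p_{t-1})$ (resp.\ $1-p_t'=\phi_d(1-p_{t-1}')$) is exactly the right elementary route to the stated lower bounds, and the second-moment concentration is indeed routine.

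The paper, however, proceeds differently. Rather than truncating at finite $t$, it first establishes \Lem~\ref{lem:locallimit}, which describes the local structure of $G(\vA)$ decorated with the \emph{final} WP messages $w(\vA)$; this rests on the black-box result from~\cite{CCKLR} (via \Lem s~\ref{lem:stablelimit} and~\ref{lem:fewfurtherchanges}) that $\vec q_*$ is the stable WP limit of the all-$\slush$ initialisation and that WP stabilises on all but $o(n)$ edges after $O(1)$ rounds. Given \Lem~\ref{lem:locallimit}, the proof of \Prop~\ref{Prop_frozen} is a one-line computation of $\pr[\Po(d(1-\alpha^*))\geq1]=\alpha_*$, and in fact yields the exact limits $|\Vf(\vA)|/n\to\alpha_*$, $|\Vu(\vA)|/n\to1-\alpha^*$ rather than just lower bounds. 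Your approach is more self-contained and avoids~\cite{CCKLR} altogether, at the cost of only giving the inequalities (which is all the proposition claims). The paper's route is not wasted effort, though: the full \Lem~\ref{lem:locallimit} is needed anyway for \Prop~\ref{Prop_slush}, where the degree distribution of $G_\slush(\vA)$ is read off from the limiting messaged tree $\cT$, and no finite-$t$ truncation would suffice there.
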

	
	\noindent
	The proofs of \Prop~\ref{claim_littleintersection} and \Prop~\ref{Prop_frozen} can be found in \Sec~\ref{Sec_local}.
	
	\Prop s~\ref{claim_littleintersection} and~\ref{Prop_frozen} confine the number of frozen coordinates to the (scaled) interval between the two stable fixed points, $[\alpha_*n+o(n),\alpha^*n+o(n)]$.
	In particular, the first part of \Thm~\ref{Thm_square}, covering the regime $d<\eul$, is an immediate consequence of \Prop s~\ref{Prop_theta=1}, \ref{claim_littleintersection} and~\ref{Prop_frozen}.
	
	The case $d>\eul$ is not quite so simple since $\alpha_*<\alpha^*$ for $d>\eul$ by \Prop~\ref{Prop_theta=1}.
	Hence, \Prop~\ref{Prop_frozen} merely confines $f(\vAnp)$ to the interval $[ \alpha_*+o(1),\alpha^*+o(1) ]$.
	As we saw in \Sec~\ref{Sec_intro_outline}, a vital step is to prove that $f(\vAnp)$ is actually close to one of the boundary points $\alpha_*,\alpha^*$ \whp{}
	To prove this statement we need to take a closer look at the minor induced by the variables that are neither identified as frozen nor unfrozen, i.e., the variables in the slush.
	
	\subsection{The slush}\label{Sec_intro_slush}
	To this end we need to take a closer look at the inconclusive $\slush$-messages.
	Indeed, the $\slush$-messages naturally induce a minor $\vA_\slush{}$ of $\vA$.
	Generally, for a given matrix $A$ let
	\begin{align}\label{eqSlushIntro1}
		V_{\slush}(A)&=\cbc{v\in V(A):\bc{\forall a\in\partial v:w_{a\to v}(A)\neq\frozen},
			\abs{\cbc{a\in \partial v:w_{a\to v}(A)=\slush}}\geq2},\\
		C_{\slush}(A)&=\cbc{a\in C(A):\bc{\forall v\in\partial a:w_{v\to a}(A)\neq\unfrozen},
			\abs{\cbc{v\in\partial a:w_{v\to a}(A)=\slush}}\geq2}.\label{eqSlushIntro2}
	\end{align}
	Hence, none of the variable nodes in $V_{\slush}(A)$ receive any $\frozen$-messages, but each receives at least two $\slush$-messages.
	Analogously, the check nodes in $C_{\slush}(A)$ do not receive $\unfrozen$-messages but get at least two $\slush$-messages.
	Let $G_\slush(A)$ be the subgraph of $G(A)$ induced on $V_{\slush}(A)\cup C_\slush{}(A)$.
	Moreover, let $A_\slush{}$ be the minor of $A$ comprising the rows and columns whose corresponding variable or check nodes belong to $V_{\slush}(A)$ and $C_{\slush}(A)$, respectively.
	We observe that $G_\slush(A)$ admits an alternative construction that resembles the construction of the 2-core of a random hypergraph.
	Indeed, $G_\slush(A)$ results from $G(A)$ by repeating the following peeling operation:
	\begin{align}\label{eqslushpeel}
		\parbox{14cm}{while there is a variable or check node of degree at most one, remove that node along with its neighbour (if any).}
	\end{align}
	
	Formally, we remove all appropriate nodes and their neighbours simultaneously, although it is an elementary exercise
		to check that, if we only make such removals one-by-one, the order makes no difference to the end result.
	To determine the size and the degree distribution of $G_\slush(\vA)$ we employ a general result about
	WP-like message passing algorithms from~\cite{CLR_WP},
	which we will use in Section~\ref{sec:Prop_frozen} to prove the following result.
	
	\begin{proposition}\label{Prop_slush}
		Define
		\begin{align}\label{eqlambdanu}
			\lambda=\lambda(d)&=d(\alpha^*-\alpha_*),&\nu=\nu(d)=\exp(-d\alpha_*)-\exp(-d\alpha^*)(1+d(\alpha^*-\alpha_*)).
		\end{align}
		For any $d>\eul$ we have $\nu >0$ and
		\begin{align}\label{eqProp_slush1}
			\lim_{n\to\infty}|V_\slush\bc{\vA}|/n=\lim_{n\to\infty}|C_\slush\bc{\vA}|/n=\nu&&\mbox{in probability}.
		\end{align}
		Moreover, for any integer $\ell\geq2$ we have, in probability,
		\begin{align}\label{eqProp_slush2}
			\lim_{n\to\infty}\frac1n\sum_{x\in V_\slush{}(\vA)}\vecone\cbc{|\partial x\cap C_\slush{}(\vA)|=\ell}
			=\lim_{n\to\infty}\frac1n\sum_{a\in C_\slush{}(\vA)}\vecone\cbc{|\partial a\cap V_\slush{}(\vA)|=\ell}=\pr\brk{\Po_{\geq2}(\lambda)=\ell}.
		\end{align}
	\end{proposition}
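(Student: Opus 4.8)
The plan is to derive \Prop~\ref{Prop_slush} from the general transfer theorem for Warning-Propagation-type message passing processes on random factor graphs proved in~\cite{CCKLR}. The operator $\WP_{\vA}$ of~\eqref{eqWP1}, started from the all-$\slush$ vector, has exactly the monotone structure that result demands: an $\slush$-message can only ever be updated to an $\frozen$- or an $\unfrozen$-message and both of those are absorbing, so $w(\vA)=\lim_t\WP_{\vA}^t(\slush,\ldots,\slush)$ is a genuine fixed point. Since $G(\vA)\sim G(n,n,d/n)$ converges locally weakly to the $\Po(d)$-Galton--Watson tree, the cited result yields that the empirical distribution of the pair (bounded-radius neighbourhood, incident fixed-point messages) at a uniformly random variable node --- and, separately, at a uniformly random check node --- converges in probability to the corresponding law on the tree. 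As membership in $V_\slush(\vA)$ and $C_\slush(\vA)$ and the degree statistics of~\eqref{eqProp_slush2} are measurable functions of such local data, this at once supplies the concentration claimed in~\eqref{eqProp_slush1}--\eqref{eqProp_slush2} and reduces the identification of $\nu,\lambda$ to a computation on the tree.

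On the tree this is the usual density evolution. Writing $q_\bullet^{(t)}$ and $\hat q_\bullet^{(t)}$ for the probabilities that a variable-to-check, resp.\ a check-to-variable, message carries each of the three values $\frozen,\slush,\unfrozen$ after $t$ rounds of $\WP$ from the all-$\slush$ start, unwinding~\eqref{eqWP1} over the independent $\Po(d)$ subtrees gives $\hat q_\frozen^{(t+1)}=\exp(-d(1-q_\frozen^{(t)}))$ and $q_\frozen^{(t+1)}=1-\exp(-d\hat q_\frozen^{(t+1)})$, hence $q_\frozen^{(t)}=\phi_d^{\circ t}(0)$, and symmetrically $1-q_\unfrozen^{(t)}=\phi_d^{\circ t}(1)$. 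By \Lem~\ref{Lemma_contract} the limits are $q_\frozen\to\alpha_*$ and $q_\unfrozen\to1-\alpha^*$, whence $q_\slush\to\alpha^*-\alpha_*$. The check-side limits follow; they may also be obtained more transparently from the observation that the pair (model $\vA$, operator $\WP_{\vA}$) is invariant under transposing $\vA$ once one simultaneously exchanges the roles of variables and checks and of $\frozen$ and $\unfrozen$, so that --- because $\vA$ and $\vA^\top$ have the same law --- $\hat q_\frozen=q_\unfrozen$, $\hat q_\unfrozen=q_\frozen$, and in particular $\hat q_\slush=q_\slush=\alpha^*-\alpha_*$.

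Now a root variable of the tree lies in $V_\slush$ exactly when it receives no $\frozen$-message and at least two $\slush$-messages; as the numbers of incident messages of the three types are independent Poisson variables with means $d\hat q_\bullet$, this probability equals $\exp(-d\hat q_\frozen)\bc{1-\exp(-d\hat q_\slush)(1+d\hat q_\slush)}$, and feeding in the fixed-point equation of $\phi_d$ together with the relations among $\alpha_*,\alpha^*$ furnished by \Prop~\ref{Prop_theta=1} reduces this to the expression for $\nu$ in~\eqref{eqlambdanu}, with $\lambda=d\hat q_\slush=d(\alpha^*-\alpha_*)$. The transpose-symmetric computation at a check node gives $\lim_n|C_\slush(\vA)|/n=\nu$ likewise. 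For~\eqref{eqProp_slush2}, note that when $v\in V_\slush(\vA)$ the rules~\eqref{eqWP1} force $\{a\in\partial v:w_{a\to v}(\vA)=\slush\}=\partial v\cap C_\slush(\vA)$, so the degree of a slush-variable in $G_\slush(\vA)$ is precisely its number of incident $\slush$-messages; conditioning on $v\in V_\slush$ leaves that number distributed as $\Po(\lambda)$ conditioned on being at least $2$, which is the assertion (the check-node version being symmetric). Finally, rewriting $\nu=\exp(-d\alpha^*)\bc{\exp(\lambda)-1-\lambda}$ and using that $\lambda=d(\alpha^*-\alpha_*)>0$ for $d>\eul$ by \Prop~\ref{Prop_theta=1} while $\exp(x)>1+x$ for every $x>0$, we get $\nu>0$.

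The step I expect to be the real obstacle is the first one: one has to verify that our enhanced, three-valued $\WP$ genuinely satisfies the hypotheses of the transfer theorem of~\cite{CCKLR} --- in particular that interchanging the fixed-point limit $t\to\infty$ with the local-weak limit $n\to\infty$ is legitimate here --- and that the conclusion~\eqref{eqProp_slush2}, which is about the induced subgraph $G_\slush(\vA)$ rather than about messages per se, really does fall out of the message-level convergence. The identity $\{a\in\partial v:w_{a\to v}(\vA)=\slush\}=\partial v\cap C_\slush(\vA)$ for $v\in V_\slush(\vA)$ is elementary from~\eqref{eqWP1} but must be checked; beyond that what remains is bookkeeping plus the calculus needed to bring the product of Poisson probabilities into the compact closed form in~\eqref{eqlambdanu}.
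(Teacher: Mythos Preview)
Your approach mirrors the paper's: invoke the WP-convergence theorem of~\cite{CCKLR}, carry out the density evolution on the $\Po(d)$ tree to pin down the limiting message law, and read off $\Pr[\text{root}\in V_\slush]$ together with the conditional degree distribution. The obstacle you flag is precisely what the paper spells out: \Lem~\ref{lem:stablelimit} verifies that $\vec q_*$ is the \emph{stable} limit of the all-$\slush$ start --- a local contraction, obtained from $\phi_d'(\alpha_*),\phi_d'(\alpha^*)<1$ --- which is the hypothesis~\cite{CCKLR} actually needs. Your monotonicity observation gives pointwise convergence of $w(\vA,t)$ on each instance but not this contraction, so that lemma is the missing ingredient; the identity $\{a\in\partial v:w_{a\to v}(\vA)=\slush\}=\partial v\cap C_\slush(\vA)$ for $v\in V_\slush(\vA)$ is indeed elementary from~\eqref{eqWP1} and is exactly what the paper uses.

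One caveat on the closing algebra. With $\hat q_\frozen=1-\alpha^*$ your expression becomes $\exp\bc{-d(1-\alpha^*)}\bc{1-e^{-\lambda}(1+\lambda)}$, and~\eqref{eqPreLambert} turns the prefactor into $1-\alpha_*$, not $\exp(-d\alpha_*)$; these coincide only if $\alpha_*+\alpha^*=1$, which fails (near $d=\eul$ both fixed points are close to $1-1/\eul$). The paper's own computation writes $\Pr[\Po(d\alpha_*)=0]$ for the no-$\frozen$ event, whereas by \Def~\ref{Def_T} the $\frozen$-mass of a message into the root is $1-\alpha^*$, so the displayed formula for $\nu$ appears to carry the same slip. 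Downstream only $\nu>0$ and the $\Po_{\ge2}(\lambda)$ degree law are used, so this is harmless --- but be aware that your ``reduces to the expression for $\nu$ in~\eqref{eqlambdanu}'' will not go through verbatim.
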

	
	Based on what we have learned about Warning Propagation, we are now in a position to establish items {\bf FIX} and {\bf STAB} from the outline from \Sec~\ref{Sec_intro_outline}.
	
	\begin{proposition}\label{lem:3fp}
		For all $d\in(\eul,\infty)$ we have $\displaystyle\lim_{n\to\infty}\Erw\brk{\abs{f(\vAnp)-\alpha_*}\wedge\abs{f(\vAnp)-\malpha}\wedge\abs{f(\vAnp)-\alpha^*}}=0.$
	\end{proposition}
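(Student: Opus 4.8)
The plan is to establish the proposition via the ``thimblerig'' perturbation argument sketched in Section~\ref{Sec_intro_outline}, combined with the local tree structure of $G(\vA)$. Concretely, fix $d>\eul$ and set $\alpha=\Erw[f(\vAnp)]$ (or work along a subsequence so that $f(\vAnp)\to\alpha$ in probability; by Proposition~\ref{Prop_frozen} any subsequential limit lies in $[\alpha_*,\alpha^*]$, and since $[0,1]$ is compact it suffices to show every such limit is one of $\alpha_*,\malpha,\alpha^*$). The goal is to show $\alpha$ must satisfy $\alpha=\phi_d(\alpha)$, since by Proposition~\ref{Prop_theta=1} the only fixed points of $\phi_d$ in $[0,1]$ are exactly $\alpha_*,\malpha,\alpha^*$, whence the claim. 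To prove $\alpha=\phi_d(\alpha)$ we compute, in two different ways, the probability that the root of a freshly attached height-two Poisson tree is frozen.

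First I would set up the perturbation: starting from $\vA$ (equivalently $G(\vA)$), attach $k=k(n)$ with $1\ll k\ll\sqrt n$ independent height-two $\Po(d)$ branching trees, embedding each root and its direct check-children onto fresh isolated nodes while embedding the second-generation variable leaves onto uniformly random existing variable nodes of $G(\vA)$. Call the result $\vA'$. The two key claims are: (a) $\vA'$ is contiguous with (indeed, statistically indistinguishable from) $\vA$ itself, up to the placement of $o(\sqrt n)$ extra rows/columns, so that $f(\vA')=f(\vA)+o(1)$ in probability and in particular the fraction of the new roots that are frozen converges to $\alpha$; and (b) the probability that a fixed second-generation leaf is frozen in $\vA'$ is $\alpha+o(1)$, and distinct leaves freeze almost independently. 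Claim (a) is the heart of the matter and is exactly the robustness statement alluded to around Proposition~\ref{Prop_sym}: adding $o(\sqrt n)$ unary-type perturbations cannot shift $f$ by more than $o(1)$, which is proved by a pinning argument using replica symmetry and the cut metric for discrete measures. Claim (b) follows because a uniformly random variable node of $G(\vA)$ is frozen with probability $f(\vA)+o(1)=\alpha+o(1)$ and, since the perturbation sites are chosen independently and $G(\vA)$ has locally tree-like structure with negligible pairwise correlations among the frozen-indicators of random vertices (a consequence of the cut-metric/replica-symmetry control), the joint distribution of the leaf-freezing events factorizes up to $o(1)$ error.

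Granting (a) and (b), I would finish by the algebraic local computation. A root $v$ of an added tree is frozen in $\vA'$ iff some check-child $a$ of $v$ has all of its other children frozen (this is the exact ``uniquely extendable'' mechanism, deterministically correct here since the added tree is a genuine pendant subtree and there are no competing constraints on $v$ or its check-children). The number of check-children of $v$ is $\Po(d)$, each such check independently has $\Po(d)$ further leaves, and each leaf is frozen independently with probability $\alpha+o(1)$ by (b). Hence the probability that a given check has all its leaf-children frozen is $\sum_{j\geq0}\Pr[\Po(d)=j]\,\alpha^j=\exp(-d(1-\alpha))+o(1)$, and the probability that at least one of the $\Po(d)$ check-children of $v$ has this property is $1-\exp\bigl(-d\exp(-d(1-\alpha))\bigr)+o(1)=\phi_d(\alpha)+o(1)$. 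Averaging the indicator that the root is frozen over the $k$ added trees, claim (a) gives $\alpha=\phi_d(\alpha)$, completing the argument.

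I expect claim (a), the robustness of $f(\vA)$ under the perturbation, to be the main obstacle. A priori, inserting even a single unary check could trigger a long-range avalanche that re-freezes a linear number of variables (this is precisely the phenomenon that makes $f(\vA)$ non-concentrated across the probability space), so one cannot argue robustness naively. The resolution is that we are adding $o(\sqrt n)$ perturbations to a \emph{fixed} instance and measuring the \emph{in-probability} effect: the pinning lemma guarantees that, for a typical $\vA$, conditioning on $o(\sqrt n)$ extra coordinate values perturbs the kernel distribution by $o(1)$ in the cut metric, hence perturbs $f$ by $o(1)$. Making this quantitative — choosing the growth rate of $k$ correctly, controlling the cut-metric error, and ensuring the embedding of leaves onto random nodes does not itself create atypical structure — is where the delicate factor-graph machinery from~\cite{Will,BetheLattices,CKPZ} enters, and is the technical core of the proof.
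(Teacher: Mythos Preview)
Your core strategy---the thimblerig perturbation to establish $f(\vA)=\phi_d(f(\vA))+o(1)$ \whp---is exactly what the paper does (see \Sec~\ref{Sec_perturb}, \Lem~\ref{Lemma_standard} and \Cor~\ref{Cor_stats}), and your identification of the robustness claim~(a) as the technical crux, to be handled via the cut metric and pinning, is correct.

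There is, however, a genuine gap in your reduction step. You propose to ``work along a subsequence so that $f(\vAnp)\to\alpha$ in probability'' and then show that the deterministic $\alpha$ is a fixed point of $\phi_d$. But $f(\vA)$ does \emph{not} concentrate on a constant---indeed, the main theorem of the paper says precisely that for $d>\eul$ it is asymptotically supported on \emph{two} values $\alpha_*,\alpha^*$. So no subsequence has a deterministic in-probability limit, and the reduction is vacuous. The same objection applies to setting $\alpha=\Erw[f(\vA)]$: the expectation lies strictly between $\alpha_*$ and $\alpha^*$ and is not a fixed point of $\phi_d$.

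The fix is straightforward once you notice it: the thimblerig argument actually yields the \emph{random} relation $|f(\vA)-\phi_d(f(\vA))|=o(1)$ \whp\ (the leaf-freezing probability is $f(\vA)+o(1)$ conditional on $\vA$, not $\alpha+o(1)$). From there the conclusion follows by a deterministic argument: the continuous function $\alpha\mapsto|\alpha-\phi_d(\alpha)|$ vanishes exactly on $\{\alpha_*,\malpha,\alpha^*\}$, hence is bounded below by some $\delta(\eps)>0$ on the complement of the $\eps$-neighbourhood of this set, forcing $f(\vA)$ into that neighbourhood \whp\ The paper does essentially this, phrased via iteration of $\phi_d$ and \Lem~\ref{Lemma_contract}; either route works.
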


	\begin{proposition}\label{Prop_dd}
		For any $d\in(\eul,\infty)$ there exists $\eps>0$ such that $\displaystyle\lim_{n\to\infty}\pr\brk{\abs{f(\vAnp)-\malpha}<\eps}=0.$
	\end{proposition}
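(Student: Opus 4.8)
The plan is to carry out the ``hammer and anvil'' argument sketched in \Sec~\ref{Sec_intro_outline}. Fix $d>\eul$, so that $\alpha_*<\malpha<\alpha^*$ by \Prop~\ref{Prop_theta=1}. Choose $\eps_0>0$ small enough that the intervals of radius $\eps_0$ about $\alpha_*,\malpha,\alpha^*$ are pairwise disjoint and that, writing $M_{\eps_0}=\sup\{\Phi_d(\alpha):|\alpha-\malpha|\le\eps_0\}$, one still has $M_{\eps_0}<\Phi_d(\alpha_*)$; this is possible since $\Phi_d$ is continuous while, by \Prop~\ref{Prop_theta=1}, $\malpha\notin\{\alpha_*,\alpha^*\}$ and $\alpha_*,\alpha^*$ are the global maximisers of $\Phi_d$. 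We will prove $\pr[\fE]=o(1)$ for the event $\fE=\{|f(\vAnp)-\malpha|<\eps_0\}$, which is precisely the assertion of the proposition with $\eps=\eps_0$. (Together with \Prop~\ref{lem:3fp}, which places $f(\vAnp)$ within $o(1)$ of $\{\alpha_*,\malpha,\alpha^*\}$, this is what reduces the two-point concentration of $f(\vAnp)$ to the two stable fixed points.)

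The main new ingredient is a combinatorial object I would call a \emph{cover} of a Tanner graph. Informally, a cover of $G(A)$ is a decoration of its variable nodes, check nodes and edges that records, for every coordinate forced to zero, a \emph{certificate} of why it is forced -- essentially a forest of implications rooted at unary checks and closed under the local freezing rules \eqref{eqWP1} -- together with a designation of the ``core'' part of the graph that carries no such certificate. Each cover $\cC$ has a well-defined frozen fraction $\alpha(\cC)$ and ``induces'' the set of kernel vectors compatible with its certified pattern of zeros and with its recorded core structure. I would establish two structural facts. First \emph{(decomposition)}: for every $A$, the solution set $\ker A$ is contained in the union, over covers $\cC$ of $G(A)$ with $\alpha(\cC)=f(A)$, of the solution sets they induce; consequently, on $\fE$ we get $|\ker\vAnp|\le\vZ$, where $\vZ$ denotes the number of pairs $(\cC,x)$ with $\cC$ a cover of $G(\vAnp)$ such that $|\alpha(\cC)-\malpha|<\eps_0$ and $x$ a kernel vector induced by $\cC$. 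Second \emph{(locality)}: whether a decoration is a cover, and the value $\alpha(\cC)$, are determined by bounded-depth neighbourhoods in $G(A)$, so that moments of cover-related quantities on $G(\vAnp)$ can be computed via the $\Po(d)$ Galton--Watson approximation.

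The two moment estimates then run as follows. The \emph{anvil} is a first-moment bound $\Erw[\#\{\text{covers }\cC\text{ of }G(\vAnp):|\alpha(\cC)-\malpha|<\eps_0\}]\le\exp(o(n))$: covers are rigid, since the certifying implication forests branch slowly, so only exponentially few decorations qualify. The \emph{hammer} is a companion first-moment computation showing that a typical such cover induces at most $2^{(M_{\eps_0}+o(1))n}$ kernel vectors -- a Poisson-tree count of the solutions of the residual linear system that remains once the certified-frozen coordinates are deleted. Combining the two as a single first-moment computation over the pairs $(\cC,x)$, so that the randomness in the number of covers and in the number of induced solutions is handled jointly, yields $\Erw\vZ\le 2^{(M_{\eps_0}+o(1))n}$, whence $\vZ\le 2^{(M_{\eps_0}+\eps_1)n}$ \whp\ for a constant $\eps_1=\eps_1(\eps_0)$ that we may take with $M_{\eps_0}+\eps_1<\Phi_d(\alpha_*)$. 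On the other hand, \eqref{eqGaoSimple} and \Prop~\ref{Prop_theta=1} give $\nul\vAnp/n\to\max_\alpha\Phi_d(\alpha)=\Phi_d(\alpha_*)$ in probability, so $|\ker\vAnp|=2^{\nul\vAnp}\ge 2^{(\Phi_d(\alpha_*)-\eps_1)n}$ \whp. On $\fE$ the decomposition fact forces $2^{(\Phi_d(\alpha_*)-\eps_1)n}\le|\ker\vAnp|\le\vZ\le 2^{(M_{\eps_0}+\eps_1)n}$, which is impossible for large $n$; hence $\pr[\fE]=o(1)$.

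The main obstacle is the cover machinery itself: one must define covers so that simultaneously (i) every kernel vector is accounted for by some cover of the correct frozen fraction, (ii) ``being a cover'' and $\alpha(\cC)$ are local, hence amenable to moment computations, and (iii) there are only $\exp(o(n))$ covers with frozen fraction near $\malpha$. Item (iii) is the genuinely delicate point: it is precisely because a direct first moment over kernel vectors of Hamming weight $\approx\malpha n$ is too lossy near the critical density $d=\eul$ (it overcounts by an exponential factor) that the two-level cover-then-solution scheme is needed, and turning the ``slowly branching'' intuition for covers into a quantitative bound requires a careful entropy estimate for the implication forests.
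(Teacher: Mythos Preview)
Your overall plan matches the paper's hammer-and-anvil argument, but there is a genuine gap in the ``hammer'' step. The bound $\Erw\vZ\le 2^{(M_{\eps_0}+o(1))n}$ on pairs (cover, compatible kernel vector) does \emph{not} hold for arbitrary kernel vectors: given a cover with $\sim\malpha n$ frozen variables and a random matching of clones, an unfrozen check of degree $k\geq2$ in the unfrozen variables is satisfied by a vector $\sigma$ of degree-weighted one-density $w$ among the unfrozen variables with probability $(1+(1-2w)^k)/2$, which exceeds $1/2$ whenever $w\neq1/2$. This gain can offset the entropy loss from unbalanced $\sigma$'s, so the unrestricted first moment over solutions does not collapse to $2^{\Phi_d(\malpha)n}$ --- the very lossiness you flag re-enters at the per-cover solution count, not only at the cover count. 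The paper fixes this by restricting to \emph{degree-weighted balanced} extensions ({\bf EXT2} in \Def~\ref{Def_extension}), for which \Lem~\ref{Lemma_extensions1} gives the clean bound $\fp(\fm,\sigma)\leq2^{-|\{a:\fm(a)=\unfrozen\}|+o(n)}$. Correspondingly the anvil is not the bare nullity formula \eqref{eqGaoSimple} but \Lem~\ref{lem_balanced}: \whp\ $\ker\vA$ contains $2^{\Phi_d(\alpha_*)n+o(n)}$ \emph{balanced} vectors --- and proving this requires the replica-symmetry/pinning input (\Prop~\ref{Prop_pin}), not just the rank formula.

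There is a second, more structural gap. The paper's cover (\Def~\ref{def_cover}) lives on the pairing model and carries the full degree-type statistics {\bf COV4}: the profile of (incoming, outgoing) message pairs must match the Poisson templates $\bDelta(1-\malpha,\cdot),\bGamma(\malpha,\cdot)$ up to $o(n)$. This is what makes the entropy calculation in \Prop~\ref{Prop_covers} yield $\exp(o(n))$ --- the cancellation leans on the identity $1-\malpha=\exp(-d(1-\malpha))$ and on the full message-degree profile, not merely on the frozen fraction being $\malpha$. Your ``implication forest/certificate'' picture does not obviously produce a definition with this property. And to close the loop --- to show that on $\{f(\vAnp)\sim\malpha\}$ the actual frozen/unfrozen pattern of $\vA$ instantiates an $\malpha$-cover in this precise sense --- the paper invokes \Cor~\ref{Cor_stats}, which rests on the standard-message/thimblerig machinery of \Sec~\ref{Sec_FP} and, again, on \Prop~\ref{Prop_pin}. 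So the pinning/replica-symmetry input is doubly essential: once to produce enough balanced solutions, once to pin down the cover statistics.
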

	
	\noindent
	The proofs of \Prop s~\ref{lem:3fp}--\ref{Prop_dd} can be found in \Sec s~\ref{sec:3fp} and~\ref{Sec_dd}.
	
	\subsection{The aspect ratio}
	We are left to deliver on item {\bf EQ} from the proof outline.
	Thus, we need to show that $f(\vAnp)$ takes either value $\alpha_*$, $\alpha^*$ with about equal probability if $d>\eul$.
	The description \eqref{eqslushpeel} of $G_\slush{}(\vA)$ in terms of the peeling process underscores that $|V_\slush(\vA)|$ and $|C_\slush(\vA)|$ are identically distributed.
	Yet in order to prove the second part of \Thm~\ref{Thm_square} we need to know that \whp\ the slush matrix is not close to square. 
	In \Sec~\ref{Sec_slush} we prove the following.
	
	\begin{proposition}\label{Prop_sym}
		For any $d_0>\eul$ there exists a function $\omega=\omega(n)\gg1$ such that for all $d>d_0$ we have
		\begin{align*}
			\lim_{n\to\infty}\pr\brk{|V_\slush\bc{\vA}|-|C_\slush\bc{\vA}|\geq\omega}=
			\lim_{n\to\infty}\pr\brk{|C_\slush\bc{\vA}|-|V_\slush\bc{\vA}|\geq\omega}=\frac12.
		\end{align*}
	\end{proposition}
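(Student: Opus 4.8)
The plan is to combine a transposition symmetry with a thimblerig-style perturbation argument. Write $D=D(\vA)=|V_\slush(\vA)|-|C_\slush(\vA)|$. The starting point is that $\vA\disteq\vA^\top$ (the entries being i.i.d.), that the Tanner graph $G(\vA^\top)$ is $G(\vA)$ with the roles of variable and check nodes exchanged, and -- crucially -- that the update \eqref{eqWP1} is invariant under simultaneously swapping variables with checks and the symbols $\frozen\leftrightarrow\unfrozen$, while the all-$\slush$ initialisation is swap-invariant. Hence $w(\vA^\top)$ is the swap-image of $w(\vA)$, and \eqref{eqSlushIntro1}--\eqref{eqSlushIntro2} give $\bc{|V_\slush(\vA^\top)|,|C_\slush(\vA^\top)|}=\bc{|C_\slush(\vA)|,|V_\slush(\vA)|}$, so $D\disteq-D$. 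Consequently $\pr[D\ge\omega]=\pr[D\le-\omega]=\tfrac12\bc{1-\pr[-\omega<D<\omega]}$, and it suffices to establish the anti-concentration bound $\pr[|D|\le\omega]=o(1)$ for some $\omega=\omega(n)\to\infty$.

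For the anti-concentration I would use an elementary local move on $\vA$ that deterministically decreases $|V_\slush|$ by exactly one without touching $|C_\slush|$. By \Prop~\ref{Prop_slush} the $\slush$-degrees of $G_\slush(\vA)$ are asymptotically $\Po_{\geq2}(\lambda)$-distributed, so \whp\ there are $\Omega(n)$ ``good'' variable nodes $v\in V_\slush(\vA)$ all of whose $\slush$-neighbouring checks have $\slush$-degree at least three, and \whp\ $\vA$ has $\Omega(n)$ all-zero rows. Pick such a pair $(v,b)$ -- $v$ a good slush variable, $b$ an empty row -- uniformly at random and set $\vA_{b,v}:=1$, i.e.\ attach a pendant unary check to $v$. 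That check sends $v$ a $\frozen$-message, so $v\in\cF$ and $v$ leaves $V_\slush$; and since $\WP$ is monotone (a $\slush$-message only ever becomes $\frozen$ or $\unfrozen$), the only new $\frozen$-messages are those emanating from $v$, and they die at $v$'s $\slush$-neighbouring checks, each of which still has at least two $\slush$-senders that block the `for all'-clause of \eqref{eqWP1} (this is where the degree-at-least-three condition enters). Thus nothing else changes status: $|V_\slush|$ drops by exactly $1$, $|C_\slush|$ is unchanged, and $D$ drops by exactly $1$. The move can be iterated $k$ times whenever $k=o(n)$, since the perturbed matrix $\vA^{(k)}$ differs from $\vA$ in only $k$ edges and its slush structure differs only in the $k$ frozen variables, so $\Omega(n)$ good sites remain available; and $D(\vA^{(k)})=D(\vA)-k$.

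The crux is to show that one such move is \emph{undetectable}: the law of the perturbed matrix $\vA'$ lies within total variation $\epsilon_n=o(1)$ of $\mathcal L(\vA)$, even though the planting site $(v,b)$ is chosen as a function of $\vA$. This is a contiguity statement of exactly the kind underlying the heuristic {\bf FIX} and \Prop~\ref{claim_littleintersection}: because locally $G(\vA)$ resembles a $\Po(d)$ Galton--Watson tree, planting one extra pendant edge at a ``typical'' slush variable is a local-weak-limit-preserving operation, and the few long-range effects are tamed through the pinning argument, replica symmetry and the cut metric for discrete distributions. Reconciling a \emph{structurally targeted} modification (it must land on a good slush variable) with \emph{statistical invisibility} is, I expect, the main technical obstacle, and I would discharge it using the pinning/thimblerig apparatus already developed for the earlier propositions.

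Granting $\dTV(\mathcal L(\vA'),\mathcal L(\vA))\le\epsilon_n=o(1)$, the conclusion follows by a smoothing argument. By the data-processing inequality, iterating the move $k$ times gives $\dTV(\mathcal L(\vA^{(k)}),\mathcal L(\vA))\le k\epsilon_n$. Choose $\omega=\omega(n)\to\infty$ so slowly that $\omega=o(\epsilon_n^{-1/2})$, put $g=2\omega+1$ and $K=\lfloor\epsilon_n^{-1/2}/g\rfloor\to\infty$, and consider the pairwise disjoint windows $I_j=[jg-\omega,\,jg+\omega]$ for $j=0,\dots,K-1$. Since $\{D(\vA)\in I_j\}=\{D(\vA^{(jg)})\in[-\omega,\omega]\}$ and $\dTV(\mathcal L(\vA^{(jg)}),\mathcal L(\vA))\le Kg\epsilon_n\le\epsilon_n^{1/2}=o(1)$, we obtain $\pr[D(\vA)\in I_j]=\pr[|D(\vA)|\le\omega]+o(1)$ uniformly in $j$; summing over the disjoint $I_j$ yields $K\bc{\pr[|D|\le\omega]-o(1)}\le1$, hence $\pr[|D|\le\omega]\le K^{-1}+o(1)=o(1)$. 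Together with $D\disteq-D$ this gives $\pr[D\ge\omega]=\pr[D\le-\omega]=\tfrac12+o(1)$, which is the assertion of \Prop~\ref{Prop_sym}. (One should also check that the underlying high-probability events -- enough empty rows and enough good slush variables -- indeed hold \whp\ by \Prop~\ref{Prop_slush} and standard concentration; on their complement one simply adds a further $o(1)$.)
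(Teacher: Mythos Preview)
Your symmetry reduction $D\disteq -D$ is correct and identical to the paper's. The smoothing argument at the end (disjoint windows) is also fine \emph{given} the undetectability claim. The gap is the undetectability itself, which you explicitly flag as ``the main technical obstacle'' and then defer to ``the pinning/thimblerig apparatus''. Those tools do not give what you need. The thimblerig of Definition~\ref{Def_Olly} attaches trees at \emph{uniformly random} variable nodes and plants the roots on formerly isolated nodes; its invisibility (Fact~\ref{fact:TVsmall}) comes precisely from this uniformity. Your move plants the pendant check at a \emph{structurally selected} column (a good slush variable), so you must show that this bias does not shift the law of the matrix in total variation. The pinning and cut-metric machinery concern the distribution of a random kernel vector, not the distribution of the graph, and local-weak-limit preservation is far weaker than $\dTV=o(1)$.

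Concretely, undetectability requires that the number of ``reverse moves'' from a typical $G'$ --- degree-one rows $b$ whose unique neighbour $v$ becomes a good slush variable of $G'-bv$ --- satisfy a precise relation to $|S(G)|\cdot|W(G)|\cdot p$, and you have not argued this. The paper sidesteps the difficulty by choosing a different move with an \emph{exact} reversibility structure: it deletes the two edges incident to a variable $v\in U$ (variables with exactly two neighbours, both degree-three slush checks). This sends $v$ to the isolated set $S$ and its two neighbours from $T$ to $R$ (degree-two slush checks), leaving all other messages untouched (Claim~\ref{claim:robustslush}). The reverse move is then: pick any isolated variable and any two checks in $R$ and add two edges. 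A direct double count (\Prop~\ref{prop:TVclasssizes}) shows that the ratio of class sizes equals $(1+o(1))p^{-2\omega_2}$, which exactly cancels the probability factor from removing $2\omega_2$ edges, because the local-limit sizes satisfy $\bar s\bar r^2/(2\bar u)=1/d^2$. This identity is what makes the argument go through, and it depends on the particular choice of $U,T,R,S$; your move has no such built-in bookkeeping, and you have not provided a substitute.
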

	
	\subsection{Moments and expansion}
	
	Finally, to complete step {\bf EQ} in \Sec~\ref{Sec_Prop_expansion} we prove that  $f(\vA)$ is about equal to the higher possible value $\alpha^*$ if $\A_\slush$ has more rows than columns, and equal to the lower value $\alpha_*$ otherwise.
	
	\begin{proposition}\label{Prop_expansion}
		For any $d>\eul$, $\eps>0$, $\omega=\omega(n)\gg1$ we have
		\begin{align*}
			\limsup_{n\to\infty}\pr\brk{|f(\vA)-\alpha^*|<\eps,\ |V_\slush(\vA)|-|C_\slush(\vA)|\geq\omega}&=0,&
			\limsup_{n\to\infty}\pr\brk{|f(\vA)-\alpha_*|<\eps,\ |C_\slush(\vA)|-|V_\slush(\vA)|\geq\omega}&=0.
		\end{align*}
	\end{proposition}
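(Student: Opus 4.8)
The plan is to split the argument according to the sign of $|V_\slush(\vA)|-|C_\slush(\vA)|$. By \Prop s~\ref{claim_littleintersection}, \ref{Prop_frozen} and~\ref{Prop_slush} we may work throughout inside the high-probability event that $|V_\slush(\vA)|=|C_\slush(\vA)|=\nu n+o(n)$, that $G_\slush(\vA)$ has the degree profile~\eqref{eqProp_slush2}, and that $|\cF(\vA)\cap\Vu(\vA)|=o(n)$. On this event $f(\vA)$ is controlled, up to $o(1)$, by the number $S(\vA)$ of \emph{unfrozen} coordinates of the minor $\vA_\slush$: identifying $\ker\vA$ restricted to the coordinates in $V_\slush(\vA)$ with $\ker\vA_\slush$ and tracking how freezing propagates from the slush core into the variable nodes hanging off it, one finds that $|f(\vA)-\alpha^*|<\eps$ forces $S(\vA)\le\rho(\eps)n$ whereas $|f(\vA)-\alpha_*|<\eps$ forces $S(\vA)\ge|V_\slush(\vA)|-\rho(\eps)n$, for some $\rho(\eps)\to0$ as $\eps\to0$. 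It therefore suffices to prove the two implications: (I) if $|V_\slush(\vA)|-|C_\slush(\vA)|\ge\omega$ then $S(\vA)=|V_\slush(\vA)|-o(n)$ \whp; and (II) if $|C_\slush(\vA)|-|V_\slush(\vA)|\ge\omega$ then $S(\vA)=o(n)$ \whp, since these are incompatible with the conclusions just drawn.

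For~(I) --- the \emph{flipper} step --- write $U$ for the union of the supports of the vectors in $\ker\vA_\slush$ and $F=V_\slush(\vA)\setminus U$ for the frozen coordinates of $\vA_\slush$; the goal is $|F|=o(n)$. Two observations are the starting point. First, $|U|\ge\dim\ker\vA_\slush\ge|V_\slush(\vA)|-|C_\slush(\vA)|\ge\omega$, so at least $\omega$ slush coordinates are unfrozen for free. Second, every check of $\vA_\slush$ with a neighbour in $U$ has at least two neighbours in $U$: a check with a unique $U$-neighbour $v$ would --- all its remaining neighbours being frozen --- impose $x_v=0$ and thereby freeze $v$, contradicting $v\in U$. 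Consequently the frozen part $F$, together with the checks surrounding it, forms a rigid, over-determined sub-configuration of $G_\slush(\vA)$ that is covered by inclusion-minimal rigid pieces --- \emph{flippers} --- each flipper being responsible for freezing some of its variables and, by a parity consideration on the rows whose sum is a unit vector, necessarily containing a check of degree $\ge3$. The heart of the matter is a first-moment estimate showing that, conditionally on the degree profile of $G_\slush(\vA)$ and on $|V_\slush(\vA)|-|C_\slush(\vA)|\ge\omega$, the expected number of vertex-disjoint families of flippers spanning $\ge\delta n$ variables is $o(1)$ for every fixed $\delta>0$: a flipper of size $s$ and excess $e\ge1$ occurs at a prescribed location with probability $O\bigl((C/n)^{e}(\lambda/n)^{\Theta(s)}\bigr)$, and the excess factors defeat the $\exp(\Theta(n))$ entropy of placing $\Omega(n)$ flipper vertices precisely because $G_\slush(\vA)$ is only barely supercritical (degrees $\Po_{\ge2}(\lambda)$) and carries at least as many variable as check nodes. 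Combined with the first observation this gives $|F|=o(n)$, i.e.\ $S(\vA)=|V_\slush(\vA)|-o(n)$.

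For~(II) --- the \emph{moment} step --- assume towards a contradiction that $S(\vA)\ge(1-\delta)|V_\slush(\vA)|$. For any unfrozen coordinate $v$ the functional $x\mapsto x_v$ is linear and non-constant on $\ker\vA_\slush$, hence vanishes on exactly half of it, so $\Erw_{z\in\ker\vA_\slush}|z|=S(\vA)/2$ \emph{exactly} and $\Var_{z\in\ker\vA_\slush}|z|=O\bigl(S(\vA)\bigr)+O\bigl(\#\{\text{equalised pairs of unfrozen coordinates}\}\bigr)$. A percolation argument --- handling the subgraph of $G_\slush(\vA)$ spanned by the degree-two checks, which merely equalise their two neighbours --- shows that every equalisation class has size $o(n)$, so the variance above is $o(n^2)$; this is the replica-symmetry input, which via the cut metric (cf.~\cite{CKPZ,Will,BetheLattices}) yields that a uniformly random $z\in\ker\vA_\slush$ has $|z|=S(\vA)/2+o(n)$ \whp, and in particular $\ker\vA_\slush$ contains a vector of Hamming weight in $\bigl[(\tfrac12-\delta)|V_\slush(\vA)|,\ \tfrac12|V_\slush(\vA)|\bigr]$. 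On the other hand, working in the configuration model conditioned on the degree distribution of \Prop~\ref{Prop_slush} and on $|C_\slush(\vA)|-|V_\slush(\vA)|\ge\omega$, a first-moment computation bounds the expected number of such balanced kernel vectors by roughly $2^{|V_\slush(\vA)|-|C_\slush(\vA)|}=o(1)$: a degree-$k$ check is satisfied by a fixed balanced vector with probability $\tfrac12+O(1/|V_\slush(\vA)|)$, and because the $O(1/|V_\slush(\vA)|)$ corrections summed over the $\Theta(n)$ checks of degree two would swamp the gain $2^{-\omega}$, one first \emph{contracts} all degree-two checks, an operation leaving $|\ker\vA_\slush|$ and the balance $|C_\slush(\vA)|-|V_\slush(\vA)|$ unchanged but producing a multigraph with all check-degrees $\ge3$, on which the corrected first moment genuinely is $o(1)$. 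This contradicts the existence of the balanced vector, so $S(\vA)=o(n)$.

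The main obstacle is the flipper union bound in~(I): one must show that the frozen part of $\vA_\slush$ is covered by inclusion-minimal rigid configurations, quantify the excess of each, and run a union bound sharp enough to beat the entropy of choosing $\Omega(n)$ vertex locations --- which works only because $G_\slush(\vA)$ sits essentially at criticality, a crude expansion estimate being far too wasteful (a related combinatorial structure appears in~\cite{Forging}), and this is where the hypothesis $|V_\slush(\vA)|-|C_\slush(\vA)|\ge\omega$ enters. A secondary difficulty is the percolation-and-contraction step in~(II): verifying that contracting all degree-two checks leaves both $|\ker\vA_\slush|$ and the row/column balance intact while removing the troublesome error terms, and that the contracted multigraph still supports the configuration-model moment estimate --- this being the single point where finiteness of the field $\field$ is used in an essential way.
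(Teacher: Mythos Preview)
Your treatment of part~(II) --- the moment calculation after contracting degree-two checks --- is essentially the paper's approach (\Lem s~\ref{Cor_lambda}, \ref{Lemma_slush_first}, \ref{Lemma_rs_slush}), and the outline is sound.

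Part~(I), however, has a genuine gap and a terminological inversion. You correctly establish the two key facts of \Lem~\ref{Lemma_nulslush}: the unfrozen set $U=V_\slush(\vA)\setminus\cF(\vA_\slush)$ has $|U|\ge|V_\slush|-|C_\slush|\ge\omega$, and every check of $\vA_\slush$ adjacent to $U$ has at least two neighbours in $U$. But this second property is precisely the paper's definition of a \emph{flipper} --- a set one can flip, not a rigid piece --- and the set carrying it is $U$, not $F$. The ``Consequently'' that follows is a non sequitur: nothing about the flipper property of $U$ forces $F$ to decompose into over-determined minimal blocks, and your proposed moment bound $O\bigl((C/n)^e(\lambda/n)^{\Theta(s)}\bigr)$ does not parse (a factor $(\lambda/n)^{\Theta(s)}$ would already be $n^{-\Theta(s)}$, trivialising the union bound without any ``excess'' refinement). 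Nor does the hypothesis $|V_\slush|-|C_\slush|\ge\omega$ enter a first-moment count on subgraphs of $G_\slush(\vA)$ in any obvious way.

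The paper's argument stays with $U$. \Lem~\ref{PropFlipping} proves, via three regimes of a first-moment estimate on $(u,c,m)$-flippers in $G_\slush(\vA)$, that for some fixed $\eps>0$ the \emph{total} size $F_\eps(\vA_\slush)$ of all flippers of size $\le\eps n$ is at most $\omega$ \whp\ --- and this bound is unconditional on the sign of $|V_\slush|-|C_\slush|$. Since $U$ is itself a single flipper of size $\ge\omega$, it cannot sit among the small ones, so $|U|\ge\eps n$; combined with $U\cap\cF(\vA)=\emptyset$ and \Prop s~\ref{claim_littleintersection}--\ref{Prop_frozen} this already forces $f(\vA)\le\alpha^*-\eps/2+o(1)$, which is all the proposition needs (you do not need the stronger $|F|=o(n)$ directly). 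The delicate case in \Lem~\ref{PropFlipping} is when the flipper has average variable \emph{and} check degree close to two; there the bound $\bigl(\lambda^2\eul^\lambda/(\eul^\lambda-1)^2\bigr)^u<1$ does the work, and this is where the ``barely supercritical'' intuition you mention actually lives.
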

	
	\noindent
	We now have all the ingredients in place to complete the proof of the main theorem.
	
	\begin{proof}[Proof of \Thm~\ref{Thm_square}]
		(i)	Suppose $d<\eul$.
		Combining \Prop s~\ref{claim_littleintersection} and~\ref{Prop_frozen} 
		with \eqref{eqLemma_WP1} and~\eqref{eqLemma_WP2}, we conclude that  $\alpha_*-o(1)\leq f(\vA)\leq\alpha^*+o(1)$ \whp{} 
		Since \Prop~\ref{Prop_theta=1} yields $\alpha_*=\alpha^*$, the assertion follows.
		
		(ii)	Fix $d>\eul$ and $\eps>0$ and let $\cE_*=\cbc{|f(\vA)-\alpha_*|<\eps}$, $\cE^*=\cbc{|f(\vA)-\alpha^*|<\eps}$.
		Then Propositions~\ref{lem:3fp} and~\ref{Prop_dd} imply that $\pr\brk{\cE_*\cup\cE^*}=1-o(1)$.
		Moreover, \Prop s~\ref{Prop_sym} and \ref{Prop_expansion} show that
		$\pr\brk{\cE_*}\leq1/2+o(1)$ and $\pr\brk{\cE^*}\leq1/2+o(1)$.
		Hence, we conclude that $\pr\brk{\cE_*},\pr\brk{\cE^*}=1/2+o(1)$, as claimed.
	\end{proof}

	\subsection{The overlap}\label{Sec_overview_overlap}
	\Thm~\ref{Cor_square} concerning the overlap follows relatively easily from \Thm~\ref{Thm_square}.
	The single additional ingredient that we need is the following statement that provides asymptotic independence of the first few coordinates $\vx_1,\ldots,\vx_\ell$ of a vector $\vec x$ drawn from the posterior distribution \eqref{eqposterior}.
	
	\begin{proposition}\label{Prop_pin}
		For every $\ell\geq1$ there exists $\gamma>0$ such that for all $d>0$ and all $\sigma\in\field^\ell$ we have
		\begin{align*}
			\lim_{n\to\infty}\Erw\brk{n^\gamma\abs{\pr\brk{\vx_1=\sigma_1,\ldots,\vx_\ell=\sigma_\ell\mid\vAnp}
					-\prod_{i=1}^\ell\pr\brk{\vx_i=\sigma_i\mid\vAnp}}}&=0.
		\end{align*}
	\end{proposition}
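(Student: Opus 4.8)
The plan is to establish a stronger pairwise statement—that for any two coordinates $i\neq j$ the joint law of $(\vx_i,\vx_j)$ differs from the product of the marginals by at most $n^{-\Omega(1)}$ on average over $\vA$—and then bootstrap to $\ell$ coordinates by a union bound and a telescoping argument. The key mechanism is a \emph{pinning} argument of the kind used in the study of random factor graph models~\cite{Max,CKPZ,Will,BetheLattices}. Concretely, I would augment $\vA$ by appending a bounded random number $\vec\theta$ of unary checks, each attached to an independently uniformly random variable node, thereby freezing those variables to $0$; write $\vA^+$ for the augmented matrix. The standard pinning lemma (in the cut-metric formulation of~\cite{CKPZ}) says that after pinning $O(1)$ random coordinates, the posterior on $\ker\vA^+$ becomes $\eps$-symmetric in the sense that $\sum_{i<j}\Erw\dTV(\mu_{i,j},\mu_i\otimes\mu_j)=o(n^2)$, where $\mu_{i,j},\mu_i,\mu_j$ denote the joint and marginal posteriors. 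The abstract pinning lemma already gives $o(n^2)$; to upgrade the error to an honest power of $n$ I would track the quantitative dependence: pinning $t=\omega(1)$ coordinates yields an $\eps(t)$-symmetry with $\eps(t)\to0$, and choosing $t=t(n)$ growing slowly trades off against the probability $O(t/n)$ that a fixed pair $(i,j)$ is hit by a pin. Balancing these gives a bound of the shape $n^{-\gamma}$ for some $\gamma>0$, at the cost of first replacing $\vA$ by $\vA^+$.

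Next I would remove the pinning. The point is that adding $O(1)$ (or even slowly growing) unary checks changes the kernel only mildly: by \Prop~\ref{Prop_sym}-type robustness (more precisely the pinning/robustness input behind the thimblerig argument), the distribution of $(\vx_1,\dots,\vx_\ell)$ under the posterior for $\vA$ and under the posterior for $\vA^+$ are close, since with probability $1-O(\ell t/n)$ none of the coordinates $1,\dots,\ell$ are among the pinned ones, and conditioning on that event the two posteriors on those coordinates agree exactly (the extra unary checks only shrink $\ker\vA$ by forcing unrelated coordinates to $0$, which rescales but does not reweight the conditional law on the untouched coordinates — here one uses that the linear system is uniquely extendable, so the marginal on a set of coordinates disjoint from the pins is unchanged under conditioning on other coordinates being $0$ exactly when those coordinates are ``free enough'', which the pinning lemma guarantees on average). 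Hence the $n^{-\gamma}$ bound transfers from $\vA^+$ back to $\vA$, with $\gamma$ possibly shrunk.

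Finally, to pass from pairs to $\ell$-tuples I would write the joint-minus-product difference as a telescoping sum of $\ell-1$ terms, each of which involves the discrepancy between $\pr[\vx_k=\sigma_k\mid \vx_1=\sigma_1,\dots,\vx_{k-1}=\sigma_{k-1},\vA]$ and $\pr[\vx_k=\sigma_k\mid\vA]$; each such term is controlled by an $\eps$-symmetry estimate for the pair $\{k,k'\}$ after additionally pinning $k_1,\dots,k_{k-1}$, which is again covered by the pinning lemma applied with $\ell$ extra pins (a constant). Summing the $\ell-1$ telescoped contributions and noting $\ell$ is fixed preserves the polynomial rate. The main obstacle I anticipate is the \emph{quantitative} strengthening of the pinning lemma from a $o(1)$ rate to a genuine $n^{-\gamma}$ rate: the usual statements are purely qualitative, so I would need to either re-examine the proof of the pinning lemma (which ultimately rests on a mutual-information/entropy telescoping over the number of pins) to extract an explicit polynomial decay, or appeal to a second-moment / spectral estimate specific to $\vA(n,p)$—for instance using that the Tanner graph is locally treelike with Poisson offspring and that away from the $o(\sqrt n)$ ``slush'' the unfrozen coordinates genuinely decorrelate at a geometric rate in graph distance, which gives $n^{-\gamma}$ after truncating at logarithmic radius. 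Getting the constant $\gamma=\gamma(\ell)>0$ to survive all three reductions (pinning-rate, depinning, telescoping) is the delicate bookkeeping step.
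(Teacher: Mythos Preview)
Your overall strategy---pinning to obtain approximate symmetry---is the right one, and the paper's proof also rests on a pinning lemma (specifically \cite[\Lem~3.1]{Gao}, which already handles $\ell$-tuples and comes with the explicit quantitative dependence $T=\lceil 4\ell^3/\eps^4\rceil+1$, so your telescoping reduction to pairs and your worry about extracting a polynomial rate are both unnecessary).

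The genuine gap is in your depinning step. You claim that if none of the coordinates $1,\ldots,\ell$ are among the pinned ones, ``the two posteriors on those coordinates agree exactly'' because the extra unary checks ``rescale but do not reweight the conditional law on the untouched coordinates.'' This is false: pinning a coordinate $x_s=0$ can and typically does change the joint law on other coordinates through the linear constraints (e.g.\ if $x_1+x_2+x_s=0$ is a row of $\vA$, pinning $x_s$ forces $x_1=x_2$). The pinning lemma guarantees that the \emph{pinned} posterior is approximately symmetric; it says nothing about the pinned and unpinned posteriors being close.

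The paper sidesteps this entirely with a distributional trick you are missing: rather than compare posteriors, it compares the \emph{distributions of the random matrices themselves}. The random matrix $\vA$ already contains $\Theta(n)$ all-zero rows, and inserting a single $1$ into $t$ randomly chosen zero rows produces a matrix $\vA\bck t$ whose kernel coincides with that of the pinned matrix $\vA[t]$. Because the number of all-zero rows is binomial with variance $\Theta(n)$, the local limit theorem gives $\dTV(\vA,\vA\bck t)=O(t/\sqrt n)$ for $t=o(\sqrt n)$. Choosing $\eps=n^{-c}$ (hence $T=O(n^{4c})$) with $c<1/8$ then makes both the pinning error $\eps$ and the TV error $O(T/\sqrt n)$ polynomially small, yielding the $n^{-\gamma}$ rate directly. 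This ``hide the pins in existing zero rows'' idea is the missing ingredient.
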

	
	\noindent
	\Prop~\ref{Prop_pin}, whose proof we defer to Appendix~\ref{Sec_pino}, is a corollary to a random perturbation of the matrix $\vAnp$ developed in~\cite{Ayre}.
	As an easy consequence of \Prop~\ref{Prop_pin} we obtain the following expression for the overlap.
	The proof can also be found in Appendix~\ref{Sec_pino}.
	
	\begin{corollary}\label{Cor_pin}
		For all $d>0$ we have
		$\lim_{n\to\infty}\Erw\abs{R(\vx,\vx')-(1+f(\vA))/2}=0.$
	\end{corollary}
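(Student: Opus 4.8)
The statement to prove is Corollary~\ref{Cor_pin}: $\lim_{n\to\infty}\Erw|R(\vx,\vx')-(1+f(\vA))/2|=0$, where $\vx,\vx'$ are independent samples from the posterior \eqref{eqposterior}, i.e. independent uniform samples from $\ker\vA$ (recall the posterior is uniform on the kernel, and the overlap $R(\vx,\hat\vx)=n^{-1}\sum_i\vecone\{\vx_i=\hat\vx_i\}$ is independent of $\vy$). So really we must show that the expected overlap of two independent uniform kernel vectors concentrates on $(1+f(\vA))/2$.

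Here is my plan. First I would rewrite $R(\vx,\vx')=\frac1n\sum_{i=1}^n\vecone\{\vx_i=\vx_i'\}$ and split the sum over $i$ according to whether $i$ is frozen. For $i\in\cF(\vA)$ we have $\vx_i=\vx_i'=0$ deterministically, contributing exactly $|\cF(\vA)|/n=f(\vA)$. For $i\notin\cF(\vA)$, the coordinate $\vx_i$ is not identically zero on the kernel; by symmetry of $\ker\vA$ as an $\field$-vector space (if $x\in\ker\vA$ then so is $x+u$ for any $u\in\ker\vA$), each such $\vx_i$ is in fact uniform on $\field$, so $\pr[\vx_i=0\mid\vA]=\pr[\vx_i=1\mid\vA]=1/2$. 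Hence for unfrozen $i$, if $\vx_i,\vx_i'$ were independent given $\vA$ we would get $\pr[\vx_i=\vx_i'\mid\vA]=1/2$ exactly. Therefore the ``ideal'' value is $f(\vA)+\frac12(1-f(\vA))=(1+f(\vA))/2$, and the whole content of the corollary is controlling the pairwise correlations among unfrozen coordinates.

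The key step is to bound $\Erw\big|R(\vx,\vx')-(1+f(\vA))/2\big|$ by a quantity controlled by Proposition~\ref{Prop_pin}. Write $X:=\frac1n\sum_{i\notin\cF(\vA)}\vecone\{\vx_i=\vx_i'\}$; then $R(\vx,\vx')-(1+f(\vA))/2 = X-\tfrac12(1-f(\vA))$. Taking conditional expectation over $\vx,\vx'$ given $\vA$ (note $\vx,\vx'$ are i.i.d.\ given $\vA$, so $\Erw[\vecone\{\vx_i=\vx_i'\}\mid\vA]=\sum_{s\in\field}\pr[\vx_i=s\mid\vA]^2$), we get
\begin{align*}
\Erw[X\mid\vA]-\tfrac12(1-f(\vA))=\frac1n\sum_{i\notin\cF(\vA)}\Big(\sum_{s\in\field}\pr[\vx_i=s\mid\vA]^2-\tfrac12\Big)=0,
\end{align*}
since $\pr[\vx_i=s\mid\vA]=1/2$ for both $s$. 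Thus $R(\vx,\vx')$ already has the right \emph{conditional} mean; what remains is to show $R(\vx,\vx')$ concentrates. I would do this via a second-moment / variance bound: $\Erw|R(\vx,\vx')-(1+f(\vA))/2|\le \big(\Erw[\Var(R(\vx,\vx')\mid\vA)]\big)^{1/2}$, and expand
\begin{align*}
\Var(R(\vx,\vx')\mid\vA)=\frac1{n^2}\sum_{i,j}\big(\pr[\vx_i=\vx_i',\vx_j=\vx_j'\mid\vA]-\pr[\vx_i=\vx_i'\mid\vA]\pr[\vx_j=\vx_j'\mid\vA]\big).
\end{align*}
Each diagonal term is $O(1/n^2)$ and there are $n$ of them, contributing $O(1/n)$. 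For the off-diagonal terms, since $\vx,\vx'$ are independent given $\vA$, the bracket factorizes into quantities of the form $\pr[\vx_i=s,\vx_j=t\mid\vA]-\pr[\vx_i=s\mid\vA]\pr[\vx_j=t\mid\vA]$; summing over $i<j$ and over $s,t\in\field$ and taking $\Erw$ over $\vA$, this is exactly the pairwise-correlation sum that Proposition~\ref{Prop_pin} controls. Concretely, Proposition~\ref{Prop_pin} with $\ell=2$ gives $\Erw|\pr[\vx_i=s,\vx_j=t\mid\vA]-\pr[\vx_i=s\mid\vA]\pr[\vx_j=t\mid\vA]|=o(n^{-\gamma})$ for a fixed pair of coordinates; by exchangeability of the coordinates of $\vA$ (the columns of $\vA$ are i.i.d.) the same bound holds for every pair $i<j$, so the off-diagonal sum is $n^{-2}\cdot O(n^2)\cdot o(n^{-\gamma})=o(1)$. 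Hence $\Erw[\Var(R(\vx,\vx')\mid\vA)]=o(1)$, and the corollary follows.

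The main obstacle — and the reason we need Proposition~\ref{Prop_pin} rather than just the naive pairwise-independence remark in the abstract — is that the abstract's observation only gives $\sum_{i<j}\Erw|\cdots|=o(n^2)$, which is too weak: plugging that into the variance bound gives $n^{-2}\cdot o(n^2)=o(1)$, which is actually fine \emph{for the variance} but one must be careful that $\Erw|R-(1+f)/2|\le(\Erw\Var(R\mid\vA))^{1/2}$ genuinely needs the averaged pairwise bound, not a high-probability one, and that the contribution of coordinates $i$ that are frozen for \emph{some} but not all of the probability space is handled — but frozenness is a deterministic function of $\vA$, so given $\vA$ the partition into $\cF(\vA)$ and its complement is fixed and the above argument goes through verbatim. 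So in fact the only quantitative input needed is $\sum_{i<j}\Erw|\pr[\vx_i=s,\vx_j=t\mid\vA]-\pr[\vx_i=s\mid\vA]\pr[\vx_j=t\mid\vA]|=o(n^2)$, which is a weakening of Proposition~\ref{Prop_pin}; the polynomial rate $n^\gamma$ there is more than enough. I would therefore present the proof as: (1) reduce to bounding the conditional variance via Cauchy–Schwarz; (2) observe the conditional mean is exactly right using uniformity of unfrozen coordinates; (3) split the variance into diagonal ($O(1/n)$) and off-diagonal terms; (4) bound the off-diagonal terms using Proposition~\ref{Prop_pin} and exchangeability of columns.
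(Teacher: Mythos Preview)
Your proposal is correct and follows essentially the same approach as the paper: split off the frozen coordinates, use Fact~\ref{fact_ker} to see the conditional mean is exactly $(1+f(\vA))/2$, and then control the conditional variance via the pairwise decorrelation from \Prop~\ref{Prop_pin} together with a second-moment/Chebyshev argument. The only cosmetic difference is that the paper first fixes one sample $x\in\ker\vA$ and lets only $\vx'$ vary (so the variance involves $\pr[\vx_i'=s,\vx_j'=t\mid\vA]-\pr[\vx_i'=s\mid\vA]\pr[\vx_j'=t\mid\vA]$ directly rather than the squared joint probabilities you encounter), and then averages over $x$ at the end; your ``factorizes'' step is really the bound $|p_{s,t}^2-p_s^2q_t^2|\leq 2|p_{s,t}-p_sq_t|$, which is fine.
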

	
	\begin{proof}[Proof of \Thm~\ref{Cor_square}]
		The assertion is an immediate consequence of \Thm~\ref{Thm_square} and \Cor~\ref{Cor_pin}.
	\end{proof}
	
	\subsection{Preliminaries and notation}\label{Sec_preliminaries}
	Throughout the paper, we use the standard Landau notations for asymptotic orders and all asymptotics are taken as $n\to\infty$. 
	Where asymptotics with respect to another additional parameter are needed, we indicate this fact by using an index.
	For example, $g(\eps,n)=o_\eps(1)$ means that
	$$
	\limsup_{\eps\to0}\limsup_{n\to\infty}|g(\eps,n)|=0.
	$$
	We call an $O(1)$ variable \emph{bounded} and an $\Omega(1)$ variable \emph{bounded away from zero}
		(although note in the latter case that it could take the value $0$ finitely often).
	We ignore floors and ceilings whenever they do not significantly affect the argument.
	
	Any $m\times n$ $\field$-matrix $A$ is perfectly represented by its Tanner graph $G(A)$, as defined
	in Section~\ref{Sec_WP}.
	We simply identify $A$ with its Tanner graph $G(A)$.
	For instance, we take the liberty of writing $f(G(A))$ instead of $f(A)$.
	Conversely, a bipartite graph $G$ with designated sets of check nodes $C(G)$ and variable nodes $V(G)$ induces a $|C(G)|\times|V(G)|$ matrix $A(G)$.
	Once again we tacitly identify $G$ with this matrix.
	Recall that for a Tanner graph $G$ and a node $z\in C(G)\cup V(G)$ we let $\partial z=\partial_Gz$ signify the set of neighbours.
	We further define $\partial^tz=\partial_G^tz$ to be the set of nodes at distance exactly $t$ from $z$.
	
	For a matrix $A$ we generally denote by $\cF(A)=\cF(G(A))$ the set of frozen variables.
	In addition, we let $\hat\cF(A)$ be the set of frozen checks, where a check node $a\in C(A)$ is called {\em frozen} if $\partial a\subset\cF(A)$.
	Let $\hat f(A)=|\hat\cF(A)|/|C(A)|$ be the fraction of frozen checks.
	
	For a matrix $A$ with Tanner graph $G$ and a node $z$ of $G$ let $d_A(z)=d_G(z)$ denote the degree of $z$.
	Furthermore, let $d_A=(d_A(z))_{z\in C(A)\cup V(A)}$ signify the degree sequence of $G(A)$ and let $d_{A,\slush}=(d_{A,\slush}(z))_{z\in C(A)\cup V(A)}$ encompass the degrees of the subgraph $G_\slush(A)$.
	Note that this sequence includes degrees of vertices
	which are not actually in $G_\slush(A)$, whose degree in $G_\slush(A)$ we define to be $0$.
	
	Returning to the random matrix $\vA$, let $\cG_\slush$ be a random multigraph drawn from the pairing model with degree distribution $d_{\vA,\slush}$,
	i.e., we generate a degree sequence according to the distribution $d_{\vA,\slush}$, assign to each vertex the appropriate number of half-edges
		according to this degree sequence, and construct a multigraph by selecting a uniformly random perfect matching on the set of half-edges.
	
	\begin{lemma}\label{Lemma_contig}
		The probability that $\cG_\slush$ is a simple graph is bounded away from $0$.
		Furthermore, conditioned on being simple the graph $\cG_\slush$ has exactly the same distribution as $G_\slush(\vA)$.
	\end{lemma}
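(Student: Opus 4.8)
The plan is to prove both assertions by conditioning on the degree sequence $\mathbf d:=d_{\vA,\slush}$, exploiting two facts: that before conditioning $G(\vA)$ is the binomial random bipartite graph $G(n,n,d/n)$ with independent entries, and that $G_\slush(\vA)$ is a deterministic function of $G(\vA)$, namely the outcome of the peeling process~\eqref{eqslushpeel}. The two ingredients are then standard facts about the pairing model together with a uniformity statement for the ``core'' $G_\slush(\vA)$ given its degree sequence.

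\emph{Simplicity.} By \Prop~\ref{Prop_slush}, \whp\ $\mathbf d$ has $(\nu+o(1))n$ nonzero entries on each of the two sides, all of them at least $2$, with empirical distribution within $o(1)$ of $\Po_{\ge2}(\lambda)$; and since $G(\vA)$ has maximum degree $O(\log n/\log\log n)$ \whp, so has $\mathbf d$, whence also $\sum_z(d_{\vA,\slush})_z^2=O(n)$ \whp. Let $\mathcal D$ be the set of degree sequences with these (deterministic) properties, so that $\pr[\mathbf d\in\mathcal D]=1-o(1)$. For every $\mathbf d\in\mathcal D$ the standard analysis of the pairing model gives $\pr[\cG_\slush\text{ simple}\mid d_{\vA,\slush}=\mathbf d]\ge c$ for some constant $c=c(\lambda)>0$: the probability of a simple outcome is bounded away from $0$ as soon as the degree sequence satisfies $\sum_z d_z^2=O(\sum_z d_z)$ and has maximum degree $o(\sqrt n)$. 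Averaging over $\mathbf d$ yields $\pr[\cG_\slush\text{ simple}]\ge c\,\pr[\mathbf d\in\mathcal D]\ge c/2$ for large $n$, which is the first claim.

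\emph{Distributional identity.} Conditionally on $d_{\vA,\slush}=\mathbf d$ it suffices to show that both $\cG_\slush$ (given it is simple) and $G_\slush(\vA)$ are uniformly distributed over the simple bipartite graphs with degree sequence $\mathbf d$, on vertex classes equal to the supports of $\mathbf d$. For $\cG_\slush$ this is the textbook fact that each such graph is realised by exactly $\prod_z d_z!$ of the $E!$ pairings. For $G_\slush(\vA)$ we reveal $G(\vA)$ by running the peeling~\eqref{eqslushpeel} with ties broken by vertex index, exposing at each removal only the (non-)edges incident to the removed vertices. Whenever the resulting core has minimum degree $\ge2$ on both sides --- which is forced by $\mathbf d$, since each of its nonzero entries is $\ge2$ --- a step-by-step induction against the true run of the process on $G(\vA)$ shows that the whole transcript (the removal order and all exposed (non-)edges) is insensitive to the internal wiring of the core: a core node always keeps its $\ge2$ core-neighbours, which by induction are never removed, so it never triggers a removal, and the node removed as the partner of a degree-one mantle node is always the same and lies outside the core. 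Hence the edges of $G_\slush(\vA)$ are precisely the pairs both of whose endpoints survive the peeling, which are left unexposed, and conditionally on the transcript these remain independent $\mathrm{Bernoulli}(d/n)$; conditioning in addition on the degree sequence being $\mathbf d$ fixes the number of core edges and makes $G_\slush(\vA)$ uniform over simple bipartite graphs with degree sequence $\mathbf d$ (a product measure conditioned on its degree sequence is uniform). As this uniform law does not depend on which transcript produced the given degree sequence, we may drop the conditioning on the transcript, keeping only $d_{\vA,\slush}=\mathbf d$; combining with the statement for $\cG_\slush$ proves the lemma.

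The main obstacle is the induction just sketched: showing that the transcript of the peeling ``does not see'' the core. This is where the precise form of the peeling rule --- removing a degree-$\le1$ node \emph{together with its neighbour} --- must be reconciled with the minimum-degree-$\ge2$ property, and in particular one has to exclude that removing a degree-one mantle node ever drags a core node out of the graph; the clean way to do this is to couple with a fixed reference run on the actual $G(\vA)$ rather than to argue in isolation. The remaining pieces are bookkeeping and the standard pairing-model estimates; the only other point deserving care is that ``bounded away from $0$'' in the first part genuinely needs the tail control on $\mathbf d$, not merely the weak-limit statement of \Prop~\ref{Prop_slush}, which is why the maximum-degree bound for $G(\vA)$ is built into $\mathcal D$.
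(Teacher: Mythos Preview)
Your argument follows the paper's strategy: both parts rest on the same key observation, namely that replacing the slush subgraph of any Tanner graph $H$ by another simple bipartite graph $S'$ on the same vertex set with the same degree sequence does not alter the peeling~\eqref{eqslushpeel}, so that $H'_\slush=S'$. Your induction ``against the true run'' establishes exactly this, and the simplicity part is the standard bounded-second-moment estimate the paper also uses.

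One step in your distributional argument is mis-stated, however. You claim that, conditionally on the transcript, the unexposed core--core edges ``remain independent $\mathrm{Bernoulli}(d/n)$''. This is false: the transcript having terminated where it did already forces the core--core subgraph to have minimum degree at least $2$ (otherwise the peeling would have continued past your stopping point), so the conditional law is $\mathrm{Bernoulli}(d/n)$ \emph{conditioned on} minimum degree $\ge 2$, not unconditioned Bernoulli. Your subsequent conditioning on the full degree sequence $\mathbf d$ happens to absorb this extra constraint --- every nonzero entry of $\mathbf d$ is $\ge 2$, and within a fixed degree sequence all graphs have equal edge count and hence equal Bernoulli weight --- so the uniformity conclusion survives; but the deferred-decisions justification as written does not stand. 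The paper sidesteps this by arguing a bijection directly, with no conditional-probability language: for $S,S'$ of the same degree sequence, the map $H\mapsto (H\setminus S)\cup S'$ is a probability-preserving bijection between $\{H:H_\slush=S\}$ and $\{H:H_\slush=S'\}$, since the two graphs have the same number of edges.
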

	
	The proof of this lemma is a standard exercise, which we include in Appendix~\ref{Sec_contig} for completeness.
	We further need a routine estimate of the degree distribution of the random bipartite graph $G(\vA)$, whose proof can be found in
	Appendix~\ref{Sec_Lemma_tails}.
	
	\begin{lemma}\label{Lemma_tails}
		Let $d>0$.
		\Whp{} the random graph $G(\vA)$ satisfies
		\begin{align}\label{eqLemma_tails}
			\max_{v\in V(\vA)\cup C(\vA)}|\partial v|&\leq\log n,&
			\frac1n\sum_{x\in V(\vA)}\binom{|\partial x|}{\ell}&\leq(2d)^\ell&\mbox{for any integer $\ell\geq1$.}
		\end{align}
	\end{lemma}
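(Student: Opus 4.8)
The plan is to exploit that $G(\vA)$ is distributed as the binomial bipartite graph on variable nodes $v_1,\dots,v_n$ and check nodes $a_1,\dots,a_n$ with edge probability $p=(d/n)\wedge1$. Consequently the variable-node degrees $|\partial v_1|,\dots,|\partial v_n|$ (i.e.\ the column sums of $\vA$) are i.i.d.\ copies of $\Bin(n,p)$, and so are the check-node degrees (the row sums). Throughout I would use the elementary bound $\Erw\binom{|\partial v_j|}{\ell}=\binom n\ell p^\ell\le d^\ell/\ell!$, valid once $n\ge d$.

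For the maximum-degree bound I would apply the standard tail estimate $\pr[\Bin(n,p)\ge k]\le\binom nk p^k\le(\eul d/k)^k$ with $k=\lceil\log n\rceil$. Since $\eul d/\log n\to0$, this is $n^{-\omega(1)}$, so a union bound over the $2n$ variable and check nodes shows that $\cE:=\{\max_v|\partial v|\le\log n\}$ holds \whp, which is the first assertion.

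For the moment sums, set $S_\ell=\sum_{x\in V(\vA)}\binom{|\partial x|}{\ell}$, so that $\Erw S_\ell\le nd^\ell/\ell!\le\tfrac12 n(2d)^\ell$. A naive first-moment union bound is \emph{not} enough here, since it only yields $\sum_{\ell\ge1}\pr[S_\ell>n(2d)^\ell]\le\sum_{\ell\ge1}(2^\ell\ell!)^{-1}=\sqrt\eul-1$, a constant rather than $o(1)$. Instead I would fix a cutoff $L=L(n)\to\infty$ that grows very slowly, say $L=\log\log\log n$, and split into two regimes. For $\ell\ge L$ I would still use Markov's inequality, $\pr[S_\ell>n(2d)^\ell]\le\binom n\ell p^\ell/(2d)^\ell\le(2^\ell\ell!)^{-1}$, and note that $\sum_{\ell\ge L}(2^\ell\ell!)^{-1}\to0$. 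For $1\le\ell<L$ I would work on the event $\cE$, equivalently replace each $\binom{|\partial x|}{\ell}$ by its truncation at degree $\log n$ (which changes nothing on $\cE$): then $S_\ell$ becomes a sum of $n$ i.i.d.\ random variables each bounded by $\binom{\log n}{\ell}\le(\log n)^L=n^{o(1)}$, with mean at most $\tfrac12 n(2d)^\ell$ and hence variance at most $n^{1+o(1)}$. Chebyshev's inequality (or a Chernoff bound for bounded i.i.d.\ sums) then gives $\pr[\{S_\ell>n(2d)^\ell\}\cap\cE]\le n^{-1+o(1)}$ uniformly over $\ell<L$, and summing over the fewer than $L=n^{o(1)}$ relevant values of $\ell$ still leaves $o(1)$. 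Combining the two regimes with $\pr[\neg\cE]=o(1)$ proves that \whp\ $S_\ell\le n(2d)^\ell$ for all integers $\ell\ge1$ simultaneously.

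The one genuine subtlety is the simultaneity over all $\ell$: this is exactly why the trivial Markov bound falls short, and it is circumvented by the slowly growing cutoff $L$, with the degree truncation at $\log n$ being precisely what makes the second-moment estimate in the range $\ell<L$ strong enough to absorb the union bound. Everything else is bookkeeping.
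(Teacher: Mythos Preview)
Your argument is correct and, for the second part, is actually more careful than the paper's own proof. The paper handles the maximum-degree bound exactly as you do (a union bound over the $2n$ nodes using $\pr[\Bin(n,p)\ge k]\le(\eul d/k)^k$), but for the moment sums it simply computes $\Erw S_\ell\le nd^\ell/\ell!$ and writes ``the assertion follows from Markov's inequality'' with no further comment. As you correctly flagged, this yields only $\pr[S_\ell>n(2d)^\ell]\le(2^\ell\ell!)^{-1}$, a constant not tending to zero, so it neither gives the bound for a fixed $\ell$ \whp\ nor the simultaneous statement over all $\ell\ge1$.

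Your cutoff at $L=\log\log\log n$, together with the truncation at degree $\log n$ to make a second-moment (or Hoeffding) bound work for $\ell<L$, is precisely the extra ingredient that closes this gap; for $\ell\ge L$ the Markov bound $(2^\ell\ell!)^{-1}$ is summable to $o(1)$. The paper's proof should be read as a sketch that tacitly leaves this step to the reader, and your version supplies it.
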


	Throughout the paper all logarithms are to the base $\eul$.
	
	The {\em entropy} of a probability distribution $\mu$ on a finite set $\Omega\neq\emptyset$ is denoted by
	\begin{align*}
		H(\mu)&=-\sum_{\omega\in\Omega}\mu(\omega)\ln\mu(\omega).
	\end{align*}

	As a further important tool we need the cut metric for probability measures on $\field^n$.
	Following~\cite{KathrinWill}, we define the {\em cut distance} of two probability measures $\mu,\nu$ on $\FF_2^n$ as
	\begin{align}\label{eqCutMetric}
		\cutm(\mu,\nu)=\frac{1}{n}\min_{\substack{\vsigma\sim\mu\\\vtau\sim\nu}}\max_{\substack{U\subset\field^n\times\field^n\\I\subset[n]}}\abs{\sum_{i\in I}\pr\brk{(\vsigma,\vtau)\in U,\vsigma_i=1}-\pr\brk{(\vsigma,\vtau)\in U, \vtau_i=1}}.
	\end{align}
	In words, we first minimise over couplings $(\vsigma,\vtau)$ of the probability measures $\mu,\nu$.
	Then, given such a coupling an adversary points out the largest remaining discrepancy.
	Specifically, the adversary puts their finger on the event $U$ and the set of coordinates $I$ where the frequency of 1-entries in $\vsigma,\vtau$ differ as much as possible.
	
	The cut metric is indeed a (very weak) metric.
	We need to point out a few of its basic properties.
	For a probability measure $\mu$ on $\FF_2^n$ let $\vsigma^{(\mu)}$ denote a sample from $\mu$. 
	Moreover, let $\bar\mu$ be the product measure with the same marginals, i.e.,
	\begin{align*}
		\bar\mu(\sigma)&=\prod_{i=1}^n\mu\bc{\cbc{\vsigma^{(\mu)}_i=\sigma_i}}&&(\sigma\in\field^n).
	\end{align*}
	It is easy to see that upper bounds on the cut distance of $\mu,\nu$ carry over to $\bar\mu,\bar\nu$, i.e., 
	\begin{align}\label{eqMargContract}
		\cutm(\bar\mu,\bar\nu)\leq\cutm(\mu,\nu).
	\end{align}
	Moreover, upper bounds on the cut distance carry over to upper bounds on the marginal distributions, i.e.,
	\begin{align}\label{eqMargBound}
		\frac1n\sum_{i=1}^n\abs{\mu\bc{\cbc{\vsigma_i^{(\mu)}=1}}-\nu\bc{\cbc{\vsigma_i^{(\nu)}=1}}}\leq\cutm(\mu,\nu).
	\end{align}
	
	The distribution $\mu$ is {\em $\eps$-extremal} if $\cutm(\mu,\bar\mu)<\eps$.
	Furthermore, $\mu$ is {\em $\eps$-symmetric} if 
	\begin{align*}
		\sum_{1\leq i<j\leq n}\abs{\mu\bc{\cbc{\vsigma_i^{(\mu)}=\vsigma_j^{(\mu)}=1}}-\mu\bc{\cbc{\vsigma_i^{(\mu)}=1}}\mu\bc{\cbc{\vsigma_j^{(\mu)}=1}}}<\eps n^2.
	\end{align*}
	Hence, for most pairs $i,j$ the entries $\vsigma_i,\vsigma_j$ are about independent.
	More generally, $\mu$ is {\em $(\eps,\ell)$-symmetric} if 
	\begin{align*}
		\sum_{\tau\in\field^\ell}\sum_{1\leq i_1<\cdots<i_\ell\leq n}\abs{\mu\bc{\cbc{\forall j\leq\ell:\vsigma_{i_j}^{(\mu)}=\tau_j}}-\prod_{j=1}^\ell\mu\bc{\cbc{\vsigma_{i_j}^{(\mu)}=\tau_j}}}<\eps n^\ell.
	\end{align*}
	The following statement summarises a few results about the cut metric from~\cite{Victor,KathrinWill}.
	
	\begin{proposition}\label{Prop_cut}
		For any $\ell,\eps>0$ there exist $\delta>0$ and $n_0>0$ such that for all $n>n_0$ and all probability measures $\mu$ on $\field^n$ the following statements hold.
		\begin{enumerate}[(i)]
			\item If $\mu$ is $\delta$-extremal, then $\mu$ is $(\eps,\ell)$-symmetric.
			\item If $\mu$ is $\delta$-symmetric, then $\mu$ is $\eps$-extremal.
		\end{enumerate}
	\end{proposition}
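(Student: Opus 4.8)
Both items are standard facts about the cut metric, established in~\cite{Victor,KathrinWill}, so the plan is to recall the two short arguments behind them rather than develop anything new. Throughout write $p_i=\mu(\cbc{\vsigma_i=1})$ for the single-coordinate marginals of $\mu$ (so that $\bar\mu$ is the product of the $p_i$), and note that $\mu$ is $\eps$-symmetric precisely when $\sum_{i<j}\abs{\mathrm{Cov}_\mu(\vecone\cbc{\vsigma_i=1},\vecone\cbc{\vsigma_j=1})}<\eps n^2$.

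For part~(ii) the plan is to test the cut distance against the \emph{independent} coupling: let $\vsigma\sim\mu$ and $\vtau\sim\bar\mu$ be independent. Fixing an event $U\subseteq\field^n\times\field^n$ and a set $I\subseteq[n]$ and putting $Z=\sum_{i\in I}\bc{\vecone\cbc{\vsigma_i=1}-\vecone\cbc{\vtau_i=1}}$, we have $\Erw Z=0$ because $\mu$ and $\bar\mu$ share their marginals, whence
\[
\abs{\sum_{i\in I}\bc{\pr\brk{U,\,\vsigma_i=1}-\pr\brk{U,\,\vtau_i=1}}}=\abs{\Erw\brk{\vecone\cbc{(\vsigma,\vtau)\in U}\,Z}}\le\Erw\abs{Z}\le\sqrt{\Var Z}.
\]
Since $\vsigma$ and $\vtau$ are independent, $\Var Z$ is the sum of the variances of $\sum_{i\in I}\vecone\cbc{\vsigma_i=1}$ and $\sum_{i\in I}\vecone\cbc{\vtau_i=1}$; the second is $O(n)$ as $\vtau$ has independent entries, and the first is at most $O(n)+2\delta n^2$ by $\delta$-symmetry of $\mu$. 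Thus $\cutm(\mu,\bar\mu)\le\sqrt{2\delta}+o(1)$, which is below $\eps$ once $\delta$ is small and $n_0$ is large.

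For part~(i) the plan is the reverse: fix a coupling $(\vsigma,\vtau)$ with $\vsigma\sim\mu$, $\vtau\sim\bar\mu$ witnessing $\cutm(\mu,\bar\mu)<\delta$, and extract control of the $\ell$-point correlations of $\mu$ by feeding well-chosen events into~\eqref{eqCutMetric}. For $\ell=2$: plugging $U=\cbc{\vsigma_j=1}$ into~\eqref{eqCutMetric}, with $I$ the set of coordinates on which the resulting discrepancy is non-negative and then its complement, and summing over $j$, gives $\sum_{i,j}\abs{\mu(\cbc{\vsigma_i=\vsigma_j=1})-\pr\brk{\vsigma_j=1,\vtau_i=1}}<2\delta n^2$; plugging in $U=\cbc{\vtau_j=1}$ instead, and using that $\vtau\sim\bar\mu$ is a product, gives $\sum_{i\ne j}\abs{\pr\brk{\vsigma_i=1,\vtau_j=1}-p_ip_j}<2\delta n^2+O(n)$. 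The triangle inequality combines these into $\sum_{i<j}\abs{\mu(\cbc{\vsigma_i=\vsigma_j=1})-p_ip_j}=O(\delta n^2)$, and the remaining three patterns $\tau\in\field^2$ reduce to this one by complementation, giving $(\eps,2)$-symmetry for $\delta$ small. For general $\ell$ one proceeds by induction: write $\mu(\cbc{\forall j\le\ell:\vsigma_{i_j}=1})-\prod_{j\le\ell}p_{i_j}$ as a telescoping sum, control the leading term $\mu(\cbc{\forall j\le\ell:\vsigma_{i_j}=1})-\mu(\cbc{\forall j<\ell:\vsigma_{i_j}=1})\,p_{i_\ell}$ by feeding $U=\cbc{\vsigma_{i_1}=\dots=\vsigma_{i_{\ell-1}}=1}$ into~\eqref{eqCutMetric} and decoupling the $\vtau$-coordinate $i_\ell$ from the $\vsigma$-coordinates exactly as above, and absorb the remaining factor into the inductive hypothesis.

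I expect the only genuine work to be the bookkeeping in part~(i): the events $U$ and index sets $I$ must be chosen so that, after summing the resulting $O(\delta n)$ estimates over the $\Theta(n^{\ell-1})$ possible ``frozen'' parts of $U$, one still obtains a clean $O_\ell(\delta n^\ell)$ bound, and the negligible contributions from coinciding indices have to be tracked. Part~(ii) is soft once one observes that $\delta$-symmetry is exactly a variance bound on $\sum_{i\in I}\vecone\cbc{\vsigma_i=1}$. Finally one picks $\delta=\delta(\eps,\ell)$ small enough that both arguments succeed simultaneously.
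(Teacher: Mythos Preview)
The paper does not prove this proposition at all: it is stated as a summary of known results from~\cite{Victor,KathrinWill} and simply cited, so there is no proof in the paper to compare your attempt against. Your sketch is correct and is essentially the standard argument from those references---the variance/independent-coupling bound for~(ii) and the telescoping/event-plugging extraction of $\ell$-point correlations for~(i)---so nothing is missing on your end; you have just supplied more detail than the paper itself does.
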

	
	Furthermore, extremality of measures carries over to conditional measures so long as we do not condition on events that are too unlikely
	(see, e.g., \cite[Corollary~3.18]{BetheLattices}).
	More generally, we call two probability measures $\mu,\nu$ on $\field^n$ {\em mutually $c$-contiguous} if $c^{-1}\mu(\sigma)\leq\nu(\sigma)\leq c\mu(\sigma)$ for all $\sigma\in\field^n$.
	
	\begin{proposition}[{\cite[Lemma~3.17]{BetheLattices}}]\label{Prop_contig}
		For any $\eps>0$ there exist $\delta>0$ and $n_0>0$ such that for all $n>n_0$,
		for any $\delta$-extremal probability measure $\mu$ on $\field^n$ and
		for any probability measure $\nu$ on $\field^n$ such that $\mu,\nu$ are mutually $(1/\eps)$-contiguous, we have $\cutm(\mu,\nu)<\eps$.
	\end{proposition}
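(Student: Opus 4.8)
The plan is to estimate $\cutm(\mu,\nu)$ via the triangle inequality for the cut metric, writing
\[
\cutm(\mu,\nu)\ \le\ \cutm(\mu,\bar\mu)+\cutm(\bar\mu,\bar\nu)+\cutm(\bar\nu,\nu),
\]
and to show that, once $\delta=\delta(\eps)$ is chosen small enough, each summand is at most $\eps/3$. The first is $\le\delta<\eps/3$ by the definition of $\delta$-extremality. The other two express that both the reweighted measure $\nu$ and the product measure $\bar\nu$ inherit near-product behaviour from $\mu$, which I would extract through the equivalence of extremality and symmetry recorded in \Prop~\ref{Prop_cut}.

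Concretely, \Prop~\ref{Prop_cut}(i) turns $\delta$-extremality of $\mu$ into $(\eps_1,\ell)$-symmetry of $\mu$ for a large $\ell$ and a small $\eps_1$, both fixed at the end. The key step is then to show that $\nu$ is $\eps_2$-symmetric. Put $h=\frac{\mathrm d\nu}{\mathrm d\mu}-1$, so that $\Erw_\mu h=0$ and $\|h\|_\infty\le 1/\eps-1$ by mutual $(1/\eps)$-contiguity, and set $Y_i=\vecone\{\sigma_i{=}1\}-\mu(\sigma_i{=}1)$. A short expansion gives
\[
\mathrm{Cov}_\nu\!\left(\vecone\{\sigma_i{=}1\},\vecone\{\sigma_j{=}1\}\right)=\mathrm{Cov}_\mu\!\left(\vecone\{\sigma_i{=}1\},\vecone\{\sigma_j{=}1\}\right)+\Erw_\mu[Y_iY_jh]-\Erw_\mu[Y_ih]\,\Erw_\mu[Y_jh];
\]
the sum of $|\mathrm{Cov}_\mu(\cdots)|$ over $i<j$ is $<\eps_1 n^2$ by $\eps_1$-symmetry, so it remains to bound $\sum_{i<j}\bigl|\Erw_\mu[Y_iY_jh]-\Erw_\mu[Y_ih]\Erw_\mu[Y_jh]\bigr|$. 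When $\mu$ is an exact product measure this is a one-line Fourier computation: expanding $h$ in the orthonormal basis $(\chi_S)$ of $L^2(\bar\mu)$ collapses the first term to $s_is_j\hat h(\{i,j\})$ and the second to $s_is_j\hat h(\{i\})\hat h(\{j\})$ with $s_i\le\tfrac12$, and Cauchy--Schwarz together with Parseval ($\sum_S\hat h(S)^2=\|h\|_2^2=O_\eps(1)$) bounds the sum by $O_\eps(n)=o(n^2)$. For a general $\delta$-extremal $\mu$ I would carry the same bookkeeping out relative to $\bar\mu$, using $(\eps_1,\ell)$-symmetry to keep the low-degree moments of $\mu$ close to those of $\bar\mu$, and a crude second-moment estimate --- itself a consequence of $\eps_1$-symmetry, which controls $\mathrm{Var}_\mu\bigl(\sum_{i\in J}\vecone\{\sigma_i{=}1\}\bigr)$ for arbitrary $J\subseteq[n]$ --- to discard the high-degree remainder; the conclusion is that $\nu$ is $\eps_2$-symmetric, whence $\cutm(\bar\nu,\nu)<\eps/3$ by \Prop~\ref{Prop_cut}(ii).

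Finally, since $\bar\mu,\bar\nu$ are product measures, coupling each coordinate by a maximal coupling of its two marginals gives $\cutm(\bar\mu,\bar\nu)\le\frac1n\sum_{i=1}^n\bigl|\mu(\sigma_i{=}1)-\nu(\sigma_i{=}1)\bigr|$, so it suffices to show the normalised marginal vectors of $\mu$ and $\nu$ are $\ell_1$-close. For any $J\subseteq[n]$ the variable $\sum_{i\in J}\vecone\{\sigma_i{=}1\}$ has $\mu$-variance $O(n+\eps_1 n^2)$, so by Chebyshev and $\nu\le\eps^{-1}\mu$ it lies within $o(n)$ of $\sum_{i\in J}\mu(\sigma_i{=}1)$ under both $\mu$ and $\nu$; applying this to $J=\{i:\nu(\sigma_i{=}1)\ge\mu(\sigma_i{=}1)\}$ and to its complement controls $\sum_i|\mu(\sigma_i{=}1)-\nu(\sigma_i{=}1)|$ by $o(n)$ once $\eps_1$ is small enough, so $\cutm(\bar\mu,\bar\nu)<\eps/3$ for large $n$. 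Choosing $\ell$, $\eps_1$ and then $\delta$ appropriately in terms of $\eps$ finishes the proof. I expect the main obstacle to be the middle step: a bounded reweighting can a priori create order $n^2$ worth of extra pairwise correlation, and the cancellation that saves the day is transparent only for a product $\mu$, so the real work is transferring the Fourier estimate to a merely $\delta$-extremal $\mu$ via the $(\eps,\ell)$-symmetry.
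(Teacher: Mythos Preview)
The paper does not prove this proposition; it is quoted verbatim from~\cite{BetheLattices} and used as a black box, so there is no ``paper's own proof'' to compare against.

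On the merits of your sketch: the triangle-inequality strategy via $\bar\mu,\bar\nu$ is sound, and your treatments of $\cutm(\mu,\bar\mu)$ and of $\cutm(\bar\mu,\bar\nu)$ are fine. The crux is indeed showing that $\nu$ is $\eps_2$-symmetric, and here your write-up is muddled. The Fourier expansion of $h$ under $\bar\mu$ is a red herring: once you leave the exact product case, you cannot ``discard the high-degree remainder'' using a variance bound on linear statistics $\sum_{i\in J}\vecone\{\sigma_i=1\}$ --- that variance says nothing about the interaction of $Y_iY_j$ with the degree-$>\!\ell$ part of $h$, whose $L^2(\bar\mu)$-norm is typically $\Theta_\eps(1)$.

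What actually works is cleaner and needs only $(\eps_1,4)$-symmetry, not large $\ell$. With $s_{ij}=\mathrm{sign}\,\Erw_\mu[Y_iY_jh]$ and $X=\sum_{i<j}s_{ij}Y_iY_j$,
\[
\sum_{i<j}\bigl|\Erw_\mu[Y_iY_jh]\bigr|=\Erw_\mu[hX]\le\|h\|_\infty\sqrt{\Erw_\mu X^2},
\]
and $\Erw_\mu X^2=\sum_{i<j,k<l}s_{ij}s_{kl}\,\Erw_\mu[Y_iY_jY_kY_l]$. Under $\bar\mu$ all off-diagonal fourth moments vanish, leaving $\Erw_{\bar\mu}X^2\le\binom n2/16$; by $(\eps_1,4)$-symmetry the $\mu$-expectation differs by $O(\eps_1 n^4)$, giving $\sum_{i<j}|\Erw_\mu[Y_iY_jh]|=O_\eps(\sqrt{\eps_1}\,n^2)$. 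The cross term $\sum_{i<j}|\Erw_\mu[Y_ih]\Erw_\mu[Y_jh]|$ succumbs to the same trick with $X=\sum_i t_iY_i$ and only $(\eps_1,2)$-symmetry. So your conclusion is right, but the route is a direct Cauchy--Schwarz/fourth-moment argument, not a Fourier truncation.
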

	
	Moreover, we need an elementary observation about the kernel of $\field$-matrices.
	
	\begin{fact}[{\cite[Lemma 2.3]{Ayre}}]\label{fact_ker}
		Let $A$ be an $m\times n$-matrix over $\FF_2$ and choose $\vxi=(\vxi_{1},\ldots,\vxi_{n})\in\ker A$ uniformly at random.
		Then for any $i,j\in[n]$ we have $\pr[\vxi_i=0]\in\{1/2,1\}$ and $\pr[\vxi_i=\vxi_j]\in\{1/2,1\}$.
	\end{fact}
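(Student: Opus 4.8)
The plan is to use nothing beyond the fact that $\ker A$ is an $\field$-linear subspace of $\field^n$ and that $\vxi$ is uniform on it. Write $W=\ker A$. For the first assertion I would consider the coordinate evaluation $\pi_i\colon W\to\field$, $x\mapsto x_i$, which is $\field$-linear. Its image is a subspace of the one-dimensional space $\field$, hence either $\{0\}$ or all of $\field$. If $\mathrm{Im}\,\pi_i=\{0\}$, then $x_i=0$ for every $x\in W$, so $\pr[\vxi_i=0]=1$. If $\mathrm{Im}\,\pi_i=\field$, then $\pi_i$ is surjective, so by rank--nullity $\ker\pi_i$ is a subspace of $W$ of index exactly $2$; since $\vxi$ is uniform on $W$ this gives $\pr[\vxi_i=0]=|\ker\pi_i|/|W|=1/2$. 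Hence $\pr[\vxi_i=0]\in\{1/2,1\}$.

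For the second assertion I would run the identical argument with the linear functional $\ell_{ij}\colon W\to\field$, $x\mapsto x_i+x_j$, observing that over $\field$ one has $x_i+x_j=0$ if and only if $x_i=x_j$. Again $\mathrm{Im}\,\ell_{ij}\in\{\{0\},\field\}$: in the first case $\vxi_i=\vxi_j$ holds surely, so $\pr[\vxi_i=\vxi_j]=1$; in the second case $\ker\ell_{ij}$ has index $2$ in $W$, so $\pr[\vxi_i=\vxi_j]=\pr[\ell_{ij}(\vxi)=0]=1/2$.

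There is essentially no obstacle here; the statement is exactly \cite[Lemma 2.3]{Ayre}, and I would simply cite that reference while including the two short arguments above for completeness. The only points worth a word of care are the degenerate case $W=\{0\}$ (when $A$ has full column rank), which is subsumed under the ``image equals $\{0\}$'' branch in both parts, and the elementary but essential remark that over $\field$ the events $\{\vxi_i=\vxi_j\}$ and $\{\vxi_i+\vxi_j=0\}$ coincide.
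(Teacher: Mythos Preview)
Your proof is correct. The paper does not prove this statement at all; it simply records it as a fact with a citation to \cite[Lemma~2.3]{Ayre}. Your argument via the linear functionals $x\mapsto x_i$ and $x\mapsto x_i+x_j$ is the standard one and would serve perfectly well as the included-for-completeness proof you describe.
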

	
	Finally, in Appendix~\ref{Sec_Lem_weighted_tails} we will prove the following auxiliary statement about weighted sums.
	
	\begin{lemma}\label{Lemma_weighted_tails}
		For any $c_0,c_1>0$ there exists $c_2>0$ such that for all $n>0$ the following is true.
		Suppose that $w:[n]\to(0,\infty)$ is any function such that
		\begin{align*}
			\frac1n\sum_{i=1}^n w_i\ \vecone\cbc{w_i>t}&\leq c_0\exp(-c_1t)&&\mbox{for any }t\geq1.
		\end{align*}
		Moreover, assume that $\cP=(P_1,\ldots,P_\ell)$ is any partition of $[n]$ into pairwise disjoint sets such that
		\begin{align*}
			\frac1n\sum_{j=1}^\ell |P_j|\ \vecone\cbc{|P_j|>t}&\leq c_0\exp(-c_1t)&&\mbox{for any }t\geq1.
		\end{align*}
		Then $\frac1n\sum_{j=1}^\ell\bc{\sum_{i\in P_j}w_i}^2\leq c_2.$
	\end{lemma}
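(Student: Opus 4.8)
The plan is to reduce the statement to the elementary fact that an averaged sub-exponential tail bound controls all polynomial moments, and then to finish with two applications of Cauchy--Schwarz. The key preliminary observation I would record is the following: if $(x_k)_{k\in K}$ is any family of positive reals with $|K|\le n$ such that $\frac1n\sum_{k\in K}x_k\vecone\cbc{x_k>t}\le c_0\exp(-c_1t)$ for all $t\ge1$, then for every fixed integer $p\ge1$ we have $\frac1n\sum_{k\in K}x_k^p\le C_p$ with $C_p=C_p(p,c_0,c_1)$. This follows from the layer-cake identity $x_k^p=\int_0^{x_k}ps^{p-1}\,\mathrm{d}s$, which gives $\sum_{k\in K}x_k^p=\int_0^\infty ps^{p-1}\,|\cbc{k\in K:x_k>s}|\,\mathrm{d}s$; on $[0,1]$ we bound $|\cbc{k:x_k>s}|\le|K|\le n$, while on $[1,\infty)$ we use $|\cbc{k:x_k>s}|\le s^{-1}\sum_k x_k\vecone\cbc{x_k>s}\le nc_0\exp(-c_1s)$ (here $s^{-1}\le1$), so that the integral is at most $n+pc_0n\int_1^\infty s^{p-1}\exp(-c_1s)\,\mathrm{d}s=O(n)$.

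Applying this claim to the weights $(w_i)_{i\in[n]}$ with $p=4$ yields $\sum_{i=1}^n w_i^4\le C_4 n$, and applying it to the part sizes $(|P_j|)_{1\le j\le\ell}$ — a family of at most $n$ positive integers satisfying exactly the stated tail hypothesis — with $p=3$ yields $\sum_{j=1}^\ell|P_j|^3\le C_3 n$. With these two moment bounds in hand the conclusion is immediate. Writing $S_j=\sum_{i\in P_j}w_i$ and letting $j(i)$ denote the index of the part containing $i$, Cauchy--Schwarz inside each part gives $S_j^2\le|P_j|\sum_{i\in P_j}w_i^2$, hence $\sum_{j}S_j^2\le\sum_j|P_j|\sum_{i\in P_j}w_i^2=\sum_{i=1}^n w_i^2\,|P_{j(i)}|$. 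A second Cauchy--Schwarz, now over the index $i\in[n]$, yields
\begin{align*}
\sum_{i=1}^n w_i^2\,|P_{j(i)}|\le\Bigl(\sum_{i=1}^n w_i^4\Bigr)^{1/2}\Bigl(\sum_{i=1}^n|P_{j(i)}|^2\Bigr)^{1/2}=\Bigl(\sum_{i=1}^n w_i^4\Bigr)^{1/2}\Bigl(\sum_{j=1}^\ell|P_j|^3\Bigr)^{1/2}\le\sqrt{C_3C_4}\,n,
\end{align*}
where the middle equality uses that each part $P_j$ contributes exactly $|P_j|$ summands, all equal to $|P_j|^2$. Dividing by $n$ gives the claim with $c_2=\sqrt{C_3C_4}$.

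I do not expect a serious obstacle. The one thing to get right is the choice of exponents: the naive temptation is to bound the part sizes or the weights by their maxima, but the tail hypotheses only give $\max_j|P_j|=O(\log n)$ and likewise for the weights, which is far too lossy, whereas they in fact pin down every polynomial moment up to an $O(n)$ factor. Choosing $p=4$ for the weights and $p=3$ for the part sizes is exactly what is needed so that the two sums produced by the inner Cauchy--Schwarz both come out $O(n)$ and the outer Cauchy--Schwarz closes. The only bookkeeping point is the identity $\sum_{i=1}^n|P_{j(i)}|^2=\sum_{j=1}^\ell|P_j|^3$, which is routine.
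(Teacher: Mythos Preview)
Your proof is correct and considerably cleaner than the paper's. The paper proceeds via a multi-scale decomposition: it sets up a sequence of thresholds $s_m=6\log_a^{(m)}n$ given by iterated logarithms, classifies each block $P_j$ according to which shell $(s_{m+1},s_m]$ captures $\max\{|P_j|,\max_{i\in P_j}w_i\}$, bounds the number of blocks in each shell using the tail hypotheses, bounds the contribution $(\sum_{i\in P_j}w_i)^2$ of each block crudely by $s_m^4$, and then sums the resulting geometric-type series over $m$. This works but involves explicit numerical constants and a fair amount of bookkeeping.

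Your route sidesteps all of that. The preliminary observation that the averaged tail bound controls every polynomial moment up to $O(n)$ is exactly the right abstraction; once you have $\sum_i w_i^4\le C_4 n$ and $\sum_j|P_j|^3\le C_3 n$, the two Cauchy--Schwarz steps and the identity $\sum_i|P_{j(i)}|^2=\sum_j|P_j|^3$ finish the job with no case analysis and no dependence on $n$. The paper's argument is more hands-on but gains nothing for it here; your approach is both shorter and gives an explicit $c_2=\sqrt{C_3C_4}$ in terms of Gamma-type integrals.
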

	
	\section{Fixed points and local maxima}\label{Sec_basics}
	
	\noindent
	In this section we prove \Lem~\ref{Lemma_stability} and \Prop~\ref{Prop_theta=1}.
	We begin with a bit of trite calculus.
	
	\subsection{Getting started}\label{Sec_start}
	We introduce
	$D_{d}(\alpha)=\exp(-d(1-\alpha))$
	so that
	\begin{equation}\label{eqPhiDK}
		\begin{split}
			\phi_{d}(\alpha)&= 1-\exp(-d\exp(-d(1-\alpha)))=1-D_{d}(1-D_{d}(\alpha)),\\
			\Phi_{d}(\alpha)&=D_{d}(1-D_{d}(\alpha)) + (1+d(1-\alpha))D_{d}(\alpha)-1. 
		\end{split}
	\end{equation}
	We need two derivatives of $\Phi_d(\alpha)$ and $\phi_d(\alpha)$:
	\begin{align}
		\Phi_{d}'(\alpha) &= d^2 D_{d}(\alpha) \bc{\phi_{d}(\alpha)-\alpha},&
		\phi_d' (\alpha) &= d^2 D_d\bc{1-D_d(\alpha)} D_d(\alpha)\label{eqlittlephi_single}, \\
		\Phi_{d}''(\alpha)&= d^3 D_{d}(\alpha)  \bc{\phi_{d}(\alpha)-\alpha} + d^2 D_{d}(\alpha)  \bc{ \phi_{d}'(\alpha)- 1 } ,&
		\phi_d'' (\alpha) &= d^3 D_d\bc{1-D_d(\alpha)}D_d(\alpha)\bc{1-dD_d(\alpha)}.\label{eqlittlephi_double} 
	\end{align}
	Since $D_{d}(\alpha)$ is strictly increasing for all $d>0$, so is $\phi_{d}(\alpha)$ due to \eqref{eqPhiDK}.
	Thus,
	\begin{align}\label{eqphiinc}
		\phi'_{d}(\alpha)&>0&&\mbox{for all }\alpha\in[0,1].
	\end{align}
	Moreover, \eqref{eqlittlephi_double} shows that the sign of $\phi_d''$ only depends on the last term, denoted by 
	\begin{align} \label{psi:sign}
		\psi_{d, \text{sign}}(\alpha) = 1-dD_d(\alpha).
	\end{align}
	We denote the unique zero of $\psi_{d, \text{sign}}(\alpha)$ by $\bar{\alpha} = 1-\frac{\log d}{d}$.
	The following claim comes down to an exercise in calculus.
	
	\begin{claim}\label{claim:calculus1}\label{claim:calculus2}
		\begin{enumerate}[(i)]
			\item \label{item:d=e_1} $\bar{\alpha}$ is a fixed point of $\phi_{d}$ iff $d=\eul$.
			\item \label{item:phi''pos} $\phi_d''(0)>0$.
			\item \label{item:phi''zero} $\phi_d''(\alpha)$ has one zero at $\bar \alpha$ in the interval $[0,1]$ if $d\geq1$, none otherwise.
			\item \label{item:critical} $\phi_{\eul}'(\bar \alpha)=1$ and $\Phi_{\eul}''(\bar{\alpha}) = 0$.
			\item \label{item:onlyfp} $\bar{\alpha}$ is the only fixed point of $\phi_{\eul}(\alpha)$.
			\item \label{item:fixedstationary} The fixed points of $\phi_{d}$ coincide with the stationary points of $\Phi_{d}$.
			\item \label{item:Phi'signs} $\Phi_{d}'(0)>0>\Phi_{d}'(1)$.
			\item \label{item:fpexists} For any $d>0$ the function $\phi_{d}$ has at least one stable fixed point.
			\item \label{item:fewfp} For any $d>0$ the function $\phi_d$ has at most three fixed points, no more than two of which are stable.
			\item \label{item:phi'<1} For $d <\eul$, we have  $\phi_d'(\alpha) <1$ for all $\alpha \in [0,1]$.
			\item \label{item:uniquePhimax} For $d < \eul$, the function $\Phi_{d}$ attains a unique local maximiser $\alpha_{d}\in(0,1)$.  
			\item \label{item:secondfp} For $d>\eul$, if $\alpha\in(0,1)$ is a fixed point of $\phi_{d}$ then so is $\hat\alpha=1-\exp(-d(1-\alpha))\in(0,1)$.
		\end{enumerate}
	\end{claim}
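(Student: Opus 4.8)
The plan is to observe that $\phi_d$ is the second iterate of the elementary map $g_d(\alpha):=1-D_d(\alpha)=1-\exp(-d(1-\alpha))$ and that $\hat\alpha=g_d(\alpha)$; once this is in place the statement reduces to the routine fact that the fixed-point set of a second iterate is invariant under the base map.

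Concretely, I would proceed in three short steps. \emph{Step 1 (the factorisation).} Using $D_d(\alpha)=\exp(-d(1-\alpha))$ and the formula $\phi_d(\alpha)=1-D_d(1-D_d(\alpha))$ from \eqref{eqPhiDK}, a direct substitution gives $g_d(g_d(\alpha))=1-D_d(1-D_d(\alpha))=\phi_d(\alpha)$, i.e.\ $\phi_d=g_d\circ g_d$; moreover $\hat\alpha=1-\exp(-d(1-\alpha))$ is by definition exactly $g_d(\alpha)$. \emph{Step 2 (the range).} For $\alpha\in(0,1)$ we have $1-\alpha\in(0,1)$, hence $D_d(\alpha)\in(\eul^{-d},1)$ and therefore $\hat\alpha=g_d(\alpha)\in(0,1-\eul^{-d})\subset(0,1)$, which yields the asserted membership $\hat\alpha\in(0,1)$. \emph{Step 3 (the fixed point).} If $\phi_d(\alpha)=\alpha$ then, using associativity of composition,
\[
\phi_d(\hat\alpha)\;=\;(g_d\circ g_d)\big(g_d(\alpha)\big)\;=\;g_d\big((g_d\circ g_d)(\alpha)\big)\;=\;g_d\big(\phi_d(\alpha)\big)\;=\;g_d(\alpha)\;=\;\hat\alpha ,
\]
so $\hat\alpha$ is again a fixed point of $\phi_d$.

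I expect \emph{no} genuine obstacle here: the whole content is spotting the identity $\phi_d=g_d\circ g_d$, which is immediate from \eqref{eqPhiDK}. The hypothesis $d>\eul$ is not actually used in the argument; it merely marks the regime in which $\hat\alpha$ differs from $\alpha$ for $\alpha\in\{\alpha_*,\alpha^*\}$. If desired, one can append the complementary remark that $g_d$ has a unique fixed point, namely the solution of $1-\alpha=\exp(-d(1-\alpha))$, which is precisely $\malpha$ by \eqref{eqmalphaeq}; since $g_d$ is increasing and permutes the three fixed points $\alpha_*<\malpha<\alpha^*$ of $\phi_d$, it must fix $\malpha$ and swap $\alpha_*$ with $\alpha^*$. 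This refinement, however, is not needed for item~\ref{item:secondfp} itself.
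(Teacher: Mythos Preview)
Your proof of item~\eqref{item:secondfp} is correct and in fact cleaner than the paper's: the paper verifies $\phi_d(\hat\alpha)=\hat\alpha$ by a direct substitution chain, plugging the fixed-point relation $\alpha=1-\exp(-d\exp(-d(1-\alpha)))$ into $\exp(-d(1-\hat\alpha))$ and unwinding until $\hat\alpha=\phi_d(\hat\alpha)$ appears. Your observation that $\phi_d=g_d\circ g_d$ with $g_d(\alpha)=1-D_d(\alpha)$ and $\hat\alpha=g_d(\alpha)$ replaces this computation by the one-line structural argument that $g_d$ commutes with $g_d\circ g_d$ and hence sends fixed points to fixed points. This factorisation is not spelled out in the paper, though it is implicitly what makes the symmetry $\Phi_d(\alpha_*)=\Phi_d(\alpha^*)$ work later; your route makes that symmetry more transparent.

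One small slip in your optional closing remark: $g_d'(\alpha)=-dD_d(\alpha)<0$, so $g_d$ is \emph{decreasing}, not increasing. The conclusion that $g_d$ fixes $\malpha$ and swaps $\alpha_*$ with $\alpha^*$ is nonetheless correct (indeed, a decreasing involution-like permutation of three ordered points must fix the middle one), and as you note this refinement is not needed for item~\eqref{item:secondfp} anyway.
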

	\begin{proof}
		\begin{enumerate}[(i)]
			\item Observe that $\phi_d(\bar\alpha) = 1-1/\eul$, which is a fixed point iff $\bar \alpha=1-\frac{\log d}{d}=1-\frac{1}{\eul}$,
			i.e., iff $d=\eul$.
			\item Recall that the sign of $\phi_d''(\alpha)$ is determined by the sign of $\psi_{d, \text{sign}}(\alpha)$, and we have
			$\psi_{d, \text{sign}} (0) =1 -~d\exp(-d) >0$ for all $d>0$.
			\item Since $\psi_{d, \text{sign}}'(\alpha) = -d^2\exp(-d(1-\alpha))<0$, we see that $\psi_{d, \text{sign}}$ is a decreasing function that has its unique zero at $\bar{\alpha}$.
			Furthermore, $\bar \alpha \le 1$ iff $d \ge 1$.
			\item By~\eqref{item:d=e_1}, when $d=\eul$ and $\alpha=\bar \alpha$, Equation~\eqref{eqlittlephi_double} reduces to
			$\Phi_{\eul}''(\bar \alpha) = \eul^2 D_{\eul}(\bar\alpha)  \bc{ \phi_{\eul}'(\bar\alpha)- 1 }$. Since also $D_e(\bar \alpha)=1/\eul$,
			by~\eqref{eqlittlephi_single} we have $\phi_{\eul}'(\bar \alpha)=1$,
			and therefore also $\Phi_{\eul}''(\bar \alpha)=0$.
			\item Due to~\eqref{item:d=e_1} $\bar \alpha$ is a fixed point, and $\phi_{\eul}'(\bar \alpha)=1$ by~\eqref{item:critical}.
			Since $\phi_{\eul}(\alpha)$ is convex for $\alpha<\bar{\alpha}$ and concave for $\alpha>\bar{\alpha}$ by \eqref{eqlittlephi_double},
			we deduce that $\phi_\eul(\alpha)>\alpha$ for $\alpha <\bar \alpha$ and $\phi_{\eul}(\alpha)<\alpha$ for $\alpha>\bar \alpha$,
			so $\bar{\alpha}$ is the unique fixed point of $\phi_{\eul}(\alpha)$.
			\item Since $d^2 D_{d}(\alpha)>0$, \eqref{eqlittlephi_single} implies that $\Phi_{d}'(\alpha) =0$ iff $\phi_{d}(\alpha)=\alpha$.
			\item This follows from~\eqref{eqlittlephi_single} since $\phi_{d}(0)>0$ and $\phi_{d}(1)<1$.
			\item 
			Since $\phi_d(0) >0$ and $\phi_d(1)<1$, and since $\phi_d$ is a continuous function,
			there must be at least one fixed point in $(0,1)$.
			Setting $\alpha_1:= \sup \{\alpha : \phi_d(\alpha)>\alpha\}$, we have that $\alpha_1$ is a fixed point by continuity.
			Furthermore, $\alpha_1$ is stable since there are points $\alpha<\alpha_1$ arbitrarily close to $\alpha_1$ for which $\phi_d(\alpha)>\alpha$, but also for any $\alpha>\alpha_1$ we have $\phi_d(\alpha)\le \alpha$, and therefore $\phi_d'(\alpha_1)\le 1$.
			\footnote{Note that at this point we could also have observed that $\Phi_d$ attains its maximum in the interior of $(0,1)$ and then applied
				Lemma~\ref{Lemma_stability} to prove the existence of a stable fixed point. This would be permissible since the proof of
				Lemma~\ref{Lemma_stability} only uses earlier points from this Claim and not~\eqref{item:fpexists} or any later points,
				therefore the argument is not a circular one.} 
			\item This is a consequence of \eqref{item:phi''zero}: between any two fixed points there must be a point with $\phi'(\alpha)=1$, and between any two such points there must be a point with $\phi''(\alpha)=0$; furthermore, between any two stable fixed points, there must be an unstable fixed point.
			\item If $d<1$, \eqref{item:phi''pos} and~\eqref{item:phi''zero} imply that $\phi''(\alpha) >0$ on $[0,1]$.
			Therefore $\phi_d'(\alpha) \leq \phi_d'(1) = d^2 \eul^{-d} < 1$.
			For $1 \leq d < \eul$, Property~\eqref{item:phi''zero} proves that for all $\alpha \in [0,1]$ we have
			$\phi_d'(\alpha)<\phi_d'(\bar{\alpha})= d/\eul <1$.
			\item
			By \eqref{item:fixedstationary}, we may consider stable fixed points of $\phi_d$ rather than maximisers of $\Phi_d$.
			The difference $h(\alpha):=\phi_d(\alpha) - \alpha$ is a decreasing function since $h'(\alpha) = \phi_d'(\alpha) -1<0$ by~\eqref{item:phi'<1}.
			Since $h(0)>0$ and $ h(1)<0$,  $h(\alpha)$ has only one zero for $d<\eul$. This shows that the stable fixed point from~\eqref{item:fpexists}
			is the unique fixed point.
			\item 
			Using $\alpha=\phi_d(\alpha)=1-\exp(-d\exp(-d(1-\alpha)))$, we obtain
			\begin{align*}
				\exp(-d(1-\hat\alpha))=\exp(-d\exp(-d(1-\alpha)))=1-\alpha=
				-\log(1-\hat\alpha)/d.
			\end{align*}
			Rearranging this inequality shows that $\hat\alpha=\phi_d(\hat\alpha)$.\qedhere
		\end{enumerate}
	\end{proof}
	
	\subsection{Proof of \Lem~\ref{Lemma_stability}}\label{Sec_Lemma_stability}
	At a fixed point $\alpha$ of $\phi_d$, \eqref{eqlittlephi_double} simplifies to 
	\begin{align}\label{eqLemma_stability_theta1}
		\Phi_{d}''(\alpha)& =  d^2 D_{d}(\alpha) \bc{ \phi_{d}'(\alpha) - 1 }. 
	\end{align}
	This shows $\Phi_{d}''(\alpha) < 0$ iff $ \phi_{d}'(\alpha) < 1$.   
	Hence, for $d>0, d\neq \eul$, \eqref{eqphiinc} and Claim~\ref{claim:calculus1}~\eqref{item:fixedstationary}
	imply that the stable fixed points of $\phi_{d}$ are precisely the local maximisers of $\Phi_{d}$.
	Claim~\ref{claim:calculus1}~\eqref{item:onlyfp} proves the second assertion in the case $d=\eul$.

	\subsection{Proof of \Prop~\ref{Prop_theta=1}}\label{Sec_Prop_theta=1}
	We make further observations on the existence and stability of fixed points of $\phi_{d}$.
	
	\begin{lemma}\label{Lemma_two}
		If $d>\eul$ then $\Phi_d$ attains its unique local minimum $\malpha\in[\alpha_*,\alpha^*]$ at the root of the \linebreak expression $1-\alpha-\exp(-d(1-\alpha))$.
	\end{lemma}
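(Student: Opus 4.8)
The plan is to pin down the point in the statement algebraically and then read off its nature from the second‑derivative formula \eqref{eqLemma_stability_theta1}. Keeping the notation $D_d(\alpha)=\exp(-d(1-\alpha))$, observe that $\phi_d$ is the square of the ``half‑step'' map $g\colon\alpha\mapsto 1-D_d(\alpha)$, i.e.\ $\phi_d=g\circ g$. Since $g$ is strictly decreasing on $[0,1]$ (from $1-\eul^{-d}$ down to $0$), it has a unique fixed point, which is exactly the root $\malpha$ of $1-\alpha-\exp(-d(1-\alpha))$; and any fixed point of $g$ is automatically a fixed point of $\phi_d$. A one‑line check confirms this: $\phi_d(\malpha)=1-\exp(-d(1-\malpha))=1-(1-\malpha)=\malpha$, using the defining relation $\exp(-d(1-\malpha))=1-\malpha$.

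Next I would compute $\phi_d'(\malpha)$. Feeding $D_d(\malpha)=1-\malpha$ (equivalently $1-D_d(\malpha)=\malpha$) into the formula $\phi_d'(\alpha)=d^2 D_d(1-D_d(\alpha))D_d(\alpha)$ from \eqref{eqlittlephi_single} yields $\phi_d'(\malpha)=d^2(1-\malpha)^2$, so $\phi_d'(\malpha)>1$ iff $d(1-\malpha)>1$. Substituting $t=d(1-\malpha)$ turns the relation $1-\malpha=\exp(-d(1-\malpha))$ into $d=t\eul^{t}$; as $t\mapsto t\eul^{t}$ is strictly increasing with value $\eul$ at $t=1$, this gives $t>1$ exactly when $d>\eul$. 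Hence $\phi_d'(\malpha)>1$ for every $d>\eul$, and \eqref{eqLemma_stability_theta1} then gives $\Phi_d''(\malpha)=d^2 D_d(\malpha)\bc{\phi_d'(\malpha)-1}>0$, so $\malpha$ is a strict local minimum of $\Phi_d$. (At $d=\eul$ one instead gets $\phi_d'(\malpha)=1$, consistent with Claim~\ref{claim:calculus1}\eqref{item:critical}.)

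It then remains to place $\malpha$ between $\alpha_*$ and $\alpha^*$ and to show it is the only local minimum. From $\Phi_d'(0)>0>\Phi_d'(1)$ (Claim~\ref{claim:calculus1}\eqref{item:Phi'signs}) together with the fact that $\malpha$ is a strict interior local minimum, the continuous function $\Phi_d$ attains its maximum over $[0,\malpha]$ at some $\alpha_*\in(0,\malpha)$ (not at $0$, where $\Phi_d$ is increasing, nor at $\malpha$, a local minimum) and its maximum over $[\malpha,1]$ at some $\alpha^*\in(\malpha,1)$. Both are stationary points of $\Phi_d$, hence fixed points of $\phi_d$ by Claim~\ref{claim:calculus1}\eqref{item:fixedstationary}; thus $\alpha_*<\malpha<\alpha^*$ are three distinct fixed points, and by Claim~\ref{claim:calculus1}\eqref{item:fewfp} they are all of them, with $\alpha_*$ the smallest and $\alpha^*$ the largest (as the notation demands). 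Since every local extremum of $\Phi_d$ is a stationary point, a local minimum can only be $\alpha_*$, $\malpha$ or $\alpha^*$; but $\alpha_*$ and $\alpha^*$ each maximise $\Phi_d$ on a full neighbourhood of themselves and so are local maxima, not minima. Therefore $\malpha\in(\alpha_*,\alpha^*)\subseteq[\alpha_*,\alpha^*]$ is the unique local minimum of $\Phi_d$.

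The argument is entirely elementary and I do not expect a genuine obstacle. The only mildly delicate parts are the sign analysis $\phi_d'(\malpha)>1\iff d>\eul$, which becomes transparent via the substitution $d=t\eul^{t}$, and the bookkeeping that makes $\alpha_*,\alpha^*$ honest two‑sided local maxima strictly flanking $\malpha$ — this works because any local extremum must be one of the at most three fixed points of $\phi_d$, so no stray stationary point can intervene and $\Phi_d'$ is positive on $(0,\alpha_*)$, negative on $(\alpha_*,\malpha)$, and symmetrically around $\alpha^*$. In effect the lemma is a repackaging of Claim~\ref{claim:calculus1} plus the single calculation $\phi_d'(\malpha)=d^2(1-\malpha)^2$.
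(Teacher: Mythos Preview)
Your proof is correct and follows the same overall architecture as the paper's: identify $\malpha$ as a fixed point of $\phi_d$, compute $\phi_d'(\malpha)=d^2(1-\malpha)^2$, show this exceeds $1$ for $d>\eul$, and then invoke the fixed-point count from Claim~\ref{claim:calculus1} to pin down uniqueness and the location between $\alpha_*$ and $\alpha^*$. The one substantive difference is how you establish $d^2(1-\malpha)^2>1$. The paper treats $\beta=\malpha$ as an implicit function of $d$, differentiates $d^2(1-\beta)^2$ via the inverse function theorem, and checks monotonicity in $d$ starting from the boundary case $d=\eul$. Your substitution $t=d(1-\malpha)$, turning the defining relation into $d=t\eul^{t}$, is cleaner: it reduces the question to the monotonicity of $t\mapsto t\eul^t$ on $(0,\infty)$ and avoids any calculus in the parameter $d$. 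Both arguments use the same anchor point $d=\eul$, $t=1$, but yours gets there with one line instead of a derivative computation. Your treatment of uniqueness is also slightly more self-contained than the paper's terse appeal to Claim~\ref{claim:calculus1}(vi),(ix); in fact you are already doing part of the work that the paper defers to Corollary~\ref{Lemma_Lambert}.
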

	\begin{proof}
		The concave function $\alpha\in[0,1]\mapsto1-\exp(-d(1-\alpha))$ has a unique fixed point $\beta=\beta(d)\in(0,1)$, which satisfies
		\begin{align*}
			\phi_{d}(\beta)&= 1-\exp(-d\exp(-d(1-\beta))=\beta,&
			\phi_{d}'(\beta)&=d^2\exp(-d(1-\beta))\exp(-d\exp(-d(1-\beta)))=d^2(1-\beta)^2.
		\end{align*}
		Hence, Claim~\ref{claim:calculus1}~\eqref{item:fixedstationary} and~\eqref{eqLemma_stability_theta1} yield
		\begin{align}\label{eqLemma_two1}
			\Phi_d'(\beta)&=0,&
			\Phi_d''(\beta)&=d^2\exp(-d(1-\beta))\bc{d^2(1-\beta)^2-1}.
		\end{align}
		In order to determine the sign of the last expression we differentiate with respect to $d$, keeping in mind that $\beta=\beta(d)$ is a function of $d$.
		Rearranging the fixed point equation $\beta=1-\exp(-d(1-\beta))$, we obtain $d=-(1-\beta)^{-1}\log(1-\beta)$.
		The inverse function theorem therefore yields
		\begin{align*}
			\frac{\partial\beta}{\partial d}&=\frac{(1-\beta)^2}{1-\log(1-\beta)}.
		\end{align*}
		Combining the chain rule with the fixed point equation $\beta=1-\exp(-d(1-\beta))$, we thus obtain
		\begin{align}\label{eqLemma_two2}
			\frac{\partial}{\partial d}d^2(1-\beta)^2&=2d(1-\beta)^2-2d^2(1-\beta)\frac{\partial\beta}{\partial d}=2d(1-\beta)^2\bc{1-\frac{d(1-\beta)}{1-\log(1-\beta)}}
			=\frac{2d(1-\beta)^2}{1+d(1-\beta)}>0.
		\end{align}
		As in Claim~\ref{claim:calculus1}, at $d=\eul$ we obtain $\beta=\bar{\alpha}=1-1/\eul$ and thus $d^2(1-\beta)^2=1$.
		Therefore, \eqref{eqLemma_two2} implies that $d^2(1-\beta)^2>1$ for all $d>\eul$, and thus  \eqref{eqLemma_two1} shows that $\Phi_d$ attains its local minimum $\malpha$ precisely at the point $\beta$.
		Finally, by Claim~\ref{claim:calculus1} (vi) and (ix) there is precisely one local minimum in the interval $[\alpha_*,\alpha^*]$.
	\end{proof}
	
	\begin{corollary}\label{Lemma_Lambert}
		For $d>\eul$ the function $\Phi_d$ attains its local maxima at the fixed points $0<\alpha_*<\alpha^*<1$ of $\phi_d$.
		Moreover, $\Phi_d(\alpha_*)=\Phi_d(\alpha^*)$.
	\end{corollary}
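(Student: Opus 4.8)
The plan is to first determine the fixed point structure of $\phi_d$ and the resulting shape of $\Phi_d$ for $d>\eul$, and then to read off the equality $\Phi_d(\alpha_*)=\Phi_d(\alpha^*)$ from a symmetry of the fixed point equation. For the shape of $\Phi_d$, I would start from \Lem~\ref{Lemma_two}: the root $\beta$ of $1-\alpha-\exp(-d(1-\alpha))$ is a fixed point of $\phi_d$ in $(0,1)$ with $\phi_d'(\beta)=d^2(1-\beta)^2>1$. Since $\phi_d$ is increasing with $\phi_d(0)>0$ and $\phi_d(1)<1$, the function $g(\alpha):=\phi_d(\alpha)-\alpha$ is positive near $0$ and negative near $1$ and satisfies $g(\beta)=0$, $g'(\beta)=\phi_d'(\beta)-1>0$; hence $g$ has a zero in $(0,\beta)$ and a zero in $(\beta,1)$, so $\phi_d$ has at least three fixed points. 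By Claim~\ref{claim:calculus1}~\eqref{item:fewfp} it has at most three, so these are all of them, and writing $\alpha_*$ for the least and $\alpha^*$ for the greatest we get $0<\alpha_*<\malpha<\alpha^*<1$ with the middle fixed point equal to $\beta=\malpha$. A short sign analysis of $g$ on the four intervals (using that $\alpha_*,\alpha^*$ are the extreme fixed points and $g'(\malpha)>0$) shows $g>0$ on $(0,\alpha_*)$, $g<0$ on $(\alpha_*,\malpha)$, $g>0$ on $(\malpha,\alpha^*)$, $g<0$ on $(\alpha^*,1)$; since $\Phi_d'(\alpha)=d^2D_d(\alpha)\,g(\alpha)$ by \eqref{eqlittlephi_single} has the same sign as $g$, the function $\Phi_d$ increases on $(0,\alpha_*)$, decreases on $(\alpha_*,\malpha)$, increases on $(\malpha,\alpha^*)$ and decreases on $(\alpha^*,1)$. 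Thus its local maxima occur precisely at the fixed points $\alpha_*<\alpha^*$, in agreement with \Lem~\ref{Lemma_stability}.

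For the equality $\Phi_d(\alpha_*)=\Phi_d(\alpha^*)$ I would exploit the map $\alpha\mapsto\hat\alpha:=1-\exp(-d(1-\alpha))$. Its derivative $-d\exp(-d(1-\alpha))$ is negative, so the map is strictly decreasing on $[0,1]$, and by Claim~\ref{claim:calculus1}~\eqref{item:secondfp} it sends the three-element set $\{\alpha_*,\malpha,\alpha^*\}$ of fixed points into itself; a strictly decreasing self-map of a three-element chain is the order-reversing bijection, so $\hat{\alpha_*}=\alpha^*$ (and $\hat{\malpha}=\malpha$, $\hat{\alpha^*}=\alpha_*$). It then remains to verify $\Phi_d(\alpha)=\Phi_d(\hat\alpha)$ at an arbitrary fixed point $\alpha$. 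Put $u=1-\alpha$ and $\hat u=1-\hat\alpha=D_d(\alpha)=\exp(-du)$. The fixed point equation $\alpha=1-\exp(-d\exp(-d(1-\alpha)))$ becomes $u=\exp(-d\hat u)$, whence $D_d(1-D_d(\alpha))=\exp(-d\hat u)=u$, and substituting into \eqref{eqPhiDK} gives $\Phi_d(\alpha)=u+(1+du)\hat u-1=u+\hat u+du\hat u-1$, which is symmetric under $u\leftrightarrow\hat u$. Since the pair attached to $\hat\alpha$ is $(1-\hat\alpha,\,D_d(\hat\alpha))=(\hat u,u)$, we get $\Phi_d(\hat\alpha)=\Phi_d(\alpha)$; taking $\alpha=\alpha_*$ yields $\Phi_d(\alpha_*)=\Phi_d(\alpha^*)$.

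The only step that requires a little care is the fixed point count in the first paragraph: one has to combine the unstable fixed point produced by \Lem~\ref{Lemma_two} with the ``at most three fixed points'' bound of Claim~\ref{claim:calculus1}~\eqref{item:fewfp} and the sign bookkeeping of $g$ to be certain that $\alpha_*$ and $\alpha^*$ are distinct and are the only local maxima of $\Phi_d$. After that, the identification $\hat{\alpha_*}=\alpha^*$ and the symmetry of $\Phi_d$ at a conjugate pair are immediate substitutions.
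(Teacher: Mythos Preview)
Your proof is correct and follows essentially the same route as the paper: establish three fixed points via \Lem~\ref{Lemma_two} and Claim~\ref{claim:calculus1}, identify the outer two as the local maxima of $\Phi_d$, use Claim~\ref{claim:calculus1}~\eqref{item:secondfp} to pair $\alpha_*$ with $\alpha^*$ under the map $\alpha\mapsto 1-\exp(-d(1-\alpha))$, and then read off $\Phi_d(\alpha_*)=\Phi_d(\alpha^*)$. Your order-reversing-bijection argument and the symmetric formula $\Phi_d(\alpha)=u+\hat u+du\hat u-1$ at a fixed point are a tidier packaging of exactly the computation the paper carries out explicitly via \eqref{eqPreLambert}--\eqref{eqLemma_Lambert2}.
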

	\begin{proof}
		Since by Claim~\ref{claim:calculus1}~\eqref{item:Phi'signs} we have $\Phi_d'(0)>0>\Phi_d'(1)$, the existence of the local minimiser $\malpha\in(0,1)$ provided by \Lem~\ref{Lemma_two} implies that $\Phi_d$ has at least two local maximisers $0<\alpha_1<\malpha<\alpha_2<1$.
		\Lem~\ref{Lemma_stability} and Claim~\ref{claim:calculus1}~\eqref{item:fixedstationary} show that $\malpha,\alpha_1,\alpha_2$ are fixed points of $\phi_d$.
		Hence, Claim~\ref{claim:calculus2}~\eqref{item:fewfp} implies that $\alpha_1=\alpha_*$ is the smallest fixed point of $\phi_d$ and that $\alpha_2=\alpha^*>\alpha_*$ is the largest fixed point.
		Additionally, \Lem~\ref{Lemma_stability} and Claim~\ref{claim:calculus2}~\eqref{item:fewfp}
		imply that $\alpha_*,\alpha^*$ are the only local maximisers of $\Phi_d$.
		
		It remains to prove that $\Phi_d(\alpha_*)=\Phi_d(\alpha^*)$.
		Claim~\ref{claim:calculus2}~\eqref{item:secondfp} implies that 
		\begin{align*}
			\hat\alpha_*&=1-\exp(-d(1-\alpha_*))&\mbox{and}&&
			\hat\alpha^*&=1-\exp(-d(1-\alpha^*))
		\end{align*}
		are fixed points of $\phi_d$.
		Because $\malpha\neq \alpha_*,\alpha^*$ is the unique root of $1-\alpha-\exp(-d(1-\alpha))$, we conclude that $\hat\alpha_*=\alpha^*$ and $\hat\alpha^*=\alpha_*$.
		Hence,
		\begin{equation}\label{eqPreLambert}
			1-\alpha^*=\exp(-d(1-\alpha_*)),\quad
			1-\alpha_*=\exp(-d(1-\alpha^*)).
		\end{equation}
		Consequently,
		\begin{align}\label{eqLemma_Lambert1}
			(1-\alpha_*)\exp(-d(1-\alpha_*))&=(1-\alpha^*)\exp(-d(1-\alpha^*))\quad\mbox{and}\\
			1-\alpha_*+\exp(-d(1-\alpha_*))&=1-\alpha^*+\exp(-d(1-\alpha^*))
			\label{eqLemma_Lambert2}
		\end{align}
		Finally, combining \eqref{eqLemma_Lambert1}--\eqref{eqLemma_Lambert2} with the fixed point equations $\phi_d(\alpha_*)=\alpha_*$, $\phi_d(\alpha^*)=\alpha^*$, we obtain 
		\begin{align*}
			\Phi_d(\alpha^*)-\Phi_d(\alpha_*)&=\exp(-d\exp(-d(1-\alpha^*)))+\exp(-d(1-\alpha^*))\\
			&\quad-\brk{\exp(-d\exp(-d(1-\alpha_*)))+\exp(-d(1-\alpha_*))}\\
			&\quad+d\brk{(1-\alpha^*)\exp(-d(1-\alpha^*))-(1-\alpha_*)\exp(-d(1-\alpha_*))}\\
			&=1-\alpha^*+\exp(-d(1-\alpha^*))-\bc{1-\alpha_*+\exp(-d(1-\alpha_*))}=0,
		\end{align*} 
		thereby completing the proof.
	\end{proof}
	
	\begin{proof}[Proof of \Prop~\ref{Prop_theta=1}]
		The first part follows immediately from \Lem~\ref{Lemma_stability} and Claim~\ref{claim:calculus2}~\eqref{item:uniquePhimax}.
		The second assertion follows from \Lem~\ref{Lemma_stability}, \Lem~\ref{Lemma_two} and \Cor~\ref{Lemma_Lambert}.
	\end{proof}
	
	\subsection{Proof of \Lem~\ref{Lemma_contract}}\label{Sec_Lemma_contract}
	By a straightforward computation, we get that $\phi_d(0)>0$ and $ \phi_d(1)<1$ for all $d>0$. Moreover, $\phi_d(\alpha)$ is a continuously differentiable function.  
	For $d<\eul$, by Claim~\ref{claim:calculus1}~\eqref{item:fixedstationary}  and~\eqref{item:uniquePhimax} (or Proposition~\ref{Prop_theta=1}~\eqref{Prop_theta=1dle}) there is one fixed point $\alpha_*= \malpha=\alpha^*$.  
	This implies $\phi_d(\alpha)>\alpha$ for $\alpha\in[0,\alpha_*)$ and $\phi_d(\alpha)<\alpha$ for $\alpha\in(\alpha_*, 1]$ .  
	By Equation~\eqref{eqphiinc}, $\phi_d(\alpha)$ is strictly increasing so $\phi_d(\phi_d(\alpha)) >\phi_d(\alpha)$ for $\alpha\in[0, \alpha_*)$ and $\phi_d(\phi_d(\alpha))<\phi_d(\alpha)$ for $\alpha\in(\alpha_*,1]$. 
	By induction, for all $t>0$, $\phi_d^{\circ t}(\alpha)>\phi_d^{\circ t-1}(\alpha)$ for $\alpha\in[0, \alpha_*)$ and $\phi_d^{\circ t}(\alpha)<\phi_d^{\circ t-1}(\alpha)$ for $\alpha\in(\alpha_*, 1]$.  
	In addition, the fact that $\alpha_*$ is a fixed point of $\phi_{d}$ implies that $\alpha_*=\phi_d(\alpha_*)>\phi_d^{\circ t}(\alpha)$ for $\alpha\in[0,\alpha_*)$ and $\alpha_*=\phi_d(\alpha_*)<\phi_d^{\circ t}(\alpha)$ for $\alpha\in(\alpha_*, 1]$.  
	Hence, for $\alpha\in[0, \alpha_*)$, the sequence $\bc{\phi_d^t(\alpha)}_{t\geq 0}$ is monotonically increasing and bounded above
	by $\phi_d(\alpha_*)=\alpha_*$, and therefore
	$\lim_{t\to\infty}\phi_d^{\circ t}(\alpha)$ exists. Furthermore, since $\phi_d$ is continuous, this limit must be a fixed point of $\phi_d$.
	Since $\alpha_*$ is the smallest fixed point, we must have $\lim_{t\to\infty}\phi_d^{\circ t}(\alpha)=\alpha_*$, as required.
	Similarly, for $\alpha\in(\alpha_*, 1]$, the sequence $\bc{\phi_d^t(\alpha)}_{t\geq 0}$ is monotonically decreasing and bounded below thus $\lim_{t\to\infty}\phi_d^{\circ t}(\alpha) = \alpha^*$.  
	
	For $d>\eul$, by Proposition~\ref{Prop_theta=1}~\eqref{Prop_theta=1dge},  there are three fixed points, $\alpha_*<\malpha<\alpha^*$ where $\alpha_*, \alpha^*$ are stable fixed points and $\malpha$ is unstable.  For the intervals $[0,\alpha_*), (\alpha^*, 1]$, the proof is exactly the same as in the case $d<\eul$.  
	Similarly, $(\alpha_*, \malpha)$ comes down to the case of a monotonically decreasing sequence converging \linebreak to $\alpha_*$ while $(\malpha, \alpha^*)$ comes down to the case of a monotonically increasing sequence converging to $\alpha^*$.  
	
	\section{Tracing Warning Propagation}\label{Sec_local}
	
	\noindent
	In this section we will analyse the local structure of $G(\vA)$ together with WP messages, and show that locally the graph has a rather simple structure.
	For this argument we will make use of the results of~\cite{CLR_WP}.%
	\footnote{The article~\cite{CLR_WP} deals with a much more general multi-type random graph model,
			an arbitrary finite alphabet of possible messages and an arbitrary message update rule. It covers
			the bipartite random graph $G(\vA)$ and our instance of the WP algorithm as a special case, and checking that all of the necessary assumptions
			are satisfied is an easy exercise.}
	The study of WP messages will enable us to prove Propositions~\ref{claim_littleintersection}, \ref{Prop_frozen} and~\ref{Prop_slush}.
	
	\subsection{Message distributions and the local structure}
	To investigate the link between the local graph structure and the WP messages we need a few definitions.
	Let us first define a \emph{message distribution} to be a vector 
	\begin{align*}
		\vec q = \bc{\vec q^{(v)},\vec q^{(c)}} & \quad\mbox{with}\quad\vec q^{(v)} = \bc{q_\frozen^{(v)},q_\slush^{(v)},q_\unfrozen^{(v)}},\ \vec q^{(c)} = \bc{q_\frozen^{(c)},q_\slush^{(c)},q_\unfrozen^{(c)}}\in[0,1]^3\\
		&\quad\mbox{s.t.}\quad\sum_{s\in\{\frozen,\slush,\unfrozen\}}q_s^{(v)}=\sum_{s\in\{\frozen,\slush,\unfrozen\}}q_s^{(c)}=1.
	\end{align*}
	Intuitively, $q^{(v)},q^{(c)}$ model the probability distribution of an incoming message at a check/variable node,
	so for example $q_\frozen^{(v)}$ is the probability that an incoming message at a variable node is $\frozen$.
	
	Given a message distribution $\vec q$, we define $\Po(d\vec q)$ to be a distribution
	of half-edges with incoming messages. Specifically, at a variable node, this generates
	$\Po\bc{dq_\frozen^{(v)}}$ half-edges whose in-message is $\frozen$ and similarly
	(and independently) generates half-edges whose in-message is $\slush$ or $\unfrozen$.
	At a check node, the generation of half-edges with incoming messages is analogous.
	Let us define the message distribution
	\begin{align*}
		\vec q_* := \bc{\vec q_*^{(v)},\vec q_*^{(c)}}\quad\mbox{with}\quad\vec q_*^{(v)} = \bc{q_{*,\frozen}^{(v)},q_{*,\slush}^{(v)},q_{*,\unfrozen}^{(v)}} & := \bc{1-\alpha^*,\alpha^*-\alpha_*,\alpha_*},\\
		\vec q_*^{(c)} = \bc{q_{*,\frozen}^{(c)},q_{*,\slush}^{(c)},q_{*,\unfrozen}^{(c)}} & := \bc{\alpha_*,\alpha^*-\alpha_*,1-\alpha^*}.
	\end{align*}
	which is our conjectured limiting distribution of a randomly chosen message after the completion of WP,
	which motivates the following definitions.

	\begin{definition}\label{Def_T}
		We define branching processes $\cT,\hat\cT$ which will generate rooted trees decorated with messages along edges towards the root.
		\begin{enumerate}[(i)]
			\item 
			The root of the first process $\cT$ is a variable node $v_0$.
			The root spawns $\Po(d)$ children, which are check nodes.
			The edges from the children to the root independently carry an $\frozen$-message with probability $1-\alpha^*$, an $\slush$-message with probability $\alpha^*-\alpha_*$, and a $\unfrozen$-message with probability $\alpha_*$.
			The process then proceeds such that each check node spawns variable nodes and each variable node spawns check nodes as its offspring such that the messages sent from the children to their parents abide by the rules from Figure~\ref{fig2:WPrules}.
			To be precise, a check node $a$ that sends its parent message $\s\in\{\frozen,\slush,\unfrozen\}$ has offspring
			\begin{description}
				\item[$\s=\frozen$] $\Po(\alpha_*d)$ children that send an $\frozen$-message.
				\item[$\s=\slush$] $\Po(\alpha_*d)$ children that send an $\frozen$-message and $\Po_{\geq1}(d(\alpha^*-\alpha_*))$ children that each send an $\slush$-message.
				\item[$\s=\unfrozen$] $\Po(\alpha_*d)$ children that send an $\frozen$-message, $\Po(d(\alpha^*-\alpha_*))$ children that send an $\slush$-message and $\Po_{\geq1}(d(\alpha^*-\alpha_*))$ children that send a $\unfrozen$-message.
			\end{description}
			Analogously, a variable node $v$ that sends its parent message $\s\in\{\frozen,\slush,\unfrozen\}$ has offspring
			\begin{description}
				\item[$\s=\frozen$] $\Po_{\geq1}((1-\alpha_*)d)$ children that send an $\frozen$-message, $\Po(d(\alpha^*-\alpha_*))$ children that send an \linebreak $\slush$-message, and $\Po(d\alpha_*)$ children that send a $\unfrozen$-message.
				\item[$\s=\slush$] $\Po(\alpha_*d)$ children that each send a $\unfrozen$-message and $\Po_{\geq1}(d(\alpha^*-\alpha_*))$ children that send an $\slush$-message.
				\item[$\s=\unfrozen$] $\Po(\alpha_*d)$ children that send a $\unfrozen$-message.
			\end{description}
			\item 	The root of the second process $\hat\cT$ is a check node $a_0$.
			The root spawns $\Po(d)$ children, which are variable nodes.
			They independently send messages $\frozen,\slush,\unfrozen$ with probabilities $\alpha_*,\alpha^*-\alpha_*,1-\alpha^*$.
			Apart from the root, the nodes have offspring as under (i).
		\end{enumerate}
	\end{definition}

	Let us note that the processes $\cT,\hat\cT$,
	when truncated at depth $t\in \NN$, are equivalent to the following: generate a $2$-type branching tree up to depth $t$
	from the appropriate type of root in which each variable node has $\Po(d)$ children which are check nodes and vice versa,
	generate messages from the leaves at depth $t$ at random according to $\vec q_*$ and generate all other messages up
	the tree from these according to the WP update rule.
	The equivalence follows from the fact that $\vec q_*$ is a distributional fixed point of WP in a $\Po(d)$ branching tree. This means
		that because the messages in the second construction at the lowest level of the tree are distributed according to $\vec q_*$ independently,
		so are the messages one level higher (independently of each other, but certainly not independently
		of the messages coming up from below). Recursively, each level has messages up which are distributed according to $\vec q_*$ independently,
		including those sent up to the root, so the constructions are certainly equivalent down to depth~$1$.
		Now consider the messages coming up to a particular vertex $u$. With no additional information,
		these are distributed according to $\vec q_*$ independently, but if we already know the message that $u$ sends to its parent,
		we must condition on the children of $u$ sending messages which are compatible with this information. This is precisely
		what the distributions in Definition~\ref{Def_T} do.

	The following is the critical lemma describing the local structure.
	Given an integer $t$, let us define $\cS_t$ to be the set of messaged trees
	rooted at a variable node and with depth at most $t$, and similarly $\hat\cS_t$
	for trees rooted at a check node.
	For any $T \in \cS_t$ and matrix $A$, let us define 
	$$\graphprop_T(A) := \frac{1}{n}\sum_{v \in V(A)}\vec 1\cbc{\delta_{G(A)}^t v \cong T}$$
	to be the empirical fraction of variable nodes whose rooted depth $t$ neighbourhood $G(A)$ with edges towards the root annotated by the WP messages $(w_{a\to y}(A),w_{y\to a}(A))_{a,y}$ is isomorphic to $T$.
	For $\hat T \in \hat \cS_T$, the parameter $\graphprop_{\hat T}(A)$ is defined similarly.
	We also define $\branchprob_T := \Pr\sqbc{\cT_t \cong T}$ and $\branchprob_{\hat T} := \Pr\sqbc{\hat\cT_t \cong \hat T}$
	to be the probabilities that the appropriate branching process is isomorphic to $T$ or $\hat T$ respectively.
	
	\begin{lemma}\label{lem:locallimit}
		For any constant $t$ and any trees $T\in \cS_t$ and $\hat T \in \hat \cS_t$ we have
		$$
		\lim_{n\to\infty}|\graphprop_T(\vA) - \branchprob_T|=0 \qquad \mbox{and} \qquad \lim_{n\to\infty}|\graphprop_{\hat T}(\vA) - \branchprob_{\hat T}|=0 \qquad \mbox{in probability.}
		$$
	\end{lemma}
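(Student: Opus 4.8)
The plan is to reduce the statement to a combination of two ingredients: (a) local weak convergence of the bare Tanner graph $G(\vA)$ to the two-type $\Po(d)$ Galton--Watson tree, and (b) a proof that, within any fixed-depth ball, the WP messages produced by running $\WP_{\vA}$ to convergence agree with the messages one would obtain by generating the leaf messages at depth $t$ according to $\vec q_*$ and propagating them upward via the update rule of Figure~\ref{fig2:WPrules}. Granting (a) and (b), the quantity $\graphprop_T(\vA)$ is an average over variable nodes of an indicator depending only on the depth-$t$ annotated neighbourhood, so standard local-weak-convergence arguments give convergence of its expectation to $\branchprob_T$, and a second-moment (two-root) computation -- exploiting that two uniformly random roots are at distance $>2t$ \whp\ and hence have asymptotically independent neighbourhoods -- yields concentration, i.e.\ convergence in probability. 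The same scheme handles $\graphprop_{\hat T}$ with a check-rooted ball.

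The serious work is in (b), and this is where I would invoke the general WP machinery of~\cite{CCKLR}. The point is that WP messages are \emph{not} a local quantity: $w_{a\to v}(\vA)$ can in principle depend on the whole component. The results of~\cite{CCKLR} (adapted from $G(n,d/n)$ to the bipartite $G(n,n,d/n)$, as flagged in the footnote) are designed exactly to show that for a contracting/monotone message-passing scheme the limiting message distribution exists, is given by the fixed point of the corresponding distributional recursion on the Galton--Watson tree, and -- crucially -- that the empirical distribution of depth-$t$ annotated neighbourhoods concentrates. Concretely I would: first check that our enhanced WP (with values $\{\frozen,\slush,\unfrozen\}$, launched from all-$\slush$) fits the framework of~\cite{CCKLR}, using the monotonicity noted after~\eqref{eqLemma_WP2} (an $\slush$-message only ever updates to $\frozen$ or $\unfrozen$ and then freezes); second, identify the distributional fixed point of the recursion. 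Here one solves the fixed-point equations for the probabilities that a variable-to-check (resp.\ check-to-variable) message is $\frozen$, $\slush$, or $\unfrozen$ on the Poisson tree: a check-to-variable message is $\frozen$ iff all its other children send $\frozen$, which under a $\Po(d\vec q^{(v)})$ in-degree happens with probability $\exp(-d(1-q^{(v)}_\frozen))$, and is $\unfrozen$ iff some other child sends $\unfrozen$; the variable side is dual. Using $\phi_d(\alpha_*)=\alpha_*$, $\phi_d(\alpha^*)=\alpha^*$ and the relations~\eqref{eqPreLambert}, one verifies that $\vec q_*$ is precisely the stable fixed point reached from the all-$\slush$ start, and that $\cT,\hat\cT$ of Definition~\ref{Def_T} are exactly the Galton--Watson trees decorated by propagating $\vec q_*$-distributed leaf messages upward. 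The equivalence remark immediately after Definition~\ref{Def_T} is the bridge: it says that the depth-$t$ truncation of $\cT$ equals ``generate the bare tree, put $\vec q_*$ leaf messages, propagate up'', which is exactly the object~\cite{CCKLR} shows $\delta^t_{G(\vA)}v$ converges to.

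I would organise it as follows. Step 1: record that $G(\vA)$ converges in the local weak sense to the two-type $\Po(d)$ tree, and that neighbourhood counts concentrate (a routine second-moment argument, or a direct citation). Step 2: set up the distributional recursion for the enhanced WP messages on the tree, verify it is contracting/monotone in the sense required by~\cite{CCKLR}, and identify $\vec q_*$ as the relevant fixed point by the calculus already done in Section~\ref{Sec_basics}. Step 3: apply the transfer theorem of~\cite{CCKLR} to conclude that for each fixed $t$ the empirical distribution of depth-$t$ annotated neighbourhoods of $G(\vA)$ converges in probability to the law of $\cT_t$ (resp.\ $\hat\cT_t$); reading off the coordinate corresponding to a fixed tree $T$ gives $\graphprop_T(\vA)\to\branchprob_T$ and likewise for $\hat T$. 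I expect Step 2 -- pinning down that the all-$\slush$ initialisation on the tree genuinely flows to $\vec q_*$ (and not to one of the other fixed-point solutions corresponding to the all-$\frozen$ or all-$\unfrozen$ starts) and that this matches Definition~\ref{Def_T} edge-message distributions exactly -- to be the main obstacle, together with the bookkeeping needed to see that our bipartite enhanced WP really is covered by the (a priori single-type, two-state) results of~\cite{CCKLR}; the footnote asserts the generalisation is routine, but making the $\{\frozen,\slush,\unfrozen\}$ three-state, two-type version fit the hypotheses is the delicate point.
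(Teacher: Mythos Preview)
Your proposal is correct and matches the paper's approach essentially step for step. The paper carries out exactly your Step~2 as Lemma~\ref{lem:stablelimit} (proving $\varphi^*(\vec q_0)=\vec q_*$ and that $\varphi$ contracts near $\vec q_*$, via the reduction to $\phi_d$ you describe), then invokes \cite[Theorem~1.5]{CCKLR} to obtain Lemma~\ref{lem:fewfurtherchanges} (that WP has essentially stabilised after a bounded number of rounds), and finally combines this with local weak convergence of the bare graph to conclude---the only cosmetic difference is that the paper factors the \cite{CCKLR} application through the intermediate ``few further changes'' lemma rather than citing a transfer theorem directly.
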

	
	In other words, picking a random vertex and looking at its local neighbourhood gives asymptotically the same result as generating a $\Po(d)$ branching tree to the appropriate depth and initialising messages at the leaves according to $\vec q_*$.
	
	Lemma~\ref{lem:locallimit} states that messages at the end of WP are roughly distributed according to $\vec q_*$, but of course, $\vec q_*$ does not reflect the messages at the start of the WP algorithm; our initialisation, in which all messages are $\slush$, is represented by the message distribution $\vec q_0 = (\vec q_0^{(v)},\vec q_0^{(c)}) := ((0,1,0),(0,1,0))$, but as the WP algorithm proceeds, the distribution will change, which motivates the following definition of an update function on message distributions.
	
	\begin{definition}
		Given a message distribution
		$\vec q = \bc{\bc{q_\frozen^{(v)},q_\slush^{(v)},q_\unfrozen^{(v)}},\bc{q_\frozen^{(c)},q_\slush^{(c)},q_\unfrozen^{(c)}}}$,
		let us define the message distribution $\varphi(\vec q)$ by setting
		\begin{align*}
			\varphi(\vec q)_\frozen^{(v)} & := \Pr\sqbc{\Po\bc{d\bc{q_\unfrozen^{(c)}+q_\slush^{(c)}}}=0},&
			\varphi(\vec q)_\frozen^{(c)} & := \Pr\sqbc{\Po\bc{dq_\frozen^{(v)}}\ge 1},\\
			\varphi(\vec q)_\slush^{(v)} & := \Pr\sqbc{\Po\bc{dq_\unfrozen^{(c)}}=0}\cdot \Pr\sqbc{\Po\bc{dq_\slush^{(c)}}\ge 1},&
			\varphi(\vec q)_\slush^{(c)} & := \Pr\sqbc{\Po\bc{dq_\frozen^{(v)}}=0}\cdot \Pr\sqbc{\Po\bc{dq_\slush^{(v)}}\ge 1},\\
			\varphi(\vec q)_\unfrozen^{(v)} & := \Pr\sqbc{\Po\bc{dq_\unfrozen^{(c)}}\ge 1},&
			\varphi(\vec q)_\unfrozen^{(c)} & := \Pr\sqbc{\Po\bc{d\bc{q_\frozen^{(v)}+q_\slush^{(v)}}}=0}.
		\end{align*}
		We further recursively define $\varphi^{\circ t}(\vec q) := \varphi\bc{\varphi^{\circ(t-1)}(\vec q)}$ for $t \ge 2$, and define $\varphi^*(\vec q) := \lim_{t\to \infty}\varphi^{\circ t}(\vec q)$ if this limit exists.
	\end{definition}
	
	The function $\varphi$ represents an update function of the WP message distributions in an idealised scenario,
	but it turns out that this idealised scenario is close to the truth.
	The following lemma is critical in order to be able to apply the results of~\cite{CLR_WP}.
	Let us define the total variation distance between message distributions $\vec q_1,\vec q_2$ by
	$$
	d_{TV} \bc{\vec q_1,\vec q_2} := d_{TV} \bc{\vec q_1^{(v)},\vec q_2^{(v)}} + d_{TV} \bc{\vec q_1^{(c)},\vec q_2^{(c)}}.
	$$
	\begin{lemma}\label{lem:stablelimit}
		We have $\varphi^*\bc{\vec q_0} = \vec q_*$. Furthermore, there exist $\eps,\delta>0$ such that for any message distribution~$\vec q$ which satisfies $d_{TV} \bc{\vec q,\vec q_*} \le \eps$,
		we have
		$d_{TV} \bc{\varphi\bc{\vec q},\vec q_*} \le (1-\delta)d_{TV} \bc{\vec q,\vec q_*}$.
	\end{lemma}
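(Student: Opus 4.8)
The plan is to reduce the four-dimensional iteration $\varphi$ to the scalar iteration $\phi_d$, whose orbits are already described by \Lem~\ref{Lemma_contract}. Parametrise a message distribution $\vec q$ by the four free coordinates $p_1=q_\frozen^{(v)}$, $p_2=q_\unfrozen^{(v)}$, $p_3=q_\frozen^{(c)}$, $p_4=q_\unfrozen^{(c)}$ (the slush entries being $1-p_1-p_2$ and $1-p_3-p_4$), and write $D_d(\alpha)=\exp(-d(1-\alpha))$ as in \Sec~\ref{Sec_start}. Then the definition of $\varphi$ unwinds to $p_1'=D_d(p_3)$, $p_2'=1-D_d(1-p_4)$, $p_3'=1-D_d(1-p_1)$, $p_4'=D_d(p_2)$, so $\varphi$ \emph{splits} into the two autonomous planar maps on $(p_1,p_3)$ and on $(p_2,p_4)$. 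Composing twice and using $\phi_d(\alpha)=1-D_d(1-D_d(\alpha))$ from \eqref{eqPhiDK}, one finds that under $\varphi^{\circ2}$ the coordinates $p_2,p_3$ each iterate $\phi_d$ while $p_1,p_4$ each iterate $\alpha\mapsto1-\phi_d(1-\alpha)$. Since $\vec q_0$ is the point $p_1=p_2=p_3=p_4=0$, \Lem~\ref{Lemma_contract} gives $p_2,p_3\to\alpha_*$ and $p_1,p_4\to1-\lim_t\phi_d^{\circ t}(1)=1-\alpha^*$ along the even subsequence; reading off the slush coordinates this says exactly $\varphi^{\circ2t}(\vec q_0)\to\vec q_*$. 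A direct check using \eqref{eqPreLambert} shows $\vec q_*$ is a fixed point of $\varphi$, and since $\varphi$ is continuous the odd iterates converge to $\varphi(\vec q_*)=\vec q_*$ as well; hence $\varphi^*(\vec q_0)=\vec q_*$.

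For the contraction statement the plan is a routine linearisation at $\vec q_*$, where $p_1=p_4=1-\alpha^*$ and $p_2=p_3=\alpha_*$. Differentiating the four updates above (using $D_d'=dD_d$ and \eqref{eqPreLambert}) shows that $D\varphi$ at $\vec q_*$ has precisely the nonzero entries $\partial p_1'/\partial p_3=\partial p_4'/\partial p_2=d(1-\alpha^*)=:\beta$ and $\partial p_2'/\partial p_4=\partial p_3'/\partial p_1=d(1-\alpha_*)=:\gamma$, whence $(D\varphi|_{\vec q_*})^2=\beta\gamma\, I$. By \eqref{eqlittlephi_single} and \eqref{eqPreLambert} one computes $\beta\gamma=d^2(1-\alpha_*)(1-\alpha^*)=\phi_d'(\alpha_*)=\phi_d'(\alpha^*)$, and for $d\neq\eul$ this equals some $\rho_0<1$ since $\alpha_*$ (equivalently $\alpha^*$) is a \emph{stable} fixed point of $\phi_d$ — for $d<\eul$ this is Claim~\ref{claim:calculus1}~\eqref{item:phi'<1}, and for $d>\eul$ it is contained in \Prop~\ref{Prop_theta=1} via \Lem~\ref{Lemma_stability}. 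Thus the spectral radius of $D\varphi|_{\vec q_*}$ equals $\sqrt{\rho_0}<1$, and by continuity of $D\varphi$ there is an $\eps$-ball around $\vec q_*$ on which $\varphi^{\circ2}$ contracts the $d_{TV}$-distance to $\vec q_*$ by a factor $1-\delta$ for any $\delta<1-\rho_0$; passing either to $\varphi^{\circ2}$ or to a norm adapted to the eigenbasis of $D\varphi|_{\vec q_*}$ (equivalent to $d_{TV}$) then yields the stated per-step contraction, which is all that is needed to feed into the framework of~\cite{CCKLR}.

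The parts that need genuine attention are the two-step decoupling bookkeeping — this is what turns $\varphi^{\circ2}$ into two decoupled copies of $\phi_d$ and its reflection, and so lets us invoke \Lem~\ref{Lemma_contract} — and the elementary identity $\beta\gamma=\phi_d'(\alpha_*)$ underpinning stability. I expect the main technical wrinkle to be the \emph{per-step} formulation of the contraction: because $\beta=d(1-\alpha^*)<1<d(1-\alpha_*)=\gamma$, the map $D\varphi|_{\vec q_*}$, although of spectral radius $\sqrt{\rho_0}<1$, has $d_{TV}$-operator norm $\gamma>1$, so the contraction is cleanest after two applications of $\varphi$; one either states it for $\varphi^{\circ2}$ or replaces $d_{TV}$ by the equivalent adapted norm, neither of which affects the subsequent use of the lemma. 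Note finally that $d=\eul$ is necessarily excluded from the second assertion, as there $\phi_\eul'=1$ at the (unique) fixed point by Claim~\ref{claim:calculus1}~\eqref{item:critical}, matching the model's critical behaviour at that point.
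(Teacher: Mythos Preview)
Your proposal is correct and follows essentially the same route as the paper: reduce to $\varphi^{\circ 2}$, which decouples coordinatewise into $\phi_d$ and its reflection $\alpha\mapsto 1-\phi_d(1-\alpha)$, then invoke \Lem~\ref{Lemma_contract} for convergence and the stability $\phi_d'(\alpha_*)=\phi_d'(\alpha^*)<1$ for contraction. The paper does the same reduction but, exploiting the $(v)\leftrightarrow(c)$ symmetry, only writes out the two $(v)$-coordinates explicitly; your four-coordinate bookkeeping and explicit Jacobian amount to the same computation.

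You are actually more careful than the paper on the per-step issue. The paper asserts that the one-step contraction ``follows from that for $\varphi^{\circ 2}$ due to continuity'', but as you observe, the linearisation $D\varphi|_{\vec q_*}$ has an off-diagonal entry $\gamma=d(1-\alpha_*)$ which exceeds $1$ for large $d$, so the literal per-step $d_{TV}$-contraction can fail. Your remedy (work with $\varphi^{\circ 2}$, or with an adapted norm, since only the stable-limit property is needed for the application of~\cite{CCKLR}) is exactly right, and your remark that $d=\eul$ must be excluded from the second assertion is also a point the paper leaves implicit.
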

	
	In the language of~\cite{CLR_WP}, this lemma states that $\vec q_*$ is the \emph{stable limit} of $\vec q_0$.
	Before proving this lemma, we first show how to use it to prove Lemma~\ref{lem:locallimit}.
	We begin with the critical application of the main result of~\cite{CLR_WP}.
	Recall that $w(A,t)$ denote the messages after $t$ iterations of WP on the Tanner graph $G(A)$ with all initial messages set as $\slush$, and $w(A)=\lim_{t\to \infty} w(A,t)$.
	
	\begin{lemma}\label{lem:fewfurtherchanges}
		For any $d,\delta>0$ there exists $t_0 \in \NN$ such that \whp\ $w(\vec A)$ and $w(\vec A,t_0)$ are
		identical except on a set of at most $\delta n$ edges.
	\end{lemma}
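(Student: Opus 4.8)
The plan is to obtain Lemma~\ref{lem:fewfurtherchanges} as a direct application of the general convergence theorem for Warning Propagation-type processes from~\cite{CCKLR} (in the bipartite variant flagged in the footnote). That theorem says, roughly, that if the idealised message-distribution dynamics $\varphi$ admits $\vec q_*$ as the \emph{stable limit} of the initialisation $\vec q_0$ --- i.e.\ $\varphi^{\circ t}(\vec q_0)\to\vec q_*$ and $\varphi$ contracts towards $\vec q_*$ on a neighbourhood --- then \whp\ running WP on $G(\vA)$ for a bounded number $t_0=t_0(\delta)$ of rounds already agrees with the full WP fixed point on all but $\delta n$ edges. So the task reduces to checking the hypotheses of that theorem for our enhanced WP. Two of them are immediate. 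First, \emph{monotonicity}: by~\eqref{eqWP1} an $\slush$-message can only ever turn into an $\frozen$- or an $\unfrozen$-message, and then never changes again, so the message on any given directed edge eventually stops changing, $w(\vA)$ exists coordinatewise, and $w(\vA,t_0)$ and $w(\vA)$ can disagree only on edges that carry an $\slush$-message at time $t_0$ but a non-$\slush$ message in $w(\vA)$. Second, $G(\vA)$ has the local structure of a two-type $\Po(d)$ Galton--Watson tree with light-tailed degrees (Lemma~\ref{Lemma_tails}), so $\varphi$ genuinely tracks the empirical message distribution over any bounded horizon. The remaining, substantive hypothesis --- that $\vec q_*=\varphi^*(\vec q_0)$ is the \emph{stable} limit of $\vec q_0$ --- is precisely Lemma~\ref{lem:stablelimit}.

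It is worth keeping the heuristic behind that theorem in mind. Put $s_t:=\varphi^{\circ t}(\vec q_0)_\slush^{(v)}+\varphi^{\circ t}(\vec q_0)_\slush^{(c)}$; this is non-increasing and tends to the $\slush$-content $s_*$ of $\vec q_*$ by Lemma~\ref{lem:stablelimit}. By the local structure the expected number of $\slush$-messages present after $t$ rounds of WP is $(s_t+o(1))\,|E(\vA)|$, and by monotonicity every message that is $\slush$ in $w(\vA)$ was $\slush$ at every finite time; hence the number of edges on which $w(\vA,t_0)$ and $w(\vA)$ differ equals the number of messages that convert from $\slush$ to non-$\slush$ at some time $t\ge t_0$. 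The contraction property supplied by Lemma~\ref{lem:stablelimit} is exactly what forces these late conversions to be rare: after a constant number of rounds the message distribution enters the contraction neighbourhood of $\vec q_*$, and from then on the influence of the depth-$r$ shell around an edge on its message decays geometrically in $r$, so the expected number of conversions after time $t_0$ is $O((1-\delta')^{t_0})\cdot n$, below $\delta n/2$ once $t_0$ is large. Turning this expectation bound into a \whp\ statement is then done inside~\cite{CCKLR} by the standard concentration arguments for bounded-depth exploration processes.

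The step I expect to be the genuine obstacle --- and the reason a naive ``monotonicity plus expectation'' argument does not suffice without the stable-limit hypothesis --- is controlling the number of $\slush$-messages of the \emph{limiting} fixed point $w(\vA)$. Unlike the count of $\slush$-messages at a fixed time $t_0$, which is a bounded-depth local functional and concentrates by a routine bounded-differences estimate over the entries of $\vA$ (with Lemma~\ref{Lemma_tails} bounding the effect of a single entry), the limiting count is a genuinely global quantity, and this is no idle worry: the closely related $f(\vA)$ provably fails to concentrate for $d>\eul$. What rescues the argument is that WP's $\slush$ category \emph{quarantines} exactly the non-concentrating part of the problem --- the contractivity of $\varphi$ around $\vec q_*$ from Lemma~\ref{lem:stablelimit} rules out any long-range propagation of $\slush$-to-non-$\slush$ conversions, so $w(\vA)$ does concentrate and agrees with its bounded-radius approximation up to $o(n)$ edges \whp. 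Establishing that contractivity, i.e.\ Lemma~\ref{lem:stablelimit}, is therefore where the real effort goes; granting it, Lemma~\ref{lem:fewfurtherchanges} is a direct appeal to~\cite{CCKLR}, with only the routine bipartite adaptation to check.
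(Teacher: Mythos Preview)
Your proposal is correct and takes essentially the same approach as the paper: the paper's proof is the one-liner ``Since $\vec q_*$ is the stable limit of $\vec q_0$, this follows directly from~\cite[Theorem~1.5]{CCKLR},'' and your write-up simply unpacks the hypotheses (monotonicity, local tree structure, and the stable-limit property of Lemma~\ref{lem:stablelimit}) that feed into that citation. Your additional heuristic discussion of why contractivity around $\vec q_*$ is the key ingredient is sound and matches the spirit of the argument, though it goes beyond what the paper records.
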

	
	\begin{proof}
		Since $\vec q_*$ is the stable limit of $\vec q_0$, this follows directly from~\cite[Theorem~1.3]{CLR_WP}.
	\end{proof}
	
	Using Lemma~\ref{lem:fewfurtherchanges}, we can determine the local limit of the graph with final WP messages.
	
	\begin{proof}[Proof of Lemma~\ref{lem:locallimit}]
		Fix $t_0$ sufficiently large, and in particular large enough that Lemma~\ref{lem:fewfurtherchanges} can be applied,
		and furthermore $d_{TV}\bc{\varphi^{t_0}\bc{\vec q_0},\vec q_*} \ll \eps$.
		
		For any matrix $A$, we view $\graphprop(A)=\graphprop^{(t)}(A)$ and $\branchprob=\branchprob^{(t)}$
			as probability distributions over rooted, messaged graphs of
			depth at most $t$, and also define $\plaingraphprop(A)=\plaingraphprop^{(t)}(A)$ and $\plainbranchprob=\plainbranchprob^{(t)}$
			as the corresponding probability
			distributions over \emph{unmessaged} graphs
			(in which case the isomorphism classes are unions of isomorphism classes in the messaged version).

		The local structure of the graph $G(\vA)$ is that of a $\Po(d)$ branching tree,
			in the sense that \whp\
			$$
			d_{TV}\bc{\plaingraphprop^{(t+t_0)}(\vA),\plainbranchprob^{(t+t_0)}}=o(1).
			$$
			We can therefore couple the depth $t+t_0$ neighbourhoods in $G(\vA)$ with the outcomes of
			$n$ independent copies of the branching process $\cT_{t+t_0}$ in such a way that \whp\ they
			agree on all but $o(n)$ nodes.

		Next, we initialize WP messages in each of these neighbourhoods with all messages being $\slush$
			and run the WP process for $t_0$ rounds. In the branching trees, the messages at depth at most $t$ are distributed
			according to $\varphi^{t_0}\bc{\vec q_0}$, which is very close to $\vec q_*$. Therefore we can couple with an alternative
			way of generating the messages in the branching trees -- namely generating according to $\vec q_*$ at depth $t$
			and tracing the messages upwards -- in such a way that \whp\ the two alternatives agree except on a set of at
			most $\eps n/2$ nodes.

		Subsequently, Lemma~\ref{lem:fewfurtherchanges}
		implies that almost all messages at time $t_0$ are the final ones,
		and in particular there are at most $\eps n/2$ nodes whose depth $t_0$ neighbourhood will change.

		This shows that \whp\ $|\graphprop_T(\vA) - \branchprob_T|\le \eps/2$ and
			$|\graphprop_{\hat T}(\vA) - \branchprob_{\hat T}|\le \eps/2$.
			Since this holds for any constant $\eps$, the result follows.
	\end{proof}
	
	\begin{proof}[Proof of Lemma~\ref{lem:stablelimit}]
		For convenience, we will actually prove that $\vec q_*$ is the stable limit of $\vec q_0$ under the operator $\varphi^{\circ 2}$
		rather than $\varphi$ -- the advantage is that this $2$-step operator acts on the coordinates
		(corresponding to variable and check nodes) independently of each other.
		The analogous statement for $\varphi$ follows from that for $\varphi^{\circ 2}$ due to continuity.
		
		Furthermore, by symmetry we may prove the appropriate statements just for the first coordinate, i.e., for $\vec q_*^{(v)}$ -- 
		the corresponding proof for $\vec q_*^{(c)}$ is essentially identical.
		
		As a final reduction, let us observe that since for any message distribution we have $q_\frozen^{(v)}+q_\slush^{(v)} + q_\unfrozen^{(v)} = 1 $, it is sufficient to consider just two of the three coordinates. In this case it will be most convenient
		to consider $q_\frozen^{(v)}$ and $q_\unfrozen^{(v)}$, so let us restate what we are aiming to prove.
		
		Consider the operator $\tilde \varphi:[0,1]^2 \to [0,1]^2$ defined by
		$
		\tilde \varphi (x_1,x_2) := \bc{\tilde\varphi_1(x_1),\tilde\varphi_2(x_2)},
		$
		where
		\begin{align*}
			\tilde\varphi_1(x_1) & :=\exp\bc{-d\exp\bc{-d x_1}},&
			\tilde\varphi_2(x_2) & := 1- \exp\bc{-d \exp \bc{-d\bc{1-x_2}}}.
		\end{align*}
		This corresponds precisely to the action of $\varphi^{\circ 2}$ on $\bc{q_\frozen^{(v)},q_\unfrozen^{(v)}}$.
		Thus our goal is to prove that $(1-\alpha^*,\alpha_*)$ is the stable limit of $(0,0)$ under $\tilde \varphi$.
		
		Now observe that $\tilde \varphi_1(x_1) = 1-\phi_d(1-x_1)$ and recall that $\phi_d$ was
		defined in \eqref{eqphid}.
		By Lemma~\ref{Lemma_contract} and Proposition~\ref{Prop_theta=1}, $\phi_d$ is a contraction on $[\alpha^\ast,1]$ with unique fixed point $\alpha^\ast$,
		and so correspondingly $\tilde \varphi_1$ is a contraction on $[0,1-\alpha^*]$ with
		unique fixed point $1-\alpha^*$.
		
		On the other hand, $\tilde \varphi_2$ is exactly the function $\phi_d$.
		Therefore, similarly, by Lemma~\ref{Lemma_contract} and Proposition~\ref{Prop_theta=1},
		$\tilde \varphi_2$ is a contraction on $[0,\alpha_*]$ with unique fixed point $\alpha_*$.
		It follows that $(1-\alpha_*,\alpha_*)$ is the limit $\tilde \varphi^*(0,0)$.
		
		To show that it is the \emph{stable} limit, we simply observe that $\tilde \varphi_1'(1-\alpha^*) = \phi_d'(\alpha^*) <1$
		by Proposition~\ref{Prop_theta=1}, and similarly $\tilde \varphi_2'(\alpha_*) = \phi_d'(\alpha_*) <1$.
		This implies that each coordinate function is a contraction in the neighbourhood of the corresponding limit point,
		and therefore so is $\tilde \varphi$.
	\end{proof}

	\subsection{Proof of \Prop~\ref{Prop_frozen}}\label{sec:Prop_frozen}
	To determine the asymptotic proportion of vertices in $V_\frozen(\vA)$, by Lemma~\ref{lem:locallimit} it suffices to determine the probability that in $\cT$ the root receives at least one $\frozen$-message.
	This event has probability
	\begin{align*}
		\Pr\sqbc{\Po(d(q_{*,\frozen}^{(v)}))\ge 1}=1-\exp(-d(1-\alpha^*))=\alpha_*
	\end{align*}
	since $q_{*,\frozen}^{(v)}=1-\alpha^*$ and by~\eqref{eqPreLambert}.

	An analogous argument yields the statement for $V_\unfrozen(\vA)$.\qed

	\subsection{Proof of Proposition~\ref{Prop_slush}}
	To determine the asymptotic proportion of vertices in $V_\slush(\vA)$, by Lemma~\ref{lem:locallimit} it suffices to determine the probability that in $\cT$ the root receives at least two $\slush$-messages and no $\frozen$-messages.
	This occurs with probability
	\begin{align*}
		\Pr\sqbc{\Po(d(\alpha^*-\alpha_*))\ge 2} &\cdot \Pr\sqbc{\Po(d\alpha_*) = 0}\\
		& = \bc{1-\exp(-d(\alpha^*-\alpha_*)) - d(\alpha^*-\alpha_*)\exp(-d(\alpha^*-\alpha_*))}\cdot \exp\bc{-d\alpha_*}\\
		& = \exp(-d\alpha_*) - \exp(-d\alpha^*)(1+d(\alpha^*-\alpha_*)),
	\end{align*}
	as claimed. The analogous statement for $C_\slush(\vA)$ can be proved similarly, or follows from the statement for $V_\slush(\vA)$ by symmetry.
	
	The statement on degree distributions follows directly from the approximation using $\cT$ or $\hat \cT$: conditioned on a node lying in $V_\slush$ or $C_\slush$, it must certainly receive at least two $\slush$-messages from its neighbours.
	Furthermore, a neighbour is in $C_\slush$ or $V_\slush$ respectively if and only if it sends an $\slush$-message to this vertex.
	The distribution of neighbours sending $\slush$ is $\Po(\lambda)$ without the conditioning (where recall that $\lambda = d(\alpha^*-\alpha_*)$), therefore with the conditioning it is $\Po_{\ge 2}(\lambda)$, as required.\qed
	
	\subsection{Proof of \Prop~\ref{claim_littleintersection}}\label{littleintersection}
	
	For a matrix $A$ we let
	\begin{align}\label{eqVpAt}
		\Vp(A,t)&=\cbc{v\in V(A):\exists a\in\partial v:w_{a\to v}(A,t)=\frozen},&
		\Vu(A,t)&=\cbc{v\in V(A):\forall a\in\partial v:w_{a\to v}(A,t)=\unfrozen},\\
		\Cp(A,t)&=\cbc{a\in C(A):\forall v\in\partial a:w_{v\to a}(A,t)=\frozen},&
		\Cu(A,t)&=\cbc{a\in C(A):\exists v\in\partial a:w_{v\to a}(A,t)=\unfrozen}\label{eqCpAt}
	\end{align}
	be the sets of nodes of $G(A)$ classified as frozen or unfrozen after $t$ iterations of WP.
	Furthermore, let $B(v, t)$ denote the nodes that are within distance $t$ of $v$.  
	Let $\cB_t$ be the set of variable nodes $v$ such that $B(v,t)$ contains at least one cycle.

	\begin{claim} \label{DefunFrozen} Let $t_0\geq 1$.
		If $v_0 \in \Vu(A,t_0)$ and $v_0\notin\cB_{t_0}$, then $v_0 \notin \cF(A)$.
	\end{claim}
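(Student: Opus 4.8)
The plan is to distil from the hypothesis a finite tree-like subgraph $W$ of $G(A)$ around $v_0$ whose variable nodes, read as the all-ones vector, furnish an element of $\ker A$ that is non-zero at $v_0$; this exhibits $v_0\notin\cF(A)$ directly. The starting point is to unwind the $\unfrozen$-messages via the update rule \eqref{eqWP1}. Since $v_0\in\Vu(A,t_0)$, every check $a\in\partial v_0$ has $w_{a\to v_0}(A,t_0)=\unfrozen$, which by \eqref{eqWP1} means there is a neighbour $y_a\in\partial a\setminus\{v_0\}$ with $w_{y_a\to a}(A,t_0-1)=\unfrozen$; and $w_{y_a\to a}(A,t_0-1)=\unfrozen$ forces $w_{b\to y_a}(A,t_0-1)=\unfrozen$ for \emph{every} $b\in\partial y_a\setminus\{a\}$; and so on. Iterating, I would build a rooted tree with root $v_0$ by the rule: at a variable node branch to \emph{all} its incident checks except the one towards the root, and at a check node branch to a single variable neighbour witnessing its outgoing $\unfrozen$-message. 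The relevant WP time index drops by exactly one each time the construction passes from a check to its witness child, and a variable emitting an $\unfrozen$-message at time $1$ must have degree $1$ (incoming check-messages are never $\unfrozen$ at time $1$, all messages starting as $\slush$); hence the construction terminates after $O(t_0)$ levels with every leaf a degree-one variable node. As the tree so produced has depth $O(t_0)$, the acyclicity assumption $v_0\notin\cB_{t_0}$ (after enlarging the radius by the relevant absolute constant, which is harmless in the intended application) lets it embed into $G(A)$ as a connected induced subtree $W$.

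Two properties of $W$ then close the argument. First, (P1): every variable node of $W$ has \emph{all} of its $G(A)$-neighbours inside $W$ — for $v_0$ this is built in, for a branching variable we included all incident checks bar the parent, and a leaf has only its parent. Second, (P2): every check node of $W$ has \emph{exactly two} $G(A)$-neighbours in $W$. For (P2) I would count: each check of $W$ has a unique witness child, so the map ``non-root variable $\mapsto$ parent check'' is a bijection from the variables of $W$ other than $v_0$ onto the checks of $W$; thus $W$ has one more variable node than check node and, being a tree, exactly $2\lvert C(A)\cap W\rvert$ edges. On the other hand each check of $W$ already contributes its parent variable and its witness child, i.e.\ at least two, to the total $\sum_{a\in C(A)\cap W}\lvert\partial a\cap W\rvert=\lvert E(W)\rvert$; equality then forces exactly two.

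Finally, let $x\in\field^{V(A)}$ be the indicator of the variable nodes of $W$. A check $c\notin W$ has no neighbour in $W$ by (P1), so its equation reads $0=0$; a check $c\in W$ has exactly two neighbours in $W$ by (P2), so its equation reads $1+1=0$ over $\field$. Hence $x\in\ker A$, and $x_{v_0}=1$ since $v_0\in W$, so $v_0\notin\cF(A)$, as claimed.

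The crux, and the only place the local acyclicity is used, is the embedding step: one must check that the unwinding is well defined (the requisite $\unfrozen$-messages exist, which is exactly what \eqref{eqWP1} hands us) and that it neither revisits a vertex nor closes a cycle — which holds once the bounded neighbourhood it touches is a tree — after which the elementary edge count underlying (P2) does the rest. I expect the only genuinely delicate bookkeeping to be confirming the $O(t_0)$ radius bound for $W$ and matching it against the radius in the hypothesis.
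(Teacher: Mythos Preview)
Your proposal is correct and follows essentially the same approach as the paper: build a rooted tree by taking all check neighbours at each variable but only one $\unfrozen$-witness variable at each check, observe that leaves are degree-one variables, and set the indicator of the tree's variable nodes as the desired kernel vector. Your edge-counting verification of (P2) is more explicit than the paper's (which simply asserts it by construction), and your flag about the radius is well placed---the tree in fact has depth roughly $2t_0$ rather than $t_0$ under the WP convention \eqref{eqWP1}, so the statement as written is slightly off by a constant factor in the radius, but as you note this is immaterial for the application in \Prop~\ref{claim_littleintersection}.
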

	\begin{proof}
		Let $v_0 \in \Vu(A,t_0)$. We will consider a subtree $T$ of $G(A)$ rooted at $v_0$ which we produce in the
		following way.
		All of the neighbours of $v_0$ are added to $T$ as children of $v_0$. Furthermore, since each such neighbour
		$a$ is a check node which sends $v_0$ a $\unfrozen$-message at time $t_0$, the check node $a$
		has at least one further neighbour (apart from $v_0$) from which it receives
		a $\unfrozen$-message at time $t_0-1$ -- we choose one such neighbour arbitrarily and add it to $T$ as a child of $a$.
		We continue recursively, for each variable node adding all neighbours (apart from the parent) if there are any,
		and for each check node at depth $i$ adding one neighbour (distinct from the parent) from which it receives message $\unfrozen$
		at time $t_0-i$.
		
		Since the leaves at depth $t_0$ send out $\unfrozen$-messages at time $1$, they must be unary variables
		(if they exist at all which is not the case if, for example, $t_0$ is odd).
		Therefore $T$ has the property that for any of its variable nodes, all its neighbours are also in $T$,
		while all checks have precisely two neighbours in $T$.
		
		Therefore we can obtain a vector in the kernel of $A$ that sets $x_{v_0}$ to~$1$
		by simply setting all the variable nodes in $T$ to $1$ and all other variables to zero.
		This shows that $v_0\not\in \cF(A)$.
	\end{proof}
	
	\begin{proof}[Proof of \Prop\ \ref{claim_littleintersection}]
		First observe that Claim \ref{DefunFrozen} implies $\Vu(A,t_0) \bigcap \cF(A) \subset \cB_{t_0}.$   Calculating the expectation of the number of vertices lying on cycles of length up to $2t_0$ and applying Markov inequality gives us that indeed $\abs{\cB_{t_0}} = o(n)$.  
		By choosing $t_0$ sufficiently large according to Lemma \ref{lem:fewfurtherchanges} we have $\abs{\Vu(A,t_0)} = \abs{\Vu(A)} + o(n)$ \whp \ which concludes the proof.  
	\end{proof}
	
	\section{The standard messages}\label{sec:3fp}\label{Sec_FP}
	
	\noindent
	In this section we prove \Prop~\ref{lem:3fp}, which states that the proportion of frozen variables is likely close to one of the fixed points
	of $\phi_d$.
	Along the way we will establish auxiliary statements that will pave the way for the proof of \Prop~\ref{Prop_dd}
	(which rules out the unstable fixed point) in \Sec~\ref{Sec_dd} as well.
	
	\subsection{Perturbing the Tanner graph}\label{Sec_perturb}
	A key observation toward \Prop~\ref{lem:3fp} is that if we make some minor alterations to $G(\vA)$, the resulting graph $G'(\vA)$ is essentially indistinguishable from $G(\vA)$.
	Let $\TT=\TT(d)$ be the tree generated by a Galton-Watson process with the two types `variable node' and `check node'.
	The root is a variable node $v_0$.
	Each variable node spawns $\Po(d)$ check nodes as offspring.
	Similarly, the offspring of a check node consists of $\Po(d)$ variable nodes.
	In addition, let $\hat\TT=\hat\TT(d)$ be the tree generated by a Galton-Watson process with the same offspring distribution whose root is a check node $a_0$.
	Given an integer $t$, we obtain $\TT_t$ and $\hat\TT_t$ from $\TT$ and $\hat\TT$, respectively, by deleting all nodes whose distance from the root exceeds $t$, so these are trees of depth (at most) $t$.
	(Unlike the branching processes from \Def~\ref{Def_T}, the trees $\TT,\hat\TT$ do not incorporate messages.)
	
	\begin{definition}\label{Def_Olly}
		Let $0\leq\omega_1=\omega_1(n)=o(\sqrt n)$, $0\leq\omega_2=\omega_2(n)=n^{1/2-\Omega(1)}$
		and obtain $G'(\vA)$ from $G(\vA)$ as follows.
		\begin{enumerate}[(i)]
			\item Generate $\omega_1$ many $\TT_2$ trees and $\omega_2$ many $\hat \TT_1$ trees independently.
			\item For each node $v$ in the final layer of these trees (which is a variable node), embed $v$ onto a variable node of $G(\vA)$ chosen uniformly at random and independently. 
			\item 
			Embed the remaining nodes of the trees randomly onto nodes which were previously isolated such that variable nodes are embedded onto variable nodes and checks onto checks.
		\end{enumerate}
		Let $G'(\vA)$ denote the resulting graph and let $\vA'$ be its adjacency matrix. (Thus $G'(\vA)=G(\vA')$ is the Tanner graph of $\vA'$.)
	\end{definition}

	\begin{figure}\centering 
		\begin{tikzpicture}[scale=0.75] 
			\tikzstyle{var}=[circle,thick,draw,minimum size=4.5mm]
			\tikzstyle{check}=[rectangle,thick,draw,minimum size=3.75mm]
			\draw[blue!90!black] (0,0) node[var] (vr1) {};
			\draw (-0.8,2) node[check] (ci1) {};
			\draw (0.8,2) node[check] (ci2) {};
			\draw[green!75!black] (-1.2,4) node[var] (vl1) {};
			\draw[green!75!black] (0,4) node[var] (vl2) {};
			\draw[green!75!black] (0.8,4) node[var] (vl3) {};
			\draw[green!75!black] (1.6,4) node[var] (vl4) {};
			\draw[very thick,red!75!black] (vr1)--(ci1);
			\draw[very thick,red!75!black] (vr1)--(ci2);
			\draw[very thick,red!75!black] (ci1)--(vl1);
			\draw[very thick,red!75!black] (ci2)--(vl2);
			\draw[very thick,red!75!black] (ci2)--(vl3);
			\draw[very thick,red!75!black] (ci2)--(vl4);
			\draw (vl1)--(-1.6,5.5);
			\draw (vl1)--(-0.8,5.5);
			\draw (vl4)--(1,5.5);
			\draw (vl4)--(1.6,5.5);
			\draw (vl4)--(2.2,5.5);
			\draw[blue!90!black] (4,0) node[var] (vr2) {};
			\draw (3.2,2) node[check] (ci5) {};
			\draw (4.8,2) node[check] (ci6) {};
			\draw[green!75!black] (4.8,4) node[var] (vl5) {};
			\draw[very thick,red!75!black] (vr2)--(ci5);
			\draw[very thick,red!75!black] (vr2)--(ci6);
			\draw[very thick,red!75!black] (ci6)--(vl5);
			\draw[blue!90!black] (6,0) node[var] (vr3) {};

			\draw[blue!90!black] (9,2) node[check] (cr1) {};
			\draw[green!75!black] (9,4) node[var] (vl6) {};
			\draw[very thick,red!75!black] (cr1)--(vl6);
			\draw (vl6)--(8.6,5.5);
			\draw (vl6)--(9.4,5.5);
			\draw[blue!90!black] (11,2) node[check] (cr2) {};
			\draw[blue!90!black] (13,2) node[check] (cr3) {};
			\draw[green!75!black] (12.2,4) node[var] (vl7) {};
			\draw[green!75!black] (13,4) node[var] (vl8) {};
			\draw[green!75!black] (13.8,4) node[var] (vl9) {};
			\draw[very thick,red!75!black] (cr3)--(vl7);
			\draw[very thick,red!75!black] (cr3)--(vl8);
			\draw[very thick,red!75!black] (cr3)--(vl9);
			\draw (vl7)--(12.2,5.5);
			\draw (vl9)--(13.4,5.5);
			\draw (vl9)--(14.2,5.5);

			\draw[blue!90!black] (-1.5,0) node (Vtilde) {$\tilde V$};
			\draw[blue!90!black] (14.5,2) node (Ctilde) {$\tilde C$};
			\draw[green!75!black] (-2.5,4) node (Utilde) {$\tilde U$};
			\draw[rounded corners] (-3,5) rectangle (16,8);
		\end{tikzpicture}
		\caption{An instance of the randomly generated trees added to $G(\vA)$ to produce $G'(\vA)$ in Definition~\ref{Def_Olly}:
			the variable and check root sets $\tilde V,\tilde C$ are shown in blue; the attachment nodes $\tilde U$ in green;
			the thick red edges are those in the trees, which are added to $G(\vA)$;
			the thin black edges were already present in $G(\vA)$;
			all explicitly drawn nodes were already present but, apart from possibly the attachment nodes (i.e., those in $\tilde U$),
			were previously isolated in $G(\vA)$.
		} \label{fig:thimblerig}
	\end{figure}
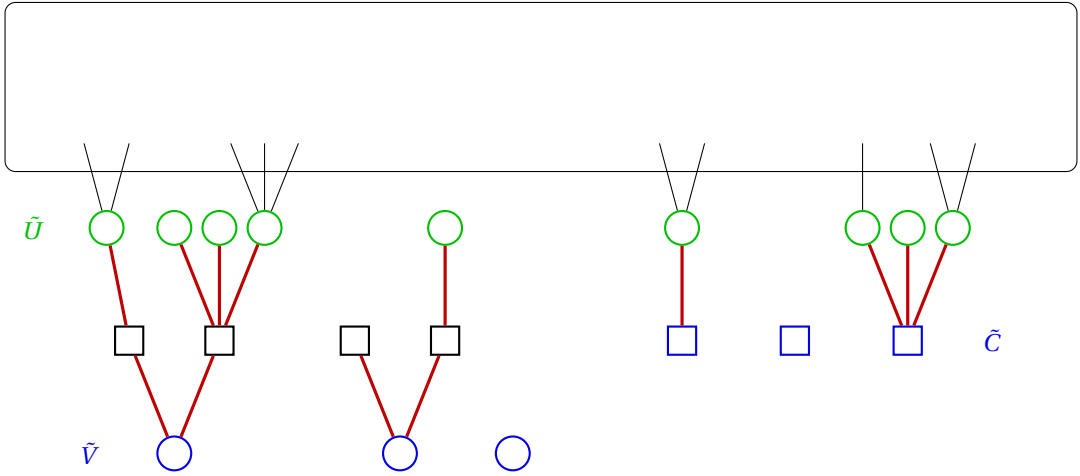
	
	Let $\tilde V,\tilde C$ denote the sets of variable and check nodes of $G'(\vA)$ respectively onto which the roots of the $\TT_2$ and $\hat\TT_1$ branching trees from \Def~\ref{Def_Olly} (i) are embedded.
	Similarly, let $\tilde U=(\partial\tilde C\cup\partial^2\tilde V)\setminus\tilde V$ be the set of variable nodes of $G(\vA)$ where the checks from \Def~\ref{Def_Olly} attach to the bulk of the Tanner graph in Step~(ii). 
	An example is shown in Figure~\ref{fig:thimblerig}.

	Note that it is possible that this process fails, for example if there are not enough isolated nodes available, in which case we simply set $G'(\vA):=G(\vA)$.
	However, since \whp\ the total size of all trees is $O(\omega_1+\omega_2)$, and \whp\ there are $\Omega(n)$ isolated variables and check nodes available, the failure probability is $\exp(-\Omega(n))$ and thus negligible for our purposes.
	For the same reason \whp\ no two nodes from the trees are embedded onto the same node of $G(\vA)$.
	
	\begin{fact}\label{fact:TVsmall}
		If $\omega_1+\omega_2=n^{1/2-\Omega(1)}$, then $\dTV(G(\vA),G'(\vA))=n^{-\Omega(1)}$.
	\end{fact}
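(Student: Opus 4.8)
The plan is to bound $\dTV$ by a likelihood-ratio estimate on a good event.
Write $\mu$ for the law of $G(\vA)=G(n,n,d/n)$ and $\mu'$ for the law of $G'(\vA)$.
First I would fix the set $\cE$ of bipartite graphs $G$ such that (a)~every vertex has degree at most $\log n$; (b)~the numbers $Z_V(G),Z_C(G)$ of isolated variable resp.\ check nodes both lie in $n\eul^{-d}+O(\sqrt n\log n)$; and (c)~fewer than $\log n$ vertices lie on a cycle of length at most $100$.
A routine first-moment and concentration computation gives $\mu(\cE^c)=n^{-\Omega(1)}$.
Moreover, the trees added in \Def~\ref{Def_Olly} have total size $O((\omega_1+\omega_2)\log n)=n^{1/2-\Omega(1)}$ \whp, so (as noted after \Def~\ref{Def_Olly}) the construction fails with probability $\eul^{-\Omega(n)}$ and, conditioned on success, \whp\ no two embedded tree-nodes land on the same vertex of $G(\vA)$; these negligible events are discarded.
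Since $\dTV(\mu,\mu')=\sum_G(\mu(G)-\mu'(G))_+\le\mu(\cE^c)+\sum_{G\in\cE}(\mu(G)-\mu'(G))_+$, it suffices to prove $\mu'(G)=(1+n^{-\Omega(1)})\mu(G)$ uniformly over $G\in\cE$.

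Fix $G\in\cE$.
The construction turns $G(\vA)$ into $G$ exactly when $G(\vA)=G\ominus D$ for some \emph{planted forest} $D\subset G$: a disjoint union of $\omega_1$ blobs of shape $\TT_2$ and $\omega_2$ blobs of shape $\hat\TT_1$ whose internal vertices (the roots, and for $\TT_2$ also the depth-one checks) are isolated in $G\ominus D$, where $G\ominus D$ is $G$ with the $e(D)$ edges of $D$ removed.
As $G(n,n,d/n)$ is a product of independent edge-indicators and $e(D)=n^{1/2-\Omega(1)}$, we have $\mu(G\ominus D)=\mu(G)\bc{(n-d)/d}^{e(D)}=\mu(G)\bc{n/d}^{e(D)}(1+o(1))$, so that
\begin{align*}
\frac{\mu'(G)}{\mu(G)}=\bc{1+o(1)}\sum_{D}\bc{\frac nd}^{e(D)}\pr\sqbc{\text{the construction produces }D\mid G(\vA)=G\ominus D}.
\end{align*}
One then needs to show this weighted sum is $1+n^{-\Omega(1)}$, and the mechanism is a term-by-term cancellation: the $\Po(d)$ offspring law contributes a factor $d^{e(D)}$, which $\bc{n/d}^{e(D)}$ cancels; the uniform embedding contributes $n^{-1}$ per leaf and $Z_V(G\ominus D)^{-1}$, $Z_C(G\ominus D)^{-1}$ per internal variable resp.\ check node; and summing over the \emph{location} of each blob restores one further power of $n$, leaving only Poisson normalisations (which sum to $1$) and the ratios $n\eul^{-d}/Z_V(G\ominus D)$, $n\eul^{-d}/Z_C(G\ominus D)$.
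For instance, the net contribution of a single $\hat\TT_1$-blob at a check $a$ works out to $\eul^{-d}(1-d/n)^{d_G(a)}/Z_C(G\ominus D)$, so summing over $a$ gives $\bc{\eul^{-d}/Z_C(G)}\sum_a(1-d/n)^{d_G(a)}=\bc{n\eul^{-d}/Z_C(G)}\bc{1+O(\log n/n)}$; the $\TT_2$-blobs are handled the same way with one extra level.

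I expect the main obstacle to be keeping every error term of size $n^{-\Omega(1)}$ through this calculation.
The delicate points are: the sampling-without-replacement corrections in the embedding of the $n^{1/2-\Omega(1)}$ internal vertices and leaves; the contribution of planted forests $D$ whose blobs close a short cycle with the rest of $G$ (controlled via property~(c), since a uniformly random attachment vertex closes a short cycle with probability $n^{-1+o(1)}$); and, above all, the fluctuation of $Z_V(G),Z_C(G)$ about $n\eul^{-d}$, which is of order $\sqrt n$ and is raised to the power $\omega_1+\omega_2$ once the per-blob contributions are multiplied out.
This is exactly where the hypothesis $\omega_1+\omega_2=n^{1/2-\Omega(1)}$ is used: on $\cE$ it makes $\bc{n\eul^{-d}/Z_C(G)}^{\omega_1+\omega_2}=\bc{1+O(n^{-1/2}\log n)}^{n^{1/2-\Omega(1)}}=1+n^{-\Omega(1)}$, whereas the weaker bound $\omega_1+\omega_2=o(\sqrt n)$ available in \Def~\ref{Def_Olly} would only give $\dTV=o(1)$.
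A hybrid argument that adds the trees one at a time does not obviously simplify matters, since the intermediate laws are not of the form $G(n,n,d/n)$.
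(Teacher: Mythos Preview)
Your proposal is correct. In the paper, Fact~\ref{fact:TVsmall} is not given a formal proof; it is declared a ``routine observation'' with a two-sentence justification to the effect that the construction adds only $n^{1/2-\Omega(1)}$ edges to previously isolated nodes with uniformly random attachment variables, and this is of lower order than the $\Theta(\sqrt n)$ standard deviation of the number of isolated nodes. The nearest fully-written analogue in the paper is the proof of \Lem~\ref{Lemma_cheat}, where a simpler perturbation (turning $\vt=o(\sqrt n)$ all-zero rows into unit rows) is handled by conditioning on the pair $(\vX,\vX_0)$ of sufficient statistics and invoking the local limit theorem for the binomial to absorb the shift $\vX_0\to\vX_0-\vt$; the sketch following Fact~\ref{fact:TVsmall} suggests the same idea is intended here.

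Your route is different in execution: rather than identifying sufficient statistics and citing a local limit theorem, you compute the likelihood ratio $\mu'(G)/\mu(G)$ directly by summing over planted forests $D$ and exhibiting the cancellation between the Poisson offspring weights $d^{e(D)}$, the uniform embedding factors, and the edge-probability correction $((1-p)/p)^{e(D)}$. What survives is essentially $(n\eul^{-d}/Z_V(G))$ and $(n\eul^{-d}/Z_C(G))$ raised to a power of order $\omega_1+\omega_2$, and the hypothesis $\omega_1+\omega_2=n^{1/2-\Omega(1)}$ is precisely what keeps this equal to $1+n^{-\Omega(1)}$ on $\cE$, as you note. This makes the mechanism and the role of the stronger bound on $\omega_1+\omega_2$ fully explicit, at the cost of heavier bookkeeping (the without-replacement corrections, the factorisation across blobs, and the $\TT_2$ case with its extra level) that a sufficient-statistic argument would bundle into a single local-limit step. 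Both approaches rest on the same underlying observation.
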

	
	\begin{proof}[Proof sketch]
		We will focus on proving the case when $\omega_1=0$, i.e., the case
			where we add only $\hat \TT_1$ trees attached to isolated check nodes.
			The argument for adding $\TT_2$ trees is conceptually similar,
			but more technically involved.
		
		Let us first observe that
			$\dTV\bc{\Bin(n,d/n),\Po(d)} = O\bc{n^{-1}} = o\bc{\omega_2^{-1}n^{-\Omega(1)}}$,
			and therefore with probability $1-n^{-\Omega(1)}$
			the effect of adding the $\omega_2$ many $\hat \TT_1$ trees to $G(\vA)$
			would be identical if the trees were constructed using the $\Bin(n,d/n)$ offspring distribution.
			For convenience, we will therefore assume in the rest of the proof that this is indeed
			how $G'(\vA)$ is constructed from $G(\vA)$.
		
		For each $i \in \NN_0$, let $\cC_{i}$ denote the set of rows of degree $i$ in $\vA$,
			and set $c_i:= |\cC_i|$. We also let $\cC_i',c_i'$ denote the analogous variables in $\vA'$.
			Now let us sample $\omega_2$ independent $\Po(d)$ random variables
			and let $c_i^{*}$ be the number of these that are equal to $i$.
			These represent the number of rows corresponding to checks in $\tilde C$ with degree $i$,
			and therefore $c_i' = c_i+c_i^*$ for $i\ge 1$,
			while $c_0' = c_0-\sum_{i\ge 1}c_i^* = c_0 - (\omega_2-c_0^*)$.  
		
		We will reveal the information about $\vA,\vA'$ a little at a time. Our aim is to show
			that in most steps of this revealing process, we perform an identical operation
			regardless of which matrix we are constructing, while in the remaining steps the difference
			in the construction only leads to a total variation distance $o(1)$.
			It will be helpful to use a superscript $\square$ to denote a quantity or set in either
			$\vA$ or $\vA'$ as appropriate, so for example $c_i^\square$ signifies either
			$c_i$ or $c_i'$ depending on the context.
			We now construct $\vA,\vA'$ as follows.
		\begin{enumerate}
				\item Reveal $c_0$ and $c_0^*$, which also determines $c_0'=c_0- (\omega_2-c_0^*)$; 
				\item Reveal $\cC_0^\square$;
				\item For each $i \in \NN$, reveal $\cC_i^\square$;
				\item For each $i \in \NN$ and each row $j\in \cC_i^\square$, reveal the $j$ positions of its $1$s.
		\end{enumerate}
		We claim that the first step is the only step at which the processes differ
			for $\vA,\vA'$.
			Indeed, in the second step we simply reveal $c_0^\square$ rows
			uniformly at random;
			in the third, for each row not in $\cC_0^\square$ we reveal its degree, which
			now has distribution $\Bin_{\ge 1}(n,d/n)$;
			and in the fourth, we simply reveal the positions of the required number of $1$s
			in a row uniformly at random.
		
		Thus the first step is the only one at which there is a difference.
			However, let us observe that $c_0 \sim \Bin\bc{n,(1-d/n)^n}$,
			which has standard deviation $\Theta\bc{n^{1/2}}$,
			while $\omega_2- c_0^* \le \omega_2 = O\bc{n^{1/2-\Omega(1)}}$.
			It is a standard fact about the binomial distribution that
			$\dTV\bc{c_0-(\omega_2-c_0^*),c_0} = O\bc{n^{-\Omega(1)}}$,
			and since $c_0'=c_0-(\omega_2-c_0^*)$, this completes the proof.
	\end{proof}

	We point out that $\tilde V,\tilde C$ are representative of $G'(\vA)$ as a whole.

	\begin{fact}\label{prop:representative}
		Let $\Lambda:(G,u)\mapsto\Lambda(G,u)\in[0,1]$ be any function that maps a pair consisting of a graph and a node to a number.
		If $1\ll\omega_1,\omega_2=n^{1/2-\Omega(1)}$, then 
		\begin{align*}
			&\Erw\abs{\frac{1}{n}\sum_{v\in V(G'(\vA))}\Lambda(G'(\vA),v) -\frac{1}{|\tilde V|}\sum_{v\in\tilde V}\Lambda(G'(\vA),v)} = o(1),\\
			&\Erw\abs{\frac1n\sum_{a\in C(G'(\vA))}\Lambda(G'(\vA),a) - \frac1{|\tilde C|}\sum_{a\in\tilde C}\Lambda(G'(\vA),a)} = o(1).
		\end{align*}
	\end{fact}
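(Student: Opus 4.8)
The plan is to reduce the statement, by an elementary sampling estimate, to a single substantial total-variation bound that formalises the ``thimblerig'' indistinguishability sketched in \Sec~\ref{Sec_intro_outline}. I will treat the variable-node assertion; the check-node one is symmetric, interchanging $\omega_1,\TT_2,\tilde V$ with $\omega_2,\hat\TT_1,\tilde C$. For a bipartite graph $G$ write $X(G)=\frac1n\sum_{v\in V(G)}\Lambda(G,v)$ and, for $T\subseteq V(G)$, $Y_T(G)=\frac1{|T|}\sum_{v\in T}\Lambda(G,v)$; both lie in $[0,1]$. Given $G'(\vA)$, let $\vec S$ be a uniformly random $\omega_1$-subset of $V(G'(\vA))$, independent of everything else. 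Sampling without replacement from values in $[0,1]$ yields $\Erw\brk{Y_{\vec S}(G'(\vA))\mid G'(\vA)}=X(G'(\vA))$ and $\Var\brk{Y_{\vec S}(G'(\vA))\mid G'(\vA)}\le\frac1{4\omega_1}$, hence $\Erw\abs{Y_{\vec S}(G'(\vA))-X(G'(\vA))}\le\frac1{2\sqrt{\omega_1}}=o(1)$. Since $(G,T)\mapsto\abs{Y_T(G)-X(G)}$ is $[0,1]$-valued, for any two distributions of marked graphs the difference of its expectations under them is at most their total-variation distance, so
\[
\Erw\abs{Y_{\tilde V}(G'(\vA))-X(G'(\vA))}\le\frac1{2\sqrt{\omega_1}}+\dTV\bc{(G'(\vA),\tilde V),(G'(\vA),\vec S)}.
\]
(On the probability-$\exp(-\Omega(n))$ event that the construction of \Def~\ref{Def_Olly} fails, $\tilde V=\emptyset$ and the contribution is negligible.) Thus it suffices to prove $\dTV\bc{(G'(\vA),\tilde V),(G'(\vA),\vec S)}=o(1)$, which I call $(\star)$.

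For $(\star)$ I would compare both marked graphs with a fully unperturbed one: let $G(\vA)\sim G(n,n,d/n)$, let $\vec u$ be a uniformly random $\omega_1$-subset of its variable nodes and $\vec b$ a uniformly random $\omega_2$-subset of its check nodes. The core claim is $\dTV\bc{(G'(\vA),\tilde V,\tilde C),(G(\vA),\vec u,\vec b)}=o(1)$, which I call $(\dagger)$. Granting $(\dagger)$, projecting onto the variable-node marks gives $\dTV\bc{(G'(\vA),\tilde V),(G(\vA),\vec u)}=o(1)$; and since $\dTV(G(\vA),G'(\vA))=n^{-\Omega(1)}$ by Fact~\ref{fact:TVsmall}, coupling $G(\vA)$ and $G'(\vA)$ to agree off a probability-$n^{-\Omega(1)}$ event and taking $\vec S=\vec u$ on the agreement event gives $\dTV\bc{(G'(\vA),\vec S),(G(\vA),\vec u)}=n^{-\Omega(1)}$. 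The triangle inequality then yields $(\star)$.

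To prove $(\dagger)$ I would reveal, in $G(\vA)$, the depth-$2$ breadth-first neighbourhoods of the vertices of $\vec u$ and the depth-$1$ neighbourhoods of those of $\vec b$. Since $\omega_1,\omega_2=n^{1/2-\Omega(1)}$, \whp\ these are pairwise disjoint trees (a collision between two bounded neighbourhoods has probability $O(1/n)$ up to polylogarithmic factors, and there are $O((\omega_1+\omega_2)^2)$ pairs, so it is this disjointness that caps the size of $\omega_1,\omega_2$). Conditioned on that event, the standard comparison of $G(n,n,d/n)$ with the $\Po(d)$ Galton--Watson tree --- with $\Bin(n,d/n)$-versus-$\Po(d)$ corrections of order $O(1/n)$ accumulated over $O(\omega_1+\omega_2)$ neighbourhoods --- shows that the joint law of the revealed neighbourhoods together with the way their leaves attach to the remaining graph is within $o(1)$ in total variation of: $\omega_1$ independent copies of $\TT_2$ and $\omega_2$ independent copies of $\hat\TT_1$, each leaf attached to a uniformly random variable node of the remainder. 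This is precisely the law produced by \Def~\ref{Def_Olly}: placing a tree root on a previously isolated node and giving it $\Po(d)$ children reproduces the degree law of a uniform variable node, and the $O(\omega_1+\omega_2)$ internal tree nodes of $G'(\vA)$ play the role of the $O(\omega_1+\omega_2)$ vertices of the revealed neighbourhoods in $G(\vA)$. Identifying the two descriptions should give $(\dagger)$.

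I expect the main obstacle to be $(\dagger)$, the rigorous thimblerig bound. The difficulty lies not in any single estimate but in assembling them correctly: controlling the cumulative $o(1)$ error arising from $\Bin$-versus-$\Po$ and from finite-size and without-replacement effects over $\omega_1+\omega_2$ neighbourhoods, and verifying that exposing bounded neighbourhoods of uniformly random vertices leaves the attachment law of their leaves exactly that of fresh $G(n,n,d/n)$-edges, so that it matches the independent uniform attachments of \Def~\ref{Def_Olly}. By contrast, the second-moment reduction in the first paragraph and the passage from $(\dagger)$ to $(\star)$ are routine.
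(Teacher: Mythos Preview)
Your approach is correct and is essentially a more explicit version of the paper's brief proof: both rest on the total-variation indistinguishability of $(G'(\vA),\tilde\vv)$ from $(G'(\vA),\vv)$ (equivalently, of $(G'(\vA),\tilde V)$ from $(G'(\vA),\vec S)$), which the paper simply asserts as a consequence of Fact~\ref{fact:TVsmall}. Your sampling/variance reduction and the intermediate comparison $(\dagger)$ with $(G(\vA),\vec u,\vec b)$ make precise the step the paper leaves implicit, and you correctly identify $(\dagger)$ as the only place where real work is needed.
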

	\begin{proof}
		The statement for $\tilde V$ follows since the local structure of $G(\vA)$, and therefore also of $G'(\vA)$ by Fact~\ref{fact:TVsmall}, is that of a $\Po(d)$ branching tree, and this is clearly also the case at the variables of $\tilde V$.
		Formally, if $\vv$ is a variable node chosen uniformly at random from $V(G'(\vA))$ and $\tilde{\vv}$ is a random element of $\tilde V$, then Fact~\ref{fact:TVsmall} implies that $(G'(\vA),\vv)$ and $(G'(\vA),\tilde\vv)$ have total variation distance $o(1)$ given $G'(\vA)$ \whp\ 
		
		To justify this last statement in more detail, consider the process of attempting to distinguish between the two,
			i.e., of attempting to determine whether a variable $v$ of $G(\vA)$ was chosen uniformly at random from $\tilde V$ (as for $\tilde \vv$)
			or from $V$ (as for $\vv$), with only the graph $G'(\vA)$ to hand and no knowledge of $G(\vA)$ or of where edges were added.
			We can reveal the isomorphism class of the depth~$2$ neighbourhood of~$v$, but can couple both distributions such that the outcome is identical
			$\whp$, since both have asymptotically the distribution of a $\Po(d)$ branching tree truncated at depth~$2$.
			Conditioned on the isomorphism class, the location of the nodes in this depth~$2$ neighbourhood is also uniformly random in 
			both distributions. Finally, the remainder of the graph in both models is simply that of $G(\vA)$ conditioned on
			having no edges incident to the depth one neighbourhood of~$v$.

		Therefore, the empirical average of $\Lambda$ on the entire set $V(G'(\vA))$ is well approximated by the average on $\tilde V$ \whp\
		The second statement concerning  $\tilde C$ follows similarly.
	\end{proof}
	
	\subsection{Construction of the standard messages}
	In \Sec~\ref{Sec_WP} we defined Warning Propagation messages via an explicit combinatorial construction that captured our intuition as to the causes of freezing.
	In the following we pursue a converse path.
	We define a set of messages implicitly, purely in terms of algebraic reality.
	We call these $\{\frozen,\unfrozen\}$-valued messages the {\em standard messages}.
	The battle plan is to ultimately match this implicit definition with the explicit construction from \Sec~\ref{Sec_WP}.
	
	The standard messages can be defined for any $m\times n$-matrix $A$.
	Given a subset $U$ of nodes of a graph~$G$, we denote by $G-U$ the graph
	obtained from $G$ by deleting $U$ and all incident edges. For a node $x$, we write $G-x$ instead of $G-\{x\}$.
	For each adjacent variable/check pair $(v,a)$ of $G(A)$ we define
	\begin{align}\label{eqStandard}
		\fm_{v\to a}(A)&=\begin{cases}
			\frozen&\mbox{ if $v$ is frozen in $G(A)-a$,}\\
			\unfrozen&\mbox{ otherwise,}
		\end{cases}\quad
		&
		\fm_{a\to v}(A)&=\begin{cases}
			\frozen&\mbox{ if $v$ is frozen in $G(A)-(\partial v\setminus\cbc a)$,}\\
			\unfrozen&\mbox{ otherwise.}
		\end{cases}
		\qquad
	\end{align}
	
	\noindent
	Hence, $\fm_{v\to a}(A)=\frozen$ iff $v$ is frozen in the matrix obtained from $A$ by deleting the $a$-row.
	Moreover, $\fm_{a\to v}(A)=\frozen$ iff $v$ is frozen in the matrix obtained by removing the rows of all $b\in\partial v$ except $a$.
	Let $\fm(A)=(\fm_{v\to a}(A),\fm_{a\to v}(A))_{v\in\partial a}$.
	
	Further, we define $\{\frozen,\fu,\unfrozen\}$-valued marks for the variables and checks by letting
	\begin{align}\label{eqMarks1}
		\fm_v(A)&=
		\begin{cases}
			\frozen&\mbox{ if $\fm_{a\to v}(A)=\frozen$ for at least two $a\in\partial v$,}\\
			\fu&\mbox{ if $\fm_{a\to v}(A)=\frozen$ for precisely one $a\in\partial v$,}\\
			\unfrozen&\mbox{ otherwise,}
		\end{cases}\quad\\
		\fm_a(A)&=\begin{cases}
			\frozen&\mbox{ if $\fm_{v\to a}(A)=\frozen$ for all $v\in\partial a$,}\\
			\fu&\mbox{ if $\fm_{v\to a}(A)=\frozen$ for all but precisely one $v\in\partial a$,}\\
			\unfrozen&\mbox{ otherwise.}
		\end{cases}\qquad
		\label{eqMarks2}
	\end{align}
	The intended semantics is that, barring intuitively unlikely dependencies, $\frozen$ and $\fu$ both represent frozen variables/checks, meaning that a variable $v$ is frozen if $\fm_v(A)\neq\unfrozen$
	while for any check $a$ we have $\fm_a(A)\neq\unfrozen$ if all variables $v\in\partial a$ are frozen.
	But for checks or variables with mark $\fu$, freezing hangs by a thread since, for instance, a variable $v$ with $\fm_v(A)=\fu$ receives just a single `freeze' message.
	This manifests itself in the messages sent out by $\fu$-variables or checks: a variable or check with mark $\frozen$ can only
		ever send out messages of $\frozen$, and similarly for $\unfrozen$ instead of $\frozen$, but a variable or check with a mark of $\fu$ can send out
		different messages to different neighbours.
		We will see a much more detailed description of this phenomenon in \Cor~\ref{Cor_stats} below.

		Although the standard messages are derived from algebraic reality rather than the WP algorithm,
		we can still consider what happens when applying the Warning Propagation operator $\WP_A$ from \Sec~\ref{Sec_WP}
		to them. Since the standard messages involve only $\frozen$ and $\unfrozen$ messages, the updated messages $\hat\fm$ given
		by~\eqref{eqWP1} can be described more simply:
	\begin{align}\label{eqSimpleWP1}
		\hat\fm_{v\to a}(A)&=\begin{cases}
			\frozen&\mbox{ if $\fm_{b\to v}(A)=\frozen$ for some $b\in\partial v\setminus\cbc a$,}\\
			\unfrozen&\mbox{ otherwise,}
		\end{cases}\\\label{eqSimpleWP2}
		\hat\fm_{a\to v}(A)&=\begin{cases}
			\frozen&\mbox{ if $\fm_{y\to a}(A)=\frozen$ for all $y\in\partial a\setminus\cbc v$,}\\
			\unfrozen&\mbox{ otherwise.}
		\end{cases}\quad
	\end{align}
	We next show that the standard messages constitute an approximate fixed point of the $\WP_{\vA}$ operator, i.e., that 
		$\hat\fm$ and $\fm$ are almost always equal, and that the marks mostly match their intended semantics \whp
	
	\begin{lemma}\label{Lemma_standard}
		For all $d>0$ we have
		\begin{align}\label{eqLemma_standard1}
			\Erw\sum_{\substack{v\in V(\vA)\\a\in\partial v}}\vecone\cbc{\fm_{v\to a}(\vA)\neq\hat\fm_{v\to a}(\vA)}+\vecone\cbc{\fm_{a\to v}(\vA)\neq\hat\fm_{a\to v}(\vA)}&=o(n),\\
			\Erw\abs{\cbc{v\in V(\vA):\fm_v(\vA)\neq\unfrozen}\triangle\cF(\vA)}=o(n),\qquad
			\Erw\abs{\cbc{a\in C(\vA):\fm_a(\vA)\neq\unfrozen}\triangle\hat\cF(\vA)}&=o(n).
			\label{eqLemma_standard2}
		\end{align}
	\end{lemma}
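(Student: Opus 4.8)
The plan is to match the algebraically defined standard messages $\fm(\vA)$ with the Warning Propagation messages $w(\vA)$ of \Sec~\ref{Sec_WP}, exploiting that by \Lem~\ref{lem:fewfurtherchanges} the latter stabilise after a bounded number of rounds and are governed by bounded-depth neighbourhoods. I would isolate a deterministic ``soundness'' part and a probabilistic ``completeness'' part.

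For soundness, recall that deleting rows of an $\FF_2$-matrix only enlarges its kernel. Hence $\fm_{v\to a}(A)=\frozen$, i.e.\ $v$ frozen in $G(A)-a$, forces $v\in\cF(A)$, and likewise $\fm_{a\to v}(A)=\frozen$ forces $v\in\cF(A)$ because $G(A)-(\partial v\setminus a)$ has yet fewer rows; thus $\cbc{v:\fm_v(A)\neq\unfrozen}\subset\cF(A)$, and if $\fm_a(A)\neq\unfrozen$ then all but at most one $v\in\partial a$ are frozen, whereupon the row $a$ forces the last one too, so $a\in\hat\cF(A)$. The same monotonicity gives one direction of the fixed-point identity: $\hat\fm_{v\to a}(A)=\frozen$ means $\fm_{b\to v}(A)=\frozen$ for some $b\in\partial v\setminus a$, so $v$ is already frozen in $G(A)-(\partial v\setminus b)$, a subsystem of $G(A)-a$, hence $\fm_{v\to a}(A)=\frozen$. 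Finally, an induction on the WP rounds exactly as for \eqref{eqLemma_WP1} shows $w_{v\to a}(A)=\frozen\Rightarrow\fm_{v\to a}(A)=\frozen$ and $w_{a\to v}(A)=\frozen\Rightarrow\fm_{a\to v}(A)=\frozen$; in particular $\Vp(A)\subset\cbc{v:\fm_v(A)\neq\unfrozen}$.

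For completeness off the slush I would argue dually. If $w_{v\to a}(\vA)=\unfrozen$ then all $b\in\partial v\setminus a$ send $v$ an $\unfrozen$-message, and removing the check $a$ only makes messages ``more $\unfrozen$'', so $v\in\Vu(G(\vA)-a,t_0)$; provided the depth-$t_0$ neighbourhood of $v$ is acyclic, Claim~\ref{DefunFrozen} applied to $G(\vA)-a$ then gives $v\notin\cF(G(\vA)-a)$, i.e.\ $\fm_{v\to a}(\vA)=\unfrozen$ -- and symmetrically for $a\to v$ messages (using $G(\vA)-(\partial v\setminus a)$, which by \Lem~\ref{Lemma_tails} deletes at most $\log n$ rows) and for the marks. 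Since the number of vertices lying on short cycles is $o(n)$ in expectation and, by \Lem~\ref{lem:fewfurtherchanges}, $w(\vA,t_0)$ and $w(\vA)$ differ on $o(n)$ edges \whp, it follows that $\fm(\vA)$ agrees with $w(\vA)$ on every non-$\slush$ edge and $\fm_v(\vA),\fm_a(\vA)$ agree with the WP classification on every non-$\slush$ node, up to $o(n)$ exceptions in expectation. Comparing the update rule \eqref{eqSimpleWP1}--\eqref{eqSimpleWP2} with the $\WP_{\vA}$-fixed-point equations satisfied by $w(\vA)$ then settles \eqref{eqLemma_standard1} and \eqref{eqLemma_standard2} off the slush, while \Prop~\ref{claim_littleintersection} shows that a frozen variable lies in $\Vp(\vA)$ or $V_\slush(\vA)$ up to $o(n)$.

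It remains to control the slush, and this is the main obstacle. Algebraically, $\frozen$-valued in-messages pin variables to $0$ while $\unfrozen$-valued in-messages are inert, so the slush behaves like the autonomous system $\vA_\slush$ of \eqref{eqSlushIntro1}--\eqref{eqSlushIntro2}, whose Tanner graph is locally a $\Po_{\geq2}(\lambda)$ branching tree by \Prop~\ref{Prop_slush}; on a tree the standard messages are exactly the frozenness indicators of the pendant subtrees and hence satisfy \eqref{eqSimpleWP1}--\eqref{eqSimpleWP2} identically, which yields \eqref{eqLemma_standard1} on the slush edges through a local-limit argument. The delicate point is \eqref{eqLemma_standard2} on the slush: a slush variable $v$ may be algebraically frozen only because the slush freezes \emph{as a whole}, a genuinely long-range effect, and I must verify that $\fm_v(\vA)\neq\unfrozen$ still detects this. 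Here I would invoke the perturbation of \Sec~\ref{Sec_perturb}: since $G'(\vA)$ is statistically indistinguishable from $G(\vA)$ (Fact~\ref{fact:TVsmall}) with representative root sets (Fact~\ref{prop:representative}), deleting the at most $\log n$ rows $\partial v$ to form the minor $\vA^{-v}$ (drop column $v$ and all rows incident to $v$) alters the frozenness of the distance-two variables $\partial a\setminus v$ for only $o(n)$ roots $v$ in expectation -- a robustness estimate of the pinning/cut-metric type behind \Prop~\ref{Prop_sym}. Consequently, for all but $o(n)$ slush variables, $\vecone_{\partial a\setminus v}$ lies in the rowspace of $\vA^{-v}$, i.e.\ $\fm_{a\to v}(\vA)=\frozen$, exactly when every variable of $\partial a\setminus v$ is frozen in $\vA$, and when the slush freezes the latter holds for every $a\in\partial v\cap C_\slush(\vA)$. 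Summing the $o(n)$ error terms over all nodes and edges, with \Lem~\ref{Lemma_tails} controlling the degree tails, finishes the proof; the crux throughout -- and the source of the technical difficulty -- is that single robustness estimate, namely bounding how deleting $v$'s incident rows reverberates through the algebraically defined frozenness of the rest of the graph.
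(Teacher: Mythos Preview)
Your decomposition into ``off the slush via WP-matching'' and ``on the slush via perturbation'' differs structurally from the paper, which treats all edges uniformly through the thimblerig construction (Claim~\ref{Lemma_msg} and the surrounding argument). Your soundness observations and the off-slush matching with the WP messages (via Claim~\ref{DefunFrozen} applied to $G(\vA)-a$ and $G(\vA)-(\partial v\setminus\cbc a)$) are essentially correct and give a valid route for non-slush edges, though they become redundant once the thimblerig machinery is in place.

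The real gap is your claim that \eqref{eqLemma_standard1} on slush edges follows ``through a local-limit argument'' from the fact that on a tree the standard messages coincide with the pendant-subtree frozenness indicators. The standard messages are \emph{global}: $\fm_{v\to a}(\vA)=\frozen$ asks whether $v$ is frozen in all of $G(\vA)-a$, and for a slush edge this depends on whether the entire slush freezes in $G(\vA)-a$, a long-range event that the bounded-depth neighbourhood cannot decide. Two Tanner graphs can have identical depth-$t_0$ neighbourhoods of a slush edge $(v,a)$ yet opposite values of $\fm_{v\to a}$, depending on whether the global slush freezes; so no local-limit transfer closes this step. The perturbation argument you correctly invoke for \eqref{eqLemma_standard2} is needed for \eqref{eqLemma_standard1} just as much, and the paper applies it to both at once. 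The point your sketch elides is \emph{why} the cut-metric robustness bites: for a thimblerig root $v\in\tilde V$ the checks $\partial v$ are newly added, so removing them from $G'(\vA)$ returns essentially to $G(\vA)$, and the distance-two variables $\partial a\setminus\cbc v\subset\tilde U$ are \emph{uniformly random} in $V(\vA)$; hence the global bound $|\cF(\vA'')\triangle\cF(\vA)|=o(n)$ from \Prop~\ref{Prop_contig} does control the frozenness of these specific variables. If instead one works directly with a generic $v\in V(\vA)$, as your phrasing ``deleting the at most $\log n$ rows $\partial v$ to form $\vA^{-v}$'' suggests, the distance-two variables $\partial^2 v$ are precisely those most likely to change status when $\partial v$ is removed, and the global $o(n)$ bound does not preclude the changes concentrating there.
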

	
	We prove \Lem~\ref{Lemma_standard} by way of the perturbation from \Sec~\ref{Sec_perturb}.
	Specifically, in light of Fact~\ref{prop:representative} it suffices to consider $G'(\vA)$ and the sets of variables/checks $\tilde V,\tilde C$
	onto which the roots of the $\TT_2$ and $\hat\TT_1$ branching trees from \Def~\ref{Def_Olly} are embedded.
	The following claim summarises the main step of the argument.
	Recall that $\tilde U$ is the set of variable nodes where the trees from \Def~\ref{Def_Olly} attach to the bulk of the Tanner graph in Step~(ii) (see Figure~\ref{fig:thimblerig}). 
	
	\begin{claim}\label{Lemma_msg}
		There exists $1\ll\omega^*=\omega^*(n)\leq n^{1/2-\Omega(1)}$ such that for all $\omega_1,\omega_2\leq\omega^*$ and every $d>0$ \whp\ we have
		\begin{align}\label{eqLemma_msg1}
			\fm_{y\to a}(\vA')&=\frozen\ \Leftrightarrow\ y\in\cF(\vA)&&\mbox{for all }a\in\tilde C\cup\partial\tilde V,\,y\in\tilde U\cap\partial a.
		\end{align}
		Furthermore, \whp\ a random vector $\vx\in\ker\vA$ satisfies
		\begin{align}\label{eqLemma_msg2}
			\pr\brk{\forall y\in \tilde U\setminus\cF(\vA):\vx_y=\sigma_y\mid G(\vA),G'(\vA)}&=2^{-|\tilde U\setminus\cF(\vA)|}&&\mbox{for all }\sigma\in\field^{\tilde U\setminus\cF(\vA)}.
		\end{align}
		Finally, $\cF(\vA)\subset\cF(\vA')$ and \whp\ we have $f(\vA')=f(\vA)+o(1)$.
	\end{claim}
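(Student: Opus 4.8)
The plan is to establish the three assertions separately, ordered by difficulty. The containment $\cF(\vA)\subseteq\cF(\vA')$, the bound $f(\vA')\geq f(\vA)-o(1)$, and the forward implication ``$y\in\cF(\vA)\Rightarrow\fm_{y\to a}(\vA')=\frozen$'' will come from a monotonicity observation; the freeness identity~\eqref{eqLemma_msg2} will come from a linear‑algebra computation together with a first moment bound; and the converse implication in~\eqref{eqLemma_msg1} together with the matching bound $f(\vA')\leq f(\vA)+o(1)$ forms the hard core, where the pinning and cut‑metric machinery enters.

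For the easy parts, recall that $\fm_{y\to a}(\vA')=\frozen$ precisely means that $y$ is frozen in $G(\vA')-a$, and that $G(\vA')-a$ arises from $G(\vA)$ by adjoining (i)~the $\hat\TT_1$‑checks distinct from $a$, which merely appends rows to the matrix, and (ii)~the $\TT_2$‑trees other than the one through $a$ (or its remnant, if $a\in\partial\tilde V$), each contributing a fresh root variable $v_0$ that occurs only inside its own tree. A short computation then shows that the image of $\ker(G(\vA')-a)$ under projection onto the original coordinates is contained in $\ker\vA$: appending a row can only shrink the kernel, while adjoining a $\TT_2$‑tree with middle checks $c_1,\dots,c_s$ restricts to those $x\in\ker\vA$ whose parities $\sum_{u\in\partial c_i\setminus\{v_0\}}x_u$ ($i=1,\dots,s$) all coincide. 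Hence every coordinate frozen in $\vA$ stays frozen, which gives $\cF(\vA)\subseteq\cF(\vA')$ and the implication ``$y\in\cF(\vA)\Rightarrow\fm_{y\to a}(\vA')=\frozen$''; and since only $\omega_1=o(\sqrt n)$ new variables $v_0$ are added, $f(\vA')\geq f(\vA)-o(1)$.

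For~\eqref{eqLemma_msg2}, fix $G(\vA)$ and $G'(\vA)$ and write $T=\tilde U\setminus\cF(\vA)$. A standard character argument over $\field$ shows that a uniformly random $\vx\in\ker\vA$ is uniform on $\field^{T}$ if and only if the row space of $\vA$ contains no non‑zero vector supported inside $T$. Since $\tilde U$ is a uniformly random set of size $t:=|\tilde U|=n^{1/2-\Omega(1)}$ chosen independently of $\vA$, it suffices to estimate the expected number of non‑zero row‑space vectors supported in $\tilde U$ and disjoint from $\cF(\vA)$. \Whp\ for every $r$ the number $M_r$ of row‑space vectors of support exactly $r$ disjoint from $\cF(\vA)$ is at most $nB^{r}$ for a suitable constant $B=B(d)$; this follows from \Lem~\ref{Lemma_tails} once one observes that an $\field$‑combination of rows can have small support only when the participating checks span a subgraph with an excess of edges, of which sparse random graphs contain few, and that removing $\cF(\vA)$ is precisely what suppresses the $\Theta(n)$ unary‑check contributions at $r=1$. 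A fixed $r$‑set lies inside $\tilde U$ with probability $(1+o(1))(t/n)^{r}$, so summing over $r\geq2$ yields $O(B^{2}t^{2}/n)=n^{-\Omega(1)}$, which establishes~\eqref{eqLemma_msg2}.

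For the converse implication in~\eqref{eqLemma_msg1} and the bound $f(\vA')\leq f(\vA)+o(1)$ we must rule out that the adjoined constraints freeze coordinates through long‑range effects. The plan is to use the pinning/replica‑symmetry argument behind \Prop~\ref{Prop_pin} and~\cite{Ayre}, which shows that the uniform distribution $\mu$ on $\ker\vA$ is $\eps$‑extremal \whp, together with \Prop s~\ref{Prop_cut} and~\ref{Prop_contig}, which guarantee that $\mu$ stays close to a product measure even after conditioning on the $O(\omega^{*}d)$ tree constraints, provided $\omega^{*}=\omega^{*}(n)$ grows slowly enough; one reveals the constraints one at a time. Extremality makes each constraint generic: a random equation on $O(\log n)$ coordinates (respectively, each equality produced by a $\TT_2$‑tree) holds with $\mu$‑probability $\tfrac12+o(1)$ unless its parity is already in the row space, and conditioning on it freezes at most one equivalence class of previously‑unfrozen coordinates. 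Such classes have size $O(\log n)$ \whp, since up to $O(1)$ exceptions they are the components of the equivalence graph generated by the degree‑$2$ checks, whose mean degree $d^{2}\eul^{-d}\leq4\eul^{-2}<1$ keeps it subcritical. Summing over all constraints freezes only $O(\omega^{*}d\log n)=n^{1/2-\Omega(1)}$ new coordinates, giving $f(\vA')\leq f(\vA)+o(1)$; applying the same reasoning to the single, uniformly random attachment variable $y$ — frozen in $\vA$ with probability $f(\vA)+o(1)$ and untouched otherwise — yields ``$\fm_{y\to a}(\vA')=\frozen\Rightarrow y\in\cF(\vA)$'', and combining with the previous paragraph gives $f(\vA')=f(\vA)+o(1)$. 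I expect this last step to be the main obstacle: controlling the cumulative freezing effect of the entire family of added trees while keeping $\mu$ close to a product measure throughout is exactly what forces the slow growth of $\omega^{*}$ and the coordinated use of \Prop s~\ref{Prop_cut} and~\ref{Prop_contig} and the pinning lemma behind \Prop~\ref{Prop_pin}; the first two paragraphs are, by contrast, elementary linear algebra and a union bound.
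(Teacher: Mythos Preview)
Your monotonicity paragraph is fine, though note that the root variables $v_0\in\tilde V$ are not ``added'' --- they were already present as isolated nodes --- so in fact $V(\vA')=V(\vA)$ and $\cF(\vA)\subset\cF(\vA')$ gives $f(\vA')\geq f(\vA)$ exactly.

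The two substantive parts of your plan each take a detour that introduces an unjustified step, and in both cases the paper's route is shorter.

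For~\eqref{eqLemma_msg2} you propose to union-bound over row-space vectors of small support disjoint from $\cF(\vA)$, claiming $M_r\leq nB^r$. This bound is not a consequence of \Lem~\ref{Lemma_tails} (which is purely about degrees), and the ``excess of edges'' heuristic does not establish it: a weight-$r$ relation among unfrozen variables can arise from arbitrary combinations of high-degree checks, not only from dense subgraphs. The paper bypasses this entirely. Since $\tilde U$ is a uniformly random $O(\omega^*)$-set, \Prop~\ref{Prop_pin} (with $\omega^*$ growing slowly enough) already gives that the joint law of $(\vx_y)_{y\in\tilde U\setminus\cF(\vA)}$ is within $n^{-\Omega(1)}$ of uniform. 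But the image of $\ker\vA$ under projection to these coordinates is a linear subspace of $\field^{\tilde U\setminus\cF(\vA)}$; if it were proper, each atom would have mass either $0$ or at least $2\cdot 2^{-|\tilde U\setminus\cF(\vA)|}$, contradicting the approximation. So the subspace is full and the projection is exactly uniform. No counting of row-space vectors is needed.

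For $f(\vA')\leq f(\vA)+o(1)$ and the backward implication in~\eqref{eqLemma_msg1}, you correctly invoke extremality and \Prop~\ref{Prop_contig}, but then argue that each added constraint freezes at most one equivalence class of unfrozen variables, and that such classes have size $O(\log n)$ because they are components of the degree-$2$-check graph. The second claim is not true in general: two unfrozen variables can be forced equal via a degree-$3$ check one of whose neighbours is frozen, with no degree-$2$ check involved. The paper avoids this structural question altogether. Since $\vA'$ differs from $\vA$ by at most $O(\omega^*)$ additional rows, the uniform measures on $\ker\vA$ and $\ker\vA'$ are mutually $2^{O(\omega^*)}$-contiguous; hence \Prop~\ref{Prop_contig} (with $\omega^*$ slow enough) yields $\cutm(\vx,\vx')=o(1)$ directly, and the marginal bound~\eqref{eqMargBound} together with Fact~\ref{fact_ker} converts this into $f(\vA')-f(\vA)=o(1)$. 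For~\eqref{eqLemma_msg1} the paper then exposes $G'(\vA)$ in two rounds: first embed every new check except the given $a$, obtaining $\vA''$ with $|\cF(\vA)\triangle\cF(\vA'')|=o(n)$ by the same cut-metric argument; then the neighbourhood $\partial a\setminus\tilde V$ is a fresh uniform sample independent of $\vA''$, so \whp\ it misses the $o(n)$-sized symmetric difference, which is exactly the statement $\fm_{y\to a}(\vA')=\frozen\Leftrightarrow y\in\cF(\vA)$.

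In short: you have identified the right black boxes (\Prop s~\ref{Prop_pin} and~\ref{Prop_contig}), but you then layer on two auxiliary combinatorial claims --- the $M_r$ bound and the equivalence-class bound --- that you do not prove and that the paper does not need. Drop both; the contiguity/cut-metric machinery plus the subspace observation already closes the argument.
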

	
	In words,~\eqref{eqLemma_msg1} states that the standard messages in $G'(\vA)$ sent down from the attachment nodes
		into the added trees are determined by whether the corresponding attachment node was frozen in $\vA$. This is intuitively
		natural to expect (and would be trivial if the trees we added consisted of just one additional edge), and it may well be that
		an added tree causes an attachment node to freeze in an obvious way without needing to consider the rest of $G(\vA)$,
		but the main take-away message of~\eqref{eqLemma_msg1} is that
		added trees do not cause strange and unexpected freezing due to restrictions passing from the added trees
		into the rest of the graph and returning back to the attachment nodes.

		On the other hand,~\eqref{eqLemma_msg2} states that the attachment nodes which were not frozen
		in $G(\vA)$ behave independently in $\ker(\vA)$, in the sense that any combination of assignments
		on these vertices is equally likely in a random vector $\vx$. (Of course, this will certainly \emph{not}
		be true after the addition of trees since, for example, we will likely add some check nodes in $\tilde C$
		of degree~$2$, which force their two neighbours to take the same value.)

	\begin{proof}
		Let us begin with the last statement.
		The inclusion $\cF(\vA)\subset\cF(\vA')$ is deterministically true because $\vA'$ is obtained from $\vA$ by effectively adding checks (viz.\ ``activating'' formerly dormant isolated checks).
		Moreover, \Prop~\ref{Prop_pin} shows that the distribution of a random $\vx\in\ker\vA$ is $n^{-\Omega(1)}$-symmetric \whp\
		Since $\vA'$ is obtained from $\vA$ by adding no more than $O(\omega^*)$ checks \whp\ and since any additional check reduces the nullity by at most one, the distributions of a uniformly random $\vx'\in\ker\vA'$ and of $\vx$ are mutually $2^{O(\omega^*)}$-contiguous \whp\
		Therefore, \Prop~\ref{Prop_contig} implies that 	\whp
		\begin{align}\label{eqLemma_msg3}
			\cutm(\vx,\vx')=o(1),
		\end{align}
		provided that $\omega^*=\omega^*(n)$ grows sufficiently slowly (so in particular setting $\eps=2^{-O(\omega^*)}$,
			the $\delta$ given by \Prop~\ref{Prop_contig} is larger than $n^{-\Omega(1)}$).
		Finally, since the marginals of the individual entries $\vx_i,\vx_i'$ are either uniform or place all mass on zero by Fact~\ref{fact_ker}, \eqref{eqMargBound} and \eqref{eqLemma_msg3} yield
		\begin{align}\label{eqLemma_msg3a}
			f(\vA')-f(\vA)=\frac1n\sum_{i=1}^n\vecone\{v_i\in\cF(\vA')\}-\vecone\{v_i\in\cF(\vA)\}
			\leq\frac2n\sum_{i=1}^n\dTV(\vx_i,\vx_i')\leq4\cutm(\vx,\vx')=o(1).
		\end{align}
		
		The other two assertions \eqref{eqLemma_msg1} and \eqref{eqLemma_msg2} follow from similar deliberations.
		Indeed, to prove \eqref{eqLemma_msg2} we observe that given $G(\vA)$ the set $\tilde U$ of variable nodes where the bottom layers of the trees from \Def~\ref{Def_Olly} attach in Step~(ii) is just a uniformly random set of $O(\omega^*)$ variable nodes of $G(\vA)$.
		Therefore, 
		provided $\omega^*\to\infty$ sufficiently slowly, \Prop~\ref{Prop_pin} shows that \whp\
		\begin{align}\label{eqLemma_msg5}
			\pr\brk{\forall y\in \tilde U\setminus\cF(\vA):\vx_y=\sigma_y\mid G(\vA),G'(\vA)}-2^{-|\tilde U\setminus\cF(\vA)|}
			= O\bc{n^{-\Omega(1)}}&&\mbox{ for any }\sigma\in\field^{\tilde U\setminus\cF(\vA)}.
		\end{align}
		Now, the projections of the vectors $x\in\ker\vA$ onto the coordinates in $\tilde U\setminus\cF(\vA)$ form a subspace of $\field^{\tilde U\setminus\cF(\vA)}$. 
		Assuming that $|\tilde U|=O(\omega^*)$ and that $\omega^*\to\infty$ sufficiently slowly, \eqref{eqLemma_msg5} implies that the dimension of this subspace equals $|\tilde U\setminus\cF(\vA)|$.
			It follows that there exists a set $\tilde B$ of $ |\tilde U\setminus\cF(\vA)|$ vectors in $\ker \vA$ whose restriction to
			$\tilde U\setminus\cF(\vA)$ forms the standard basis. This set $\tilde B$ can be extended to a basis $B$ of $\ker \vA$
			in which each vector of $B\setminus\tilde B$ takes value $0$ on $\tilde U\setminus\cF(\vA)$.
			A random vector of $\ker \vA$ can be generated by taking each element of $B$ with probability $1/2$ independently
			and taking the sum. Since in particular each element of $\tilde B$ is included in the sum with probability $1/2$ independently,
			\eqref{eqLemma_msg2} follows.
	
	Regarding \eqref{eqLemma_msg1}, fix some check $a\in\tilde C\cup\partial\tilde V$ and think of $G'(\vA)$,
	and therefore also its adjacency matrix~$\vA'$, as being constructed from $G(\vA)$ in two steps.
	In the first step we select all the nodes onto which the additional trees will be embedded except for $\partial a \cap \tilde U$,
		and add all of the additional edges except those incident to $a$. (Note that if $a\notin \tilde C$, we do \emph{not} add the edge between
		$a$ and the corresponding variable node of $\tilde V$).
	Let $G''(\vA)$ be the outcome of this first step and let $\vA''$ be its adjacency matrix.
	Subsequently we independently choose the set of neighbours $\partial a\setminus\tilde V$ among the variable nodes of $G(\vA)$
	and add the edges incident to $a$ to obtain $G'(\vA)$.
	Let $\vx''$ be a random element of $\ker\vA''$.
	Repeating the argument towards \eqref{eqLemma_msg3}  we see that $\cutm(\vx,\vx'')=o(1)$ \whp\
	Hence, repeating the steps of \eqref{eqLemma_msg3a} we conclude that 
	$|\cF(\vA)\triangle\cF(\vA'')|=o(n)$ \whp\
	Since in our two-round exposure $\partial a\setminus\tilde V$ is independent of $\vA''$, we thus conclude that $\partial a\cap\cF(\vA'')\setminus\tilde V=\partial a\cap\cF(\vA)\setminus\tilde V$ \whp\
	Hence, the definition \eqref{eqStandard} of the standard messages implies \eqref{eqLemma_msg1}.
\end{proof}

\begin{proof}[Proof of \Lem~\ref{Lemma_standard}]
	By Fact~\ref{prop:representative} it suffices to prove the fixed point conditions for the variables and checks $\tilde V,\tilde C$ of $G'(\vA)$
	which are the roots of the $\TT_2$ and $\hat\TT_1$ branching processes added in \Def~\ref{Def_Olly}.
	Hence, with $\omega^*$ from Claim~\ref{Lemma_msg} let $\omega_1=\omega_*$ and $\omega_2=0$ and assume that \eqref{eqLemma_msg1}--\eqref{eqLemma_msg2} are satisfied.
	We may also assume that the subgraph of $G'(\vA)$ induced on $\cX=\tilde V\cup\tilde U\cup\partial \tilde V$ is acyclic.
	Pick a variable $v\in\tilde V$ and an adjacent check $a\in\partial v$.
	We will show that under the assumptions the fixed point property is satisfied deterministically.
	
	The definition \eqref{eqStandard} of the standard messages provides that $\fm_{a\to v}(\vA')=\frozen$ iff $v$ is frozen in $G'-(\partial v\setminus\cbc a)$.
	A sufficient condition is that $\partial a\setminus\cbc v\subset\cF(\vA)$. 
	Conversely, if $\partial a\setminus(\cbc v\cup\cF(\vA))\neq\emptyset$, then \eqref{eqLemma_msg2} shows that $v$ is unfrozen in $G'(\vA)-(\partial v\setminus\cbc a)$. 
	For there exists $\sigma\in\ker\vA$ such that $\sum_{y\in\partial a\setminus\cbc v}\sigma_y=1$, and because the subgraph induced on $\cX$ is acyclic this vector $\sigma$ extends to a vector $\sigma'\in\ker\vA'$ with $\sigma'_v=1$.
	Hence, $v\not\in\cF(\vA')$.
	Furthermore, \eqref{eqLemma_msg1} ensures that $\partial a\setminus\cbc v\subset\cF(\vA)$ iff $\fm_{y\to a}(\vA')=\frozen$ for all $y\in\partial a\setminus\cbc v$.
	Hence, $\fm_{a\to v}(\vA')=\frozen$ iff $\fm_{y\to a}(\vA')=\frozen$ for all $y\in\partial a\setminus\cbc v$.
	In other words, we obtain 
	\begin{align}\label{eqLemma_standard_10}
		\fm_{a\to v}(\vA')&=\hat\fm_{a\to v}(\vA')&&\mbox{for all }v\in\tilde V,\,a\in\partial v.
	\end{align}
	
	A similar argument shows that	
	\begin{align}\label{eqLemma_standard_11}
		\fm_{v\to a}(\vA')&=\hat\fm_{v\to a}(\vA')&&\mbox{for all }v\in\tilde V,\,a\in\partial v.
	\end{align}
	Indeed, \eqref{eqStandard} guarantees that $\fm_{v\to a}(\vA')=\frozen$ if there is a check $b\in\partial v\setminus\cbc a$ such that $\partial b\setminus\cbc v\subset\cF(\vA)$.
	Such a check satisfies $\fm_{b\to v}(\vA')=\frozen$, and thus \eqref{eqSimpleWP1} shows that $\hat\fm_{v\to a}(\vA')=\frozen$.
	Conversely, suppose that $\fm_{v\to a}(\vA')=\unfrozen$.
	Then \eqref{eqStandard} shows that $v$ is unfrozen in $G'(\vA)-a$.
	Hence, the kernel of the matrix obtained from $\vA'$ by deleting the $a$-row contains a vector $\sigma''$ with $\sigma''_v=1$.
	Therefore, any check $b\in\partial v\setminus a$ features a variable $y\in\partial b\setminus(\cbc v\cup\cF(\vA))$.
	Consequently, because the subgraph induced on $\cX$ is acyclic, \eqref{eqLemma_msg2} implies that $v$ is unfrozen in the subraph $G'(\vA)-(\partial v\setminus\{b\})$ where the only check adjacent to $v$ is $b$.
	Thus, $\fm_{b\to v}(\vA')=\unfrozen$.
	Finally, \eqref{eqSimpleWP1} shows that $\hat\fm_{v\to a}(\vA')=\unfrozen$.

	The proof of \eqref{eqLemma_standard2} proceeds along similar lines.
	Indeed, $v\in\tilde V$ is frozen in $\vA'$ if there exists a check $a\in\partial v$ such that $\partial a\setminus\cbc v\subset\cF(\vA)$.
	Hence, \eqref{eqLemma_msg1} shows that the existence of a check $a\in\partial v$ with $\fm_{a\to v}(\vA')=\frozen$ is a sufficient condition for $v\in\cF(\vA')$.
	Conversely, \eqref{eqLemma_msg2} shows that the absence of such a check is a sufficient condition for $v\not\in\cF(\vA')$.
	Thus, recalling the definition \eqref{eqMarks1}, we obtain the first part of \eqref{eqLemma_standard2}.
	
	To prove the second part we combine \eqref{eqLemma_standard1}--\eqref{eqLemma_standard2} with \eqref{eqLemma_standard_11} to see that $a\in\hat\cF(\vA')$ iff there is at most one $y\in\partial a$ with $\fm_{y\to a}(\vA')=\unfrozen$.
	For clearly $a\in\hat\cF(\vA')$ if no such $y$ exists, while if there is precisely one such $y$ the presence of the check $a$ will freeze this variable.
	Conversely, if at least two $y,y'\in\partial a$ satisfy $\fm_{y\to a}(\vA'),\fm_{y'\to a}(\vA')\neq\frozen$, then $a\not\in\cF(\vA')$ due to \eqref{eqLemma_msg2}.
	Thus, a glance at the definition \eqref{eqMarks2} of $\fm_a(\vA')$ completes the proof of \eqref{eqLemma_standard2}.
\end{proof}

Proposition~\ref{lem:3fp} roughly states that the proportion of frozen variables is close to one of the fixed points of WP;
in order to prove this result,
we will need to analyse the distribution of the numbers of incoming and outgoing standard messages of each type at a node.
This motivates the following definitions.

Given a vector $\Lu =\bc{\ell_{\unfrozen\unfrozen},\ell_{\unfrozen\frozen},\ell_{\frozen\unfrozen},\ell_{\frozen\frozen}} \in \NN_0^4$ and $\s\in\{\frozen,\fu,\unfrozen\}$, let
\begin{align*}
	\vDelta_{A}(\s,\Lu)&=
	\sum_{v\in V(A)}\vecone\cbc{\fm_{v}(A)=\s}\prod_{\x,\y\in\{\unfrozen,\frozen\}}\vecone\cbc{\abs{\cbc{a\in\partial v:\fm_{a\to v}(A)=\x \mbox{ and }\fm_{v\to a}(A)=\y}}=\ell_{\x \y}},\\
	\vGamma_A(\s,\Lu)&=
	\sum_{a\in C(A)}\vecone\cbc{\fm_{a}(A)=\s}\prod_{\x,\y\in\{\unfrozen,\frozen\}}\vecone\cbc{\abs{\cbc{v\in\partial a:\fm_{v\to a}(A)=\x\mbox{ and }\fm_{a\to v}(A)=\y}}=\ell_{\x \y}}.
\end{align*}
These random variables count variables/checks with certain marks and given numbers of edges with specific incoming/outgoing messages.
For instance, $\ell_{\unfrozen\frozen}$ provides the number of edges with an incoming $\unfrozen$-message and an outgoing $\frozen$-message.
Of course, for some choices of $\s$ and $\Lu$ the variables $\vDelta_{A}(\s,\Lu)$ and $\vGamma_A(\s,\Lu)$ may equal zero deterministically.
We can think of $\vDelta$ and $\vGamma$ as generalised degrees,
giving information not just about the number of edges, but the number of edges with each type of message.
The following corollary pinpoints the generalised degree distribution.
For $\alpha,\hat \alpha \in [0,1]$ and $\Lu=(\ell_{\unfrozen\unfrozen},\ell_{\unfrozen\frozen},\ell_{\frozen\unfrozen},\ell_{\frozen\frozen}) \in \NN_0^4$,
we define
\begin{align}\label{eqCor_stats11}
	\bDelta(\hat\alpha,\unfrozen,\Lu)&=\vecone\cbc{\ell_{\frozen\unfrozen}=\ell_{\unfrozen\frozen}=\ell_{\frozen\frozen}=0} \cdot \pr\brk{\Po(d\hat\alpha)=0} \cdot \pr\brk{\Po(d(1-\hat\alpha))=\ell_{\unfrozen\unfrozen}},\\
	\bDelta(\hat\alpha,\fu,\Lu)&=\vecone\cbc{\ell_{\frozen\unfrozen}=1,\,\ell_{\unfrozen\unfrozen}=\ell_{\frozen\frozen}=0} \cdot \pr\brk{\Po(d\hat\alpha)=1} \cdot \pr\brk{\Po(d(1-\hat\alpha))=\ell_{\unfrozen\frozen}}\label{eqCor_stats12},\\
	\bDelta(\hat\alpha,\frozen,\Lu)&=\vecone\cbc{\ell_{\frozen\unfrozen}=\ell_{\unfrozen\unfrozen}=0,\ell_{\frozen\frozen}\geq2}\cdot \pr\brk{\Po(d\hat\alpha)=\ell_{\frozen\frozen}}\cdot \pr\brk{\Po(d(1-\hat\alpha))=\ell_{\unfrozen\frozen}}\label{eqCor_stats13},\\
	\bGamma(\alpha,\unfrozen,\Lu)&=\vecone\cbc{\ell_{\unfrozen\frozen}=\ell_{\frozen\frozen}=0,\ell_{\unfrozen\unfrozen}\geq2}\cdot \pr\brk{\Po(d(1-\alpha))=\ell_{\unfrozen\unfrozen}}\cdot \pr\brk{\Po(d\alpha)=\ell_{\frozen\unfrozen}}\label{eqCor_stats14},\\
	\bGamma(\alpha,\fu,\Lu)&=\vecone\cbc{\ell_{\unfrozen\frozen}=1,\,\ell_{\unfrozen\unfrozen}=\ell_{\frozen\frozen}=0}\cdot\pr\brk{\Po(d (1-\alpha)=1}\cdot\pr\brk{\Po(d\alpha)=\ell_{\frozen\unfrozen}}\label{eqCor_stats15},\\
	\bGamma(\alpha,\frozen,\Lu)&=\vecone\cbc{\ell_{\frozen\unfrozen}=\ell_{\unfrozen\frozen}=\ell_{\unfrozen\unfrozen}=0}\cdot\pr\brk{\Po(d (1-\alpha))=0}\cdot\pr\brk{\Po(d\alpha)=\ell_{\frozen\frozen}}.\label{eqCor_stats16}
\end{align}

\begin{corollary}\label{Cor_stats}
	Let $d>0$.
	For any $\s\in\{\frozen,\fu,\unfrozen\}$ and $\Lu  = (\ell_{\unfrozen\unfrozen},\ell_{\unfrozen\frozen},\ell_{\frozen\unfrozen},\ell_{\frozen\frozen}) \in \NN_0^4$ we have
	\begin{align}\label{eqCor_stats}
		\lim_{n\to\infty}\frac{1}{n}\ex\brk{\abs{\vDelta_{\vA}(\s,\Lu)-\bDelta(\hat f(\vA),\s,\Lu)}+\abs{\vGamma_{\vA}(\s,\Lu)-\bGamma(f(\vA),\s,\Lu)}}&=0,\\
		\lim_{n\to\infty}\ex\brk{\abs{f(\vA) - \phi_d(f(\vA)) }+\abs{\hat f(\vA)-(1+d(1-f(\vA)))\exp\bc{-d(1-f(\vA))}}}&=0.\nonumber
	\end{align}
\end{corollary}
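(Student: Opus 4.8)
The plan is to pin down the joint law of the mark and the ``generalised degree'' at a typical variable node and at a typical check node, and then to recover both displayed assertions by marginalising. Since $\bDelta,\bGamma$ and the two identities involve the random quantities $f(\vA),\hat f(\vA)$, I treat $f(\vA)$ as a free parameter throughout and only close the loop at the very end. The underlying picture is that, given $\vA$, a uniformly random variable node $v$ of $G(\vA)$ looks like the root of a two-type $\Po(d)$ Galton--Watson tree in which, at each check-child $a$ of $v$, the $\Po(d)$ grandchildren below $a$ lie in $\cF(\vA)$ independently with probability $f(\vA)+o(1)$; hence $v$ receives an incoming $\frozen$-message from $a$ exactly when all those grandchildren are frozen, which has probability $\sum_{k\ge0}\Pr[\Po(d)=k]f(\vA)^k=\exp(-d(1-f(\vA)))$, independently across the $\Po(d)$ check-children.

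\emph{Reduction to the added trees.} To make this rigorous I reuse the perturbation $G'(\vA)$ of \Def~\ref{Def_Olly} with $\omega_1=\omega_2=\omega^*$ taken from Claim~\ref{Lemma_msg}. Two consequences of Claim~\ref{Lemma_msg} do the work. First, for $a\in\tilde C\cup\partial\tilde V$ and $y\in\tilde U\cap\partial a$ we have $\fm_{y\to a}(\vA')=\frozen$ iff $y\in\cF(\vA)$ by \eqref{eqLemma_msg1}, and since $\tilde U$ is a uniformly random $O(\omega^*)$-set while the non-frozen coordinates of a random kernel vector are jointly near-uniform by \eqref{eqLemma_msg2} (which rests on \Prop~\ref{Prop_pin} and Fact~\ref{fact_ker}), the indicators $\vecone\{y\in\cF(\vA)\}$, $y\in\tilde U$, are, given $\vA$, asymptotically i.i.d.\ Bernoulli$(f(\vA))$. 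Second, $\cF(\vA)\subseteq\cF(\vA')$ and $f(\vA')=f(\vA)+o(1)$ \whp, and the same cut-metric/contiguity argument that yields \eqref{eqLemma_msg1} shows that $\cF(G(\vA)-a)$ and $\cF(G(\vA')-a)$ agree up to $o(n)$ vertices for all but $o(n)$ checks $a$; by the definition \eqref{eqStandard} of the messages this means the vectors $\fm(\vA)$ and $\fm(\vA')$, and hence the generalised degrees $(\fm_v,\Lu)$, coincide at all but $o(n)$ variable (resp.\ check) nodes, so that $\tfrac1n\Erw|\vDelta_{\vA}(\s,\Lu)-\vDelta_{\vA'}(\s,\Lu)|=o(1)$ and likewise for $\vGamma$, uniformly in $\Lu$ once the degree tail bound \Lem~\ref{Lemma_tails} is used to discard high-degree vertices. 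Finally, Fact~\ref{prop:representative} applied to $\Lambda(G,v)=\vecone\{\fm_v(G)=\s\}\prod_{\x,\y}\vecone\{|\{a\in\partial v:\fm_{a\to v}(G)=\x,\fm_{v\to a}(G)=\y\}|=\ell_{\x\y}\}$ reduces $\tfrac1n\vDelta_{\vA'}(\s,\Lu)$ to the empirical average of $\Lambda$ over $\tilde V$, and similarly $\tfrac1n\vGamma_{\vA'}(\s,\Lu)$ to an average over $\tilde C$.

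\emph{The computation and closing the loop.} A root $v\in\tilde V$ carries a $\TT_2$-shaped neighbourhood, and by the exact fixed-point identities \eqref{eqLemma_standard_10}--\eqref{eqLemma_standard_11} valid there a check-child $a$ satisfies $\fm_{a\to v}(\vA')=\frozen$ iff all its $\Po(d)$ grandchildren lie in $\cF(\vA)$; by the previous paragraph this occurs with probability $\exp(-d(1-f(\vA)))$, independently over $a\in\partial v$. Poisson thinning then yields that $v$ receives $\Po(d\exp(-d(1-f(\vA))))$ incoming $\frozen$-messages and an independent $\Po(d(1-\exp(-d(1-f(\vA)))))$ incoming $\unfrozen$-messages, while \eqref{eqSimpleWP1}--\eqref{eqSimpleWP2} and \eqref{eqMarks1} make the outgoing messages and the mark $\fm_v(\vA')$ deterministic functions of these two counts. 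Splitting into the cases ``exactly $0$'', ``exactly $1$'' and ``at least $2$'' incoming $\frozen$-messages — which is where $\ell_{\frozen\unfrozen}=1$ is forced in the $\fu$-case, where $\ell_{\frozen\frozen}\ge2$ in the $\frozen$-case, and where every remaining incident edge acquires an outgoing $\frozen$ as soon as there is at least one incoming $\frozen$ — reproduces exactly \eqref{eqCor_stats11}--\eqref{eqCor_stats13}. The identical argument for a check root $a_0\in\tilde C$, whose $\hat\TT_1$-neighbourhood has $\Po(d)$ variable-children that are uniformly random variables of $G(\vA)$ each sending $\fm_{y\to a_0}(\vA')=\frozen$ iff $y\in\cF(\vA)$ (probability $f(\vA)$ by \eqref{eqLemma_msg1}), produces \eqref{eqCor_stats14}--\eqref{eqCor_stats16} via \eqref{eqMarks2}. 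Taking expectations over $\vA$ gives the first line of \eqref{eqCor_stats}. To conclude, summing this first line over $\Lu$ and over $\s\in\{\frozen,\fu\}$ and invoking \eqref{eqLemma_standard2} to identify $\{v:\fm_v(\vA)\ne\unfrozen\}$ with $\cF(\vA)$ up to $o(n)$ vertices gives, in $L^1$, $f(\vA)=\Pr[\Po(d\exp(-d(1-f(\vA))))\ge1]+o(1)=\phi_d(f(\vA))+o(1)$ by the definition \eqref{eqphid}; summing instead over $\tilde C$ and using $\Pr[\Po(\mu)\le1]=(1+\mu)\eul^{-\mu}$ with $\mu=d(1-f(\vA))$, together with the $\hat\cF(\vA)$-half of \eqref{eqLemma_standard2}, yields $\hat f(\vA)=(1+d(1-f(\vA)))\exp(-d(1-f(\vA)))+o(1)$.

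\emph{Main obstacle.} Granting \Lem~\ref{Lemma_standard} and Claim~\ref{Lemma_msg}, the two genuine difficulties are the following. The robustness step: one must upgrade ``$f(\vA')=f(\vA)+o(1)$'' to ``$\fm(\vA)$ and $\fm(\vA')$ agree at $n-o(n)$ nodes'', which needs the cut-metric/pinning comparison not only for $\vA$ versus $\vA'$ but also for the deletions $G-a$ entering the definition \eqref{eqStandard}, and then the resulting $o(n)$ estimates must be summed uniformly over the unbounded index set $\Lu$ by dominated convergence with the tail input of \Lem~\ref{Lemma_tails}. And the three-way case analysis on $\tilde V$ and $\tilde C$ must be carried out with care: the asymmetry between $\bDelta$ and $\bGamma$, and in particular the extra ``$1+\mu$'' factor that appears in the formula for $\hat f(\vA)$ but not in the probability $\exp(-d(1-f(\vA)))$ that a variable node receives an incoming $\frozen$-message, originates entirely from the $\fu$-case, in which a single incoming $\frozen$-message at a variable node behaves quite differently from a single incoming $\unfrozen$-message at a check node.
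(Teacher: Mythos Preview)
Your proposal follows essentially the same route as the paper: reduce via Fact~\ref{prop:representative} to the roots $\tilde V,\tilde C$, invoke Claim~\ref{Lemma_msg} so that the attachment variables are i.i.d.\ Bernoulli$(f(\vA))$, compute the incoming-message distribution by Poisson thinning, read off \eqref{eqCor_stats11}--\eqref{eqCor_stats16}, and then marginalise using \eqref{eqLemma_standard2} to get the two scalar identities. The computation and the final summation are right.

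The one place where you work harder than needed is the ``robustness step''. You try to show, on the coupling of $\vA$ and $\vA'$, that $\fm(\vA)$ and $\fm(\vA')$ coincide at all but $o(n)$ nodes, via comparing $\cF(G(\vA)-a)$ with $\cF(G(\vA')-a)$ for most $a$. This is both unnecessary and, as written, not obviously rigorous: knowing that the two frozen sets agree up to $o(n)$ vertices for each fixed $a$ does not directly control how many \emph{edges} $(v,a)$ satisfy $\fm_{v\to a}(\vA)\neq\fm_{v\to a}(\vA')$, since the bad set of size $o(n)$ may well meet $\partial a$. The paper bypasses this entirely. Because $\dTV(G(\vA),G'(\vA))=n^{-\Omega(1)}$ (Fact~\ref{fact:TVsmall}) and the random variables in \eqref{eqCor_stats} are bounded by $O(n)$, the $L^1$ estimate for $\vA$ follows immediately from the same estimate for $\vA'$; one never needs to compare the two message vectors on a common probability space. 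Both of your ``main obstacles'' (the uniform-over-$a$ cut-metric comparison and the dominated-convergence summation over $\Lu$) are artifacts of this detour and disappear once you transfer via total variation instead.
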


\begin{proof}
	In light of Fact~\ref{prop:representative} it once again suffices to prove the various estimates for the variables/checks from $\tilde V,\tilde C$. 
	Hence, with $\omega^*$ from Claim~\ref{Lemma_msg} let $1\ll\omega_1,\omega_2\ll\omega^*$.
	
	To prove the second part of \eqref{eqCor_stats} we consider a check $a\in\tilde C$.
	The construction in \Def~\ref{Def_Olly} ensures that $a$ randomly selected $\vk(a)\sim\Po(d)$ random variable nodes of $G$ as neighbours.
	Each of them belongs to $\cF(\vA)$ with probability $f(\vA)$.
	Thus, $\vk(a)$ decomposes into two independent Poisson variables $\vk_{\frozen}(a)$ and $\vk_{\unfrozen}(a)$ with means $f(\vA)d$ and $(1-f(\vA))d$.
	Furthermore, the definition \eqref{eqMarks2} of the marks ensures that the mark of $a$ depends only on the incoming messages.
	Moreover, \eqref{eqMarks2} implies together with \eqref{eqLemma_msg1} that \whp\ over the choice of $\vA$ for any fixed integers $\ell_\unfrozen,\ell_\frozen\geq0$ we have
	\begin{align}\label{eqCor_stats10}
		\pr\brk{\fm_a(\vA')=\unfrozen,\;\vk_{\frozen}(a)=\ell_\frozen,\;\vk_{\unfrozen}(a)=\ell_\unfrozen\mid\vA}&=\vecone\cbc{\ell_\unfrozen\geq2}\pr\brk{\Po(d(1-f(\vA)))=\ell_\unfrozen}\pr\brk{\Po(df(\vA))=\ell_\frozen}+o(1).
	\end{align}
	Indeed, \eqref{eqMarks2} ensures that $\fm_a(\vA')=\unfrozen$ only if $a$ receives at least two $\unfrozen$-messages.
	Furthermore, as Fact~\ref{Lemma_msg} shows that $f(\vA')=f(\vA)+o(1)$ \whp, we can rewrite \eqref{eqCor_stats10} as 
	\begin{align}\label{eqCor_stats_11}
		\pr\brk{\fm_a(\vA')=\unfrozen,\;\vk_{\frozen}(a)=\ell_\frozen,\;\vk_{\unfrozen}(a)=\ell_\unfrozen\mid\vA}&=\vecone\cbc{\ell_\unfrozen\geq2}\pr\brk{\Po(d(1-f(\vA)))=\ell_\unfrozen}\pr\brk{\Po(df(\vA))=\ell_\frozen}+o(1).
	\end{align}
	Since by the fixed point property from \Lem~\ref{Lemma_standard} the reverse messages sent out by $a$ are determined by the incoming ones via \eqref{eqSimpleWP2} \whp, all messages returned by a check with mark $\unfrozen$ are $\unfrozen$ \whp\
	Therefore, \eqref{eqCor_stats_11} implies the second part of \eqref{eqCor_stats}.
	Finally, we observe that the identity  $$\lim_{n \to \infty}\Erw\abs{\hat f(\vA) - (1+d(1-f(\vA)))\exp\bc{-d(1-f(\vA))}}=0$$ is equivalent
	to the statement that \whp\ $\hat f(\vA) = (1+d(1-f(\vA)))\exp\bc{-d(1-f(\vA))} + o(1)$, which actually follows from
	\eqref{eqLemma_standard2}, \eqref{eqCor_stats14}
	and~\eqref{eqCor_stats}
	by summing over $\Lu \in \NN_0^4$.
	More precisely, \eqref{eqLemma_standard2} implies that \whp\ $\hat f(\vA) = n^{-1}\abs{\cbc{a : \fm_a(\vA) \neq \unfrozen}} + o(1)$. Furthermore, by~\eqref{eqCor_stats}, \whp\ for all but $o(n)$ check nodes $a$ we have $\fm_a(\vA)\neq\unfrozen$ if and only if $a$ is adjacent to
	no edge along which both messages are $\unfrozen$. 
	A glance at~\eqref{eqCor_stats14} 
	shows that the sum over all $\Lu \in \NN_0^4$ of $\bGamma(\alpha,\unfrozen,\Lu)$ is simply
	$\Pr\brk{\Po(d(1-\alpha))\ge 2} = 1-(1+d(1-\alpha))\exp\bc{-d(1-\alpha)}$.
	Considering the complement and substituting $\alpha=f(\vA)$, the result follows.

	The first part of \eqref{eqCor_stats} also follows from similar deliberations.
	For example, for $x\in\tilde V$ we have \linebreak $\fm_x(\vA')=\unfrozen$ iff $\fm_{a\to x}(\vA')=\unfrozen$ for all $a\in\partial x$.
	Furthermore, the fixed point property from \Lem~\ref{eqLemma_standard1} shows that \whp\ $\fm_{a\to x}(\vA')=\frozen$ iff $y\in\cF(\vA)$ for all $y\in\partial a\setminus\tilde V$.
	Since the variables $y$ are chosen randomly and independently, we see that
	$$ \pr[\fm_{a\to x}(\vA')=\frozen\mid\vA]=\pr\brk{\Po(d(1-f(\vA)))=0}+o(1)=\exp(-d(1-f(\vA)))+o(1)=\hat f(\vA)+o(1) \mbox{ \quad \whp\ }$$ 
	Because $x$ has a total of $\Po(d)$ independent adjacent checks, we obtain \eqref{eqCor_stats} for $\s=\unfrozen$; the cases $\s=\frozen$ and $\s=\fu$ are analogous.
	Finally, the identity $f(\vA)=\phi_d(f(\vA))+o(1)$ \whp\ follows from Fact~\ref{prop:representative}, \eqref{eqLemma_standard2} and \eqref{eqCor_stats} by summing on $\ell_{\unfrozen\unfrozen}$.
\end{proof}

\begin{proof}[Proof of \Prop~\ref{lem:3fp}]
	Fix a small $\eps>0$ and let $U(\eps)=\{\alpha\in[0,1]: |\alpha-\alpha_*|\wedge|\alpha-\alpha_0|\wedge|\alpha-\alpha^*|>\eps\}$.
	Then \Lem~\ref{Lemma_contract} shows that there exists an integer $t>0$ such that $\abs{\phi_d^{\circ t}(\alpha)-\alpha_*}\wedge\abs{\phi_d^{\circ t}(\alpha)-\alpha^*}<\eps/2$ for all $\alpha\in U(\eps)$.
	Hence, 
	\begin{align}\label{eqlem:3fp_1}
		\abs{\alpha-\phi^{\circ t}_d(\alpha)}&>\eps/2 &&\mbox{for all $\alpha\in U(\eps)$.}
	\end{align}
	By contrast, \Cor~\ref{Cor_stats} shows that $\abs{f(\vA)-\phi_d(f(\vA))}=o(1)$ \whp\
	Since $\phi_d(\nix)$ is uniformly continuous on $[0,1]$, this implies that $\abs{f(\vA)-\phi_d^{\circ t}(f(\vA))}=o(1)$ \whp\
	Hence, \eqref{eqlem:3fp_1} shows that $\pr\brk{f(\vA)\in U(\eps)}=o(1)$.
	Because this holds for arbitrarily small $\eps>0$, the assertion follows.
\end{proof}

\section{The unstable fixed point}\label{Sec_dd}

\noindent
\Prop~\ref{lem:3fp} shows that $f(\vA)$ is close to one of the fixed points of the function $\phi_d$ \whp\
The aim in this section is to prove \Prop~\ref{Prop_dd} by using the ``hammer and anvil''
strategy described in \Sec~\ref{Sec_formalheuristic} to rule out the unstable fixed point $\malpha$.
The proof is subtle and requires three steps.
First we show that a random $\vx\in\ker\vA$ sets about half the unfrozen variables to one.
Indeed, even if we weight the variable nodes by their degrees the overall weight of the one-entries comes to about half \whp\
Therefore, \eqref{eqGaoSimple} implies that $\ker\vA$ contains $2^{\Phi_d(\alpha_*)n+o(n)}$ such balanced vectors \whp\
This is the ``anvil'' part of the argument.

The ``hammer'' part consists of the next two steps showing that the existence of that many balanced solutions is actually unlikely if $f(\vA)\sim\malpha$.
We proceed by way of a sophisticated moment computation.
Specifically, we estimate the number of fixed points of the operator from \eqref{eqSimpleWP1}--\eqref{eqSimpleWP2} that mark about \linebreak $\malpha n$ variable nodes unfrozen as per \eqref{eqMarks1}.
This expectation turns out to be of order $\exp(o(n))$.
Subsequently we compute the expected number of actual balanced solutions compatible with such a WP fixed point.\linebreak
The answer turns out to be $2^{\Phi_d(\malpha)n+o(n)}$.
Since $\Phi_d(\malpha)<\Phi_d(\alpha_*)=\max_\alpha\Phi_d(\alpha)$, we conclude that a random matrix with $f(\vA)\sim\malpha$ would have far fewer ``balanced'' vectors in its kernel than the anvil part of the argument demands.
Consequently, the event $f(\vA)\sim\malpha$ is unlikely.

\subsection{Degree-weighted solutions}
Let us now carry this strategy out in detail.
A vector $x\in\ker\vA$ is called {\em $\delta$-balanced} if
\begin{align*}
	\abs{\sum_{v\notin\cF(\vA)}d_{\vA}(v)\bc{\vecone\cbc{x_v=1}-1/2}}<\delta n.
\end{align*}
The following observation is a simple consequence of \Prop~\ref{Prop_pin}.

\begin{lemma}\label{lem_balanced}
	\Whp\ the random matrix $\vA$ has $2^{\Phi_d(\alpha_*)n+o(n)}$ many $o(1)$-balanced solutions.
\end{lemma}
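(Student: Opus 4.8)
The plan is to show that a uniformly random $\vx\in\ker\vA$ is $o(1)$-balanced with probability $1-o(1)$ conditionally on $\vA$, and then to read off the count from the nullity formula \eqref{eqGaoSimple}. Set
\[
	S(\vx)=\sum_{v\notin\cF(\vA)}d_{\vA}(v)\bc{\vecone\cbc{\vx_v=1}-1/2},
\]
so that $\vx$ is $\delta$-balanced iff $|S(\vx)|<\delta n$. By \Prop~\ref{Prop_theta=1} and \eqref{eqGaoSimple} we have $|\ker\vA|=2^{\nul\vA}=2^{\Phi_d(\alpha_*)n+o(n)}$ \whp, because $\Phi_d(\alpha_*)=\max_{\alpha\in[0,1]}\Phi_d(\alpha)$. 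Hence it is enough to prove that for every fixed $\delta>0$, \whp\ all but an $o(1)$-fraction of the vectors $\vx\in\ker\vA$ satisfy $|S(\vx)|<\delta n$; a routine diagonalisation then produces a sequence $\delta=\delta(n)\to0$ for which $\vA$ has $2^{\Phi_d(\alpha_*)n+o(n)}$ many $\delta(n)$-balanced solutions \whp

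The heart of the matter is a conditional second moment computation for $S(\vx)$ over the choice of $\vx\in\ker\vA$, with $\vA$ fixed. For $v\notin\cF(\vA)$, Fact~\ref{fact_ker} gives $\pr[\vx_v=0\mid\vA]\in\{1/2,1\}$, and since $v$ is unfrozen the value $1$ is excluded, so $\pr[\vx_v=1\mid\vA]=1/2$; consequently $\Erw[S(\vx)\mid\vA]=0$ identically. Moreover,
\[
	\Erw\brk{S(\vx)^2\mid\vA}=\frac14\sum_{v\notin\cF(\vA)}d_\vA(v)^2+\sum_{\substack{v\neq w\\v,w\notin\cF(\vA)}}d_\vA(v)d_\vA(w)\,C_{vw},\qquad C_{vw}:=\pr[\vx_v=\vx_w=1\mid\vA]-\tfrac14 .
\]
The diagonal term is $O(n)$ \whp\ by \Lem~\ref{Lemma_tails} (applied with $\ell=1,2$, which bounds $\sum_v d_\vA(v)^2$). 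For the off-diagonal term, \Lem~\ref{Lemma_tails} also yields $\max_v d_\vA(v)\le\log n$ \whp, so this sum is at most $(\log n)^2\sum_{v\neq w}|C_{vw}|$. Here \Prop~\ref{Prop_pin} enters with $\ell=2$ and $\sigma=(1,1)$: since $C_{vw}=\pr[\vx_v=\vx_w=1\mid\vA]-\pr[\vx_v=1\mid\vA]\pr[\vx_w=1\mid\vA]$ for unfrozen $v,w$, and since the law of $\vA$ is invariant under permutations of the columns, the bound $\Erw[n^\gamma|C_{vw}|]=o(1)$ holds for every pair $v\neq w$, with $\gamma=\gamma(2)>0$ from \Prop~\ref{Prop_pin}. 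Summing over pairs and applying Markov's inequality shows that \whp\ $\sum_{v\neq w}|C_{vw}|=o(n^{2-\gamma/2})$. Altogether $\Erw[S(\vx)^2\mid\vA]=o(n^2)$ \whp

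On the (\whp) event that $\Erw[S(\vx)^2\mid\vA]=o(n^2)$, Chebyshev's inequality gives, for any fixed $\delta>0$, $\pr[|S(\vx)|\geq\delta n\mid\vA]\leq\Erw[S(\vx)^2\mid\vA]/(\delta n)^2=o(1)$, i.e.\ the number of $\vx\in\ker\vA$ that fail to be $\delta$-balanced is $o(|\ker\vA|)$. Combined with $|\ker\vA|=2^{\Phi_d(\alpha_*)n+o(n)}$ this gives $2^{\Phi_d(\alpha_*)n+o(n)}$ many $\delta$-balanced solutions \whp\ for each fixed $\delta$, and then, by diagonalising over a sequence $\delta(n)\to0$, the claim. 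The only point that needs a little care is the degree weighting: the factors $d_\vA(v)$ can be as large as $\log n$, but this is harmless because against the polynomial gain $n^{-\gamma}$ supplied by \Prop~\ref{Prop_pin} we only forfeit a factor $(\log n)^2$ in the off-diagonal sum, while the bound $\sum_v d_\vA(v)^2=O(n)$ from \Lem~\ref{Lemma_tails} handles the diagonal sum. The remaining ingredients—the exact identity $\Erw[S\mid\vA]=0$ via Fact~\ref{fact_ker}, the exchangeability upgrading \Prop~\ref{Prop_pin} from the first two coordinates to an arbitrary pair, and the second-moment/Chebyshev and diagonalisation steps—are routine.
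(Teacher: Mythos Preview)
Your argument is correct. It takes a somewhat different route from the paper's proof, so a brief comparison is in order.

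The paper first invokes \Prop~\ref{Prop_pin} together with \Prop~\ref{Prop_cut} to conclude that the law of $\vx$ given $\vA$ is $o(1)$-extremal in the cut metric, then \emph{stratifies by degree}: for each fixed $\ell$ it shows that $\sum_{v\notin\cF(\vA)}\vecone\{d_{\vA}(v)=\ell\}(\vecone\{\vx_v=1\}-1/2)=o(n)$ \whp, and finally sums over $\ell$ using the sub-exponential Poisson tails of the degrees. You instead do a single conditional second-moment computation for the full degree-weighted sum, controlling the off-diagonal by the \emph{quantitative} $n^{\gamma}$ saving in \Prop~\ref{Prop_pin} (via exchangeability and Markov), which comfortably absorbs the $(\log n)^2$ coming from the degree weights, and the diagonal by \Lem~\ref{Lemma_tails}. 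Your approach is more elementary in that it bypasses the cut-metric apparatus entirely; its cost is that it genuinely needs the polynomial rate in \Prop~\ref{Prop_pin}, whereas the paper's stratified argument would go through with only a qualitative $o(1)$ bound there. Both arrive at the same Chebyshev conclusion and the same diagonalisation over $\delta(n)\to0$.
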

\begin{proof}
	Since \eqref{eqGaoSimple} and \Prop~\ref{Prop_theta=1} show that $\nul\vA\sim\Phi_d(\alpha_*)n$ \whp, it suffices to prove that
	\whp\ over the choice of $\vA$, a uniformly random $\vx\in\ker\vA$ is $o(1)$-balanced \whp\ over the choice of $\vx$.
	To see this, fix any integer $\ell>0$.
	\Prop~\ref{Prop_pin} implies together with \Prop~\ref{Prop_cut} that \whp\ over the choice of $\vA$,
	the distribution of a uniformly random $\vx\in\ker\vA$ is $o(1)$-extremal.
	Moreover, Fact~\ref{fact_ker} shows that the event $\{\vx_v=1\}$ has probability $1/2$ for all $v\not\in\cF(\vA)$.
	Therefore, the definition \eqref{eqCutMetric} of the cut metric implies that for any $\ell \in \NN$, \whp\ over the choice of $\vA$ we have
	\begin{align}\label{eqlem_balanced1}
		\Erw\brk{\abs{\sum_{v\not\in\cF(\vA)}\vecone\{d_{\vA}(v)=\ell\}\bc{\vecone\cbc{\vx_v=1}-\frac{1}{2}}}\mid\vA}=o(n).
	\end{align}
	To see that this is true, observe that in order for~\eqref{eqlem_balanced1} to fail,
		setting $I\subset [n]\setminus\cF(\vA)$ to be the subset of (indices of) unfrozen variables
		of degree $\ell$ in $G(\vA)$, there must be some constant $\gamma>0$ such that
		$|I|\ge \gamma n$ and furthermore
		$$
		\Pr\bc{\abs{\sum_{i \in I} \vx_i - \frac{|I|}{2}} \ge \gamma n \mid \vA}\ge 2\gamma.
		$$
		Suppose that $\Pr\bc{\sum_{i \in I} \vx_i  \ge \frac{|I|}{2} + \gamma n \mid \vA}\ge \gamma$ (the proof
		when the corresponding lower tail has probability at least $\gamma$ is similar);
		we will show that this property is incompatible with the distribution $\mu$ of $\vx$ being $o(1)$-extremal.
		For let $(\vsigma,\vtau)$ be any coupling of $(\mu,\bar\mu)$.
		We know that $\bar \mu$ has probability $2^{-|I|}$ on every possible assignment within $I$,
		so let us set $U_1\subset \field^n$ to be the set of vectors that
		assign at least $\frac{|I|}{2}+\gamma n$ many $1$s on $I$, set
		$U_2\subset \field^n$ to be the set of vectors that assign at most $\frac{|I|}{2}+\gamma n/2$ many $1$s on $I$,
		and let $U := U_1\times U_2$.
		Observe that
		$$
		\pr\brk{(\vsigma,\vtau)\in U} \ge \pr\brk{\vsigma \in U_1}-\pr\brk{\vtau \notin U_2} \ge \gamma - o(1)\ge \gamma/2.
		$$
		It follows that
		\begin{align*}
			\abs{\sum_{i\in I}\pr\brk{(\vsigma,\vtau)\in U,\vsigma_i=1}-\pr\brk{(\vsigma,\vtau)\in U, \vtau_i=1}}
			& \ge \frac{\gamma n}{2}\cdot \pr\brk{(\vsigma,\vtau)\in U} \ge \frac{\gamma^2 n}{2}.
		\end{align*}
		Since this is true for any coupling $(\vsigma,\vtau)$ of $(\mu,\bar\mu)$, the definition~\eqref{eqCutMetric} of the cut metric
		gives \linebreak $\cutm(\mu,\bar \mu) \ge \gamma^2/2 = \Theta(1)$, and therefore $\mu$ is not $o(1)$-extremal,
		which we know can only happen with probability $o(1)$ over the choice of $\vA$.

	As~\eqref{eqlem_balanced1} is true for every fixed $\ell$ \whp\ and the $\Bin(n,d/n)$ degree distribution of $G(\vA)$ has sub-exponential tails, the assertion follows from \eqref{eqlem_balanced1} by summing on $\ell$.
\end{proof}

\subsection{Counting WP fixed points}
Proceeding to the next step of our strategy, we now estimate the expected number of approximate WP fixed points that leave about $\malpha n$ variables unfrozen.
We call such fixed points $\malpha$-covers.
The precise definition, in which we condition on the degree sequence $d_{\vA}$ of $G(\vA)$, reads as follows.
For the rest of this section, let us fix a further parameter $\explicit = \explicit(n) \xrightarrow{n\to \infty} \infty$,
	which grows arbitrarily slowly. 

\begin{definition}\label{def_cover}
	Given $d_{\vA}$ let
	\begin{align*}
		\fV=\bigcup_{i=1}^n\cbc{v_i}\times[d_{\vA}(v_i)]&&\mbox{and}&&\fC=\bigcup_{i=1}^n\cbc{a_i}\times[d_{\vA}(a_i)]
	\end{align*}
	be sets of variable/check clones.
	An {\em $\alpha$-cover} is a pair $(\fm,\pi)$ consisting of a map $\fm:\fV\cup\fC\to\{\frozen,\unfrozen\}^2, $ \linebreak $\,(u,j)\mapsto(\fm_1(u,j),\fm_2(u,j))$ and a bijection $\pi:\fV\to\fC$ such that the following conditions are satisfied.
	\begin{description}
		\item[COV1] For all $i\in[n]$ and $j\in[d_{\vA}(v_i)]$ we have $\Big(\fm_1(\pi(v_i,j)),\fm_2(\pi(v_i,j))\Big)=\Big(\fm_2(v_i,j),\fm_1(v_i,j)\Big)$.
		\item[COV2] For all but at most $n/\explicit$ pairs $(v_i,j)$ with $i\in[n]$ and $j\in[d_{\vA}(v_i)]$ we have
		\begin{align*}
			\fm_2\bc{v_i,j}=\begin{cases}
				\frozen&\mbox{ if $\fm_1\bc{v_i,h}=\frozen$ for some $h\in[d_{\vA}(v_i)]\setminus\cbc j$},\\
				\unfrozen&\mbox{ otherwise.}
			\end{cases}
		\end{align*}
		\item[COV3] For all but at most $n/\explicit$ pairs $(a_i,j)$ with $i\in[n]$ and $j\in[d_{\vA}(a_i)]$ we have
		\begin{align*}
			\fm_2\bc{a_i,j}=\begin{cases}
				\frozen&\mbox{ if $\fm_1\bc{a_i,h}=\frozen$ for all $h\in[d_{\vA}(a_i)]\setminus\cbc j$},\\
				\unfrozen&\mbox{ otherwise.}
			\end{cases}
		\end{align*}
		\item[COV4] For any $\s\in\{\frozen,\star,\unfrozen\}$ and
		$\Lu = (\ell_{\unfrozen\unfrozen},\ell_{\unfrozen\frozen},\ell_{\frozen\unfrozen},\ell_{\frozen\frozen})\in\NN_0^4$ let
		\begin{align}\label{eqCOV5a}
			\fm(v_i)&=
			\begin{cases}
				\frozen&\mbox{ if $\fm_{1}(v_i,j)=\frozen$ for at least two $j\in[d_{\vA}(v_i)]$,}\\
				\fu&\mbox{ if $\fm_{1}(v_i,j)=\frozen$ for precisely one $j\in[d_{\vA}(v_i)]$,}\\
				\unfrozen&\mbox{ otherwise,}
			\end{cases}\quad\\
			\fm(a_i)&=\begin{cases}
				\frozen&\mbox{ if $\fm_{1}(a_i,j)=\frozen$ for all $j\in[d_{\vA}(a_i)]$,}\\
				\fu&\mbox{ if $\fm_{1}(a_i,j)=\frozen$ for all but precisely one $j\in[d_{\vA}(a_i)]$,}\\
				\unfrozen&\mbox{ otherwise,}
			\end{cases}\qquad
			\label{eqCOV5b}\\
			\vDelta(\s,\Lu{\tiny })&=\sum_{i=1}^n\vecone\cbc{\fm\bc{v_i}=\s}\prod_{\x,\y\in\{\unfrozen,\frozen\}}\vecone\cbc{\abs{\cbc{j\in[d_{\vA}(v_i)]:\fm_1(v_i,j)=\x, \; \fm_2(v_i,j)=\y}}=\ell_{\x \y}},			\label{eqCOV5c}\\
			\vGamma( \s,\Lu)&=
			\sum_{i=1}^n\vecone\cbc{\fm(a_i)=\s}\prod_{\x,\y \in\{\unfrozen,\frozen\}}\vecone\cbc{\abs{\cbc{j\in[d_{\vA}(a_i)]:\fm_1(a_i,j)=\x,\;\fm_2(a_i,j)=\y}}=\ell_{\x \y}}.			\label{eqCOV5d}
		\end{align}
		Then with $\bDelta(\nix),\bGamma(\nix)$ from \eqref{eqCor_stats11}--\eqref{eqCor_stats16} we have
		\begin{align}\label{eqCOV5punch}
			\vDelta(\s,\Lu)&=n\bDelta(1-\alpha,\s,\Lu) \; \pm \; n/\explicit,&
			\vGamma(\s,\Lu)&=n\bGamma(\alpha,\s,\Lu) \; \pm \; n/\explicit.
		\end{align}
	\end{description}
	We also define an \emph{$\alpha$-semi-cover to consist just of the map
		$\fm:\fV\cup\fC\to\{\frozen,\unfrozen\}^2,\,(u,j)\mapsto(\fm_1(u,j),\fm_2(u,j))$ satisfying {\bf COV2}--{\bf COV4}.}
\end{definition}

The intuition behind this definition is as follows. The sets $\fV,\fC$ represent clones of variable/check nodes, which
	will be used to model half-edges in the configuration model. The map $\fm$ will represent an assignment
	of messages to these half-edges, where each half-edge receives an incoming message given by $\fm_1$
	and an outgoing message given by $\fm_2$. Meanwhile, the bijection $\pi$ describes which half-edges will be matched together.

The condition {\bf COV1} ensures that the matching of half-edges is consistent in the sense that,
	say, a half-edge with incoming message $\frozen$ and outgoing message $\unfrozen$ must be matched with a half-edge
	with incoming message $\unfrozen$ and outgoing message $\frozen$.

Conditions~{\bf COV2} and~{\bf COV3} state that, in most cases, the outgoing message along a half-edge is exactly
	the one that would be produced by applying the WP update rule to the incoming messages along the other half-edges at this node.
	Thus these conditions ensure that, once the appropriate graph has been produced, the messages represent an approximate
	fixed point of WP.

Turning to the most complicated of the conditions,~{\bf COV4}, equations~\eqref{eqCOV5a} and~\eqref{eqCOV5b} extend
	$\fm$ to include not just messages on the half-edges, but also marks on the variables and checks 
	similar to those generated from the standard messages in~\eqref{eqMarks1}
	and~\eqref{eqMarks2}.
	Subsequently, $\vDelta(\s,\Lu)$ and $\vGamma(\s,\Lu)$ count the number of variables and checks respectively with mark $\s$
	and with numbers of messages along incident half-edges consistent with $\Lu$, thus describing a generalised degree sequence.
	Finally,~\eqref{eqCOV5punch} states that this generalised degree distribution is consistent with what one would heuristically expect
	if the incoming messages at a variable are $\frozen$ with probability $1-\alpha$ independently
	while the incoming messages at a check are $\frozen$ with probability $\alpha$ independently.
	The relation between these two probabilities is motivated by~\eqref{eqmalphaeq}, since we will be focussing
	on the case when $\alpha=\malpha$.

Let $\fZ(\alpha)$ be the number of $\alpha$-covers.
The main result in this section is the proof of the following bound.

\begin{proposition}\label{Prop_covers}
	For any $d>\eul$ \whp\ over the choice of the degree sequence $d_{\vA}$ we have
	\begin{align*}
		\frac{\fZ(\malpha)}{(dn)!\prod_{i=1}^nd_{\vA}(v_i)!d_{\vA}(a_i)!}&=\exp(o(n))\enspace.
	\end{align*}
\end{proposition}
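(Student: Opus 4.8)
\emph{The plan is a first moment--style entropy computation.} First I would condition on a degree sequence $d_{\vA}$ lying in the probability-$(1-o(1))$ event where the conclusions of \Lem~\ref{Lemma_tails} hold and $\sum_i d_{\vA}(v_i)=\sum_i d_{\vA}(a_i)=dn(1+o(1))$; since $\fZ(\malpha)$ is a deterministic function of $d_{\vA}$, it then suffices to prove the bound for every such sequence. Write $M=\sum_i d_{\vA}(v_i)$ for the number of clones on each side. The key structural point is that an $\malpha$-cover $(\fm,\pi)$ is, up to the $o(n)$ slack in \textbf{COV2}--\textbf{COV3}, determined by much less data: \textbf{COV2} and \textbf{COV3} force the outgoing message of all but $o(n)$ clones to be a deterministic function of the incoming ones, so a valid labelling of the $d_{\vA}(v_i)$ clones of a variable node is specified by the set of clones receiving a $\frozen$-message (hence $2^{d_{\vA}(v_i)}$ choices, and analogously at checks), while by \textbf{COV1} the bijection $\pi$ merely matches a variable clone carrying a message pair $(s,t)$ to a check clone carrying $(t,s)$. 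Grouping clones by their \emph{edge type} $\theta\in\cbc{\frozen,\unfrozen}^2$ and writing $m_\theta$ for the number of variable clones of type $\theta$, the number of bijections $\pi$ compatible with a fixed placement of clone labels is therefore exactly $\prod_\theta m_\theta!$.

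Next I would turn this into a sum. With \textbf{COV4} pinning the number of variable nodes of mark $\s$ and generalised degree $\Lu$ to $n\bDelta(1-\malpha,\s,\Lu)+o(n)$ and the check-side counts to $n\bGamma(\malpha,\s,\Lu)+o(n)$, one arrives at
\begin{align*}
\fZ(\malpha)&\leq\exp(o(n))\sum_{\vec m}\bc{\prod_\theta m_\theta!}\,N_V(\vec m)\,N_C(\vec m),
\end{align*}
where $N_V(\vec m)$ counts placements of valid clone labels over the variable nodes realising the mark/generalised-degree profile dictated by \textbf{COV4} with edge-type totals $\vec m$, and $N_C(\vec m)$ is the check-side analogue with the matching (swapped) totals; the $\exp(o(n))$ factor absorbs the choice of the $o(n)$ exceptional clones and their labels. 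Each of $N_V,N_C$ factorises over the nodes as a product of per-node multinomials $\binom{d_{\vA}(v_i)}{\Lu^{(i)}}$ choosing which clone gets which label, times one multinomial in $n$ distributing the $(\s,\Lu)$-types over the nodes; by \Lem~\ref{Lemma_tails} only polylogarithmically many types carry appreciable mass, so the outer sum over $\vec m$ and over admissible profiles contributes another $\exp(o(n))$. Hence it is enough to bound the single term at the $\malpha$-profile $\bc{\bDelta(1-\malpha,\cdot),\bGamma(\malpha,\cdot)}$ with its induced edge-type fractions $p_\theta$.

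The remaining step is Stirling's formula. Expanding $\prod_\theta m_\theta!$, the two $n$-multinomials and the per-node multinomials, and substituting the explicit Poisson weights from \eqref{eqCor_stats11}--\eqref{eqCor_stats16}, I expect the leftover exponential rate to vanish:
\begin{align*}
\frac1n\log\frac{\bc{\prod_\theta m_\theta!}\,N_V(\vec m)\,N_C(\vec m)}{(dn)!\prod_i d_{\vA}(v_i)!\,d_{\vA}(a_i)!}\longrightarrow0.
\end{align*}
The reason is that $\bDelta(1-\malpha,\cdot)$ and $\bGamma(\malpha,\cdot)$ are exactly the incoming/outgoing message laws at the root of a $\Po(d)$ two-type Galton--Watson tree equipped with the Warning Propagation rules in a \emph{self-consistent} fixed point --- the per-edge rate of $\frozen$-messages being $\malpha$ towards checks and $1-\malpha$ towards variables, with \textbf{COV1} gluing the two sides --- so that, as in the replica symmetric first moment of a sparse factor graph at a Bethe fixed point, the entropy of the edge-type matching cancels against the branching entropies on the two sides. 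After expanding the Poisson weights this cancellation reduces to the identity $1-\malpha=\exp(-d(1-\malpha))$ of \eqref{eqmalphaeq} together with $\malpha=\phi_d(\malpha)$. Combining the two displays and letting the exceptional-clone tolerance tend to $0$ yields $\fZ(\malpha)\leq\exp(o(n))(dn)!\prod_i d_{\vA}(v_i)!\,d_{\vA}(a_i)!$, which is the assertion (the matching lower bound, which is not needed, is obtained in the same way).

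I expect the entropy cancellation in the last step to be the main obstacle: although each ingredient is a routine Stirling estimate, checking that the leftover rate is \emph{exactly} zero requires carefully matching the Poisson parameters that \textbf{COV4} imposes on the two ends of every edge, and in particular tracking the $\fu$-marked variables and checks and the low-degree corrections, whose weights are precisely those governed by \eqref{eqmalphaeq}. A smaller technical point that also needs care is to verify that the $o(n)$ exceptions permitted in \textbf{COV2}--\textbf{COV3} cost only a factor $\exp(o(n))$: for this one fixes a small $\eps>0$, bounds the number of approximate fixed points with at most $\eps M$ exceptional clones by $\binom{2M}{\eps M}\,4^{\eps M}$ times the number of exact fixed points of the reduced instance, and sends $\eps\to0$ once the other estimates are in hand.
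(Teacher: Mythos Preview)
Your proposal is correct and follows essentially the same approach as the paper: the paper's Lemma~6.5 gives precisely the decomposition you describe, writing $\fZ(\malpha)=\exp(o(n))\,\fH^2\fL^2\fE$ where $\fH$ is the multinomial assigning $(\s,\Lu)$-types to nodes, $\fL$ the per-node multinomials choosing which clones carry which labels, and $\fE=\prod_\theta m_\theta!$ the number of type-respecting matchings; Lemmas~6.6--6.8 then carry out exactly the Stirling computation you outline, and the cancellation indeed reduces to the fixed-point identity $1-\malpha=\exp(-d(1-\malpha))$ from~\eqref{eqmalphaeq}. The only difference is cosmetic---the paper performs the entropy calculation explicitly rather than invoking a Bethe-fixed-point heuristic---so your anticipated ``main obstacle'' is precisely the content of Lemmas~6.6--6.8.
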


The rest of this section is devoted to the proof of \Prop~\ref{Prop_covers}.
The following lemma decomposes $\fZ(\malpha)$ into a few factors that we will subsequently calculate separately.

\begin{lemma}\label{Lemma_covers2}
	\Whp\ over the choice of $d_{\vA}$ we have $\fZ(\malpha)=\exp(o(n))\fH^2\fL^2\fE$ where
	\begin{align*}
		\fH&=\binom n{n((\bDelta(1-\malpha,\s,\Lu))_{\s\in\{\frozen,\fu,\unfrozen\},\Lu\in\NN_0^4})},&
		\fL&=\prod_{\substack{\s\in\{\frozen,\fu,\unfrozen\}\\ \Lu = (\ell_{\unfrozen\unfrozen},\ell_{\unfrozen\frozen},\ell_{\frozen\unfrozen},\ell_{\frozen\frozen})\in\NN_0^4}}\binom{\ell_{\unfrozen\unfrozen}+\cdots+\ell_{\frozen\frozen}}{\ell_{\unfrozen\unfrozen},\ldots,\ell_{\frozen\frozen}}^{n\bDelta(1-\malpha,\s,\Lu)} \mbox{ and }\\
		\fE&=\bc{dn\malpha^2}!\bc{\bc{dn\malpha(1-\malpha)}!}^2\bc{dn(1-\malpha)^2}!.
	\end{align*}
\end{lemma}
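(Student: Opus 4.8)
The plan is to count $\malpha$-covers by decomposing the pair $(\fm,\pi)$ into three pieces that are, to within $\exp(o(n))$ and conditionally on a typical degree sequence $d_{\vA}$, chosen independently. The variable side consists of (a) the \emph{type} of each variable node $v_i$, i.e.\ its mark $\fm(v_i)$ from~\eqref{eqCOV5a} together with the profile $\Lu\in\NN_0^4$ recording how many of its $d_{\vA}(v_i)$ clones carry each of the four labels in $\{\frozen,\unfrozen\}^2$, and (b) which clone positions carry which label. By {\bf COV2} the second coordinate $\fm_2$ of a clone is forced by the first coordinates of the clones at the same node, for all but $o(n)$ clones, so (b) is really only a choice of the $\fm_1$'s. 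The check side is the analogous data for the $a_i$'s, governed by {\bf COV3}. Finally there is the bijection $\pi$, which by {\bf COV1} must send a variable clone labelled $(x,y)$ to a check clone labelled $(y,x)$, while {\bf COV4} constrains the empirical type distributions to be $\bDelta(1-\malpha,\cdot)+o(1)$ (variables) and $\bGamma(\malpha,\cdot)+o(1)$ (checks). So the first step is to write $\fZ(\malpha)=\exp(o(n))\times(\text{variable configs})\times(\text{check configs})\times(\text{\# admissible }\pi)$ and then evaluate the three factors.

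To evaluate them I would first classify the valid types. Inspecting {\bf COV2}, a variable node receiving $K_f$ frozen and $K_u$ non-frozen incoming labels has mark and profile determined by $(K_f,K_u)$ --- mark $\unfrozen$, $\fu$ or $\frozen$ according as $K_f=0$, $K_f=1$ or $K_f\ge2$ --- and $(K_f,K_u)$ is recoverable from the type; comparison with~\eqref{eqCor_stats11}--\eqref{eqCor_stats13} shows that $\bDelta(1-\malpha,\cdot)$ is precisely the law of the type when $K_f\sim\Po(d(1-\malpha))$, $K_u\sim\Po(d\malpha)$ independently, so the degree $K_f+K_u$ has the $\Po(d)$ law matching a typical degree sequence of $G(\vA)$. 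Likewise {\bf COV3} identifies a valid check type with a pair $(J_f,J_u)$, $J_f\sim\Po(d\malpha)$, $J_u\sim\Po(d(1-\malpha))$, so $\bGamma(\malpha,\cdot)$ appears on the check side. Using that \whp\ the degree sequence concentrates, together with the tail bounds of \Lem~\ref{Lemma_tails}, the number of variable-side configurations is $\exp(o(n))\,\fH\,\fL$: the multinomial $\fH$ counts the choice of types (the degree constraint being free to this precision because $\bDelta(1-\malpha,\cdot)$ has the correct degree marginal) and $\fL=\prod_{\s,\Lu}\binom{|\Lu|}{\Lu}^{n\bDelta(1-\malpha,\s,\Lu)}$ counts the clone labellings at each node, the $o(n)$ clones allowed to break {\bf COV2}--{\bf COV3} costing only $\exp(o(n))$. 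The check side is $\exp(o(n))$ times $\binom{n}{n(\bGamma(\malpha,\s,\Lu))}\prod_{\s,\Lu}\binom{|\Lu|}{\Lu}^{n\bGamma(\malpha,\s,\Lu)}$, and the crucial point is that this too equals $\exp(o(n))\,\fH\,\fL$. This is the self-duality of the unstable fixed point: \eqref{eqmalphaeq} reads $1-\malpha=\exp(-d(1-\malpha))$, so $\malpha$ is its own dual fixed point $1-\exp(-d(1-\malpha))$, and the bijections above give $H(\bGamma(\malpha,\cdot))=H(\Po(d\malpha))+H(\Po(d(1-\malpha)))=H(\bDelta(1-\malpha,\cdot))$ and $\sum_{\s,\Lu}\bGamma(\malpha,\s,\Lu)\log\binom{|\Lu|}{\Lu}=\Erw\log\binom{J_f+J_u}{J_f}=\sum_{\s,\Lu}\bDelta(1-\malpha,\s,\Lu)\log\binom{|\Lu|}{\Lu}$ (swapping the two Poisson parameters leaves the binomial coefficient unchanged); Stirling's formula turns the entropy identities into the claimed equality.

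For the matching factor, once all clone labels are fixed {\bf COV1} forces $\pi$ to biject the variable clones with label $(x,y)$ onto the check clones with label $(y,x)$, so there are $\prod_{(x,y)}N_{xy}!$ admissible $\pi$, where $N_{xy}$ is the number of $(x,y)$-labelled variable clones and the conjugate check classes have the same sizes. Summing the clone contributions against the type distribution and using $\exp(-d(1-\malpha))=1-\malpha$, one computes $N_{\unfrozen\unfrozen}=N_{\frozen\frozen}=dn\malpha(1-\malpha)+o(n)$, $N_{\unfrozen\frozen}=dn\malpha^2+o(n)$, $N_{\frozen\unfrozen}=dn(1-\malpha)^2+o(n)$ (which sum to $dn\sim\sum_i d_{\vA}(v_i)$), so $\prod_{(x,y)}N_{xy}!=\exp(o(n))\,\fE$. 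Assembling the three factors yields $\fZ(\malpha)=\exp(o(n))\,\fH^2\fL^2\fE$ \whp\ over $d_{\vA}$.

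I expect the main obstacle to be making the word ``independently'' above rigorous, i.e.\ checking that imposing {\bf COV1} together with the requirement that the \emph{induced} check side obeys {\bf COV3}--{\bf COV4} really leaves, up to $\exp(o(n))$, a free $\fH\fL$ of check-side choices and $\fE$ admissible bijections and no fewer. Concretely, after fixing the variable-side data and a label-respecting $\pi$, the induced check-clone labels automatically have the right class sizes, but one must argue that for all but an $\exp(o(n))$-fraction of these bijections the induced local second coordinates satisfy {\bf COV3} and the induced type counts match~\eqref{eqCOV5d} up to $o(n)$; this is a configuration-model local-limit statement, for which I would use \Lem~\ref{Lemma_contig} and the tail estimates of \Lem~\ref{Lemma_tails} to bound the number of ``bad'' check nodes by $o(n)$ \whp\ for a uniformly random such $\pi$. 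The other delicate point is the self-duality computation of the second paragraph, which is exactly what makes both $\fH$-factors and both $\fL$-factors appear even though one side enumerates variables and the other checks.
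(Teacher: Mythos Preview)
Your three-fold decomposition $\fZ(\malpha)=\exp(o(n))\cdot(\text{variable configs})\times(\text{check configs})\times(\text{matchings})=\exp(o(n))\,\fH\fL\cdot\fH\fL\cdot\fE$ is exactly the paper's argument. Two minor remarks: the equality of the variable- and check-side multinomial and clone-labelling factors does not actually need the self-duality~\eqref{eqmalphaeq}, since a direct inspection of~\eqref{eqCor_stats11}--\eqref{eqCor_stats16} shows that $\bDelta(1-\malpha,\nix)$ and $\bGamma(\malpha,\nix)$ are the same distribution up to the relabeling $\frozen\leftrightarrow\unfrozen$ of the index set (self-duality is only genuinely used to put the clone-class sizes $N_{xy}$ into the clean form appearing in $\fE$, cf.\ \Lem~\ref{Lemma_covers1}); and the configuration-model/local-limit verification you anticipate in your final paragraph is not required---once variable and check labels are chosen separately the number of {\bf COV1}-respecting bijections is exactly $\prod_{x,y}N_{xy}!$, and the $\exp(o(n))$ error arises solely from the $o(n)$ tolerances permitted in {\bf COV2}--{\bf COV4}.
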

\begin{proof}
	The first factor $\fH$ simply accounts for the number of ways of partitioning the $n$ variable nodes and the $n$ check nodes into the various types as designated by \eqref{eqCOV5c}--\eqref{eqCOV5d}.
	Since we need to select a type for each variable and check node, the number of possible designations actually reads
	\begin{align}\label{eqLemma_covers2_1}
		\binom n{n((\bDelta(1-\malpha,\s,\Lu))_{\s\in\{\frozen,\fu,\unfrozen\},\Lu\in\NN_0^4}}\binom n{n((\bGamma(\malpha,\s,\Lu))_{\s\in\{\frozen,\fu,\unfrozen\},\Lu\in\NN_0^4}}\exp(o(n));
	\end{align}
	the $\exp(o(n))$ error term accounts for the $n/\explicit$ error terms in~\eqref{eqCOV5punch}.
	But a glimpse at~\eqref{eqCor_stats11}--\eqref{eqCor_stats16} reveals that these two multinomial coefficients coincide.
	Hence, \eqref{eqLemma_covers2_1} is equal to $\fH^2\exp(o(n))$.
	Furthermore, the factor~$\fL$ accounts for the number of ways of selecting, for each variable/check node, the clones along which messages of the four types $\{\frozen,\unfrozen\}^2$ travel.
	Finally, $\fE$ counts the number of ways of matching up these clones so that {\bf COV2}--{\bf COV3} are satisfied.
	To be precise, since {\bf COV2--COV3} only provide asymptotic estimates rather than precise equalities, we incur an $\exp(o(n))$ error term; hence $\fZ(\alpha_0)=\exp(o(n))\fH^2\fL^2\fE$.
\end{proof}

\begin{lemma}\label{Lemma_covers3}
	We have  $\frac1n\log\fL=\fl'+\fl''+o(1)$, where 
	\begin{align*}
		\fl'&=\exp(-d)\sum_{\ell=0}^\infty\frac{d^\ell}{\ell!}\log(\ell!),&
		\fl''&=-\sum_{\substack{\s\in\{\frozen,\fu,\unfrozen\}\\ \Lu = (\ell_{\unfrozen\unfrozen},\ell_{\unfrozen\frozen},\ell_{\frozen\unfrozen},\ell_{\frozen\frozen})\in\NN_0^4}}\bDelta(1-\malpha,\s,\Lu)\log(\ell_{\unfrozen\unfrozen}!\ell_{\unfrozen\frozen}!\ell_{\frozen\unfrozen}!\ell_{\frozen\frozen}!).
	\end{align*}
\end{lemma}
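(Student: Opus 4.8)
The plan is to reduce the statement to a single elementary identity about Poisson weights; no real ideas are needed here, only careful bookkeeping. First I would take logarithms in the definition of $\fL$ from \Lem~\ref{Lemma_covers2}. Since the exponents appearing there are the reals $n\bDelta(1-\malpha,\s,\Lu)$, this yields, with the shorthand $\abs\Lu:=\ell_{\unfrozen\unfrozen}+\ell_{\unfrozen\frozen}+\ell_{\frozen\unfrozen}+\ell_{\frozen\frozen}$, the \emph{exact} identity $\frac1n\log\fL=\sum_{\s,\Lu}\bDelta(1-\malpha,\s,\Lu)\log\binom{\abs\Lu}{\ell_{\unfrozen\unfrozen},\ell_{\unfrozen\frozen},\ell_{\frozen\unfrozen},\ell_{\frozen\frozen}}$ (so the $o(1)$ in the statement is only slack). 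Splitting $\log\binom{\abs\Lu}{\ell_{\unfrozen\unfrozen},\dots,\ell_{\frozen\frozen}}=\log(\abs\Lu!)-\log(\ell_{\unfrozen\unfrozen}!\ell_{\unfrozen\frozen}!\ell_{\frozen\unfrozen}!\ell_{\frozen\frozen}!)$, the contribution of the second summand is by definition precisely $\fl''$. Hence the whole task comes down to showing $\sum_{\s,\Lu}\bDelta(1-\malpha,\s,\Lu)\log(\abs\Lu!)=\fl'$.

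For that, I would group the sum by the total degree $\ell:=\abs\Lu$, so that it becomes $\sum_{\ell\ge0}\log(\ell!)\bigl(\sum_{\s}\sum_{\Lu:\abs\Lu=\ell}\bDelta(1-\malpha,\s,\Lu)\bigr)$, and then establish the key sub-claim that the inner bracket equals $\pr\brk{\Po(d)=\ell}$. This I would read off directly from the explicit formulas \eqref{eqCor_stats11}--\eqref{eqCor_stats13}: with first argument $\hat\alpha=1-\malpha$ the two Poisson parameters occurring there are $d(1-\malpha)$ and $d(1-\hat\alpha)=d\malpha$, and the indicator in each of the three formulas (for marks $\unfrozen,\fu,\frozen$) pins one coordinate of $\Lu$ and forces, respectively, $\abs\Lu=\ell_{\unfrozen\unfrozen}$, $\abs\Lu=1+\ell_{\unfrozen\frozen}$, $\abs\Lu=\ell_{\frozen\frozen}+\ell_{\unfrozen\frozen}$ with the remaining coordinate free. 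Summing each over $\Lu$ with $\abs\Lu=\ell$ and adding up, the three cases contribute exactly the $j=0$, $j=1$ and $j\ge2$ terms of $\sum_{j=0}^{\ell}\pr\brk{\Po(d(1-\malpha))=j}\pr\brk{\Po(d\malpha)=\ell-j}$, which by additivity of independent Poissons equals $\pr\brk{\Po(d)=\ell}$. Substituting back gives $\sum_{\ell\ge0}\log(\ell!)\pr\brk{\Po(d)=\ell}=\exp(-d)\sum_{\ell\ge0}\frac{d^\ell}{\ell!}\log(\ell!)=\fl'$, completing the proof.

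Along the way I would record the (routine) convergence checks: the multinomial coefficient is at least $1$, so every summand is nonnegative and bounded by $\bDelta(1-\malpha,\s,\Lu)\log(\abs\Lu!)$; by the sub-claim $\sum_\s\bDelta(1-\malpha,\s,\Lu)$ is a convolution of two Poisson weights in $\abs\Lu$ and hence has super-exponentially decaying tails, which dominate $\log(\abs\Lu!)=O(\abs\Lu\log\abs\Lu)$, so the rearrangement by total degree is justified and $\fl'$ is finite. I do not expect any genuine obstacle: the argument is pure accounting, and the only place to be careful is matching the three indicator constraints in \eqref{eqCor_stats11}--\eqref{eqCor_stats13} to the value of $\ell$ — in particular remembering that once $\hat\alpha=1-\malpha$ the two Poisson rates are $d(1-\malpha)$ and $d\malpha$, which do add up to $d$. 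Conceptually the content is just that $\bDelta(1-\malpha,\cdot,\cdot)$, marginalised over the mark and the message-type breakdown of the incident edges, is the $\Po(d)$ degree law, exactly as one expects since every variable node of the branching process $\cT$ has $\Po(d)$ children irrespective of the messages.
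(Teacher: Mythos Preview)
Your proposal is correct and follows essentially the same approach as the paper: split the multinomial coefficient into its numerator and denominator, identify the denominator sum with $\fl''$, and show the numerator sum equals $\fl'$ by observing that the total degree $\abs{\Lu}$ has the $\Po(d)$ law. The paper packages this last step slightly more tersely by introducing a random pair $(\vec{\s},\vLu)$ with law $\bDelta(1-\malpha,\cdot,\cdot)$ and asserting that $\vell_{\unfrozen\unfrozen}+\cdots+\vell_{\frozen\frozen}\sim\Po(d)$ directly from \eqref{eqCor_stats11}--\eqref{eqCor_stats13}, whereas you spell out the three-case convolution explicitly; the content is identical.
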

\begin{proof}
	Choose $\vec{\s}\in\{\frozen,\fu,\unfrozen\}$ along with non-negative vector $\vLu\in\NN_0^4$ from the distribution
	\begin{align*}
		\pr\brk{\vec{\s}=\s,\vLu = \Lu}&=\bDelta(1-\malpha,\s,\Lu)&&(\s\in\{\frozen,\fu,\unfrozen\},\, \Lu\in\NN_0^4).
	\end{align*}
	Then due to {\bf COV4} we have
	\begin{align}\label{eqLemma_covers3_1}
		\frac1n\log\fL&=\Erw\brk{\log(\vell_{\unfrozen\unfrozen}+\cdots+\vell_{\frozen\frozen})!}-\Erw\brk{\log(\vell_{\unfrozen\unfrozen}!\cdots\vell_{\frozen\frozen}!)}+o(1)=\Erw\brk{\log(\vell_{\unfrozen\unfrozen}+\cdots+\vell_{\frozen\frozen})!}-\fl''+o(1).
	\end{align}
	Moreover, \eqref{eqCor_stats11}--\eqref{eqCor_stats13} show that $\vell_{\unfrozen\unfrozen}+\cdots+\vell_{\frozen\frozen}$ has distribution $\Po(d)$. Therefore, ${\Erw\brk{\log(\vell_{\unfrozen\unfrozen}+\cdots+\vell_{\frozen\frozen})!}=\fl'}$.
	Hence, the assertion follows from \eqref{eqLemma_covers3_1}.
\end{proof}

\begin{lemma}\label{Lemma_covers4}
	We have $\frac1n\log\fH=d\Big(1-\log(d)-\malpha\log\malpha-(1-\malpha)\log(1-\malpha)\Big)-\fl''.$
\end{lemma}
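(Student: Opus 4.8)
The plan is to read $\fH$ as (the exponential of) an entropy and to match that entropy with the claimed expression via the elementary formula for the entropy of a Poisson law. First I would note that $p:=\bc{\bDelta(1-\malpha,\s,\Lu)}_{\s\in\{\frozen,\fu,\unfrozen\},\,\Lu\in\NN_0^4}$ is a probability distribution on $\{\frozen,\fu,\unfrozen\}\times\NN_0^4$: summing \eqref{eqCor_stats11}--\eqref{eqCor_stats13} over $\Lu$ leaves exactly $\Pr\brk{\Po(d(1-\malpha))=0}$, $\Pr\brk{\Po(d(1-\malpha))=1}$ and $\Pr\brk{\Po(d(1-\malpha))\ge2}$, which sum to one. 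Since the tails of $p$ decay super-exponentially, Stirling's formula applied term by term gives $\frac1n\log\fH=H(p)+o(1)$; this suffices, because the $\exp(o(n))$ slack carried through \Lem~\ref{Lemma_covers2} means the stated identity is to be understood up to an $o(1)$ error, exactly as in \Lem~\ref{Lemma_covers3}. It thus remains to evaluate $H(p)$.

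Next I would unpack $p$. Setting the first argument of \eqref{eqCor_stats11}--\eqref{eqCor_stats13} to $1-\malpha$, one recognises $p$ as the law of a pair $(\vec\s,\vLu)$ built as follows: let $X\sim\Po(d(1-\malpha))$ count the incoming $\frozen$-messages at a variable node and, independently, $Y\sim\Po(d\malpha)$ count the incoming $\unfrozen$-messages; put $\vec\s=\unfrozen,\fu,\frozen$ according to whether $X=0$, $X=1$ or $X\ge2$; and let $\vLu$ be the incoming/outgoing edge-type vector forced by the fixed-point rule \textbf{COV2}, namely $\vLu=(Y,0,0,0)$ if $X=0$, $\vLu=(0,Y,1,0)$ if $X=1$, and $\vLu=(0,Y,0,X)$ if $X\ge2$. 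As $(X,Y)\mapsto(\vec\s,\vLu)$ is injective and $X,Y$ are independent, $H(p)=H(X,Y)=H(X)+H(Y)$.

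Then I would finish with a short computation. From $\Pr\brk{\Po(\lambda)=k}=\eul^{-\lambda}\lambda^k/k!$ one reads off $H(\Po(\lambda))=\lambda\bc{1-\log\lambda}+\Erw_{Z\sim\Po(\lambda)}\brk{\log(Z!)}$, so
\begin{align*}
	H(p)=d(1-\malpha)\bc{1-\log\bc{d(1-\malpha)}}+d\malpha\bc{1-\log\bc{d\malpha}}+\Erw\brk{\log(X!)}+\Erw\brk{\log(Y!)}.
\end{align*}
To identify the last two terms with $-\fl''$, observe that in each of the three cases above $\ell_{\unfrozen\unfrozen}!\,\ell_{\unfrozen\frozen}!\,\ell_{\frozen\unfrozen}!\,\ell_{\frozen\frozen}!=X!\,Y!$ (using $0!=1!=1$), whence $-\fl''=\Erw_p\brk{\log\bc{\vell_{\unfrozen\unfrozen}!\,\vell_{\unfrozen\frozen}!\,\vell_{\frozen\unfrozen}!\,\vell_{\frozen\frozen}!}}=\Erw\brk{\log(X!)}+\Erw\brk{\log(Y!)}$. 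Substituting this, and expanding $\log\bc{d(1-\malpha)}=\log d+\log(1-\malpha)$ and $\log\bc{d\malpha}=\log d+\log\malpha$, the first two terms collapse to $d\bc{1-\log d-\malpha\log\malpha-(1-\malpha)\log(1-\malpha)}$, which together with $-\fl''$ is precisely the right-hand side of the lemma.

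The only genuine obstacle is the bookkeeping in the middle step: one has to verify carefully that the indicator constraints in \eqref{eqCor_stats11}--\eqref{eqCor_stats13} really do force the outgoing-message counts (hence $\vLu$) to be the stated deterministic, injective function of $(X,Y)$ -- i.e.\ that rule \textbf{COV2} is being applied correctly -- and that the thinning of the $\Po(d)$ stream of incoming messages into $\frozen$- and $\unfrozen$-messages at rates $d(1-\malpha)$ and $d\malpha$ is exactly the one encoded by \eqref{eqCor_stats11}--\eqref{eqCor_stats13}. Everything else is routine Stirling and entropy calculus.
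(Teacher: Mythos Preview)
Your argument is correct and considerably cleaner than the paper's. Both start from Stirling's formula to rewrite $\frac1n\log\fH$ as the entropy $H\big((\bDelta(1-\malpha,\s,\Lu))_{\s,\Lu}\big)+o(1)$, but from there the approaches diverge. The paper expands this entropy sum explicitly, treating the three marks $\s\in\{\unfrozen,\fu,\frozen\}$ separately and carrying out about a page of algebra; along the way extra terms $(1-\malpha)\log(1-\malpha)+d(1-\malpha)^2$ appear, and the special fixed-point identity \eqref{eqmalphaeq}, namely $1-\malpha=\exp(-d(1-\malpha))$, is invoked at the end to cancel them. Your route instead observes that on the support of the distribution, $(\s,\Lu)$ is an injective (in fact bijective) deterministic function of the pair of independent Poissons $(X,Y)\sim\Po(d(1-\malpha))\otimes\Po(d\malpha)$, so the entropy factorises as $H(X)+H(Y)$; the closed form $H(\Po(\lambda))=\lambda(1-\log\lambda)+\Erw[\log(Z!)]$ and the identification $-\fl''=\Erw[\log(X!)]+\Erw[\log(Y!)]$ then give the result directly. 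Your proof is not only shorter but more robust: it never appeals to \eqref{eqmalphaeq}, showing the identity of \Lem~\ref{Lemma_covers4} is really a statement about the Poisson structure of the $\bDelta$-distribution rather than anything special about the unstable fixed point.
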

\begin{proof}
	This is a straightforward computation.
	For the sake of brevity we introduce $\newp(\lambda,i)=\pr\brk{\Po(\lambda)=i}$.
	Using Stirling's formula, we approximate $\fH$ in terms of entropy as
	\begin{align}\label{eqLemma_covers4_1}
		\frac1n\log\fH&=H((\bDelta(1-\malpha,\s,\Lu))_{\s\in\{\frozen,\fu,\unfrozen\},\Lu\in\NN_0^4})+o(1).
	\end{align}
	Depending on the choice of $\s\in\{\frozen,\fu,\unfrozen\}$, the definitions \eqref{eqCor_stats11}--\eqref{eqCor_stats13} of the $\bDelta(1-\malpha,\s,\Lu)$ constrain some of the values $\ell_{\unfrozen\unfrozen},\ldots,\ell_{\frozen\frozen}$ to be zero.
	Hence, using the identity \eqref{eqmalphaeq}, we can spell the right hand side of \eqref{eqLemma_covers4_1} out as
	\begin{align}
		H&((\bDelta(1-\malpha,\s,\Lu))_{\s\in\{\frozen,\fu,\unfrozen\},\Lu\in\NN_0^4})=-\sum_{\s,\Lu}\bDelta(1-\malpha,\s,\Lu)\log\bDelta(1-\malpha,\s,\Lu)\nonumber\\
		&=-\sum_{\ell_{\unfrozen\unfrozen}\geq0}\newp(d(1-\malpha),0)\newp(d\malpha,\ell_{\unfrozen\unfrozen})\log(\newp(d(1-\malpha),0)\newp(d\malpha,\ell_{\unfrozen\unfrozen}))\nonumber\\
		&\quad-\sum_{\ell_{\unfrozen\frozen}\geq0}\newp(d(1-\malpha),1)\newp(d\malpha,\ell_{\unfrozen\frozen})\log(\newp(d(1-\malpha),1)\newp(d\malpha,\ell_{\unfrozen\frozen}))\nonumber\\
		&\quad-\sum_{\ell_{\unfrozen\frozen}\geq0,\ell_{\frozen\frozen}\geq2}\newp(d(1-\malpha),\ell_{\frozen\frozen})\newp(d\malpha,\ell_{\unfrozen\frozen})\log(\newp(d(1-\malpha),\ell_{\frozen\frozen})\newp(d\malpha,\ell_{\unfrozen\frozen}))\nonumber\\ 
		&=d(1-\malpha)^2-(1-\malpha)\sum_{\ell_{\unfrozen\unfrozen}\geq0}\newp(d\malpha,\ell_{\unfrozen\unfrozen})\brk{\ell_{\unfrozen\unfrozen}\log(d\malpha)-d\malpha}\nonumber\\
		&\quad -d(1-\malpha)^2\log(d(1-\malpha)^2)-d(1-\malpha)^2\sum_{\ell_{\unfrozen\frozen}\geq0}\newp(d\malpha,\ell_{\unfrozen\frozen})\brk{\ell_{\unfrozen\frozen}\log(d\malpha)-d\malpha}\nonumber\\
		&\quad -\bc{\malpha-d(1-\malpha)^2}\sum_{\ell_{\unfrozen\frozen}\geq0}\newp(d\malpha,\ell_{\unfrozen\frozen})\brk{\ell_{\unfrozen\frozen}\log(d\malpha)-d\malpha}\nonumber\\
		&\quad-\sum_{\ell_{\frozen\frozen}\geq2}\newp(d(1-\malpha),\ell_{\frozen\frozen})
		\brk{\ell_{\frozen\frozen}\log(d(1-\malpha))-d(1-\malpha)}-\fl''\nonumber\\
		&=-\fl''+d(1-\malpha)^2+d\malpha(1-\malpha)-d\malpha(1-\malpha)\log(d\malpha)\nonumber\\
		&\quad-d(1-\malpha)^2\log(d(1-\malpha)^2)+d^2\malpha(1-\malpha)^2-d^2\malpha(1-\malpha)^2\log(d\malpha)\nonumber\\
		&\quad+d(1-\malpha)-d(1-\malpha)\log(d(1-\malpha))+(1-\malpha)\log(1-\malpha)+d(1-\malpha)^2\log(d(1-\malpha)^2)  \nonumber\\
		&\qquad +d\malpha(\malpha-d(1-\malpha)^2)-d\malpha(\malpha-d(1-\malpha)^2)\log(d\malpha) \nonumber\\
		&=-\fl''-d\log d-d\malpha\log\malpha-d(1-\malpha)\log(1-\malpha)+d+(1-\malpha)\log(1-\malpha)+d(1-\malpha)^2. \label{eqLemma_covers4_2}	
	\end{align} 
	Since $1-\malpha=\exp(-d(1-\malpha))$, the assertion is immediate from \eqref{eqLemma_covers4_2}.
\end{proof}

\begin{lemma}\label{Lemma_covers5}
	\Whp\ over the choice of $d_{\vA}$ we have $\frac1n\log\frac{\fE}{(dn)!}=2d\malpha\log\malpha+2d(1-\malpha)\log(1-\malpha)$.
\end{lemma}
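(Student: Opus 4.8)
The plan is to read off $\frac1n\log(\fE/(dn)!)$ by a direct application of Stirling's formula, the whole point being the algebraic identity $\malpha^2+2\malpha(1-\malpha)+(1-\malpha)^2=1$, which forces
\[
dn\malpha^2+2\,dn\malpha(1-\malpha)+dn(1-\malpha)^2=dn .
\]
First I would recall from Lemma~\ref{Lemma_covers2} that $\fE=(dn\malpha^2)!\,\big((dn\malpha(1-\malpha))!\big)^2\,(dn(1-\malpha)^2)!$, so that $\fE/(dn)!$ is a ratio of five factorials whose numerator arguments sum to the denominator argument. Applying $\log(m!)=m\log m-m+O(\log m)$ to each of the five factorials, the linear $-m$ terms cancel exactly by the displayed identity, and the five $O(\log m)=O(\log n)$ remainders contribute only $o(1)$ after division by $n$.

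This leaves
\begin{align*}
\frac1n\log\frac{\fE}{(dn)!}&=\frac1n\Big[dn\malpha^2\log(dn\malpha^2)+2\,dn\malpha(1-\malpha)\log(dn\malpha(1-\malpha))\\
&\qquad+dn(1-\malpha)^2\log(dn(1-\malpha)^2)-dn\log(dn)\Big]+o(1).
\end{align*}
Splitting each $\log(dn\cdot(\text{fraction}))$ as $\log(dn)+\log(\text{fraction})$, the $\log(dn)$ contributions again cancel by the same identity, and collecting the remaining terms the coefficient of $\log\malpha$ becomes $2d\big(\malpha^2+\malpha(1-\malpha)\big)=2d\malpha$ while the coefficient of $\log(1-\malpha)$ becomes $2d\big(\malpha(1-\malpha)+(1-\malpha)^2\big)=2d(1-\malpha)$, which is exactly $2d\malpha\log\malpha+2d(1-\malpha)\log(1-\malpha)$.

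The only point requiring genuine care — and the reason the statement is phrased ``\whp\ over the choice of $d_{\vA}$'' rather than deterministically — is that the clone sets $\fV,\fC$ from Definition~\ref{def_cover} have size $\sum_i d_{\vA}(v_i)=\sum_i d_{\vA}(a_i)=|E(\vA)|$, and the block sizes to be matched are the true counts of clones of each message type, which agree with $dn\malpha^2,\,dn\malpha(1-\malpha),\,dn(1-\malpha)^2$ only up to $o(n)$ by \textbf{COV4}; here I would invoke that $|E(\vA)|$ is binomial with mean $dn$ and hence $|E(\vA)|=dn+O(\sqrt n\log n)$ \whp, and that $\frac1n\log$ of a product of factorials varies by $o(1)$ under $o(n)$ perturbations of the (bounded-density) arguments, so the idealised values $dn\malpha^2$ etc.\ may be substituted at the cost of an $o(1)$ error, which in any case is swallowed by the $\exp(o(n))$ slack in Lemma~\ref{Lemma_covers2}. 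Floors and ceilings in $dn\malpha^2$ and the like are ignored per the conventions of Section~\ref{Sec_preliminaries}. I do not expect any real obstacle: the lemma is essentially Stirling's formula together with $\malpha^2+2\malpha(1-\malpha)+(1-\malpha)^2=1$, and the only bookkeeping is tracking the $o(1)$ terms and the concentration of $|E(\vA)|$.
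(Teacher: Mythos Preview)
Your proposal is correct and takes the same approach as the paper, which simply invokes Stirling's formula; you have spelled out the cancellation via $\malpha^2+2\malpha(1-\malpha)+(1-\malpha)^2=1$ and the handling of the \whp\ qualifier in more detail than the paper does, but the argument is the intended one.
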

\begin{proof}
	This follows immediately from Stirling's formula.
\end{proof}

\begin{proof}[Proof of \Prop~\ref{Prop_covers}]
	The proposition is an immediate consequence of \Lem s~\ref{Lemma_covers2}--\ref{Lemma_covers5}.
\end{proof}

\subsection{Extending covers}
While in the previous section we just estimated the number of covers, here we also count actual solutions to the random linear system encoded by a cover.
The following definition captures assignments $\sigma$ that, up to $o(n)$ errors, comply with the frozen/unfrozen designations of a cover $(\fm,\pi)$ and also satisfy the checks, again up to $o(n)$ errors.
We extend $\sigma:\{v_1,\ldots,v_n\}\to\field$ to the set of $\fV$ of clones by letting $\sigma(v_i,j)=\sigma(v_i)$.
Recall that $\explicit$ was an arbitrarily slowly growing function.

\begin{definition}\label{Def_extension}
	An {$\alpha$-extension} consists of an $\alpha$-cover $(\fm,\pi)$ together with an assignment $\sigma:\{v_1,\ldots,v_n\}\to\field$ such that the following conditions are satisfied.
	\begin{description}
		\item[EXT1] We have 
		$\sum_{i=1}^n(1+d_{\vA}(v_i))\vecone\cbc{\sigma(v_i)=1, \; \fm(v_i)\neq\unfrozen}\le n/\explicit.$
		\item[EXT2] We have
		$\sum_{i=1}^nd_{\vA}(v_i)\vecone\cbc{\sigma(v_i)=1, \;\fm(v_i)=\unfrozen}=\frac12\sum_{i=1}^nd_{\vA}(v_i)\vecone\cbc{\fm(v_i)=\unfrozen} \pm n/\explicit.$
		\item[EXT3] We have
		$\sum_{i=1}^n\vecone\cbc{\sum_{j\in[d_{\vA}(a_i)]}\sigma(\pi(a_i,j))\neq0}\le n/\explicit$.
	\end{description}
	Further, we define an $\alpha$-semi-extension to consist of an $\alpha$-semi-cover $\fm$ (see Definition~\ref{def_cover})
		and an assignment $\sigma:\{v_1,\ldots,v_n\}\to\field$ satisfying {\bf EXT1} and~{\bf EXT2}. In other words, an $\alpha$-semi-extension consists of $\fm$ and $\sigma$ satisfying all the properties of an $\alpha$-extension which do not involve the bijection~$\pi$.
\end{definition}

The first condition {\bf EXT1} posits that, when weighted according to their degrees,
all but $o(n)$ variables that are deemed frozen under $\fm$ are set to zero under~$\sigma$.
{\bf EXT2} provides that about half the variables that ought to be unfrozen according to $\fm$ are set to one, if we weight variables by their degrees.
Finally, {\bf EXT3} ensures that all but $o(n)$ checks are satisfied.

Let $\fX(\alpha)$ be the total number of $\alpha$-extensions.
The main result of this section reads as follows.

\begin{proposition}\label{Prop_extensions}
	Let $d>\eul$.
	\Whp\ over the choice of the degree sequence $d_{\vA}$ we have
	\begin{align*}
		\frac{\fX(\malpha)}{(dn)!\prod_{i=1}^nd_{\vA}(v_i)!d_{\vA}(a_i)!}&=\exp(n\Phi_d(\malpha)+o(n)).
	\end{align*}
\end{proposition}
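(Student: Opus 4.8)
The proposition asserts that the number of $\malpha$-extensions, divided by the factorial normalisation $(dn)!\prod_i d_{\vA}(v_i)!d_{\vA}(a_i)!$, equals $\exp(n\Phi_d(\malpha)+o(n))$ whp over the degree sequence. The strategy is to build on \Lem~\ref{Lemma_covers2}: an $\malpha$-extension is an $\malpha$-cover $(\fm,\pi)$ together with a compatible assignment $\sigma$, so I would first refine the decomposition $\fZ(\malpha)=\exp(o(n))\fH^2\fL^2\fE$ to keep track of $\sigma$ as well. Concretely, partition the contribution to $\fX(\malpha)$ according to (a) the mark statistics of the cover (handled by $\fH$, $\fL$ exactly as before), (b) the choice of which variables carry $\sigma$-value $1$, and (c) the pairing $\pi$ that now must additionally respect {\bf EXT3}, i.e. each check must see an even number of $1$-variables among its clones. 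Steps (a) and the clone-selection factor $\fL$ are untouched; the new work is entirely in steps (b) and (c).

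For step (b): by {\bf EXT1} all but $o(n)$ of the degree-weighted mass of frozen ($\fm(v_i)\neq\unfrozen$) variables is set to $0$, so those contribute a factor $\exp(o(n))$. By {\bf EXT2} exactly half the degree-weighted mass of the unfrozen variables is set to $1$; the number of ways to choose such a subset of unfrozen variables is, by Stirling and the generalised degree statistics from \eqref{eqCOV5punch} (which pin the degree distribution of unfrozen variables to a $\Po(d(1-\malpha)^{?})$-type law — here the unfrozen variables have all-$\unfrozen$ incoming messages, degree $\Po(d\malpha)$ by \eqref{eqCor_stats14}), asymptotically $\exp((1-\alpha^*)n\log 2 + o(n))$, since roughly $(1-\alpha^*)n$ variables are unfrozen and a $(1+o(1))$ fraction of the binomial coefficient $\binom{m}{m/2}$ is $2^m$ up to subexponential corrections; care is needed because the constraint is on the \emph{degree-weighted} count, not the plain count, but the local CLT / concentration of the weighted sum (using the sub-exponential tails of the Poisson degrees, cf.\ \Lem~\ref{Lemma_tails}) shows this changes the exponent only by $o(n)$.

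For step (c): having fixed $\sigma$ on variables, hence $\sigma$ on all variable clones $\fV$, we must count bijections $\pi:\fV\to\fC$ that (i) respect the message-type blocks as in $\fE$ — i.e. match $\frozen\frozen$-clones to $\frozen\frozen$-clones, etc., giving the four factorials $(dn\malpha^2)!((dn\malpha(1-\malpha))!)^2(dn(1-\malpha)^2)!$ as in \Lem~\ref{Lemma_covers2} — and (ii) make all but $o(n)$ checks satisfied (all clones at a check summing to $0$). The key point is that within each message-type block the pairing is otherwise free, so the fraction of pairings that satisfy a given check is governed by the probability that a $\Po_{\geq?}$-degree check with its neighbours' $\sigma$-values chosen as above has even parity. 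Since the unfrozen variables are balanced ($1$ with probability $\tfrac12$ each, asymptotically independently once we condition on the block structure), each check is satisfied with probability $\tfrac12+o(1)$ unless it is entirely frozen (all neighbours $0$, automatically satisfied). The frozen checks number about $(1-(1+d(1-\malpha))\exp(-d(1-\malpha)))^{?}$... more precisely $n$ minus the unfrozen-check count; by the $\hat f$ computation in \Cor~\ref{Cor_stats} the unfrozen checks are about $(1-(1+d(1-\malpha))\exp(-d(1-\malpha)))n$-many — wait, unfrozen checks are those receiving a $\unfrozen$ message, count $\approx(1-(1+d(1-\malpha))e^{-d(1-\malpha)})n$ — and each contributes a factor $\tfrac12$. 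Putting the pieces together: $\tfrac1n\log\frac{\fX(\malpha)}{(dn)!\prod d_{\vA}(v_i)!d_{\vA}(a_i)!}$ equals $\tfrac1n\log(\fH^2\fL^2\fE/(dn)!) + (1-\alpha^*)\log 2 + o(1) - (\text{unfrozen-check fraction})\log 2 + o(1)$, and a (moderately long but routine) algebraic simplification — using the fixed-point identities $1-\malpha=e^{-d(1-\malpha)}$ from \eqref{eqmalphaeq} and the Lemma~\ref{Lemma_covers3}--\ref{Lemma_covers5} evaluations — should collapse exactly to $\Phi_d(\malpha)=\exp(-d\exp(-d(1-\malpha)))+(1+d(1-\malpha))\exp(-d(1-\malpha))-1$.

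\textbf{Main obstacle.} The delicate step is (c): justifying that the parity constraints at the $\Theta(n)$ non-frozen checks behave like independent fair coins, each costing a factor $\tfrac12$, \emph{uniformly} over the pairing within each block. Naively one wants to say ``condition on all but one clone at each check, then the last clone's label is equally likely $0$ or $1$'', but the checks share clones and the blocks constrain the multiset of available labels, so this requires either a careful sequential-exposure/switching argument inside the configuration model, or an entropy/local-CLT count of balanced pairings — and one must verify the error terms are genuinely $o(n)$ in the exponent and not, say, $\Theta(\log n)\cdot(\text{something})$. A secondary subtlety, flagged already in the paper's introduction for the related moment computation, is the role of \emph{degree-two checks} in the slush, but here since we work with the full degree sequence $d_{\vA}$ and only need an $\exp(o(n))$-precise count (not a sharp constant), I expect the degree-two issue to be absorbable into the $o(n)$ error — unlike in \Sec~\ref{Sec_Prop_expansion} where sharper control is needed. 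Throughout, all degree-dependent sums converge because the Poisson degree distribution has sub-exponential tails, which keeps every error term at $o(n)$ after summing over $\ell$.
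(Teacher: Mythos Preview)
Your high-level decomposition (covers $\times$ choice of $\sigma$ $\times$ fraction of matchings satisfying {\bf EXT3}) is exactly the paper's approach, and you have correctly located the only non-routine step. A few corrections and one pointer to how the paper dispatches your ``main obstacle'':

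\textbf{Fixed-point confusion.} Throughout step (b) you write $(1-\alpha^*)$ for the number of $\unfrozen$-variables and invoke $\hat f$ from \Cor~\ref{Cor_stats}. But you are working with $\malpha$-covers, so the correct counts come from \Lem~\ref{Lemma_covers1} (not \Cor~\ref{Cor_stats}): the number of $\unfrozen$-variables is $(1-\malpha)n+o(n)$ and the number of $\unfrozen$-checks is $(\malpha-d(1-\malpha)^2)n+o(n)$. With these, the combinatorial factor is $2^{n((1-\malpha)-(\malpha-d(1-\malpha)^2))+o(n)}$, and the identity \eqref{eqmalphaeq} immediately gives $1-2\malpha+d(1-\malpha)^2=\Phi_d(\malpha)$. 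No lengthy algebraic simplification is needed.

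\textbf{Only an upper bound is proved (and needed).} Your step (b) over-works: for the application to \Prop~\ref{Prop_dd} only $\fX(\malpha)\leq\exp(n\Phi_d(\malpha)+o(n))\cdot(dn)!\prod_i d_{\vA}(v_i)!d_{\vA}(a_i)!$ is required, so you may simply bound the number of $\sigma$ obeying {\bf EXT1}--{\bf EXT2} by $2^{|\{i:\fm(v_i)=\unfrozen\}|+o(n)}$ without any local-CLT argument about the degree-weighted constraint.

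\textbf{The parity step.} Your diagnosis of the obstacle is right, and the paper's device (\Lem~\ref{Lemma_extensions1}) is precisely one of the two routes you suggest. Fix $\fm$ and a balanced $\sigma$; let $\vec\pi$ be a uniformly random matching compatible with {\bf COV1--COV3}. Condition first on the exact degree-weighted one-fractions $\vr_\unfrozen\sim\tfrac12$, $\vr_\frozen\sim1$ among the check clones. Now replace the conditioned random pairing by a product measure: let $(\vec\chi_{ij})$ be independent with $\pr[\vec\chi_{ij}=0]=\vr_{\fm_1(a_i,j)}$, let $\cS$ be the event that all but $o(n)$ checks have even $\vec\chi$-sum, and let $\cR$ be the event that the empirical one-counts in each message class match $\vr_\unfrozen,\vr_\frozen$ exactly. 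Then $\fp(\fm,\sigma)=\pr[\cS\mid\cR]$, and Bayes plus the binomial local limit theorem give $\pr[\cR]=\Omega(1/n)$, while independence gives $\pr[\cS]=2^{-|\{i:\fm(a_i)=\unfrozen\}|+o(n)}$ directly. Hence $\pr[\cS\mid\cR]\leq\pr[\cS]/\pr[\cR]=2^{-|\{i:\fm(a_i)=\unfrozen\}|+o(n)}$, with the polynomial factor absorbed into the $o(n)$. This is cleaner than a sequential-exposure argument and sidesteps the dependency issue you flag; the degree-two checks cause no trouble here because you only need exponential-order accuracy.

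Combining this with \Prop~\ref{Prop_covers} finishes the proof in two lines.
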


The following lemma summarises the key step toward the proof of \Prop~\ref{Prop_extensions}.
For a fixed $\fm$ let $\vec\pi$ be a uniformly random matching of the clones $\fV,\fC$ such that $(\fm,\vec\pi)$ is an $\malpha$-cover.
Given $\fm$ and $\sigma$, let $\fp(\fm,\sigma)$ be the probability
	that the random matching $\vec\pi$ results in $\sigma$ satisfying all but $o(n)$ checks.

\begin{lemma}\label{Lemma_extensions1}
	\Whp\ over the choice of $d_{\vA}$,
		given any $\malpha$-semi-extension $(\fm,\sigma)$,
		we have 
	$$\fp(\fm,\sigma)\leq2^{-|\{i\in[n]:\fm(a_i)=\unfrozen\}|+o(n)}.$$
\end{lemma}
\begin{proof}
	Given the $\malpha$-semi-cover $\fm$, the precise matching $\vec\pi$ of the frozen/unfrozen clones
	remains random subject to condition {\bf COV1}.
	We will expose this matching in two steps.
	First we expose the degree-weighted fraction of occurrences of frozen/unfrozen variables set to one.
	Specifically, let $\vr_{\unfrozen}\sim1/2$ be the precise degree-weighted fraction of occurrences of unfrozen variables that are set to zero under $\sigma$; in formulae,
	\begin{align}\label{eqLemma_extensions1_1}
		\vr_{\unfrozen}&=\frac{\sum_{i=1}^n\abs{\cbc{j\in[d_{\vA}(a_i)]:\fm_1(a_i,j)=\unfrozen,\,\sigma(\pi(a_i,j))=0}}}{\sum_{i=1}^n\abs{\cbc{j\in[d_{\vA}(a_i)]:\fm_1(a_i,j)=\unfrozen}}}.
	\end{align}
	Similarly, let $\vr_{\frozen}\sim1$ be the degree-weighted fraction of frozen clones set to zero:
	\begin{align}\label{eqLemma_extensions1_2}
		\vr_{\frozen}&=\frac{\sum_{i=1}^n\abs{\cbc{j\in[d_{\vA}(a_i)]:\fm_1(a_i,j)=\frozen,\,\sigma(\pi(a_i,j))=0}}}{\sum_{i=1}^n\abs{\cbc{j\in[d_{\vA}(a_i)]:\fm_1(a_i,j)=\frozen}}}.
	\end{align}
	
	Once we condition on $\vr_{\unfrozen},\vr_{\frozen}$, the precise matching of the various clones remains random.
	To study the conditional probability that $\sigma$ satisfies all but $o(n)$ checks, we set up an auxiliary probability space.
	To be precise, let $\vec\chi=\bc{\vec\chi_{ij}}_{i\in[n],j\in[d_A(a_i)]}$ be a random sequence of mutually independent field elements $\vec\chi_{ij}\in\field$ such that
	\begin{align}\label{eqLemma_extensions1_3}
		\pr\brk{\vec\chi_{ij}=0}&=\begin{cases}
			\vr_{\unfrozen}&\mbox{ if }\fm_1(a_i,j)=\unfrozen,\\
			\vr_{\frozen}&\mbox{ if }\fm_1(a_i,j)=\frozen.
		\end{cases}
	\end{align}
	Further, consider the events 
	\begin{align*}
		\cR&=\cbc{\sum_{i=1}^n\sum_{j=1}^{d_{\vA}(a_i)}\vecone\cbc{\vec\chi_{ij}=0,\,\fm_1(a_i,j)=\s}= \vr_{\s}\sum_{i=1}^n\sum_{j=1}^{d_{\vA}(a_i)}\vecone\cbc{\fm_1(a_i,j)=\s}\mbox{ for }\s\in\{\frozen,\unfrozen\}},\\
		\cS&=\cbc{\sum_{i=1}^n\vecone\cbc{\sum_{j=1}^{d_{\vA}(a_i)}\vec\chi_{ij}\neq0}=o(n)}.
	\end{align*}
	(Note that since $\vec \chi_{ij}\in \field$, implicitly the central sum in the definition of $\cS$ is over $\field$.)
	Then because the matching $\vec\pi$ of the clones is random subject to {\bf COV1} we obtain
	\begin{align}\label{eqLemma_extensions1_4}
		\fp(\fm,\sigma)=\Erw[\pr\brk{\cS\mid\cR,\vr_{\frozen},\vr_{\unfrozen}}].
	\end{align}
	
	Hence, we are left to calculate $\pr\brk{\cS\mid\cR,\vr_{\frozen},\vr_{\unfrozen}}$.
	Calculating the unconditional probabilities is easy.
	Indeed, the choice \eqref{eqLemma_extensions1_1}--\eqref{eqLemma_extensions1_2} of $\vr_{\unfrozen},\vr_{\frozen}$ and the definition \eqref{eqLemma_extensions1_3} of $\vec\chi$ and the local limit theorem for the binomial distribution ensure that
	\begin{align}\label{eqLemma_extensions1_5}
		\pr\brk{\cR}&=\Omega(1/n).
	\end{align}
	Furthermore, we claim that
	\begin{align}\label{eqLemma_extensions1_6}
		\pr\brk{\cS}&=2^{-|\{i\in[n]:\fm(a_i)=\unfrozen\}|+o(n)}.
	\end{align}
	Indeed, consider a check $a_i$ such that $\fm(a_i)=\unfrozen$.
	Then there exists $j\in[d_{\vA}(a_i)]$ such that $\fm_1(a_i,j)=\unfrozen$.
	Therefore, the choice \eqref{eqLemma_extensions1_1} of $\vr_{\unfrozen}$ ensures that the event $\vec\chi_{ij}\neq0$ occurs with probability $1/2+o(1)$.
	Similarly, if $\fm(a_i)\neq\unfrozen$, then by the choice of $\vr_{\frozen}$ the event $\vec\chi_{ij}\neq0$ has probability at most $o(d_{\vA}(a_i))$.
	Since the definition \eqref{eqLemma_extensions1_3} of the $\vec\chi_{ij}$ ensures that these events are independent for the different checks $a_i$, we obtain \eqref{eqLemma_extensions1_6}.
	Finally, combining \eqref{eqLemma_extensions1_4}--\eqref{eqLemma_extensions1_6} with Bayes' rule, we obtain
	\begin{align*}
		\fp(\fm,\sigma)&=\Erw\Big[\pr\brk{\cS\mid\cR,\vr_{\frozen},\vr_{\unfrozen}}\Big]=\Erw\Big[\pr\brk{\cS\mid\vr_{\frozen},\vr_{\unfrozen}}\cdot\pr\brk{\cR\mid\cS,\vr_{\frozen},\vr_{\unfrozen}}/\pr\brk{\cR\mid\vr_{\frozen},\vr_{\unfrozen}}\Big]
		\leq2^{-|\{i\in[n]:\fm(a_i)=\unfrozen\}|+o(n)},
	\end{align*}
	as desired.
\end{proof}

To complete the proof of \Prop~\ref{Prop_extensions} we combine \Lem~\ref{Lemma_extensions1} with the following statement about the numbers of variables/checks of the various types. Given $\s \in \{\frozen,\unfrozen\}$, let us define $\indunf{\s}:= \vecone\cbc{\s=\unfrozen}$.

\begin{lemma}\label{Lemma_covers1}
	\Whp\ over the choice of $d_{\vA}$, any $\malpha$-cover $(\fm,\pi)$ satisfies
	\begin{align}\label{eqLemma_covers1}
		\frac1{dn}\sum_{i=1}^n\sum_{j=1}^{d_{\vA}(v_i)}\vecone\cbc{\fm(v_i,j)=(\x,\y)}&\sim \malpha^{1+\indunf{\x}-\indunf{\y}}(1-\malpha)^{1-\indunf{\x}+\indunf{\y}}\qquad(\x,\y\in\cbc{\frozen,\unfrozen}),\\
		\frac1{dn}\sum_{i=1}^n\sum_{j=1}^{d_{\vA}(a_i)}\vecone\cbc{\fm(a_i,j)=(\x,\y)}&\sim \malpha^{1-\indunf{\x}+\indunf{\y}}(1-\malpha)^{1+\indunf{\x}-\indunf{\y}}\qquad(\x, \y\in\cbc{\frozen,\unfrozen}),\label{eqLemma_covers1a}\\
		\frac1n\sum_{i=1}^n\vecone\cbc{\fm(v_i)=\frozen}\sim \malpha-d(1-\malpha)^2,&\qquad
		\frac1n\sum_{i=1}^n\vecone\cbc{\fm(v_i)=\unfrozen}\sim 1-\malpha,\qquad
		\frac1n\sum_{i=1}^n\vecone\cbc{\fm(v_i)=\fu}\sim d(1-\malpha)^2,\label{eqLemma_covers2}\\
		\frac1n\sum_{i=1}^n\vecone\cbc{\fm(a_i)=\unfrozen}\sim \malpha-d(1-\malpha)^2,&\qquad
		\frac1n\sum_{i=1}^n\vecone\cbc{\fm(a_i)=\frozen}\sim 1-\malpha,\qquad 
		\frac1n\sum_{i=1}^n\vecone\cbc{\fm(a_i)=\fu}\sim d(1-\malpha)^2. \label{eqLemma_covers3}
	\end{align}
\end{lemma}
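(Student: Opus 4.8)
The plan is to read off all nine asymptotics directly from condition \textbf{COV4} of \Def~\ref{def_cover}, which already fixes the empirical distribution of the generalised degrees of an $\malpha$-cover, and then to evaluate the resulting Poisson series, using the identity $1-\malpha=\exp(-d(1-\malpha))$ from \eqref{eqmalphaeq} to bring the answers into the stated monomial form. As a first step I would condition on $d_{\vA}$ lying in the \whp-event of \Lem~\ref{Lemma_tails}; then every vertex has degree at most $\log n$, we have $\sum_i d_{\vA}(v_i)=\sum_i d_{\vA}(a_i)=dn(1+o(1))$, and $\frac1n\sum_x\binom{|\partial x|}{\ell}\le(2d)^\ell$ for every $\ell\ge1$. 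Consequently only $n^{o(1)}$ vectors $\Lu$ can make $\vDelta(\s,\Lu)$ or $\vGamma(\s,\Lu)$ nonzero, and both the combinatorial weights $\ell_{\x\y}\le\log n$ and the Poisson tails beyond $\log n$ are negligible, so that the per-$\Lu$ error terms of \textbf{COV4} still sum to $o(n)$. It then suffices to prove the claimed identities for an arbitrary $\malpha$-cover $(\fm,\pi)$ over such a degree sequence.

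For \eqref{eqLemma_covers1} I would note that, by the definition \eqref{eqCOV5c} of $\vDelta$, $\sum_{i=1}^n\sum_{j=1}^{d_{\vA}(v_i)}\vecone\cbc{\fm(v_i,j)=(\x,\y)}=\sum_{\s,\Lu}\ell_{\x\y}\,\vDelta(\s,\Lu)$, which by \textbf{COV4} equals $n\sum_{\s,\Lu}\ell_{\x\y}\,\bDelta(1-\malpha,\s,\Lu)+o(n)$. Plugging in the explicit formulas \eqref{eqCor_stats11}--\eqref{eqCor_stats13} and using the elementary identities $\sum_\ell\pr\brk{\Po(\lambda)=\ell}=1$, $\sum_\ell\ell\,\pr\brk{\Po(\lambda)=\ell}=\lambda$, $\pr\brk{\Po(\lambda)=1}=\lambda\eul^{-\lambda}$ and $\pr\brk{\Po(\lambda)\ge1}=1-\eul^{-\lambda}$, each series collapses; for instance when $(\x,\y)=(\unfrozen,\frozen)$ only the marks $\s\in\{\fu,\frozen\}$ contribute and the sum equals $\pr\brk{\Po(d(1-\malpha))\ge1}\cdot d\malpha=(1-\eul^{-d(1-\malpha)})d\malpha$. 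Dividing by $dn=\sum_i d_{\vA}(v_i)(1+o(1))$ and substituting \eqref{eqmalphaeq} yields $\malpha^2=\malpha^{1+\indunf{\unfrozen}-\indunf{\frozen}}(1-\malpha)^{1-\indunf{\unfrozen}+\indunf{\frozen}}$, and the three remaining choices of $(\x,\y)$ are treated identically. Identity \eqref{eqLemma_covers1a} is proved the same way from \eqref{eqCOV5d} and \eqref{eqCor_stats14}--\eqref{eqCor_stats16}; since $\bGamma(\malpha,\cdot,\cdot)$ has the roles of $d\malpha$ and $d(1-\malpha)$ interchanged relative to $\bDelta(1-\malpha,\cdot,\cdot)$, the exponents of $\malpha$ and $1-\malpha$ in the outcome swap, which is exactly the discrepancy between \eqref{eqLemma_covers1} and \eqref{eqLemma_covers1a}.

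The mark counts \eqref{eqLemma_covers2}--\eqref{eqLemma_covers3} follow by the same mechanism but without degree weights: $\frac1n\sum_i\vecone\cbc{\fm(v_i)=\s}=\frac1n\sum_\Lu\vDelta(\s,\Lu)=\sum_\Lu\bDelta(1-\malpha,\s,\Lu)+o(1)$, and summing \eqref{eqCor_stats11}--\eqref{eqCor_stats13} over $\Lu$ gives $\eul^{-d(1-\malpha)}=1-\malpha$ for $\s=\unfrozen$, $\pr\brk{\Po(d(1-\malpha))=1}=d(1-\malpha)^2$ for $\s=\fu$, and $\pr\brk{\Po(d(1-\malpha))\ge2}=\malpha-d(1-\malpha)^2$ for $\s=\frozen$, in each case after one application of \eqref{eqmalphaeq}; \eqref{eqLemma_covers3} is obtained the same way from $\bGamma(\malpha,\cdot,\cdot)$. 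The only step requiring genuine care, which I expect to be the main (though mild) obstacle, is interchanging the summation over the infinitely many $\Lu$ with the $o(n)$ errors of \textbf{COV4} and with the Poisson tails — and this is precisely what conditioning on the well-behaved degree sequence of \Lem~\ref{Lemma_tails} is designed to handle. Everything else reduces to bookkeeping with low-order Poisson moments and the single algebraic identity \eqref{eqmalphaeq}.
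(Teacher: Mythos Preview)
Your proposal is correct and follows essentially the same approach as the paper: both read the statistics off from \textbf{COV4} via the explicit Poisson distributions \eqref{eqCor_stats11}--\eqref{eqCor_stats16} and then simplify with the identity \eqref{eqmalphaeq}. The only notable difference is in \eqref{eqLemma_covers1a}: the paper derives it from \eqref{eqLemma_covers1} via \textbf{COV1}, since the matching $\pi$ exchanges the two message coordinates and hence the count of check clones with pair $(\x,\y)$ equals the count of variable clones with pair $(\y,\x)$; you instead recompute from the $\bGamma$ formulas, which works equally well but misses this one-line shortcut. Your extra care with the summability of the $o(n)$ errors over infinitely many $\Lu$ (via the degree bounds of \Lem~\ref{Lemma_tails} and Poisson tails) is a point the paper's proof leaves implicit.
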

\begin{proof}
	We first claim that {\bf COV4} implies the estimate 
	\begin{align*}
		\frac1n\sum_{i=1}^n\sum_{j=1}^{d_{\vA}(v_i)}\vecone\cbc{\fm(v_i,j)=(\x,\y)}&\sim d\malpha^{\indunf{\x}}(1-\malpha)^{1-\indunf{\x}}\exp(-d\indunf{\y}(1-\malpha))(1-\exp(-d(1-\malpha)))^{1-\indunf{\y}}.
	\end{align*}
	Let us first give some non-rigorous intuition for where the formula comes from. Condition~{\bf COV4} heuristically
		states that at a given variable, each incoming message is $\frozen$ with probability $1-\malpha$ independently.
		Thus there are on average about $nd(1-\malpha)$ incoming messages of $\frozen$ and $nd\malpha$ incoming messages of $\unfrozen$,
		which leads to the term $d\malpha^{\indunf{\x}}(1-\malpha)^{1-\indunf{\x}}$ above.
		Regardless of the incoming message, in order for the corresponding
		outgoing message to also be $\frozen$ (in which case $\indunf{\y}=0$),
		assuming we are not in one of the few (by~{\bf COV1}) cases when the outgoing messages
		do not come from the incoming messages,
		there would need to be at least one further incoming message of $\frozen$ at the appropriate variable,
		which occurs with probability $\pr\brk{\Po(d(1-\malpha))\ge 1} = 1-\exp\bc{d(1-\malpha)}$.
		Therefore we also have that the probability that the outgoing message is $\unfrozen$ (the case when $\indunf{\y}=1$)
		is $\exp\bc{d(1-\malpha)}$. This gives the appropriate term involving exponentials.
	
	For a more formal proof, let us restrict ourselves the case when $(\x,\y)=(\frozen,\frozen)$ -- other cases are very similar.
		We have 
		\begin{align*}
			\frac{1}{n}\sum_{i=1}^n\sum_{j=1}^{d_{\vA}(v_i)}\vecone\cbc{\fm(v_i,j)=(\frozen,\frozen)}
			& = \frac{1}{n}\sum_{\s \in \{\frozen,\star,\unfrozen\}} \sum_{\Lu \in \mathbb{N}_0^4} \ell_{\frozen\frozen} \vDelta(\s,\Lu) \\
			& \sim \sum_{\s \in \{\frozen,\star,\unfrozen\}} \sum_{\Lu \in \mathbb{N}_0^4} \ell_{\frozen\frozen} \bDelta(1-\malpha,\s,\Lu) \\
			& = \sum_{\Lu \in \mathbb{N}_0^4} \ell_{\frozen\frozen} \bDelta(1-\malpha,\frozen,\Lu) \\
			& = \sum_{\substack{\ell_{\frozen\frozen}\ge 2 \\ \ell_{\unfrozen\frozen}\ge 0}} \ell_{\frozen\frozen}\pr\brk{\Po(d(1-\malpha))=\ell_{\frozen\frozen}}\pr\brk{\Po(d\malpha)=\ell_{\unfrozen\frozen}} \\
			& = \Erw\brk{\Po(d(1-\malpha))} - \pr\brk{\Po(d(1-\malpha))=1} \\
			& = d(1-\malpha)\bc{ 1- \exp\bc{-d(1-\malpha)}}
		\end{align*}
		Since in the case when $(\x,\y)=(\frozen,\frozen)$ we have $\indunf{\x}=\indunf{\y}=0$, the claimed estimate follows.

	From this estimate,
	using the identity \eqref{eqmalphaeq}, we obtain \eqref{eqLemma_covers1}.
	The second identity \eqref{eqLemma_covers1a} follows from \eqref{eqLemma_covers1} and~{\bf COV1}.
	We note also that a similar heuristic argument to the one above motivates the expression in~\eqref{eqLemma_covers1a}:
		indeed, we can also argue by symmetrical considerations, switching $\frozen$ and $\unfrozen$ and switching the roles
		of variable and check nodes (which leaves the WP update rule unchanged). Switching $\frozen$ and $\unfrozen$ also
		negates the effect of replacing $1-\malpha$ by $\alpha$, and therefore in the expression we only need to replace $\indunf{\x}$ and $\indunf{\y}$ by
		$1-\indunf{\x}$ and $1-\indunf{\y}$ respectively.
	
	Finally, equations~\eqref{eqLemma_covers2}--\eqref{eqLemma_covers3} follow from the identity $\malpha=1-\exp(-d(1-\malpha))$ and {\bf COV2} by summing on~$\Lu$.
	Once again, the motivation comes from the heuristic of the distributions of incoming messages: for example, to have $\fm(v_i)=\frozen$, we
		require at least two incoming messages of $\frozen$, which occurs with probability
		\begin{align*}
			\pr\brk{\Po(d(1-\malpha)\ge 2)} &
			= 1-\exp\bc{-d(1-\malpha)}(1+d(1-\malpha)) 
			\stackrel{{\scriptsize \eqref{eqmalphaeq}}}{=} 1- (1-\malpha)(1+d(1-\malpha)) = \malpha - d(1-\malpha)^2
		\end{align*}
		as required.
\end{proof}

\begin{proof}[Proof of \Prop~\ref{Prop_extensions}]
	\Whp\ over the choice of $d_{\vA}$, for any $\malpha$-semi-cover $\fm$,
		$2^{|\{i\in[n]:\fm(v_i)=\unfrozen\}|+o(n)}$ many 
		vectors $\sigma$ are present such that $(\fm,\sigma)$ is an $\malpha$-semi-extension (since we must set almost all of the frozen variables
		to zero, but most of the assignments on the unfrozen variables will result in {\bf EXT1} and {\bf EXT2} being satisfied). Therefore
	\Lem s~\ref{Lemma_extensions1} and~\ref{Lemma_covers1} (in particular the second approximation of~\eqref{eqLemma_covers2}
		and the first approximation of~\eqref{eqLemma_covers3}) imply that \whp\ over the choice of $d_{\vA}$,
	\begin{align}\label{eqProp_extensions_1}
		p(\fm,\sigma)&\leq2^{|\{i\in[n]:\fm(v_i)=\unfrozen\}|-|\{i\in[n]:\fm(a_i)=\unfrozen\}|+o(n)}
		\leq2^{n(1-2\malpha+d(1-\malpha)^2+o(1))}.
	\end{align}
	Further, using the identity \eqref{eqmalphaeq}, we verify that $1-2\malpha+d(1-\malpha)^2=\Phi_d(\malpha)$.
	Thus, the assertion follows from \eqref{eqProp_extensions_1} and \Prop~\ref{Prop_covers}.
\end{proof}

\begin{proof}[Proof of \Prop~\ref{Prop_dd}]
	We can generate a random Tanner graph $G(\vA)$ with a given degree sequence $d_{\vA}$ by way of the pairing model.
	Specifically, we generate a random pairing $\vec\pi$ of the sets $\fV,\fC$ of clones and condition on the event $\fS$ that the resulting graph $G(\vec\pi)$ is simple.
	\Whp\ over the choice of the degree sequence $d_{\vA}$ we have $\pr\brk{\fS\mid d_{\vA}}=\Omega(1)$; but in fact, for the purposes of the present proof the trivial estimate
	\begin{align}\label{eqProp_dd_1}
		\pr\brk{\fS\mid d_{\vA}}&=\exp(o(n))	&&\mbox{\whp}
	\end{align}
	suffices.
	Now, let $\cE$ be the event that $G(\vec\pi)$ has at least $2^{\Phi_d(\alpha^*)n -n/\explicit}$ many $\malpha$-extensions.
	Recall that \whp\ over the choice of $d_{\vA}$ there are $\bc{\sum_{i=1}^nd_{\vA}(v_i)}!=(dn)!\exp(o(n))$ possible matchings of the $2\bc{\sum_{i=1}^nd_{\vA}(v_i)}$ clones in total, and that each Tanner graph extends to $\prod_{i=1}^nd_{\vA}(v_i)!d_{\vA}(a_i)!$ pairings.
	Therefore, \Prop s~\ref{Prop_theta=1} and~\ref{Prop_extensions}, \eqref{eqProp_dd_1} and Markov's inequality show that \whp\ over the choice of $d_{\vA}$,
	\begin{align*}
		\pr\Big[\cE\mid \fS,d_{\vA}\Big]&\leq
		2^{-\Phi_d(\alpha^*)n+ n/\explicit}\frac{\fX(\alpha_0)}{(dn)!\prod_{i=1}^nd_{\vA}(v_i)!d_{\vA}(a_i)!}
		\leq
		2^{n(\Phi_d(\malpha)-\Phi_d(\alpha^*))+o(n)}=\exp(-\Omega(n)),
	\end{align*}
	or in other words \whp\ over the choice of $d_{\vA}$,
		\begin{equation}\label{eqProp_dd_2}
			\pr\Big[\vA \in \cE\mid d_{\vA}\Big] \leq \exp(-\Omega(n)).
		\end{equation}
		Note that we no longer condition on $\fS$ because the random model for $\vA$ automatically results in a simple Tanner graph,
		or more precisely, $G(\vA)$ has the same distribution as $G(\vec\pi)$ conditioned on this graph being simple.

	To complete the proof, assume that $\pr\brk{f(\vA)=\malpha+o(1)}>\eps$ for some $\eps>0$.
	Observe that Lemma~\ref{lem_balanced} shows that, \whp\ over the choice of $\vA$, there are $2^{\Phi_d(\alpha^*)n+o(n)}$
		many $o(1)$-balanced vectors in $\ker A$,
		which in the case when $f(\vA)=\malpha$ are simply $\malpha$-extensions by Lemma~\ref{Lemma_standard}.
		It follows that with probability at least $\eps/2$ over the choice of~$\vA$, there are at least $2^{\Phi_d(\alpha^*)n+o(n)}$
		many $\malpha$-extensions, i.e., $\pr\brk{\vA \in \cE}\ge \eps/2$.
		Hence with probability at least $\eps/4$ over the choice of $d_{\vA}$ we have $\pr\brk{\vA \in \cE \mid d_{\vA}}\ge \eps/4$.
		However, this statement directly contradicts~\eqref{eqProp_dd_2}.
\end{proof}

\section{Symmetry and correlation}\label{Sec_slush}

\noindent
The aim in this section is to prove \Prop~\ref{Prop_sym}, which states that \whp\ the numbers of variables and checks
in the slush are not almost equal.
Thus, we study the subgraph $G_\slush(\vA)$ induced on $V_\slush(\vA) \cup C_\slush(\vA)$. 
We use the notation $n_\slush := |V_\slush(\vA)|$ and $m_\slush := |C_\slush(\vA)|$.
We exploit the symmetry of the distribution of $\vA$ by considering the transpose of the matrix.
While symmetry automatically implies that events are equally likely for $\vA$ and $\vA^\top$, we would like to be able to deduce that the event $|V_\slush\bc{\vA}|-|C_\slush\bc{\vA}|\geq\omega$ occurs with probability asymptotically $1/2$ for some $\omega = \omega(n) \gg 1$.
The main step is to prove the following.

\begin{lemma}\label{lem:excludedmiddle}
	There exists some $\omega_0 \xrightarrow{n\to \infty} \infty$ such that
	\whp\ $|n_\slush-m_\slush| \geq \omega_0$.
\end{lemma}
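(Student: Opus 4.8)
The plan is to prove that $|n_\slush-m_\slush|\to\infty$ in probability; this is equivalent, by an elementary diagonal argument, to the statement of the lemma. So fix $k\in\NN$ and write $X=X(\vA)=n_\slush-m_\slush$; it suffices to show $\pr\brk{|X|\le k}\to0$.

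The engine of the argument is a local, \emph{undetectable} perturbation that raises $n_\slush$ by one while leaving $m_\slush$ fixed, built in the spirit of the thimblerig perturbation of \Sec~\ref{Sec_perturb}. Given $\vA$ --- which by \Prop~\ref{Prop_slush} has $n_\slush,m_\slush=(\nu+o(1))n$ with $\nu>0$ \whp\ --- first reveal $G_\slush(\vA)$, then pick a variable node $u$ that is isolated in $G(\vA)$ and two distinct check nodes $a\neq a'$ uniformly at random from $C_\slush(\vA)$, and insert the edges $\{u,a\},\{u,a'\}$. Since $a,a'\in C_\slush(\vA)$ receive at least two $\slush$-messages and no $\unfrozen$-message, the update rules \eqref{eqWP1} force $w_{a\to u}=w_{a'\to u}=\slush$ and then $w_{u\to a}=w_{u\to a'}=\slush$; and since every message emitted by $a$ or $a'$ towards an original neighbour was already $\slush$ (a check in $C_\slush$ can emit neither $\frozen$ nor $\unfrozen$), supplying one more incoming $\slush$-message changes none of them. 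Hence \emph{no} message anywhere else is affected: $u$ joins $V_\slush$, the checks $a,a'$ remain in $C_\slush$, no freezing and hence no cascade occurs, and the precise effect is $n_\slush\mapsto n_\slush+1$, $m_\slush\mapsto m_\slush$. Performing $r$ such insertions (with fresh randomness, and noting that reusing an attachment check across insertions does no harm) therefore yields, whenever $|C_\slush(\vA)|\ge2$ and $G(\vA)$ has at least $r$ isolated variables --- both hold \whp\ for $r=o(\sqrt n)$ --- a matrix $\vA^{[r]}$ with $n_\slush(\vA^{[r]})-m_\slush(\vA^{[r]})=X+r$.

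Next I would fix an integer $L\ge1$ and $\omega=\omega(n)$ with $1\ll\omega=n^{1/2-\Omega(1)}$, and set $\vA^{(j)}:=\vA^{[j\omega]}$, $Y^{(j)}:=n_\slush(\vA^{(j)})-m_\slush(\vA^{(j)})$ for $0\le j\le L$, so that $Y^{(j)}=X+j\omega$ \whp. The key point --- an argument along the lines of Fact~\ref{fact:TVsmall}, since we have merely activated $O(\omega)$ formerly isolated variables of bounded degree and incremented the degrees of $O(\omega)$ checks, a change far below the $\Theta(\sqrt n)$ fluctuations of the pertinent degree statistics --- is that $\dTV(\vA,\vA^{(j)})=o(1)$ for each fixed $j\le L$, whence $\pr\brk{Y^{(j)}\in I}\le\pr\brk{X\in I}+o(1)$ for every $I\subset\ZZ$. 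Combining this with $Y^{(j)}=X+j\omega$ \whp, we obtain for every $j\le L$
\begin{align}\label{eqExcludedMiddlePlan}
\pr\brk{X\in[j\omega-k,\,j\omega+k]}\ \ge\ \pr\brk{Y^{(j)}\in[j\omega-k,\,j\omega+k]}-o(1)\ \ge\ \pr\brk{|X|\le k}-o(1),
\end{align}
the final inequality holding because on $\{|X|\le k\}$ we have $Y^{(j)}=X+j\omega\in[j\omega-k,\,j\omega+k]$ \whp. By the transpose symmetry of the model and of Warning Propagation (so that $X\disteq-X$; cf.\ the opening of \Sec~\ref{Sec_slush}), \eqref{eqExcludedMiddlePlan} also holds with the interval $[-j\omega-k,\,-j\omega+k]$. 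For $n$ large the $2L+1$ intervals $[j\omega-k,\,j\omega+k]$, $-L\le j\le L$, are pairwise disjoint, so summing \eqref{eqExcludedMiddlePlan} over them yields $(2L+1)\bc{\pr\brk{|X|\le k}-o(1)}\le1$; letting $n\to\infty$ and then $L\to\infty$ gives $\pr\brk{|X|\le k}\to0$, as required.

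The hard part will be the indistinguishability $\dTV(\vA,\vA^{(j)})=o(1)$: because the attachment checks are chosen using knowledge of $G_\slush(\vA)$, one must carry out the two-round exposure carefully --- first reveal $G_\slush(\vA)$ and hence $C_\slush(\vA)$, then resample the rest of $\vA$ --- and verify, as in the proofs of Fact~\ref{fact:TVsmall} and \Lem~\ref{Lemma_contig}, that activating or incrementing $o(\sqrt n)$ nodes is dominated by the $\Theta(\sqrt n)$-scale fluctuations of the degree sequence. The remaining ingredients --- that the gadget provokes no cascade and shifts $(n_\slush,m_\slush)$ by exactly $(+1,0)$ --- follow at once from \eqref{eqWP1} once one observes that the new variable only ever sends and receives $\slush$-messages.
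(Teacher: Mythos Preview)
Your overall plan—construct a perturbation that shifts $n_\slush-m_\slush$ by a controlled amount while remaining statistically undetectable, then spread the mass of $X=n_\slush-m_\slush$ over many disjoint intervals—is exactly the paper's strategy. Your disjoint-intervals conclusion is a clean variant of the paper's random-$\omega_2$ trick, and the analysis that your gadget moves $(n_\slush,m_\slush)\mapsto(n_\slush+1,m_\slush)$ without cascades is correct.

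The gap is the indistinguishability claim $\dTV(\vA,\vA^{(j)})=o(1)$, which you rightly flag as ``the hard part'' but justify only by analogy to Fact~\ref{fact:TVsmall}. That analogy is misleading: in Fact~\ref{fact:TVsmall} the attachment points are \emph{uniformly random} variable nodes, whereas your checks $a,a'$ are drawn from $C_\slush(\vA)$, a set determined by the global outcome of Warning Propagation. A bare ``$O(\omega)$ changes versus $\Theta(\sqrt n)$ fluctuations'' heuristic does not handle this conditioning, and your two-round-exposure sketch does not supply the missing step. What is actually required—and what the paper carries out in \Prop~\ref{prop:TVclasssizes}—is a precise double-counting: one builds a bipartite auxiliary graph between ``before'' and ``after'' configurations, counts the forward operations out of each $G$ and the reverse operations into each $H$, and checks that the ratio of these counts exactly matches the probability ratio $p^{\pm 2\omega_2}$. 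The paper's perturbation (deleting the edges of variables in $U$, the set of degree-$2$ variables whose two neighbours are degree-$3$ slush checks) is engineered so that the resulting balance identity $\bar s\,\bar r^2/(2\bar u)=1/d^2$ is explicit and verifiable. Your perturbation can be made to work too—the corresponding reverse operation is ``pick a degree-$2$ slush variable whose two check neighbours have slush-degree $\ge 3$ and delete its edges'', and the analogous balance identity does hold once one computes the density of such variables via the branching process and the fixed-point relations—but that computation has to be done, and it is the heart of the matter. As written, the proposal identifies the right architecture but leaves the load-bearing calculation unperformed.
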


As indicated above, \Prop~\ref{Prop_sym} follows from this lemma and symmetry considerations.
We first describe the symmetry property more explicitly.

\begin{lemma}\label{prop:marksym}
	For any matrix $A$ we have $V_\slush\bc{A^{\top}} = C_\slush\bc{A}$ and $C_\slush\bc{A^{\top}} = V_\slush\bc{A}$.
\end{lemma}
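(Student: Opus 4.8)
The plan is to exhibit an explicit symmetry of Warning Propagation under transposition. Transposing $A$ interchanges rows and columns, so the Tanner graph $G(A^{\top})$ is obtained from $G(A)$ by relabelling every variable node as a check node and every check node as a variable node, keeping the incidence structure intact; write $z\mapsto\tilde z$ for this bijection of nodes, and note that it carries the edge $\{a,v\}$ of $G(A)$ to the edge $\{\tilde a,\tilde v\}$ of $G(A^{\top})$, with the message in the geometric direction ``from $a$'s side to $v$'s side'' being $w_{a\to v}$ in $G(A)$ but $w_{\tilde a\to\tilde v}$ in $G(A^{\top})$, and symmetrically. The key observation is that the two clauses of \eqref{eqWP1} are interchanged by the involution $\mathrm{fl}$ on $\{\frozen,\slush,\unfrozen\}$ that swaps $\frozen\leftrightarrow\unfrozen$ and fixes $\slush$: the check-to-variable rule becomes, after applying $\mathrm{fl}$ on both sides, the variable-to-check rule and vice versa. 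I therefore claim that the fixed point reached from the all-$\slush$ start satisfies
\[
w_{\tilde a\to\tilde v}(A^{\top})=\mathrm{fl}\bc{w_{a\to v}(A)},\qquad w_{\tilde v\to\tilde a}(A^{\top})=\mathrm{fl}\bc{w_{v\to a}(A)}\qquad\text{for every edge }\{a,v\}\text{ of }G(A).
\]

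To prove the displayed identity I would first record that $\WP_A$ is monotone for the partial order on messages in which $\slush$ lies below both $\frozen$ and $\unfrozen$ (immediate from \eqref{eqWP1}: replacing an incoming $\slush$ by a committed value can only commit the outgoing messages further), and that $w(A,t)$ is nondecreasing in $t$, so $w(A)=\lim_t w(A,t)$ is the \emph{least} fixed point of $\WP_A$. Next, define $w'\in\cW(A^{\top})$ by the two formulas above applied to $w(A)$, and check directly from \eqref{eqWP1} that $\WP_{A^{\top}}(w')=w'$: evaluating the check-to-variable clause of $\WP_{A^{\top}}$ on $w'$ and applying $\mathrm{fl}$ turns it, term by term, into the variable-to-check clause of $\WP_A$ evaluated on $w(A)$, which is satisfied because $w(A)$ is a fixed point; the other clause is analogous. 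Hence $w'$ is a fixed point of $\WP_{A^{\top}}$, so by minimality $w(A^{\top})\le w'$ in the product order (via $z\mapsto\tilde z$). Applying the same inequality to $A^{\top}$ in place of $A$, and using that $\mathrm{fl}$ and $z\mapsto\tilde z$ define an order-isomorphism together with $(A^{\top})^{\top}=A$, yields the reverse inequality, so $w(A^{\top})=w'$ exactly.

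Finally I would read off the slush sets. By \eqref{eqSlushIntro2}, a check node $\tilde v\in C(A^{\top})$ lies in $C_\slush(A^{\top})$ iff none of the incoming messages $w_{\tilde a\to\tilde v}(A^{\top})$ with $\tilde a\in\partial\tilde v$ equals $\unfrozen$ and at least two of them equal $\slush$. Substituting $w_{\tilde a\to\tilde v}(A^{\top})=\mathrm{fl}(w_{a\to v}(A))$ and using $\mathrm{fl}(x)\neq\unfrozen\iff x\neq\frozen$ and $\mathrm{fl}(x)=\slush\iff x=\slush$, this becomes precisely the condition in \eqref{eqSlushIntro1} defining $v\in V_\slush(A)$. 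Hence $C_\slush(A^{\top})=V_\slush(A)$, and $V_\slush(A^{\top})=C_\slush(A)$ follows by applying this with $A^{\top}$ in place of $A$. I expect the only delicate point to be the bookkeeping in the fixed-point verification, where one must take care that the within-round update order of \eqref{eqWP1} (the variable-to-check update consumes the freshly updated check-to-variable messages) does not break the symmetry; this causes no problem because the verification is carried out at a fixed point, where the ``old'' and ``new'' messages agree — alternatively one can work with the fully parallel variant of WP, which has the same fixed points and makes the transpose symmetry manifest.
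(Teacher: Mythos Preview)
Your argument is correct and rests on exactly the same symmetry the paper exploits: transposition swaps the two node types and the update rules of \eqref{eqWP1} are interchanged by the involution $\mathrm{fl}$ that swaps $\frozen\leftrightarrow\unfrozen$ and fixes $\slush$. The difference is only in how you establish $w(A^{\top})=\mathrm{fl}(w(A))$. The paper does this by a one-line induction on $t$: since both processes start at all-$\slush$ and the update rules correspond under $\mathrm{fl}$, one has $w_{\tilde a\to\tilde v}(A^{\top},t)=\mathrm{fl}(w_{a\to v}(A,t))$ and $w_{\tilde v\to\tilde a}(A^{\top},t)=\mathrm{fl}(w_{v\to a}(A,t))$ for every $t$, and then passes to the limit. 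You instead characterise $w(A)$ as the least fixed point of $\WP_A$ (via monotonicity with respect to the order $\slush<\frozen,\unfrozen$), transport the fixed point through the symmetry, and close up with an antisymmetry argument. Both routes are sound; the induction is shorter and sidesteps the within-round ordering issue you flagged, while your approach has the minor conceptual advantage of isolating the least-fixed-point property, which would also handle variants of WP with different update schedules.
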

\begin{proof}
	We can show by induction on $t\in \NN$ that the messages at time $t$ in the Tanner graphs of $A,A^\top$ are symmetric.
	More precisely, the Tanner graphs are identical except that variable nodes become check nodes and vice versa.
	At time $0$ all messages are $\slush$ in both graphs, while it can be easily checked that the update rules remain identical if we switch checks and variables and also switch the symbols $\frozen$ and $\unfrozen$.
	Therefore, introducing
	\begin{align*}
		V_{\slush}(A,t)&=\Big \{v\in V(A):\big(\forall a\in\partial v:w_{a\to v}(A,t)\neq\frozen\big)\mbox{ and }\abs{\cbc{a\in \partial v:w_{a\to v}(A,t)=\slush}}\geq2\Big\},\\
		C_{\slush}(A,t)&=\Big\{a\in C(A):\big(\forall v\in\partial a:w_{v\to a}(A,t)\neq\unfrozen\big)\mbox{ and }\abs{\cbc{v\in\partial a:w_{v\to a}(A,t)=\slush}}\geq2\Big\}.
	\end{align*}
	we conclude that $V_\slush(A,t)=C_\slush(A^\top,t)$ and $C_\slush(A,t)=V_\slush(A^\top,t)$ for all $t$.
	Recalling \eqref{eqSlushIntro1}--\eqref{eqSlushIntro2}, we see that $V_\slush(A)=\bigcap_{t\geq0}V_\slush(A,t)$ and $C_\slush(A)=\bigcap_{t\geq0}C_\slush(A,t)$, whence the assertion follows.
\end{proof}

\begin{proof}[Proof of \Prop~\ref{Prop_sym}]
	We apply \Lem~\ref{prop:marksym} to deduce that
	$$
	\Pr\Big[|V_\slush(\vA)|-|C_\slush(\vA)| \ge \omega_0\Big] = \Pr\Big[\abs{C_\slush\bc{\vA^\top}} - \abs{V_\slush\bc{\vA^\top}}\ge \omega_0\Big]
	= \Pr\big[|C_\slush(\vA)|-|V_\slush(\vA)| \ge \omega_0\big],
	$$
	where for the second equality we used the fact that $\vA,\vA^\top$ have identical distributions.
	Furthermore Lemma~\ref{lem:excludedmiddle} implies that 
	$\Pr\big[|V_\slush(\vA)|-|C_\slush(\vA)| \ge \omega_0\big] + \Pr\big[|C_\slush(\vA)|-|V_\slush(\vA)| \ge \omega_0\big] = 1-o(1)$,
	and the desired statement follows.
\end{proof}

The proof strategy for Lemma~\ref{lem:excludedmiddle} is similar to (but rather simpler than) the standard approach to proving a local limit theorem:
we will show that $n_\slush-m_\slush$ is almost equally likely to hit any value in a range much larger than $\omega_0$,
and therefore the probability of hitting the much smaller interval $[-\omega_0,\omega_0]$ is negligible.
Our strategy involves a carefully constructed version of a switching method, in which we make small modifications
	to the graph in such a way that $n_\slush-m_\slush$ is perturbed in a controlled way -- more precisely, it will be altered by an
	additive term $\omega_2$, which will be chosen shortly (see~\eqref{eq:ExcludedMiddleHierarchy}) --
	in particular, we will have $\omega_0\ll \omega_2 \ll \sqrt{n}$.
	However, the modifications we make to the graph are small enough
	that the distribution of the modified graph is essentially indistinguishable from that of the original graph (since $\omega_2 \ll \sqrt{n}$).
	Moreover, since $\omega_2 \gg \omega_0$,
	it turns out that the resulting discrepancy $|n_\slush-m_\slush|$ is unlikely to be smaller than $\omega_0$.

We begin by estimating the sizes of some special sets of vertices.
Recall $\lambda$ from \eqref{eqlambdanu}.

\begin{definition}
	\begin{enumerate}[(i)]
		\item Let $R=R(\A)$ be the set of check nodes $a$ of degree two such that $w_{v\to a}(\vA)=\slush$ for all $v\in\partial a$.
		\item Let $S=S(\A)$ be the set of isolated variable nodes.
		\item Let $T=T(\A)$ be the set of check nodes $a$ of degree three such that $w_{v\to a}(\vA)=\slush$ for all $v\in\partial a$.
		\item Let $U=U(\A)$ be the set of variable nodes which have precisely two neighbours, both in $T$.
		\item Let
		\begin{align*}
			r & =r(\A)  := |R|/n, & 
			s & =s(\A)  := |S|/n, & 
			u & =u(\A)  := |U|/n, \\
			\bar r & := \frac{\exp\bc{-d}\lambda^2}{2},
			& \bar s & := \exp\bc{-d} ,
			& \bar u & := \left(\frac{\exp\bc{-d}\lambda^2}{2}\right) \cdot
			\left(\frac{\exp\bc{-d\alpha^*} \lambda^2/2}{1-\exp\bc{-\lambda}} \right)^2.
		\end{align*}
	\end{enumerate}
\end{definition}

\begin{lemma}\label{prop:TVdegs}
	\Whp
	\begin{align*}
		r =(1+o(1)) \bar r, \qquad
		s =(1+o(1)) \bar s, \qquad
		u = (1+o(1)) \bar u.
	\end{align*}
	In particular, there exists some $\omega_1 \to \infty$ such that
	$$
	r= \bc{1+o\bc{\frac{1}{\omega_1}}} \bar r, \qquad
	s= \bc{1+o\bc{\frac{1}{\omega_1}}} \bar s, \qquad
	u= \bc{1+o\bc{\frac{1}{\omega_1}}} \bar u.
	$$
\end{lemma}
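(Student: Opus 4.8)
The plan is to compute the expectations of $|R|$, $|S|$, $|U|$ via the local limit description of $G(\vA)$ together with the WP messages (Lemma~\ref{lem:locallimit}), and then to show concentration by a second moment argument. Each of $R$, $S$, $U$ is a sum over vertices of an indicator of a property that is determined by the $O(1)$-radius neighbourhood of the vertex in $G(\vA)$ decorated with the final WP messages. Concretely, $S$ is the set of isolated variable nodes, so $\Erw|S| = n\pr[\Po(d)=0] = n\eul^{-d}$; this is an immediate calculation, no WP needed. For $R$, a check node lies in $R$ iff it has degree exactly two and both incoming variable-to-check messages are $\slush$. Now $w_{v\to a}(\vA)=\slush$ precisely when $v$ receives no $\frozen$-message and at least one $\slush$-message from its neighbours other than $a$; by Lemma~\ref{lem:locallimit} the joint law of the two incoming messages at a degree-two check converges to two independent copies of a message distributed as $\vec q_*^{(v)}$, so $\pr[\text{both }\slush] \to (\alpha^*-\alpha_*)^2 = (\lambda/d)^2$. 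Hence $\Erw|R| = n\,\pr[\Po(d)=2]\cdot(\lambda/d)^2 + o(n) = n\,\frac{\eul^{-d}d^2}{2}\cdot\frac{\lambda^2}{d^2} + o(n) = n\bar r + o(n)$. For $U$, a variable node lies in $U$ iff it has degree exactly two and both its neighbouring checks lie in $T$; a check of degree three lies in $T$ iff all three incoming variable-to-check messages are $\slush$, which (again by the local limit, now reading off the offspring distribution in Definition~\ref{Def_T}, or directly from $\vec q_*$) happens with the stated probability. One has to be slightly careful: given that a check $a$ adjacent to our variable $v$ has degree three, the two messages $a$ sends to $v$'s grandchildren-side are correlated with the message $a\to v$, but the relevant event here is about the messages \emph{into} $a$ from its other two neighbours, which are asymptotically independent $\slush$ with probability $\lambda/d$ each conditioned appropriately — the precise bookkeeping is exactly the computation behind $\bar u$, giving $\Erw|U| = n\bar u + o(n)$.

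With the first moments in hand, the next step is concentration. The cleanest route is the second moment method: show $\Var|R| = o(n^2)$, $\Var|S|=o(n^2)$, $\Var|U|=o(n^2)$, which by Chebyshev yields $r=(1+o(1))\bar r$ etc.\ in probability. For $|S|$ this is classical (the number of isolated vertices in $G(n,n,d/n)$). For $|R|$ and $|U|$ the events defining membership for two vertices $a,a'$ (resp.\ $v,v'$) become asymptotically independent unless their neighbourhoods overlap, and the probability that two uniformly random vertices have overlapping $O(1)$-neighbourhoods is $O(1/n)$; combined with the uniform degree tail bound from Lemma~\ref{Lemma_tails} this gives the $o(n^2)$ variance bound. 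Alternatively, and perhaps more in the spirit of the paper, one can invoke the general concentration machinery of~\cite{CCKLR} that already underlies Lemma~\ref{lem:locallimit}: local statistics of $G(\vA)$ decorated with WP messages concentrate. Either way, once we have convergence in probability to the deterministic constants $\bar r,\bar s,\bar u$, the final sentence of the lemma is automatic: ``$x_n = (1+o(1))c$ in probability'' for a fixed constant $c>0$ means that for every fixed $\eps>0$ we have $\pr[|x_n/c - 1|>\eps]\to 0$; a standard diagonalisation then produces a single sequence $\eps_n\to 0$ with $\pr[|x_n/c-1|>\eps_n]\to 0$, and setting $\omega_1 = \omega_1(n) := \min\{1/\eps_n^{(R)}, 1/\eps_n^{(S)}, 1/\eps_n^{(U)}\}^{1/2} \to\infty$ (or any sequence tending to infinity slowly enough) gives $r = (1+o(1/\omega_1))\bar r$ and likewise for $s,u$ simultaneously \whp.

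I expect the main obstacle to be the $\Erw|U|$ computation and its variance bound, because $U$ is the only one of the three sets whose defining event reaches to distance two in the Tanner graph and therefore genuinely requires the message-distribution description rather than a bare degree count. In particular one must correctly account for the conditioning: a degree-three check $a$ sends a $\slush$-message to $v$ only if the other two messages into $a$ are $\slush$ and none is $\frozen$, and then whether $a\in T$ depends additionally on the message $v\to a$; disentangling which messages are being conditioned on (the incoming ones at $a$ from its other neighbours) from the event of interest ($v$ has both neighbours in $T$) is where the factor $\bigl(\tfrac{\eul^{-d\alpha^*}\lambda^2/2}{1-\eul^{-\lambda}}\bigr)^2$ comes from — the denominator $1-\eul^{-\lambda}$ is a conditional-probability normalisation reflecting that we already know $v$ emits a $\slush$-message along at least one edge. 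Verifying that this combinatorial factor matches $\bar u$ is a finite but fiddly calculation; everything else ($\bar r$, $\bar s$, the variance estimates, the passage from in-probability convergence to the $o(1/\omega_1)$ statement) is routine.
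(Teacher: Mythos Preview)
Your proposal is essentially correct, and the alternative you mention (``invoke the general concentration machinery of~\cite{CCKLR} that already underlies Lemma~\ref{lem:locallimit}'') is precisely what the paper does. The paper's proof is three lines: membership in each of $R,S,U$ is determined by the depth-$\leq 2$ messaged neighbourhood of the node, so Lemma~\ref{lem:locallimit} applies directly, and one checks that the corresponding branching-process probabilities equal $\bar r,\bar s,\bar u$. The crucial point you slightly underplay is that Lemma~\ref{lem:locallimit} already delivers convergence \emph{in probability} of these empirical proportions, not just of their expectations, so your separate second-moment argument is redundant.

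Two small remarks. First, your variance sketch (``the events become asymptotically independent unless their neighbourhoods overlap'') is not automatic for events phrased in terms of the \emph{final} WP messages, which are global functions of $G(\vA)$; one needs Lemma~\ref{lem:fewfurtherchanges} to replace them by bounded-round messages, at which point locality holds --- but this is exactly the content already packaged inside Lemma~\ref{lem:locallimit}, so there is no need to unpack it again. Second, your interpretation of the $1-\eul^{-\lambda}$ denominator in $\bar u$ is slightly misattributed: it arises not from conditioning on $v$'s outgoing message but from the conditional offspring law of a check $a$ given that $a$ sends $\slush$ to its parent, which in Definition~\ref{Def_T} stipulates $\Po_{\geq1}(\lambda)$ children sending $\slush$; the probability that this equals~$2$ is $\eul^{-\lambda}\lambda^2/2$ divided by $1-\eul^{-\lambda}$. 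Combined with the factor $\eul^{-d\alpha_*}$ for having no $\frozen$-sending children, and using $d\alpha_*+\lambda=d\alpha^*$, this gives the stated per-check factor.
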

\begin{proof}
	Since whether a node lies in each of these sets is a fact about its depth (at most) $2$ neighbourhood (with messages), by Lemma~\ref{lem:locallimit}, it is enough to look at the probabilities that $\cT_2$ (for $S,U$) and $\hat \cT_2$ (for $R$) have the appropriate structure.
	(Indeed, the statement for $S$ could be proved directly using a Chernoff bound and without appealing to Lemma~\ref{lem:locallimit}.)
	An elementary check verifies that these probabilities are $\bar r,\bar s, \bar u$, as appropriate.
\end{proof}

Let $1\ll \omega_1 \ll n^{1/2}$ be a function such that \Lem~\ref{prop:TVdegs} holds.
For the remainder of this section,
we will fix further functions $\omega_0,\omega_2$ such that
\begin{equation}\label{eq:ExcludedMiddleHierarchy}
	1 \ll \omega_0 \ll  \omega_1 \ll n^{1/2}
\end{equation}
and such that $\omega_2$ is chosen uniformly at random from the interval $[\omega_1/2,\omega_1]$
independently of $\vA$. In particular, we will prove Lemma~\ref{lem:excludedmiddle} with this $\omega_0$.

\begin{claim}\label{claim:sneakysubset}
	If $|U| = \Theta(n)$, then for all but $o\bc{\binom{|U|}{\omega_1}}$ subsets $U'\subset U$ of size $\omega_1$,
	no node has more than one neighbour in~$U'$. 
\end{claim}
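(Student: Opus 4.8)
The plan is a straightforward first‑moment (union bound) estimate over the pairs of vertices of $U$ that could create a conflict. First I would record the structural observation that every element of $U$ is a variable node whose two neighbours are check nodes lying in $T$; consequently a node of $G(\vA)$ can have two or more neighbours inside a set $U'\subset U$ only if it is a check node, and any such check is automatically in $T$. Since a check $a\in T$ has degree exactly three, it is adjacent to at most three variables of $U$, and $U'$ fails the desired conclusion precisely when it contains two variables $v,v'$ that share a common check neighbour. I would therefore introduce the auxiliary ``conflict graph'' $H$ on vertex set $U$ in which $v\sim v'$ whenever $\partial v\cap\partial v'\neq\emptyset$, and note that the number of edges of $H$ is at most $\sum_{a\in T}\binom{|\partial a\cap U|}{2}\le 3|T|\le 3n$, using that $G(\vA)$ has exactly $n$ check nodes.

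Next I would count the ``bad'' subsets. For a fixed edge $\{v,v'\}$ of $H$, the number of $\omega_1$‑element subsets of $U$ containing both endpoints is $\binom{|U|-2}{\omega_1-2}$, so by the union bound the number of $\omega_1$‑subsets of $U$ that are not independent in $H$ is at most $3n\binom{|U|-2}{\omega_1-2}$. Dividing by $\binom{|U|}{\omega_1}$ and using the elementary identity $\binom{|U|-2}{\omega_1-2}\big/\binom{|U|}{\omega_1}=\omega_1(\omega_1-1)\big/(|U|(|U|-1))$, the fraction of bad subsets is at most $3n\,\omega_1^2/|U|^2$.

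Finally I would invoke the two hypotheses in play: $|U|=\Theta(n)$ (assumed in the statement) and $\omega_1\ll n^{1/2}$ from the hierarchy \eqref{eq:ExcludedMiddleHierarchy}. Together these give $3n\,\omega_1^2/|U|^2=O(\omega_1^2/n)=o(1)$, so all but an $o(1)$‑fraction — equivalently all but $o\bc{\binom{|U|}{\omega_1}}$ — of the subsets $U'\subset U$ of size $\omega_1$ contain no edge of $H$, which is exactly the assertion that no node has more than one neighbour in $U'$.

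I do not expect a genuine obstacle here; the only points requiring care are the reduction identifying a ``bad'' $U'$ with a subset containing an edge of $H$ (which rests on the degree‑two property of vertices in $U$ and the degree‑three property of checks in $T$), and the sparsity bound $|E(H)|=O(n)$, since it is precisely this linear bound, combined with the regime $\omega_1\ll\sqrt n$, that makes the union bound close.
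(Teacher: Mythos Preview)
Your argument is correct and essentially identical to the paper's: both compute a first-moment bound showing that the expected number of checks in $T$ with at least two neighbours in a random $\omega_1$-subset $U'$ is $O(|T|\omega_1^2/|U|^2)=O(\omega_1^2/n)=o(1)$, and then invoke Markov. Your conflict-graph phrasing and explicit bound $|E(H)|\le 3|T|$ just unpack the same estimate slightly more carefully.
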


\begin{proof}
	It is a simple exercise to check that if a subset $U'\subset U$ of size $\omega_1$ is chosen
	uniformly at random, then the expected number of nodes of $T$ for which two of their three neighbours are chosen
	to be in $U'$
	is $O\bc{|T|\omega_1^2/n^2}=o(1)$. Therefore by Markov's inequality, \whp\ this does not occur for any check node.
\end{proof}

We will use the following notation for the remainder of the section.
Given a Tanner graph $G$ and a set of variable nodes $W$, let $G\bck W$ denote the graph obtained from $G$ by deleting the set of edges incident~to~$W$. Note that this amounts to replacing the columns of the matrix corresponding to nodes of $W$ with $0$ columns.

\begin{claim}\label{claim:robustslush}
	Let $G$ be any Tanner graph and $U' \subset U(G)$ be any subset whose nodes lie at distance
	greater than $2$.
	Let $U''\subset U'$ be any subset of $U'$. Then $V_\slush\bc{G\bck{U''}} = V_\slush(G) \setminus U''$.
\end{claim}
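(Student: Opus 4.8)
The plan is to argue that deleting the edges incident to a set $U''\subseteq U'$ of variable nodes which are pairwise at distance greater than $2$ neither freezes nor un-slushes any surviving variable node, nor does it rescue any variable in $U''$ from leaving the slush. First I would recall from \Def~\ref{Def_Olly}'s neighbourhood that each $v\in U\subseteq U(G)$ has exactly two neighbours, both check nodes of $T(G)$ (degree-$3$ checks all of whose incoming messages are $\slush$). The crucial structural fact is that, since nodes of $U'$ are at pairwise distance $>2$, no check node is adjacent to two of them; hence when we delete the edges at $U''$, every affected check $a\in T(G)$ loses exactly one of its three neighbours and retains exactly two, both of which still send $\slush$. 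Therefore $a$ keeps sending $\slush$ out along those two remaining edges, and in particular $a\in C_\slush$ is preserved, as is the $\slush$-message it sends to any surviving neighbour.

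Next I would make this precise by induction on the WP time $t$, comparing the message vectors $w(G,t)$ and $w(G\bck{U''},t)$. The claim of the induction is: for every edge of $G\bck{U''}$ (equivalently, every edge of $G$ not incident to $U''$), the two messages along it agree in $G$ and in $G\bck{U''}$ at all times $t$, \emph{and} every variable node in $U''$ receives only $\slush$-messages at all times in $G\bck{U''}$ because it is isolated there (so it trivially has no incident edges at all). The base case $t=0$ is immediate since all messages are $\slush$. For the inductive step one checks the update rule \eqref{eqWP1}: a surviving check $a$ computes $\hat w_{a\to v}$ from the messages $w_{y\to a}$ with $y\in\partial_{G}a\setminus\{v\}$; for $a\notin\partial U''$ this set is unchanged, and for $a\in\partial U''$ exactly one neighbour $y_0\in U''$ is removed, but $w_{y_0\to a}(G,t)=\slush$ throughout (as $y_0\in U'\subseteq U$ sends only $\slush$ — one should verify from the offspring structure in \Def~\ref{Def_T}, or directly, that a node in $U$ always sends $\slush$), and dropping a $\slush$-message from the inputs changes neither the ``$\frozen$ for all'' clause (still fails because the remaining two inputs are $\slush$) nor the ``$\unfrozen$ for some'' clause (still no $\unfrozen$ present), so the output $\hat w_{a\to v}=\slush$ is unchanged. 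The variable-to-check updates involve no deleted edges at surviving variables, so they are literally identical. This gives $w_{a\to v}(G)=w_{a\to v}(G\bck{U''})$ for every edge of $G\bck{U''}$, hence the same limiting messages.

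With the messages identified I would conclude as follows. For a variable node $v\notin U''$: its set of incident edges is the same in $G$ and in $G\bck{U''}$, and the incoming messages $w_{a\to v}$ on these edges agree; hence the condition \eqref{eqSlushIntro1} defining $V_\slush$ — no incoming $\frozen$-message and at least two incoming $\slush$-messages — holds in $G$ iff it holds in $G\bck{U''}$. Thus $v\in V_\slush(G\bck{U''})\iff v\in V_\slush(G)$, so $V_\slush(G\bck{U''})\setminus U'' = V_\slush(G)\setminus U''$. For a variable node $v\in U''$: in $G\bck{U''}$ it is isolated, so the ``at least two $\slush$-messages'' requirement fails and $v\notin V_\slush(G\bck{U''})$. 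Combining the two gives exactly $V_\slush(G\bck{U''}) = V_\slush(G)\setminus U''$, as claimed.

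The main obstacle I anticipate is not the induction itself but nailing down the one auxiliary fact it rests on: that every node of $U(G)$ sends out only $\slush$-messages for all time. This is intuitively clear — a node $v\in U$ has exactly two neighbours, both in $T(G)$, each of which perpetually sends $\slush$ to $v$, so by \eqref{eqWP1} $v$ never sees a $\frozen$ (it would need a $\frozen$ on the other edge, which never happens) and never sees $\unfrozen$ from all-but-one neighbour (the one other neighbour always sends $\slush$), hence $w_{v\to a}(G,t)=\slush$ forever. One must also double-check the symmetric statement (that the two checks in $T(G)$ keep sending $\slush$ to $v$) is not needed as input but only as output; in fact only the variable-side claim feeds the check update, so a short simultaneous induction closes the loop. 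Everything else is bookkeeping with the update rules.
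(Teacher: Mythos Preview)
Your proposal is correct and follows essentially the same approach as the paper: run WP simultaneously on $G$ and $G\bck{U''}$, and verify by induction on $t$ that the messages on the common edges agree, the key point being that the deleted messages were all $\slush$ so their removal cannot affect any check-to-variable update at the adjacent $T$-checks. Your write-up is considerably more explicit than the paper's two-sentence sketch (and even flags the small auxiliary fact that nodes of $U$ perpetually emit $\slush$), but the underlying argument is identical; one cosmetic slip is that when computing $\hat w_{a\to y_1}$ after deletion there is \emph{one} remaining input, not two, though this does not affect the conclusion that the output stays $\slush$.
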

In other words, removing $U''$ from $G$
does not have any knock-on effects on the slush.

\begin{proof}
	Let $G':= G\bck{U''}$, and let us run WP on both $G'$ and $G$ simultaneously, initialising
	with all messages being $\slush$. 
	We verify by induction on $t$ that the messages on the common edge set (those in $G'$) are identical in both processes, since a discrepancy can only enter at edges incident to a deleted edge (i.e., in $G\setminus G'$), but our choice of $U''\subset U$ is such that the messages emanating from the vertices of $T$ incident to $U''$ remain~$\slush$.
\end{proof}

For any $r,s,u$, let $\cG_{r,s,u}$ denote the class of graphs with the appropriate parameters,
i.e., with $r(G)=r$, with $s(G)=s$ and with $u(G)=u$,
and let
$$
\cG'_{r,s,u} = \cG'_{r,s,u;\omega_2} := \cG_{r',s',u'}, \qquad \qquad \mbox{where }
r':=r+\frac{2\omega_2}{n}, \qquad s':=s+\frac{\omega_2}{n}, \qquad u':=u-\frac{\omega_2}{n}.
$$
The intuition behind this definition is that if we delete a set $U''\subset U'$ of size $\omega_2$
to obtain $G'$,
then by Claim~\ref{claim:sneakysubset} no remaining messages are changed,
and therefore
\begin{itemize}
	\item $|R(G')| = |R(G)|+2\omega_2$ (for each vertex of $U''$, its two neighbours
	are moved into $R$);
	\item $|S(G')| = |S(G)| + \omega_2$ (the vertices of $U''$ are moved into $S$);
	\item $|U(G')| = |U(G)| - \omega_2$.
\end{itemize}
Furthermore, for any integer $\ell \in \ZZ$, let $\cG_{r,s,u}(\ell) \subset \cG_{r,s,u}$
be the subset consisting of graphs such  \linebreak that  $n_\slush-m_\slush=\ell$,
and similarly define $\cG'_{r,s,u}(\ell) \subset \cG'_{r,s,u}$
to be the subset consisting of graphs such \linebreak that  $n_\slush-m_\slush=\ell':=\ell-\omega_2$.

\begin{proposition}\label{prop:TVclasssizes}
	Suppose that we have parameters $r,s,u$ satisfying $$
	r= \bc{1+o\bc{\frac{1}{\omega_1}}} \bar r, \qquad
	s= \bc{1+o\bc{\frac{1}{\omega_1}}} \bar s, \qquad
	u= \bc{1+o\bc{\frac{1}{\omega_1}}} \bar u.
	$$
	Then for any integer $\ell \in \ZZ$ we have
	$\Pr\sqbc{G(\vA) \in \cG_{r,s,u}(\ell)} = (1+o(1))\Pr\sqbc{G(\vA) \in \cG'_{r,s,u}(\ell)}$.
\end{proposition}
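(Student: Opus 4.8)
The plan is to prove \Prop~\ref{prop:TVclasssizes} by a switching-type double count based on the perturbation described just after the statement. For $G\in\cG_{r,s,u}(\ell)$ and a set $U''\subset U(G)$ of size $\omega_2$ whose vertices lie pairwise at distance greater than $2$, form $G':=G\bck{U''}$. By Claims~\ref{claim:robustslush} and~\ref{claim:sneakysubset} no message on the common edge set changes, so each $v\in U''$ merely moves from $U$ into $S$ and each of the $2\omega_2$ (distinct) checks of $T(G)$ incident to $U''$ merely moves from $T$ into $R$; consequently $G'$ has parameters $r'=r+2\omega_2/n$, $s'=s+\omega_2/n$, $u'=u-\omega_2/n$ and $n_\slush(G')-m_\slush(G')=\ell-\omega_2$, i.e.\ $G'\in\cG'_{r,s,u}(\ell)$. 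Recording in addition, for each $v\in U''$, the unordered pair of $R(G')$-checks it was joined to, one obtains a correspondence between pairs $(G,U'')$ of this kind and triples $(G',U'',\vec a)$ with $G'\in\cG'_{r,s,u}(\ell)$, $U''\subset S(G')$ of size $\omega_2$, and $\vec a$ assigning to each vertex of $U''$ an unordered pair of nodes of $R(G')$, all $2\omega_2$ of them distinct; the inverse re-adds the $2\omega_2$ edges, and a simultaneous run of Warning Propagation on $G$ and $G'$, exactly as in the proof of Claim~\ref{claim:robustslush}, verifies that the reconstructed $G$ indeed lies in $\cG_{r,s,u}(\ell)$ with $U''\subset U(G)$ well separated.

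Summing $\Pr\brk{G(\vA)=G}$ over this correspondence in two ways then gives the result. Since $u\to\bar u>0$ for $d>\eul$, every $G\in\cG_{r,s,u}(\ell)$ has $|U(G)|=un=\Theta(n)$, and Claim~\ref{claim:sneakysubset}---whose proof applies verbatim with $\omega_2$ in place of $\omega_1$ and uniformly over such $G$, the expected number of obstructing $\omega_2$-subsets being $O(\omega_2^2/n)=o(1)$---shows that $G$ admits $(1-o(1))\binom{un}{\omega_2}$ admissible sets $U''$; hence the sum equals $(1-o(1))\binom{un}{\omega_2}\Pr\brk{G(\vA)\in\cG_{r,s,u}(\ell)}$. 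On the other hand each $G'\in\cG'_{r,s,u}(\ell)$ is reached with multiplicity $\binom{s'n}{\omega_2}\prod_{i=0}^{\omega_2-1}\binom{r'n-2i}{2}=\binom{s'n}{\omega_2}\frac{(r'n)!}{2^{\omega_2}(r'n-2\omega_2)!}$, and each corresponding $G$ has exactly $2\omega_2$ more edges than $G'$, so $\Pr\brk{G(\vA)=G}=(d/n)^{2\omega_2}(1-d/n)^{-2\omega_2}\Pr\brk{G(\vA)=G'}=(1+o(1))(d/n)^{2\omega_2}\Pr\brk{G(\vA)=G'}$. Equating the two evaluations yields
\[
\Pr\brk{G(\vA)\in\cG_{r,s,u}(\ell)}=(1+o(1))\,\rho\,\Pr\brk{G(\vA)\in\cG'_{r,s,u}(\ell)},
\]
where $\rho=(d/n)^{2\omega_2}\binom{un}{\omega_2}^{-1}\binom{s'n}{\omega_2}\frac{(r'n)!}{2^{\omega_2}(r'n-2\omega_2)!}$.

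It thus remains to show $\rho=1+o(1)$. Because $\omega_2\le\omega_1\ll n^{1/2}$ one has $\binom{s'n}{\omega_2}=(1+o(1))(s'n)^{\omega_2}/\omega_2!$, $(r'n)!/(r'n-2\omega_2)!=(1+o(1))(r'n)^{2\omega_2}$ and $\binom{un}{\omega_2}=(1+o(1))(un)^{\omega_2}/\omega_2!$; the powers $n^{\pm\omega_2}$ and the factors $\omega_2!$ cancel, leaving $\rho=(1+o(1))\bigl(d^2s'(r')^2/(2u)\bigr)^{\omega_2}$. A short computation---using $\lambda=d(\alpha^*-\alpha_*)$ and the identities \eqref{eqPreLambert}, which give $\exp(-d\alpha^*)=\exp(-d)/(1-\alpha_*)$ and hence $1-\exp(-\lambda)=(\alpha^*-\alpha_*)/(1-\alpha_*)$---shows $d^2\bar s\bar r^2=2\bar u$. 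Since $u$ (by hypothesis) and, because $\omega_2/n\ll1/\omega_1$, also $s'=s+\omega_2/n$ and $r'=r+2\omega_2/n$ each lie within a factor $1+o(1/\omega_1)$ of $\bar s,\bar r,\bar u$, we get $d^2s'(r')^2/(2u)=1+o(1/\omega_1)$, whence $\bigl(d^2s'(r')^2/(2u)\bigr)^{\omega_2}=(1+o(1/\omega_1))^{\omega_2}=1+o(1)$ since $\omega_2\le\omega_1$. All error terms above are uniform over $\omega_2\in[\omega_1/2,\omega_1]$, completing the proof.

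I expect the crux to be the \emph{rigorous} verification of the ``no knock-on effects'' step---that deleting (and re-adding) a well-separated $U''$ shifts the triple $(r,s,u)$ and the quantity $n_\slush-m_\slush$ by exactly the stated amounts, with no second-order leakage of vertices into or out of $U$ or of the slush as a whole---together with the exact multiplicity bookkeeping on the $\cG'$ side ($U''\subset S(G')$, attachment checks drawn distinctly from $R(G')$, and the uniformity of the $(1-o(1))$ from Claim~\ref{claim:sneakysubset}). The second delicate point is the combinatorial identity $d^2\bar s\bar r^2=2\bar u$: it is precisely why the constants $\bar r,\bar s,\bar u$ were defined with those factors, and its proof hinges on the Lambert-type relations \eqref{eqPreLambert} for $\alpha_*,\alpha^*$.
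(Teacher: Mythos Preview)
Your argument is the same as the paper's: a probability-weighted double count over the switching between $\cG_{r,s,u}(\ell)$ and $\cG'_{r,s,u}(\ell)$, followed by the algebraic verification that $d^2\bar s\bar r^2=2\bar u$ (equivalently, the paper's $\bar s\bar r^2/(2\bar u)=1/(p^2n^2)$). If anything your write-up is tidier, since you sum $\Pr[G(\vA)=G]$ over the bijection directly instead of first comparing $|\cG_{r,s,u}(\ell)|$ with $|\cG'_{r,s,u}(\ell)|$ and then separately invoking the edge-count/probability relation.

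That said, the step you yourself flag as the crux---that deleting a well-separated $U''$ changes $u$ by \emph{exactly} $\omega_2/n$---is not justified by the claims you cite (and the paper's ``intuition'' paragraph glosses over the same point). Claim~\ref{claim:robustslush} guarantees that the WP messages on the surviving edges are unchanged, so $r$, $s$ and $n_\slush-m_\slush$ do shift by precisely the stated amounts. But membership in $U$ also depends on the \emph{degree} of the neighbouring checks: when a check $a\in T(G)$ adjacent to some $v''\in U''$ drops to degree two and enters $R(G')$, any \emph{other} neighbour $w\in U(G)\setminus U''$ of $a$ loses a $T$-neighbour and therefore leaves $U$ as well. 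Claim~\ref{claim:sneakysubset} only forbids checks with two neighbours in $U''$; it says nothing about checks with one neighbour in $U''$ and another in $U\setminus U''$, and the local limit shows that such checks occur for a positive fraction of the $v''$, so typically $|U(G')|<|U(G)|-\omega_2$ and the forward map misses $\cG'_{r,s,u}(\ell)$. The symmetric issue arises in the reverse direction: not every $(U'',\vec a)$ rebuilt from $G'$ yields $|U(G)|=un$, because a chosen $R(G')$-check may have a degree-two neighbour whose other neighbour already lies in $T(G')$. A clean repair is to drop $u$ from the class labels altogether---the parameters $r,s,\ell$ are the ones that shift exactly and are all that the proof of Lemma~\ref{lem:excludedmiddle} actually needs---and to control the now $G$-dependent forward degree $\binom{|U(G)|}{\omega_2}$ via the concentration of $|U(G(\vA))|$ supplied by Lemma~\ref{prop:TVdegs}.
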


\begin{proof}
	We construct an auxiliary bipartite graph $H$ with classes
	$\cG_{r,s,u}(\ell),\cG'_{r,s,u}(\ell)$,
	and with an edge between $G \in \cG_{r,s,u}(\ell)$ and $G' \in \cG'_{r,s,u}(\ell)$
	if $G'$ can be obtained from $G$ by deleting the edges incident to a set $U'' \subset U(G)$ of size $\omega_2$.
	(Note that by Claim~\ref{claim:robustslush},
	$G'$ satisfies $n_\slush'= n_\slush-\omega_2$ and $m_\slush'=m_\slush$, 
	 \linebreak so $n_\slush'-m_\slush'= (n_\slush-m_\slush) -\omega_2 =\ell-\omega_2=\ell'$, so such an edge is plausible.)
	
	By Claim~\ref{claim:sneakysubset} (and the fact that $\omega_2\le \omega_1$),
	every graph $G \in \cG_{r,s,u}(\ell)$ is incident to 
	$
	(1+o(1))\binom{un}{\omega_2}
	$
	\linebreak edges of $H$, since almost every choice of $\omega_2$ nodes from $U$ will result in a graph from $\cG'_{r,s,u}(\ell)$.
	
	On the other hand, given a graph $G' \in \cG'_{r,s,u}(\ell)$,
	we may construct a graph $G \in \cG_{r,s,u}(\ell)$ by picking any set of $\omega_2$ nodes within $S(G')$,
	any set of $2\omega_2$ nodes within $R(G')$ and adding $2\omega_2$ edges between them in the appropriate way.
	Thus we may double-count the edges of $H$ and obtain
	$$
	\abs{\cG_{r,s,u}(\ell)} \binom{un}{\omega_2} = (1+o(1)) \abs{\cG'_{r,s,u}(\ell)} \binom{sn}{\omega_2}\binom{rn}{2\omega_2} \frac{(2\omega_2)!}{2^{\omega_2}}.
	$$
	Since $r,s,u$ are very close to their idealised values $\bar r,\bar s,\bar u$, some
	standard approximations lead to
	\begin{equation} \label{eq:cGratio}
		\frac{\abs{\cG_{r,s,u}(\ell)}}{\abs{\cG'_{r,s,u}(\ell)}}
		= (1+o(1))\bc{\frac{\bar s\bar r^2n^2}{2\bar u}}^{\omega_2}.
	\end{equation}
	Substituting in the definitions of 
	$\bar r, \bar s, \bar u$, some elementary calculations and~\eqref{eqPreLambert} show that
	$
	\frac{\bar s \bar r^2}{2\bar u} = \frac{1}{d^2} = \frac{1}{p^2 n^2}.
	$
	Substituting this into~\eqref{eq:cGratio}, we obtain
	\begin{equation}\label{eq:cGratio2}
		\abs{\cG_{r,s,u}(\ell)} = (1+o(1)) \abs{\cG'_{r,s,u}(\ell)} p^{-2\omega_2}.
	\end{equation}
	
	On the other hand, let us observe that for any graph $G \in \cG_{r,s,u}(\ell)$
	and any graph $G'$ constructed from $G$ as above,
	$G'$ has precisely $2\omega_2$ edges fewer than $G$, and therefore
	\begin{equation}\label{eq:compareprobs}
		\Pr\sqbc{G(\A) = G'} = \Pr\sqbc{G(\A)=G} p^{-2\omega_2}(1-p)^{2\omega_2}  = (1+o(1))\Pr\sqbc{G(\A)=G} p^{-2\omega_2}.
	\end{equation}
	Combining~\eqref{eq:cGratio2} and~\eqref{eq:compareprobs},
	we deduce the statement of the proposition.
\end{proof}
\begin{proof}[Proof of Lemma~\ref{lem:excludedmiddle}]
	For any
	$(r,s,u) = (1+o(\omega_1^{-1}))(\bar r, \bar s, \bar u)$ and for any $G \in \cG_{r,s,u}$,
	pick an arbitrary subset $U''\subset U'$ of size $\omega_2$, where $U'$ is as in Claim~\ref{claim:sneakysubset}
	and let $G':= G\bck{U''}$.

		For any real number $x$, let us define $\Dev{x}$ to be the \emph{complement} of the event that
		$1-x \le \frac{r}{\bar r} , \frac{s}{\bar s} , \frac{u}{\bar u} \le 1+x$, i.e., the event that at least one of $r,s,u$
		differs significantly from its ideal value, the amount of deviation required being described by $x$. 
		We fix a further parameter $\omega_3$ satisfying $\omega_1\ll \omega_3\ll\sqrt{n}$,
		and 
		define the set
		$$
		\cS = \left\{(r,s,u) : 1-\frac{1}{\omega_3} \le \frac{r}{\bar r} , \frac{s}{\bar s} , \frac{u}{\bar u} \le 1+\frac{1}{\omega_3}\right\}.
		$$
		The choice of $\omega_3$ is such that for $(r,s,u)\in \cS$ we can apply Proposition~\ref{prop:TVclasssizes}.
	
	Observe that since $\omega_2 \le \omega_1 \le \omega_3 = o(\sqrt{n})$ we have that $\frac{\omega_2}{n} \ll \frac{1}{\omega_3}$,
		and therefore if
		$(r,s,u) \in \cS$, then
		$r+\frac{2\omega_2}{n} = \bc{1\pm \frac{2}{\omega_3}} \bar r$, and similarly for $s+ \frac{\omega_2}{n},u-\frac{\omega_2}{n}$.
	Using this fact, we obtain
	\begin{align*}
		\Pr\brk{|n_\slush-m_\slush| \le \omega_0}
		& \stackrel{\mbox{\scriptsize \phantom{P.\ref{prop:TVclasssizes}}}}{=} \bc{ \sum_{(r,s,u) \in \cS} \sum_{|\ell| \le \omega_0}\Pr\sqbc{G(\A)\in \cG_{r,s,u}(\ell)}} + O\bc{\pr\brk{\Dev{\frac{1}{\omega_3}}}} \\
		& \stackrel{\mbox{\scriptsize P.\ref{prop:TVclasssizes}}}{=} \bc{ \sum_{(r,s,u) \in \cS} \sum_{|\ell| \le \omega_0}\Pr\sqbc{G(\A)\in \cG'_{r,s,u}(\ell)}}
		+ O\bc{\pr\brk{\Dev{\frac{2}{\omega_3}}}}
		= \Pr\brk{|n_\slush-m_\slush + \omega_2| \le \omega_0} + o(1).
	\end{align*}
	However, since $\omega_2$ is chosen uniformly at random from the interval $[\omega_1/2,\omega_1]$, and in particular independently of $\vA$,
	we may change our point of view and say that
	$$
	\Pr\brk{|n_\slush-m_\slush + \omega_2| \le \omega_0} = \Pr\brk{\omega_2 = |m_\slush-n_\slush| \pm \omega_0} \le \frac{2\omega_0+1}{\omega_1/2} = o(1),
	$$
	as required.
\end{proof}

\section{Moments and expansion}\label{Sec_Prop_expansion}

\subsection{Overview}
In this section we prove \Prop~\ref{Prop_expansion}.
The proofs of the two statements of the proposition proceed via two rather different arguments.
First we show that it is unlikely that $|V_\slush(\vA)|-|C_\slush(\vA)|$ is large and at the same time $f(\vA)\sim\alpha^*$,
which would imply that the slush is almost entirely frozen.
The proof relies on the fact that $G(\vA)$ is unlikely to contain a moderately large, relatively densely connected subgraph.
Specifically, let $A$ be a matrix.
A {\em flipper} of $A$ is a set of variable nodes $U\subset V(A)$ such that for all $a\in \partial U$ we have $|\partial a\cap U|\geq2$.
Let $\fF_\eps(A)$ be the set of all flippers $U$ of $A$ of size $|U|\leq\eps n$.
Moreover, let $F_\eps(A)=\sum_{U\in\fF_\eps(A)}|U|$ be the total size of all flippers of $A$ which individually each have size at most $\eps n$.

\begin{lemma} \label{PropFlipping}
	For any $d>0$ there exists $\eps>0$ such that for any function $\omega=\omega(n)\gg1$ \linebreak we have~$F_\eps(\vA_\slush)\leq\omega$~\whp{}
\end{lemma}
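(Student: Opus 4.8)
First, I would set up a first‑moment computation over the random matrix $\vA$. A flipper of size $k$ in $\vA_\slush$ is a set $U$ of $k$ variable nodes such that every check node adjacent to $U$ in $G_\slush(\vA)$ has at least two neighbours inside $U$. The key is that a flipper of size $k$ carries many edges for its size: if $U$ is a flipper then all the edges incident to $U$ live on the $\Theta(k)$ check nodes of $\partial U$, and each such check contributes at least two of these edges, so $e(U,\partial U)\ge 2|\partial U|$; combined with the fact that in the slush every variable has degree at least $2$, this forces $U$ to span a subgraph with at least, say, $(1+\delta)k$ edges for a suitable $\delta>0$ whenever $k\le\eps n$. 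Concretely I would argue that a flipper of size $k$ induces in $G(\vA)$ a subgraph with at least $k+1$ edges — because each $a\in\partial U$ has $\ge 2$ neighbours in $U$, the bipartite subgraph on $U\cup\partial U$ has $e\ge 2|\partial U|\ge |U|+|\partial U|$, hence at least one more edge than vertices are needed merely to connect it, i.e.\ excess $\ge 1$ — and in fact one should squeeze out a genuinely linear excess $\delta k$ by using that slush variables have degree $\ge 2$ as well, so both sides of the bipartite graph $U\cup\partial U$ have min degree $\ge 2$, giving $e\ge |U|+|\partial U|$ and, since $|\partial U|\ge k/(\log n)$ by the degree bound of Lemma~\ref{Lemma_tails} but more usefully $|\partial U|\ge$ a constant fraction of $k$ when the average degree is bounded, a multiplicative excess. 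The cleanest route: condition on the simple degree sequence via Lemma~\ref{Lemma_contig} and Proposition~\ref{Prop_slush}, which gives the slush degree distribution as $\Po_{\ge2}(\lambda)$ on $\nu n$ variables and $\nu n$ checks.

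Second, with this structural fact in hand, I would bound $\Erw F_\eps(\vA_\slush)=\sum_{k\le\eps n} k\,\Erw|\{\text{flippers of size }k\}|$ directly in the configuration model for $G_\slush$. The number of ways to choose $U$ of size $k$ is $\binom{\nu n}{k}$; the number of ways to choose a candidate neighbourhood $\partial U$ of size $j$ is $\binom{\nu n}{j}$; and the probability, in the pairing model, that all $\Theta(k)$ half‑edges from $U$ land on clones of those $j$ checks while each of those $j$ checks receives $\ge 2$ of them, is bounded by roughly $(j \bar d/(\nu n))^{\,(\text{number of }U\text{-half-edges})}$ times a combinatorial factor, which with the min‑degree‑$2$ constraint on both sides yields a bound of the shape $\big(C k/n\big)^{\delta k}$ after summing over $j\le k$. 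Thus $\Erw F_\eps(\vA_\slush)\le \sum_{k\le\eps n} k\,(Ck/n)^{\delta k}$. For $k$ up to some constant this is $O(1/n^{\delta-1})=o(1)$ once $\delta>1$ — which is why we need the \emph{linear} excess, not merely excess $1$ — and for $k$ between $\log n$ and $\eps n$ the summand is exponentially small; the intermediate range $k\le\log n$ is where one must be a little careful, but $(Ck/n)^{\delta k}\le (C\log n/n)^{\delta}$ for $k\ge1$, summed over $k\le\log n$ this is $(\log n)(C\log n/n)^\delta=o(1)$ provided $\delta>0$. Hence $\Erw F_\eps(\vA_\slush)=o(1)$, and $F_\eps(\vA_\slush)\le\omega$ \whp\ for any $\omega\gg1$ by Markov's inequality. (One subtlety: the bound is proved in the pairing model, but Lemma~\ref{Lemma_contig} transfers it to $G_\slush(\vA)$ at the cost of a constant factor, since being simple has probability bounded away from $0$, and then a further transfer from the event ``$d_{\vA,\slush}$ is typical'' — which holds \whp\ by Proposition~\ref{Prop_slush} — back to $\vA$ itself.)

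\textbf{Main obstacle.} The crux is extracting a \emph{multiplicative} edge‑excess $\delta k$ from the flipper condition, uniformly for all $k\le\eps n$, rather than just a constant excess. Excess exactly $1$ only gives $\Erw F_\eps=O(\log^{O(1)}n/n\cdot n)=O(1)$, which is not $o(1)$; the linear excess is what makes the first moment vanish and it is forced by the \emph{two‑sided} minimum‑degree‑$2$ structure of $G_\slush$ together with the bounded average degree — every check in $\partial U$ eats two $U$‑half‑edges, and every vertex of $U$ emits at least two half‑edges, so the bipartite graph on $U\cup\partial U$ has $\ge|U|+|\partial U|$ edges and $|\partial U|\ge ck$ for a constant $c=c(d)$ by a standard counting argument against the light‑tailed degree distribution. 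Getting the constants to line up so that the exponent $\delta k$ genuinely beats the entropy term $\binom{\nu n}{k}\binom{\nu n}{j}\le (en/k)^k(en/j)^j$ for \emph{all} $k$ in the range — in particular choosing $\eps$ small enough that $\eps\log(1/\eps)$ is dominated — is the routine‑but‑delicate heart of the proof, analogous to the expansion arguments for cores of random graphs referenced as~\cite{Forging}.
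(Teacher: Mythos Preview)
Your plan has a genuine gap at exactly the point you flag as the ``main obstacle'': the linear edge excess you need is simply not there in the worst case. Take a flipper $U$ in which every variable of $U$ has slush-degree exactly~$2$ and every check in $\partial U$ has exactly two neighbours in $U$. Then $|\partial U|=|U|=k$ and the bipartite graph on $U\cup\partial U$ is a disjoint union of even cycles with $e=2k=|U|+|\partial U|$ edges --- zero excess, not $\delta k$. The two-sided minimum-degree-$2$ property buys you $e\ge\max(2|U|,2|\partial U|)\ge |U|+|\partial U|$, but that is the tree threshold, not a multiplicative surplus; and your side claim ``$|\partial U|\ge ck$ from the light-tailed degree distribution'' holds trivially with $c=1$ in this example yet contributes nothing. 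A naive first moment over such near-$2$-regular configurations gives $\binom{\nu n}{k}^2$ choices times a matching probability of order $(k/n)^{2k}$, which is $\Theta(1)^k$ --- not summable.

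The paper therefore splits off this regime. Claims~\ref{Claim_flippper1} and~\ref{Claim_flippper2} dispose of flippers whose average variable- or check-degree exceeds $2+\delta$ by a crude first moment directly in $G(\vA)$ (not even the slush), exactly along the lines of your $(Ck/n)^{\delta k}$ bound. The remaining near-$2$-regular flippers are handled in Claim~\ref{Claim_Flipper3}, and this is where the argument departs from yours: there is no edge excess, so the saving must come from the \emph{degree distribution} of the slush. Working in the pairing model (via Lemma~\ref{Lemma_contig} and Proposition~\ref{Prop_slush}), one pays a factor $\Pr[\Po_{\ge2}(\lambda)=2]=\lambda^2/(2(\eul^\lambda-\lambda-1))$ for each degree-$2$ variable in $U$, and a corresponding factor for the checks; after cancellation the expected count carries the base
\[
\frac{\lambda^2\eul^\lambda}{(\eul^\lambda-1)^2}\,,
\]
which is strictly less than $1$ for every $\lambda>0$. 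That inequality --- not an expansion/excess argument --- is what makes the sum over $k$ converge. Your outline would go through once you replace the nonexistent linear excess in the tight case by this degree-distribution saving.
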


\noindent
The proof of \Lem~\ref{PropFlipping} can be found in \Sec~\ref{Sec_PropFlipping}.
We will combine \Lem~\ref{PropFlipping} with the following statement to bound the size of $V_\slush{}(\vA)\setminus\cF(\vA_\slush{})$.

\begin{lemma}\label{Lemma_nulslush}
	The set $U=V_\slush(\vA)\setminus\cF(\vA_\slush{})$ is a flipper of $\vAs$ of size $|U|\geq|V_\slush{}(\vA)|-|C_\slush{}(\vA)|$ and $U\cap\cF(\vA)=\emptyset$.
\end{lemma}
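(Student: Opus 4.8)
The statement is deterministic: all three claims are properties of $\vA$ and of its minor $\vAs$, with no probabilistic input. Write $V_\slush=V_\slush(\vA)$, $C_\slush=C_\slush(\vA)$, $n_\slush=|V_\slush|$ and $m_\slush=|C_\slush|$, and recall that $U=V_\slush\setminus\cF(\vAs)$, so $V_\slush$ is the disjoint union of $U$ and $\cF(\vAs)$. The Tanner graph of $\vAs$ is $G_\slush(\vA)$, and in what follows all neighbourhoods are taken in $G_\slush(\vA)$. The plan is to get the flipper property and the size bound from elementary linear algebra inside $\vAs$, and the disjointness $U\cap\cF(\vA)=\emptyset$ from the peeling description~\eqref{eqslushpeel} of $G_\slush(\vA)$.

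For the flipper property I would argue by contradiction. Suppose some $a\in C_\slush$ with $\partial a\cap U\neq\emptyset$ had exactly one neighbour $v$ in $U$. Then every other neighbour $u$ of $a$ lies in $V_\slush\setminus U=\cF(\vAs)$, so $x_u=0$ for all $x\in\ker\vAs$. As the row of $\vAs$ indexed by $a$ reads $x_v+\sum_{u\in\partial a\setminus\{v\}}x_u=0$, this forces $x_v=0$ for all $x\in\ker\vAs$, i.e.\ $v\in\cF(\vAs)$, contradicting $v\in U$. Hence $|\partial a\cap U|\ge 2$ for every $a\in\partial U$, which is exactly the flipper condition. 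For the size bound, $\ker\vAs$ is contained in the coordinate subspace $\{x:x_v=0\text{ for all }v\in\cF(\vAs)\}$, of dimension $n_\slush-|\cF(\vAs)|$, so $n_\slush-\operatorname{rank}\vAs=\dim\ker\vAs\le n_\slush-|\cF(\vAs)|$; thus $|\cF(\vAs)|\le\operatorname{rank}\vAs\le\min\{m_\slush,n_\slush\}$ and $|U|=n_\slush-|\cF(\vAs)|\ge n_\slush-m_\slush$.

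For the disjointness it suffices to show that a variable $v\in V_\slush$ is frozen in $\vA$ if and only if it is frozen in $\vAs$; then $v\in U$ gives $v\notin\cF(\vAs)$, hence $v\notin\cF(\vA)$. By~\eqref{eqslushpeel}, $\vAs$ is obtained from $\vA$ by iterating the step ``delete a node of degree at most one together with its neighbour, if any''. I would check that each such single step, passing from a matrix $B$ to $B'$, induces a \emph{surjection} $\ker B\to\ker B'$ given by forgetting the coordinate of the removed variable: when a check is deleted along with a variable, that check's equation determines the forgotten coordinate from the surviving ones (as a sum of other coordinates, or as $0$ when the deleted check had degree one), so surjectivity holds by reinstating this value; when only an isolated variable or an isolated check is deleted the verification is trivial. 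Composing these surjections over the whole peeling sequence shows that the coordinate projection $\ker\vA\to\field^{V_\slush}$ has image exactly $\ker\vAs$. Hence $x_v=0$ for all $x\in\ker\vA$ if and only if $y_v=0$ for all $y\in\ker\vAs$, which is the required equivalence.

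There is no real difficulty; the only point needing a little care is the claim in the previous paragraph that each peeling step yields a surjection on kernels compatible with restriction to the surviving coordinates, so that the steps may be composed. This is the familiar reason why $2$-core type reductions preserve nullity, and I would verify it case by case.
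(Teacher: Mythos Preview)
Your proof is correct and follows essentially the same approach as the paper. The flipper argument and the size bound are identical to the paper's; for $U\cap\cF(\vA)=\emptyset$ the paper also extends kernel vectors of $\vAs$ back to $\ker\vA$ via the peeling~\eqref{eqslushpeel}, though it organises the peeling into two explicit stages (first exhaust all degree-$\le 1$ checks, setting the removed variables---precisely $V_\frozen(\vA)$---to zero, then exhaust all degree-$\le 1$ variables, assigning them in reverse deletion order) rather than verifying a surjection step by step as you do.
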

\begin{proof}
	Clearly, $\nul\vA_\slush{}\geq|V_\slush{}(\vA)|-|C_\slush{}(\vA)|$ and thus
	\begin{align*}
		2^{|V_\slush(\vA)|-|C_\slush(\vA)|}\leq2^{\nul\vA_\slush}=\abs{\ker\vA_\slush}\leq\abs{\cbc{\newx\in \field^{|\Vs(\vA)|}:\forall v\in\cF(\vAs):\newx_v=0}}= 2^{|U|}.
	\end{align*}
	Hence, $|U|\geq|V_\slush{}(\vA)|-|C_\slush{}(\vA)|$.
	
	To show that $U$ is a flipper of $A$ we consider a variable node $v\in U$ and an adjacent check node $a\in\Cs(\vA)$.
	Assume for a contradiction that $\partial a\cap U=\cbc v$.
	Then for all other variable nodes $u\in\partial a\cap\Vs(\vA)$ we have $u\in\cF(\vAs)$.
	Hence, the only way to satisfy check $a$ is by setting $v$ to zero, too.
	Thus, $v\in\cF(\vAs)$, which contradicts $v\in U$.
	
	Finally, to show that $U\cap\cF(\vA)=\emptyset$ it suffices to prove that any vector $\newx_\slush\in\ker\vAs$ extends to a vector $\newx\in\ker\vA$.
	To see this we recall the peeling process \eqref{eqslushpeel} that yields $\Vs(\vA)$.
	Let us actually run this peeling process in two stages.
	In the first stage we repeatedly remove check nodes of degree one or less from $G(\vA)$:
	\begin{quote}
		while there is a check node of degree one or less, remove it along with \linebreak its adjacent variable (if any).
	\end{quote}
	The set of variable nodes that this process removes is precisely $\Vf(\vA)$ and we extend $\newx_\slush$ by setting $\newx_v=0$ for all $v\in\Vf(\vA)$.
	Next we repeatedly delete variable nodes of degree one or less:
	\begin{quote}
		while there is a variable node of degree one or less, remove it along with \linebreak its adjacent check (if any).
	\end{quote}
	Observe that since in the second stage we also delete the neighbours (if any) of the chosen variables,
		any checks which remain at the end of this stage have all the neighbours they had before the stage began.
		In particular, we cannot create any new check nodes of degree at most one, and do not have to repeat the first stage.
		Thus the outcome of this two-stage process is the same as that described in~\eqref{eqslushpeel}.

	Let $y_1,\ldots,y_\ell$ be the variable nodes that this process deletes, and suppose that they were deleted in this order.
	Then we inductively extend $\newx_\slush$ by assigning the variables in the reverse order $y_\ell,\ldots,y_1$ as follows.
	At the time $y_k$ was deleted, where $1\leq k \leq \ell$, this variable node either had no adjacent check node at all, in which case we define $\newx_{y_k}=0$, or there was precisely one adjacent check node $b_k$.
	In the latter case we set $\newx_{y_k}$ to the (unique) value that satisfies $b_k$ given the previously defined entries of $\newx$.
	The construction ensures that $\newx\in\ker\vA$.
\end{proof}

\noindent
Second, we bound the probability that $|C_\slush(\vA)|-|V_\slush(\vA)|$ is large and at the same time $f(\vA)\sim\alpha_*$. 
The proof of the following lemma, which we postpone to \Sec~\ref{Sec_Lemma_expectedSolutions}, is based on a delicate moment calculation.

\begin{lemma}\label{Lemma_expectedSolutions}
	For any $d>\eul$ there exists $\eps>0$ such that for any $\omega=\omega(n)\gg1$  we have
	\begin{align*}
		\pr\brk{|C_\slush(\vA)|-|V_\slush(\vA)|\geq\omega\mbox{ and }
			|V_\slush(\vA)\cap\cF(\vA)|<\eps n}&=o(1).
	\end{align*}
\end{lemma}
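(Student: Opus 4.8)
\textbf{Proof proposal for \Lem~\ref{Lemma_expectedSolutions}.}
The plan is a first--moment argument restricted, as the outline in \Sec~\ref{Sec_formalheuristic} suggests, to ``balanced'' kernel vectors, together with a replica--symmetry input which guarantees that such a vector exists whenever the bad event occurs. Write $n_\slush=|V_\slush(\vA)|$ and $m_\slush=|C_\slush(\vA)|$, call a vector $x\in\ker\vA_\slush$ \emph{$\delta$--balanced} if $\bigl|\sum_{v\in V_\slush(\vA)}d_{\vA,\slush}(v)(\vecone\cbc{x_v=1}-1/2)\bigr|<\delta n$, and let $Z=Z_\delta$ count the nonzero $\delta$--balanced vectors in $\ker\vA_\slush$, for a small fixed $\delta>0$ to be chosen. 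First I would record two structural facts. (1) $V_\slush(\vA)\cap\cF(\vA)=\cF(\vA_\slush)$ and the projection $\ker\vA\to\ker\vA_\slush$ is a surjective $\field$--linear map: surjectivity is \Lem~\ref{Lemma_nulslush}, while $x\in\ker\vA$ restricts into $\ker\vA_\slush$ because every neighbour of a check in $C_\slush(\vA)$ lying outside $V_\slush(\vA)$ was forced to zero by a unary check during the peeling process defining $G_\slush(\vA)$, hence vanishes under $x$. In particular a uniformly random $\vx\in\ker\vA$ restricts to a uniformly random element of $\ker\vA_\slush$. The desired bound then reduces to two claims: \textbf{(a)} on $\cbc{|V_\slush(\vA)\cap\cF(\vA)|<\eps n}$ we have $Z\geq1$ \whp, and \textbf{(b)} on $\cbc{m_\slush-n_\slush\geq\omega}$ we have $\Erw[Z\mid d_{\vA,\slush}]=o(1)$ \whp\ over $d_{\vA,\slush}$.

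For \textbf{(a)}, exactly as in the proof of \Lem~\ref{lem_balanced} one combines \Prop~\ref{Prop_pin} with \Prop s~\ref{Prop_cut} and~\ref{Prop_contig} to see that \whp\ a random $\vx\in\ker\vA$ is $o(1)$--extremal; by the basic properties of the cut metric this passes to the restriction to $V_\slush(\vA)$, whose single--coordinate marginals are $1/2$ on $V_\slush(\vA)\setminus\cF(\vA_\slush)$ and $0$ elsewhere by Fact~\ref{fact_ker}. Using the light tails of the slush degree sequence (\Lem s~\ref{Lemma_tails} and~\ref{Lemma_weighted_tails}) this yields that, for every fixed $\delta>0$, \whp\ a uniformly random $\vx\in\ker\vA_\slush$ is $\delta$--balanced. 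On the event $|V_\slush(\vA)\cap\cF(\vA)|<\eps n$ there are more than $n_\slush-\eps n=(\nu-\eps+o(1))n$ non--frozen slush variables, so provided $\eps,\delta$ are small enough the all--zero vector is \emph{not} $\delta$--balanced while $\ker\vA_\slush\neq\cbc{0}$; hence \whp\ at least one nonzero kernel vector is $\delta$--balanced, i.e.\ $Z\geq1$.

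For \textbf{(b)}, I would condition on $d_{\vA,\slush}$, which by \Prop~\ref{Prop_slush} is \whp\ essentially $\Po_{\geq2}(\lambda)$--distributed on both sides, and work in the pairing model via \Lem~\ref{Lemma_contig}. The key manoeuvre is to contract the check nodes of degree exactly two, each of which merely forces its two variable neighbours to agree. A percolation/branching--process analysis of the $\Po_{\geq2}(\lambda)$ configuration model shows that the degree--two contraction structure is subcritical, so this operation preserves $\nul\vA_\slush$, changes $m_\slush-n_\slush$ only by a correction with bounded exponential moments, and produces a matrix $\tilde\vA$ with $\tilde n$ (super--)variables and $\tilde m$ checks all of degree at least three, under which $\delta$--balanced kernel vectors of $\vA_\slush$ correspond to balanced kernel vectors of $\tilde\vA$ (here the light tails of the super--variable sizes are needed). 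A now--routine pairing--model first--moment computation over balanced $\sigma\in\ker\tilde\vA$ then gives $\Erw[Z\mid d_{\vA,\slush}]=O\!\left(2^{\tilde n-\tilde m}\right)=O\!\left(2^{-(m_\slush-n_\slush)}\right)\leq O\!\left(2^{-\omega}\right)=o(1)$: minimum degree three forces the weight fraction $\rho=1/2$ to be the \emph{unique} maximiser of the first--moment exponent over the balanced window (this is precisely what fails for degree--two checks, whose quadratic contribution competes with the binomial entropy --- the reason for contracting them), and a local--limit--type cancellation removes the polynomial prefactor. Combining \textbf{(a)} and \textbf{(b)}, $\pr[m_\slush-n_\slush\geq\omega,\ |V_\slush(\vA)\cap\cF(\vA)|<\eps n]\leq\Erw[Z\,\vecone\cbc{m_\slush-n_\slush\geq\omega}]+o(1)=\Erw\!\left[\vecone\cbc{m_\slush-n_\slush\geq\omega}\,\Erw[Z\mid d_{\vA,\slush}]\right]+o(1)=o(1)$, where on the (super--exponentially unlikely) atypical degree sequences one uses the crude bound $Z\leq2^{n_\slush}$.

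The main obstacle is the contraction step. One must carry out the percolation argument showing that contracting degree--two checks is ``almost acyclic'', so that $\nul\vA_\slush$ and, crucially, the imbalance $m_\slush-n_\slush$ are preserved up to an $o(\omega)$--harmless correction, and then verify that the reduced, minimum--degree--three system admits a clean first--moment estimate whose exponent is maximised \emph{exactly} at the balanced point with no super--polynomial error, so that the final bound survives even for a very slowly growing $\omega$. This bookkeeping, and in particular the cancellation of parallel edges modulo two under contraction, is the single point where finiteness of the field $\field$ is used essentially. A secondary subtlety is to choose $\delta$ small enough that the first--moment exponent is strictly dominated at $\rho=1/2$, yet large enough that replica symmetry delivers a $\delta$--balanced kernel vector on the bad event.
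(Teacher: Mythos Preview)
Your proposal is essentially the paper's own proof. The paper organises the argument into three pieces exactly matching your outline: \Lem~\ref{Cor_lambda} is the subcritical percolation analysis of the degree--two checks, \Lem~\ref{Lemma_slush_first} is your step \textbf{(b)} (first moment on the contracted system, exploiting minimum check degree three so that the first two derivatives of $\log\pr[\cS\mid\cR]$ vanish at $w=1/2$ and only a cubic correction survives), and \Lem~\ref{Lemma_rs_slush} is your step \textbf{(a)} (replica symmetry forces a balanced kernel vector to exist whenever most slush variables are unfrozen). One organisational difference: the paper defines ``balanced'' directly on the \emph{contracted} matrix $\cA_\slush''$ (the set $\cK_\eps''$) and proves existence there, whereas you define balance on $\vA_\slush$ and then transfer through the contraction; the paper's choice saves the transfer step you flag as needing the light tails of super--variable sizes.

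There is one genuine slip in your final combining line. You write
\[
\pr\brk{m_\slush-n_\slush\geq\omega,\ |V_\slush\cap\cF(\vA)|<\eps n}\leq\Erw\brk{Z\,\vecone\cbc{m_\slush-n_\slush\geq\omega}}+o(1)
\]
and then dispose of atypical degree sequences by calling them ``super--exponentially unlikely'' and invoking $Z\leq2^{n_\slush}$. But \Prop~\ref{Prop_slush} only gives $o(1)$ probability for the atypical event, not $\exp(-\Omega(n))$, so the contribution $2^{\Theta(n)}\cdot o(1)$ is not controlled. The fix is immediate and is what the paper does implicitly: bound $\pr[Z\geq1,\ m_\slush-n_\slush\geq\omega]$ rather than $\Erw[Z\,\vecone\{\cdots\}]$. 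On typical degree sequences you apply Markov to the conditional expectation; on atypical ones you simply use that their probability is $o(1)$, and no bound on $Z$ is needed.
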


\begin{proof}[Proof of \Prop~\ref{Prop_expansion}]
	Fix a small enough $\eps>0$ and suppose that $\omega\to\infty$.
	To prove the first statement let $\cE=\{|V_\slush(\vA)|-|C_\slush(\vA)|\geq\omega\}$ and $\cE'=\cbc{F_\eps(\vA)<\omega}$.
	\Lem~\ref{Lemma_nulslush} shows that if the event $\cE \cap\cE'$ occurs, then the set $U=V_\slush(\vA)\setminus\cF(\vA_\slush{})$, being a flipper of size at least $\omega$ (by $\cE$), cannot be included in $\cF_\eps(\vA)$ (because\linebreak of $\cE'$) and therefore has size at least $\eps n$.
	Additionally, we have $U\cap\cF(\vA)=\emptyset$ while $U\subset V_\slush{}(\vA)\subset V(\vA)\setminus V_\unfrozen(\vA)$.
	Hence, \Prop~\ref{claim_littleintersection}  implies $f(\vA)\leq|V(\vA)\setminus V_\unfrozen(\vA)|/n+o(1)-\eps$. 
	Consequently, \Prop~\ref{Prop_frozen} and \Lem~\ref{PropFlipping} yield
	\begin{align*}
		\pr\brk{\cE\cap\cbc{f(\vA)>\alpha^*-\eps/2}}
		&\leq\pr\brk{\{F_\eps(\vA)>\omega\}\cup
			\cbc{|V(\vA)\setminus V_\unfrozen(\vA)|/n> \alpha^*+\eps/3}}=o(1).
	\end{align*}
	Thus, \Prop s~\ref{lem:3fp} and \ref{Prop_dd} show that $\pr\brk{\cE\cap\cbc{\abs{f(\vA)-\alpha_*}>\eps}}=o(1)$.
	
	With respect to the second statement, let $\cA=\{|C_\slush(\vA)|-|V_\slush(\vA)|\geq\omega\}$ and $\cA'=\cbc{|V_\slush(\vA)\cap\cF(\vA)|<\eps n}$.
	Then \Lem~\ref{Lemma_expectedSolutions} shows that
	\begin{align}\label{eqProp_expansion10}
		\pr\brk{\cA\cap\cA'}&=o(1).
	\end{align}
	Moreover, \Prop~\ref{Prop_frozen} and \eqref{eqLemma_WP1} show that
	\begin{align}\label{eqProp_expansion11}
		\pr\brk{\cbc{f(\vA)\leq\alpha_*+\eps/2}\setminus\cA'}&=o(1),
	\end{align}
	and the assertion is immediate from \eqref{eqProp_expansion10}, \eqref{eqProp_expansion11} and \Prop s~\ref{lem:3fp} and~\ref{Prop_dd}.
\end{proof}

\subsection{Proof of \Lem~\ref{PropFlipping}}\label{Sec_PropFlipping}
A {\em $(u,c,m)$-flipper of $\vAs$} consists of a set $U\subset V_\slush{}(\vA)$ of size $|U|=u$ whose neighbourhood $C=\partial U\cap C_\slush{}(\vA)$ has size $|C|=c$ such that the number the number of $U$-$C$-edges in $G_\slush{}(\vA)$ is equal to $m$.
Let $\vZ(u,c,m)$ be the number of $(u,c,m)$-flippers.
As a first step we deal with flippers whose average variable degree exceeds two.

\begin{claim}\label{Claim_flippper1}
	For any $d>0,\delta>0$ there exists $\eps>0$ such that
	\begin{align*}
		\Erw\brk{\sum_{U\in\fF_\eps(\vA)}|U|\vecone\cbc{\sum_{x\in U}|\partial x\cap C_\slush(\vA)|\geq(2+\delta)|U|}}&=o(1).
	\end{align*}
\end{claim}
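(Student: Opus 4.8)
The plan is a first-moment computation, carried out on the underlying graph $G(\vA)$ rather than on $G_\slush(\vA)$, so that the combinatorially awkward set $C_\slush(\vA)$ never enters the count. The starting observation is a purely structural reduction. Suppose $U\in\fF_\eps(\vA)$ satisfies the indicator and set $u:=|U|$, $C:=\partial U\cap C_\slush(\vA)$, $c:=|C|$ and $m:=\sum_{x\in U}|\partial x\cap C_\slush(\vA)|$; then $m$ is exactly the number of $U$--$C$ edges of $G(\vA)$. Because $C\subseteq\partial U$ and $U$ is a flipper of $\vA$, every $a\in C$ satisfies $|\partial a\cap U|\geq2$, whence $m\geq 2c$, while the indicator gives $m\geq(2+\delta)u$. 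Consequently the contribution of all such $U$ of a fixed size $u$ is at most $\sum_{c,m}\Erw N(u,c,m)$, summed over $c\geq1$ and $m\geq(2+\delta)u$ with $2c\leq m$, where $N(u,c,m)$ is the number of pairs $(U,C)$ of variable/check sets with $|U|=u$, $|C|=c$ and at least $m$ edges of $G(\vA)$ between them (we merely overcount, having dropped the requirements that $C\subseteq C_\slush$ and that each $a\in C$ has at least two neighbours in $U$). Since the potential $U$--$C$ edges are present independently with probability $d/n$,
\begin{align*}
\Erw N(u,c,m)&\leq\binom nu\binom nc\binom{uc}m\bc{\frac dn}^m.
\end{align*}

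Next one performs the three sums. For fixed $u$ the bound is increasing in $c$, so the sum over $1\leq c\leq m/2$ costs a factor $m/2$ and one may substitute $c=m/2$; restricting first to $m\leq n$ (the range $m>n$ gives, using $\binom nc\le 2^n$ and $c\le n$, a bound of the form $\bc{en/u}^u 2^n (e\eps d)^n=\exp(-\Omega(n))$ for $\eps$ small, uniformly in $u\leq\eps n$, hence is negligible) and using $\binom n{m/2}\leq(2en/m)^{m/2}$, $\binom{um/2}m\leq(eu/2)^m$ and $\binom nu\leq(en/u)^u$, a short computation with $m=\gamma u$, $\gamma\geq2+\delta$, yields $\binom nu\binom n{m/2}\binom{um/2}m(d/n)^m\leq(eL)^u\rho^{\,u(\gamma/2-1)}$ with $L:=e^3d^2/(2(2+\delta))$ and $\rho:=Lu/n$. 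For $u\leq\eps n$ and $\eps$ small we have $\rho\leq L\eps<\tfrac14$, so the sum over $m$ is geometric and, after reinstating the $u\cdot(m/2)$ prefactors,
\begin{align*}
u\sum_{\substack{m\geq(2+\delta)u\\ c\leq m/2}}\frac m2\,\Erw N(u,c,m)&\leq C_0\,u^2(eL)^u\bc{\frac{Lu}n}^{\delta u/2}
\end{align*}
for a constant $C_0=C_0(d,\delta)$. The factor $\bc{Lu/n}^{\delta u/2}$ is the gain: a flipper of size $u$ with above-average slush-degree is a subgraph with edge-excess of order $\delta u/2$ over a tree, hence polynomially rare for bounded $u$ and exponentially rare for $u=\Theta(n)$. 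Writing $M_0:=eL^{1+\delta/2}$ it remains to sum $\sum_{u\geq2}u^2\bc{M_0(u/n)^{\delta/2}}^u$: for $2\leq u\leq\sqrt n$ one has $M_0(u/n)^{\delta/2}\leq M_0 n^{-\delta/4}\leq n^{-\delta/8}$ for $n$ large, so this part is $\leq\sum_{u\geq2}u^2 n^{-\delta u/8}=O(n^{-\delta/4})$, while if $\eps$ is chosen small enough that $M_0\eps^{\delta/2}\leq\tfrac12$ the range $\sqrt n<u\leq\eps n$ contributes $\leq\sum_{u>\sqrt n}u^2 2^{-u}=o(1)$. Altogether the expectation in the claim is $O(n^{-\delta/4})+o(1)=o(1)$.

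The main obstacle, and essentially the only delicate point, is the summation over the edge-count $m$: bounding the number of admissible values of $m$ crudely would introduce a spurious factor $\mathrm{poly}(n)$ (or at least $\log n$, if one first restricts to maximum degree $\leq\log n$) that would swamp the smallness, so one must genuinely exploit the geometric decay of $\Erw N(u,c,m)$ in $m$, and dispatch the heavy tail $m>n$ by a separate, cruder exponential estimate (this also avoids having to condition on a maximum-degree event whose failure probability is only polynomially small). A secondary nuisance is that the constant $L=e^3d^2/(2(2+\delta))$ grows with $d$, but this is harmless because $\eps$ is chosen only after $d$ and $\delta$, so the factor $(u/n)^{\delta u/2}\leq\eps^{\delta u/2}$ ultimately dominates $(eL)^u$.
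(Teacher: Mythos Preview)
Your proof is correct and rests on the same first-moment bound as the paper's, namely
\[
\Erw\brk{u\,\vZ(u,c,m)}\leq u\binom nu\binom nc\binom{uc}m p^m,
\]
subject to the constraints $m\geq(2+\delta)u$ and $m\geq 2c$. The difference lies only in how the sum over $(c,m)$ is organised. The paper splits into three regimes according to the ratio $c/u$ (namely $c\leq u$, $u\leq c\leq 100u$, and $c\geq 100u$) and estimates each separately. You instead observe that the bound is monotone in $c$ on the admissible range and substitute the extremal value $c=m/2$, reducing everything to a single geometric sum in $m$; the tail $m>n$ is dispatched by a crude exponential estimate. Your route is slightly more streamlined in that it avoids the case analysis, at the cost of having to argue the monotonicity in $c$ and to treat the range $m>n$ separately; the paper's case split makes the latter unnecessary since in each case $m$ is automatically $O(n)$. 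Both executions are elementary and deliver the same final estimate $O(n^{-\Omega(\delta)})$.
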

\begin{proof}
	Recalling $p=d/n \wedge 1$, we write the simple-minded bound
	\begin{align}\label{eqClaim_flippper1_1}
		\Erw\brk{u\vZ(u,c,m)}&\leq u\binom nu\binom nc\binom{uc}m p^m;
	\end{align}
	here $\binom nu$ counts the number of choices for $U$, $\binom nc$ accounts for the number of  possible sets of $c$ check nodes, $\binom{uc}m$ bounds the number of bipartite graphs on the chosen variable and check sets, and $p^m$ bounds the probability that the chosen subgraph is actually contained in $G(\vA)$.
	We aim to bound the r.h.s.\ of \eqref{eqClaim_flippper1_1} subject to the constraints
	\begin{align}\label{eqClaim_flippper1_2}
		m\geq\max\cbc{2c,(2+\delta)u},&&1\leq u\leq\eps n&&\mbox{for a small enough $\eps>0$.}
	\end{align}
	We consider three separate cases.
	\begin{description}
		\item[Case 1: $c\leq u$] we estimate
		\begin{align}\label{eqClaim_flippper1_3}
			\binom nu\binom nc\binom{uc}m p^m&\leq \bcfr{\eul n}{u}^{2u}\bcfr{\eul uc d}{mn}^m\leq
			\bcfr{\eul n}{u}^{2u}\bcfr{\eul c d}{2n}^{(2+\delta)u}\leq \bc{\eul^{4+\delta}d^{2+\delta}}^u\bcfr{u}{n}^{\delta u}.
		\end{align}
		Combining \eqref{eqClaim_flippper1_1}--\eqref{eqClaim_flippper1_3}, we obtain
		\begin{align}\label{eqClaim_flippper1_4}
			\sum_{1\leq c\leq u\leq\eps n}\Erw\brk{u\vZ(u,c,m)}&\leq
			\sum_{1\leq u\leq\eps n}u^2\bc{\eul^{4+\delta}d^{2+\delta}}^u\bcfr{u}{n}^{\delta u}=o(1).
		\end{align}
		\item[Case 2: $u\leq c\leq 100u$] due to \eqref{eqClaim_flippper1_2} we obtain
		\begin{equation}\label{eqClaim_flippper1_5}
			\begin{split}
			\binom nu\binom nc\binom{uc}m p^m&
			\leq\bcfr{\eul n}{u}^{u}\bcfr{\eul n}{c}^c\bcfr{\eul u d}{2n}^c\bcfr{\eul u d}{2n}^{m/2}
			\leq\bcfr{\eul n}{u}^{u}\bcfr{\eul^2d}{2}^c\bcfr{\eul ud}{2n}^{u(1+\delta/2)}\\
			&\leq\bcfr{\eul^2d}{2}^{400 u}\bcfr{u}{n}^{\delta u/2}.
			\end{split}
		\end{equation}
		Combining \eqref{eqClaim_flippper1_1} and \eqref{eqClaim_flippper1_5}, we get
		\begin{align}\label{eqClaim_flippper1_5}
			\sum_{\substack{1 \leq u\leq\eps n\\ u\leq c\leq100 u}}\Erw\brk{u\vZ(u,c,m)}&\leq\sum_{1\leq u\leq\eps n}100u^2\bcfr{\eul^2d}{2}^{400 u}\bcfr{u}{n}^{\delta/2}=o(1).
		\end{align}
		\item[Case 3: $100u \leq c\leq n$] the condition \eqref{eqClaim_flippper1_2} yields
		\begin{align*}
			\binom nu\binom nc\binom{uc}m p^m\leq\bcfr{100\eul n}{c}^{1.1c}\bcfr{\eul d u}{n}^{2c}\leq\bcfr{\eul d u}{n}^{c/2}.
		\end{align*}
		Hence,
		\begin{align}\label{eqClaim_flippper1_6}
			\sum_{\substack{1 \leq u\leq\eps n\\ 100 u\leq c\leq n}}\Erw\brk{u\vZ(u,c,m)}&\leq\sum_{1\leq u\leq\eps n}u\sum_{100 u\leq c\leq n}\bcfr{\eul d u}{n}^{c/2}
			\leq\sum_{1\leq u\leq\eps n}u\bcfr{\eul d u}{n}^{u}=o(1).
		\end{align}
	\end{description}
	Finally, the assertion follows from \eqref{eqClaim_flippper1_4}, \eqref{eqClaim_flippper1_5} and \eqref{eqClaim_flippper1_6}.
\end{proof}

\noindent
Complementing Claim~\ref{Claim_flippper1}, we now estimate the sizes of flippers of average check degree greater than two.

\begin{claim}\label{Claim_flippper2}
	For any $d>0,\delta>0$ there exists $\eps>0$ such that
	\begin{align*}
		\Erw\brk{\sum_{U\in\fF_\eps(\vA)}|U|\vecone\cbc{\sum_{a\in\partial U\cap C_\slush(\vA)}|\partial a\cap U|\geq(2+\delta)|C|}}&=o(1).
	\end{align*}
\end{claim}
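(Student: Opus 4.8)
The plan is to mirror the proof of Claim~\ref{Claim_flippper1}, with the roles of the variable count $u=|U|$ and the check count $c=|C|$ interchanged. First I would record the two inequalities that govern the summation: for any $(u,c,m)$-flipper contributing to the expectation we have $m\ge(2+\delta)c$ by hypothesis, and also $m\ge2u$. The latter holds because $C=\partial U\cap C_\slush(\vA)$ contains every $C_\slush(\vA)$-neighbour of every $x\in U\subseteq V_\slush(\vA)$, and each such $x$ has at least two of them since $G_\slush(\vA)$ has minimum degree at least $2$ by the peeling characterisation~\eqref{eqslushpeel}. With the same first-moment estimate $\Erw[u\vZ(u,c,m)]\le u\binom nu\binom nc\binom{uc}mp^m$ as in Claim~\ref{Claim_flippper1}, we are then in exactly the situation treated there, but with $u\leftrightarrow c$ and with the weight multiplying $\vZ(u,c,m)$ still equal to $u$ rather than $c$.

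Next I would split the sum according to the aspect ratio $u/c$ and reuse the three estimates of Claim~\ref{Claim_flippper1} (now organised around $c$, which carries the $(2+\delta)$-weight): the ranges $u\le c$, $c\le u\le100c$ and $u\ge100c$. In the two ranges $u\le100c$ one bounds $\binom{uc}mp^m\le(\eul ucd/(mn))^m$, uses $m\ge2u$ to simplify the base and $m\ge(2+\delta)c$ to extract a surplus factor $(c/n)^{\delta c/2}$, exactly as in Cases~1 and~2 of Claim~\ref{Claim_flippper1} after the substitution; this yields $\sum_m u\binom nu\binom nc\binom{uc}mp^m\le c^{O(1)}\,K^{c}\,(c/n)^{\delta c/2}$ for a constant $K=K(d)$, which sums over $c\ge1$ to $o(1)$ once $\eps$ is small (the smallest terms carry an $n^{-\Omega(1)}$ factor, exactly as in Claim~\ref{Claim_flippper1}). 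When $c$ is close to $n$ one should use $\binom nc=\binom n{n-c}\le(\eul n/(n-c))^{n-c}$ in place of the crude bound, which together with $m\ge(2+\delta)c$ again beats the entropy factor. In the range $u\ge100c$ one argues as in Case~3 of Claim~\ref{Claim_flippper1}, obtaining a bound geometric in $c$ with ratio $O((du/n)^{1/2})<1$, which sums over $c$ and then over $u\le\eps n$ to $o(1)$.

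The step I expect to be the main obstacle is controlling the weight $u$ in the regime $u\gg c$, where a verbatim transcription of Claim~\ref{Claim_flippper1} leaves a factor $u$ that can be as large as $\eps n$. I would handle this by first conditioning on the high-probability event of \Lem~\ref{Lemma_tails} that every vertex of $G(\vA)$ has degree at most $\log n$; then $2u\le m\le c\log n$ forces $u\le\tfrac12c\log n$, so $u$ is at most a polylogarithmic multiple of $c$ and is absorbed into the $c^{O(1)}$ prefactor of the $c$-indexed bound. With this in place the three cases go through and sum to $o(1)$, proving the claim. Combining Claims~\ref{Claim_flippper1} and~\ref{Claim_flippper2} then reduces \Lem~\ref{PropFlipping} to flippers in which both the average variable degree and the average check degree lie in $(2,2+\delta)$, i.e.\ which are close to $2$-regular, and these are dealt with by the remaining argument of this subsection.
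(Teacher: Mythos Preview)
Your overall strategy matches the paper's: swap the roles of $u$ and $c$ in Claim~\ref{Claim_flippper1}, record that $m\ge 2u$ (from the minimum-degree-two property of $G_\slush(\vA)$) together with $m\ge(2+\delta)c$, and use the same three-case split according to the ratio $u/c$. The paper does exactly this, tersely.

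The ``obstacle'' you flag in Case~3 is not genuine, however, and stems from an incomplete swap. In Claim~\ref{Claim_flippper1} Case~3 the bound is $(\eul du/n)^{c/2}$, with exponent linear in $c$; after swapping $u\leftrightarrow c$ for Claim~\ref{Claim_flippper2} Case~3 (where now $u\ge 100c$ and the dominant constraint is $m\ge 2u$), the transcribed bound has exponent linear in $u$, for instance
\[
\binom nu\binom nc\binom{uc}m p^m
\le \bc{\frac{100\eul n}{u}}^{1.01u}\bc{\frac{\eul dc}{2n}}^{2u}
\le \bc{O_d(1)\,(u/n)^{0.99}}^{u}
\]
after using $c\le u/100$. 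The weight $u$ is then absorbed by this exponential decay in $u$: summing over $c\le u/100$, over $m$, and over $u\ge 100$ gives $o(1)$ directly, with no extra device. Your degree-conditioning workaround is therefore unnecessary; it is also not sound as written, because conditioning on the high-probability event $\{\max\deg\le\log n\}$ does not by itself control the \emph{unconditional} expectation --- on the complementary event the random variable $\sum_{U}|U|\vecone\{\cdots\}$ can be as large as $\exp(\Theta(\eps\log(1/\eps)n))$, which the merely super-polynomially small probability $\pr[\max\deg>\log n]$ does not kill.
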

\begin{proof}
	The proof is rather similar to the proof of the previous claim, except that we swap the roles of $u$ and $c$.
	Once more we start from the naive bound \eqref{eqClaim_flippper1_1}, but this time $m$ satisfies  $m\geq \max\cbc{2u,(2+\delta)c}$ and $1\leq u\leq\eps n$.
	\begin{description}
		\item[Case 1: $u\leq c$] we have
		\begin{align}\label{eqClaim_flippper2_3}
			\binom nu\binom nc\binom{uc}m p^m&\leq
			\bcfr{\eul n}{c}^{2c}\bcfr{\eul u d}{2n}^{(2+\delta)c}\leq \bc{\eul d}^{5c}\bcfr{u}{n}^{\delta c}.
		\end{align}
		\item[Case 2: $c\leq u\leq 100c$] we estimate
		\begin{equation}\label{eqClaim_flippper2_5}
			\begin{split}
			\binom nu\binom nc\binom{uc}m p^m&
			\leq\bcfr{\eul n}{u}^{u}\bcfr{\eul n}{c}^c\bcfr{\eul c d}{2n}^u\bcfr{\eul c d}{2n}^{m/2}
			\leq\bcfr{\eul n}{c}^{c}\bcfr{\eul^2d}{2}^u\bcfr{\eul cd}{2n}^{c(1+\delta/2)}\\
			&\leq\bcfr{100\eul^2d}{2}^{u}\bcfr{u}{n}^{\delta u/200}.
			\end{split}
		\end{equation}
		\item[Case 3: $100c \leq u$] we have
		\begin{align}\label{eqClaim_flippper2_6}
			\binom nu\binom nc\binom{uc}m p^m\leq\bcfr{\eul n}{u}^{1.1u}\bcfr{\eul d c}{n}^{2u}\leq\bcfr{\eul d u}{n}^{c/2}.
		\end{align}
	\end{description}
	Summing \eqref{eqClaim_flippper2_3}, \eqref{eqClaim_flippper2_5} and \eqref{eqClaim_flippper2_6} on $u,c,m$ such that $m\geq(2+\delta)c$, we obtain  ${\sum_{u,c,m}\Erw\brk{uZ(u,c,m)}=o(1).}$
\end{proof}

\noindent
Finally, we need to deal with flippers of average variable and constraint degree about two.  

\begin{claim}\label{Claim_Flipper3}
	For any $d>\eul$ there exists $\eps>0$ such that for any $\omega=\omega(n)\gg1$ we have
	\begin{align*}
		\pr\brk{\sum_{U\in\fF_\eps(\vA)}|U|\vecone\cbc{\sum_{x\in U}|\partial x\cap C_\slush(\vA)|\leq(2+\eps)|U|,
				\sum_{a\in\partial U\cap C_\slush(\vA)}|\partial a\cap U|\leq(2+\eps)|C|}>\omega}&=o(1).
	\end{align*}
\end{claim}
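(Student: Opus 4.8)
The plan is a first moment bound followed by Markov's inequality: I would show that
\begin{align*}
\Erw\brk{\sum_{U\in\fF_\eps(\vA)}|U|\,\vecone\cbc{\cdots}}=O(1),
\end{align*}
where $\vecone\cbc{\cdots}$ is the indicator appearing in the claim; then $\pr\brk{\,\cdot>\omega\,}\leq O(1)/\omega=o(1)$ for every $\omega\gg1$. Since the indicator only refers to the slush minor, the flippers that contribute all lie in $V_\slush(\vA)$, and it suffices to work inside $G_\slush(\vA)$. By \Lem~\ref{Lemma_contig} I may, at a multiplicative cost of $O(1)$, replace $G_\slush(\vA)$ by the pairing model $\cG_\slush$; and by \Prop~\ref{Prop_slush} together with routine concentration and \Lem~\ref{Lemma_tails} I may condition on the high-probability event that the degree sequence $d_{\vA,\slush}$ is \emph{regular}: about $\nu n$ variable and $\nu n$ check nodes, empirical degree distributions on both sides close to $\Po_{\geq2}(\lambda)$, and maximum degree $O(\log n)$. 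Here $\lambda=d(\alpha^*-\alpha_*)>0$ because $d>\eul$. It then remains to bound the conditional expectation above uniformly over regular degree sequences.

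Next I would record the rigid geometry of the flippers in question. If $U\subseteq V_\slush(\vA)$ is a flipper with $C=\partial U\cap C_\slush(\vA)$ and $m$ edges between $U$ and $C$, then every $v\in U$ has all of its $G_\slush$-edges inside $H:=G_\slush[U\cup C]$, so $\deg_H(v)=d_{\vA,\slush}(v)\geq2$, while $|\partial a\cap U|\geq2$ for every $a\in C$; hence $H$ has minimum degree at least $2$. With $u=|U|$, $c=|C|$ the two hypotheses read $m\leq(2+\eps)u$ and $m\leq(2+\eps)c$, and combined with $m\geq2u$, $m\geq2c$ they force $u$ and $c$ to agree up to a factor $1+\eps$, the excess $e:=m-u-c$ to satisfy $e\leq\tfrac32\eps u$, and the surplus degree $\sum_{v\in U}(\deg_H(v)-2)=m-2u$ to be at most $\eps u$ (and analogously on the check side). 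In particular all but at most $\eps u$ of the variables in $U$ have $G_\slush$-degree \emph{exactly} $2$, and $H$ has at most $O(\eps u)$ vertices of degree $\geq3$; up to a bounded-size core, $H$ is a union of cycles through degree-$2$ vertices.

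The heart of the argument is a first moment estimate for the number $\vZ(u,c,m)$ of such flippers in $\cG_\slush$. Exploring $H$ by a spanning tree plus its $O(e)$ extra (``return'') edges, and absorbing into an $\eul^{o(u)}$ error both the simple-graph conditioning and the Stirling correction $M!/(M-m)!\approx M^m$ (legitimate since $m=O(\eps n)$), the count factorises into: a transfer weight along the degree-$2$ paths contributing a factor $r_0$ per variable node, where
\begin{align*}
r_0:=\frac{2\,\pr\brk{\Po_{\geq2}(\lambda)=2}\cdot\Erw\brk{\Po_{\geq2}(\lambda)\bc{\Po_{\geq2}(\lambda)-1}}}{\Erw\brk{\Po_{\geq2}(\lambda)}^2}=\frac{\lambda^2\eul^{-\lambda}}{(1-\eul^{-\lambda})^2};
\end{align*}
a factor $O(\eps)$ for each of the $e\leq\tfrac32\eps u$ return edges (each is pinned to a prescribed half-edge, probability $O(1/n)$, against $O(\eps n)$ choices of where to return); and a factor at most $C^j$ for each vertex of degree $j\geq3$, for an absolute constant $C$, the key point being that the Poisson-type multiplicity $\pr\brk{\Po_{\geq2}(\lambda)=j}\asymp\lambda^j/j!$ of such a vertex kills the $j^j$ cost of its branchings. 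The decisive inequality is $r_0<1$, which is elementary: $r_0<1\iff\lambda^2+2<\eul^\lambda+\eul^{-\lambda}=2\cosh\lambda$, and $2\cosh\lambda=2+\lambda^2+\tfrac{\lambda^4}{12}+\cdots>2+\lambda^2$ for every $\lambda>0$. Collecting terms gives $\Erw\brk{\vZ(u,c,m)\mid d_{\vA,\slush}}\leq\eul^{o(u)}\bc{r_0\,C^{O(\eps)}}^{u}$ for regular $d_{\vA,\slush}$; choosing $\eps$ small enough that $r_0\,C^{O(\eps)}<1$ and summing $\sum_{u\leq\eps n}u\,\bc{r_0\,C^{O(\eps)}}^{u}=O(1)$ (the same geometric series also swallows the $\sum_e O(\eps)^e=O(1)$ from return edges) finishes the proof.

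The step I expect to be the main obstacle is precisely this moment bound: one must show that the combinatorial overhead of describing a flipper of positive excess — where the $e$ surplus cycles attach, and how to handle vertices of degree as large as $\Theta(\log n)$ — costs only $\eul^{O(\eps u)}$, rather than $\eul^{\Omega(\eps u\log n)}$ as a naive enumeration of the suppressed-degree-$2$ kernel would produce, which would overwhelm the geometric gain $r_0^u$. The two ingredients that make it go through are (i) charging each unit of excess an honest $O(1/n)$, so that for small $\eps$ excess is genuinely cheap, and (ii) exploiting that high-degree vertices are rare and, thanks to the $1/j!$ in the Poisson weight, individually cheap — both of which have been partly prepared by Claims~\ref{Claim_flippper1} and~\ref{Claim_flippper2}, which already dispose of the flippers whose average degree on one of the two sides exceeds $2+\eps$.
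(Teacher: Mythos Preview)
Your plan is correct and lands on exactly the same mechanism as the paper: a first moment bound in the pairing model $\cG_\slush$ (via \Lem~\ref{Lemma_contig}), conditioning on the good degree event from \Prop~\ref{Prop_slush}, and reducing to the key inequality
\[
\frac{\lambda^2\eul^\lambda}{(\eul^\lambda-1)^2}<1\qquad(\lambda>0),
\]
which is your $r_0<1$. Your $\cosh$ proof of this inequality is the same fact the paper invokes as \eqref{eq_lambda}.

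The difference is purely in how the count is organised. You propose to grow the flipper by an exploration (spanning tree plus $e=m-u-c$ surplus edges), charging a transfer weight $r_0$ per degree-$2$ step and $O(\eps)$ per return edge. The paper instead does a static three-factor count: (i) choose $U$ among variables, observing that all but $\leq\eps u$ have $G_\slush$-degree exactly $2$, giving roughly $\binom{\nu n}{u}\bc{\frac{\lambda^2}{2(\eul^\lambda-\lambda-1)}}^u$; (ii) choose $C$ together with a designated set of $m$ check-clones, giving roughly $\binom{\nu n}{c}\bc{\frac{\lambda^2\eul^\lambda}{2(\eul^\lambda-\lambda-1)}}^c$; (iii) multiply by the matching probability $\binom{\DELTA}{m}^{-1}$ in the pairing model. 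Combining these and simplifying yields
\[
\Erw\brk{\vZ(u,c,m)\mid\cD}\leq\bcfr{u}{n}^{m-u-c}\bcfr{\lambda^2\eul^\lambda+o_\eps(1)}{(\eul^\lambda-1)^2}^u,
\]
which is exactly your $r_0^u\cdot O(\eps)^e$ in closed form. The paper's organisation sidesteps the issue you flag as the ``main obstacle'' (controlling the combinatorial overhead of high-degree vertices and return-edge placements) by absorbing it into the crude factors $\binom{u}{\eps u}$ and $L^{O(\eps c)}$, which are $\eul^{o_\eps(u)}$ since $L$ is a fixed constant; no tree exploration is needed. Your route would work too, but the static count is shorter and avoids the bookkeeping you were worried about.
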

\begin{proof}
	Choose $L=L(d)>0$ sufficiently large and subsequently $\eps>0$ sufficiently small.
	Moreover, for a vertex $u$ of $G_\slush(\vA)$ let $d_\slush(u)$ signify the degree of $u$ in $G_\slush(\vA)$.
	Further, with $\nu,\lambda$ from \eqref{eqlambdanu} let $\cD$ be the event that the graph $G_\slush{}(\vA)$ enjoys the following four properties.
	\begin{description}
		\item[D1] $|V_\slush(\vA)|= (\nu+o(1)) n$ and $|C_\slush(\vA)|= (\nu+o(1)) n$.
		\item[D2] For any $2\leq\ell\leq L$ we have
		$\sum_{x\in V_\slush(\vA)}\vecone\cbc{d_\slush(x)=\ell}= \pr\brk{\Po_{\geq2}(\lambda)=\ell}\nu n + o(n).$
		\item[D3] For any $2\leq\ell\leq L$ we have
		$\sum_{a\in C_\slush(\vA)}\vecone\cbc{d_\slush(a)=\ell}= \pr\brk{\Po_{\geq2}(\lambda)=\ell}\nu n + o(n).$
		\item[D4] The bounds from \eqref{eqLemma_tails} hold for the degree sequence of $G(\vA)$.
	\end{description}
	Then \Prop~\ref{Prop_slush} and \Lem~\ref{Lemma_tails} imply that
	\begin{align}\label{eqClaim_Flipper30}
		\pr\brk{\cD}=1-o(1).
	\end{align}
	
	We aim to count $(u,c,m)$-flippers $U\subset V_\slush{}(\vA)$ with neighbourhoods $C=\partial U\cap C_\slush{}(\vA)$ of size $|C|=c$ such that
	\begin{align}\label{eqClaim_Flipper31}
		m&=\sum_{x\in U}|\partial x\cap C|=\sum_{a\in C}|\partial a\cap U|\leq(2+\eps)(u\wedge c),&\mbox{and, of course,}&&
		&&\min_{a\in C}|\partial a\cap U|\geq2.
	\end{align}
	To estimate the number $\vZ(u,c,m)$ we recall from \Prop~\ref{Prop_slush} that the graph $G_\slush(\vA)$ is uniformly random given the degrees.
	Therefore, according to \Lem~\ref{Lemma_contig} it suffices to bound  the number of $(u,c,m)$-flippers of a random graph chosen from the pairing model with the same degree sequence.
	Thus, let $\vec\Gamma_\slush$ be a random perfect matching of the complete bipartite graph on the vertex sets
	\begin{align*}
		\cV&=\bigcup_{v\in V_\slush(\vA)}\cbc v\times[d_\slush(v)],&\cC&=\bigcup_{a\in C_\slush(\vA)}\cbc a\times[d_\slush(a)].
	\end{align*}
	Further, let $\cG_\slush$ be the multigraph obtained from $\vec\Gamma_\slush$ by contracting the clones $\cbc v\times[d_\slush(v)]$ and $\cbc a\times[d_\slush(a)]$ of the variable and constraint nodes into single vertices for all $v\in V_\slush(\vA)$, $a\in C_\slush(\vA)$.
	Due to \eqref{eqClaim_Flipper30} it suffices to establish the bound
	\begin{align}\label{eqClaim_Flipper32}
		\sum_{u,c,m:1\leq u\leq\eps n}u\Erw\brk{\vZ(u,c,m)\mid\cD}&=O(1).
	\end{align}
	
	To prove \eqref{eqClaim_Flipper32} we first count viable choices of $U$.
	Since \eqref{eqClaim_Flipper31} implies that $2u\leq m\leq(2+\eps)u$, no more than $\delta u$ of the vertices in the set $U$ have degree greater than two.
	Further, {\bf D1} and {\bf D2} show that there are no more than
	\begin{align}\label{eqClaim_Flipper33}
		\binom{(\nu+o(1))n}u\binom{u}{\eps u}\bcfr{\lambda^2+o(1)}{2(\exp(\lambda)-\lambda-1)}^{(1-\eps)u}
		\leq \bcfr{\eul L}{\eps}^{\eps u}\bcfr{\eul(\nu+o(1))n}{u}^u\bcfr{\lambda^2+o(1)}{2(\exp(\lambda)-\lambda-1)}^{u}
	\end{align}
	such sets $U$.
	
	By a similar token, most check nodes in $C$ have precisely two neighbours in $U$.
	Thus, we estimate the number of choices of $C\subset C_\slush(\vA)$ of size $c$ along with a set $\fC$ of $m$ clones of these checks as follows.
	Summing on all vectors $\vk=(k_1,\ldots,k_c)$ of integers $k_i\geq2$ with $\sum_{i}k_i=m$ and on all sequences $(b_1,\ldots,b_c)\in C_\slush{}(\vA)^c$, we obtain the bound
	\begin{align}\label{eqClaim_Flipper34a}
		\frac1{c!}\sum_{b_1,\ldots,b_c\in C_\slush(\vA)}\sum_{\vk}\prod_{i=1}^c\binom{d_\slush(b_i)}{k_i}
		=\frac1{c!}\sum_{\vk}\prod_{i=1}^c\sum_{b\in C_\slush(\vA)}\binom{d_\slush(b)}{k_i}.
	\end{align}
	Now, \eqref{eqClaim_Flipper31} implies that $\sum_{i\leq c}\vecone\cbc{k_i>2}k_i\leq 3\eps c$.
	Therefore, {\bf D3} and {\bf D4} ensure that for any $\vk$,
	\begin{align}\label{eqClaim_Flipper34b}
		\prod_{i=1}^c\sum_{b\in C_\slush(\vA)}\binom{d_\slush(b)}{k_i}&\leq L^{3\eps c}
		\prod_{i=1}^c\sum_{b\in C_\slush(\vA)}\binom{d_\slush(b)}{2}\leq L^{3\eps c}\bc{(\nu+o(1))n}^c\bcfr{\lambda^2\exp(\lambda)+o(1)}{2(\exp(\lambda)-\lambda-1)}^c.
	\end{align}
	Furthermore, there are no more than $\binom{m-c-1}{c-1}=\binom{m-c-1}{m-2c}$ possible vectors $\vk$ and thus \eqref{eqClaim_Flipper31} yields
	\begin{align}\label{eqClaim_Flipper34c}
		\binom{m-c-1}{m-2c}&\leq\bcfr{2\eul}{\eps}^{\eps c}.
	\end{align}
	Combining \eqref{eqClaim_Flipper34a}--\eqref{eqClaim_Flipper34c} with {\bf D1}, we see that the number of possible $C,\fC$ is bounded by
	\begin{align}\label{eqClaim_Flipper34}
		\bcfr{2\eul L^3}{\eps}^{\eps c}\bcfr{\eul(\nu+o(1))n}{c}^c\bcfr{\lambda^2\exp(\lambda)+o(1)}{2(\exp(\lambda)-\lambda-1)}^c.
	\end{align}
	Finally, since {\bf D2} and {\bf D4} imply that 
	\begin{align*}
		\sum_{x\in V_\slush(\vA)}d_\slush(x)=(1+o_\eps(1)) \nu n\Erw[\Po_{\geq2}(\lambda)]=(1+o_\eps(1))\frac{\nu n \lambda(\exp(\lambda)-1)}{\exp(\lambda)-\lambda-1},
	\end{align*}
	the probability that $\vec\Gamma_\slush$ matches the designated variable/check clones comes to
	\begin{align}\label{eqClaim_Flipper36}
		\frac{m!(\sum_{x\in V_\slush(\vA)}d_\slush(x)-m)!}{(\sum_{x\in V_\slush(\vA)}d_\slush(x))!}=
		\binom{\sum_{x\in V_\slush(\vA)}d_\slush(x)}{m}^{-1}
		=\bcfr{\eul(\lambda(\exp(\lambda)-1) \nu+o_\eps(1))n}{m(\exp(\lambda)-\lambda-1)}^{-m}.
	\end{align}
	
	Combining \eqref{eqClaim_Flipper33}, \eqref{eqClaim_Flipper34} and~\eqref{eqClaim_Flipper36} (and dragging all $o(1)$-error terms into the $o_\eps(1)$), we obtain
	\begin{equation*}
		\begin{split}
		\Erw&\brk{\vZ(u,c,m)\mid\cD}\\
		&\leq\bcfr{\eul\nu n}{u}^u\bcfr{\eul\nu n}{c}^c
		\bcfr{\eul(\lambda(\exp(\lambda)-1) \nu+o_\eps(1))n}{m(\exp(\lambda)-\lambda-1)}^{-m}
		\bcfr{\lambda^2\exp(\lambda)}{2(\exp(\lambda)-\lambda-1)}^c
		\bcfr{\lambda^2}{2(\exp(\lambda)-\lambda-1)}^{u}.
		\end{split}
	\end{equation*}
	Hence, \eqref{eqClaim_Flipper31} 
	yields
	\begin{align}\label{eqClaim_Flipper37}
		\Erw\brk{\vZ(u,c,m)\mid\cD}&\leq \bcfr{u}{n}^{m-u-c}
		\bcfr{\lambda^2\exp(\lambda)+o_\eps(1)}{(\exp(\lambda)-1)^2}^u.
	\end{align}
	Since $\lambda>0$ we have $\lambda^2\exp(\lambda)/((\exp(\lambda)-1)^2)<1$.
	Therefore,  \eqref{eqClaim_Flipper37}  implies \eqref{eqClaim_Flipper32} for small $\eps>0$.
\end{proof}

\begin{proof}[Proof of \Lem~\ref{PropFlipping}]
	The lemma follows from Claims~\ref{Claim_flippper1}, \ref{Claim_flippper2} and~\ref{Claim_Flipper3}.
	More precisely, let given $d>\eul$, let $\eps_1$ be the $\eps$ given by Claim~\ref{Claim_Flipper3},
	and subsequently set $\delta:=\eps_1$ and let $\eps_2,\eps_3$ be the $\eps$ given by Claims~\ref{Claim_flippper1} and~\ref{Claim_flippper2} respectively.
	Then let us set $\eps_0:= \eps_1\wedge \eps_2\wedge \eps_3$.
	
	Now Claims~\ref{Claim_flippper1} and~\ref{Claim_flippper2} imply that \whp\ there is no
	$U\in\fF_{\eps_0}(\vA)$ with $\sum_{x\in U}|\partial x\cap C_\slush(\vA)|\geq(2+\delta)|U|$
	or with $\sum_{a\in\partial U\cap C_\slush(\vA)}|\partial a\cap U|\geq(2+\delta)|C|$.
	On the other hand, conditioning on this event,
	since $\eps_0 \le \eps_1=\delta$ we have $\fF_{\eps_0}(\vA) \subset \fF_\delta(\vA)$,
	and therefore Claim~\ref{Claim_Flipper3} implies that \whp\ $F_{\eps_0}(\vA) \le \omega$
	for any function $\omega=\omega(n) \gg 1$, as required.
\end{proof}

\subsection{Proof of \Lem~\ref{Lemma_expectedSolutions}}\label{Sec_Lemma_expectedSolutions}
The proof is based on a somewhat delicate moment calculation.
Suppose that $|V_\slush(\vA)\cap\cF(\vA)|<\eps n$, i.e., very few coordinates in the slush are frozen.
Then Fact~\ref{fact_ker} implies that for most $v\in V_\slush(\vA)$ the corresponding entry $\vx_{\slush,v}$ of a random vector $\vx_\slush\in\ker\vAs$ takes the value $0$ with probability precisely $1/2$.
Furthermore, since $|V_\slush(\vA)|=\Omega(n)$ \whp, \Prop~\ref{Prop_pin} implies that for most pairs $u,v\in V_\slush(\vA)$ the entries $\vx_{\slush,u},\vx_{\slush,v}$ are stochastically independent.
Therefore, \whp\ the random vector $\vx_\slush$ has Hamming weight $(1/2+o_\eps(1))|V_\slush(\vA)|$.
Hence, a tempting first idea toward the proof of \Lem~\ref{Lemma_expectedSolutions} might be to simply calculate the expected number of vectors of Hamming weight $(1/2+o_\eps(1))|V_\slush(\vA)|$ in the kernel of $\vAs$.

This strategy would work if we could replace the $o_\eps(1)$ error term above by $O(n^{-1/2})$.
Indeed, there are trivially at most $2^{|V_\slush(\vA)|}$ candidate vectors of Hamming weight $|V_\slush(\vA)|/2+O(\sqrt n)$.
Moreover, it is not very hard to verify that a given such vector satisfies all checks with probability $\Theta\bc{2^{-|C_\slush(\vA)|}}$.
As a consequence, the expected number of vectors in $\ker\vAs$ of Hamming weight $|V_\slush(\vA)|/2+O(\sqrt n)$ tends to zero if $|C_\slush(\vA)|-|V_\slush(\vA)|\gg1$.
But unfortunately this simple calculation does not extend to larger $\eps$ as required by \Lem~\ref{Lemma_expectedSolutions}.
The reason is that for larger $\eps$ a second order term pops up, i.e., the probability that all checks are satisfied reads 
$$2^{-|\Cs(\vA)|+O_\eps(\eps^2)|\Cs(\vA)|}.$$
This quadratic term is due to the presence of checks of degree two -- checks of larger degree only produce at most
	a cubic error term.

It is therefore also tempting to modify the argument by observing that a Chernoff bound shows that the number of vectors of Hamming weight
	at least $(1/2+\eps)|V_\slush|$ or at most $(1/2-\eps)|V_\slush|$ is \linebreak $2^{|V_\slush|} \exp\bc{-\eps^2 |V_\slush| + O(\eps^3 n)}$
	and wonder, therefore, whether this $\exp(-\eps^2 |V_\slush|)$ would be enough to outweigh the quadratic term in the probability bound above.
	A more careful analysis shows that the number of checks of degree~2 is approximately $\gamma|C_\slush|$, where
	$\gamma =\pr\brk{\Po_{\ge 2}(\lambda)=2}$ (see Proposition~\ref{Prop_slush}), while the error term in the probability that
	such a check is satisfied is approximately $\exp(4\eps^2)$. The question then becomes whether $\gamma$ is smaller than $1/4$,
	in which case we could again use the first moment method.
	Unfortunately it is not hard to see that $\gamma$ tends to $1$ as $d$ tends to $\eul$ from above (since then $\lambda$ tends to $0$),
	and so at best this strategy would only work for sufficiently large $d$.

Instead, we deal with the problem of the quadratic error term by observing that a check node of degree two simply imposes an equality constraint on its two adjacent variables.
Thus, any two variable nodes that appear in a check node of degree two can be contracted into a single variable node and then the check node can be eliminated.
A variant of the moment calculation, without the quadratic error term, can then be applied to the matrix that the multigraph resulting from the contraction procedure induces. The argument in the second attempt at the first moment method shows that
	the $\exp(-\eps^2|V_\slush|)$ decay in the proportion of vectors with an imbalance of $\eps$ will be enough to handle the cubic and higher-order error terms.

To carry out this programme we first investigate the subgraph $G_\slush'(\vA)$ obtained from $G_\slush{}(\vA)$ by deleting all checks of degree greater than two.
More precisely, invoking \Lem~\ref{Lemma_contig}, for the apparent technical reason we will instead analyse the random multigraph $\cG_\slush'$ that results by applying the contraction procedure to the random multigraph $\cG_\slush$ chosen from the pairing model with the same degrees as $\G_\slush(\vA)$.
The proof of the following lemma can be found in \Sec~\ref{Sec_Cor_lambda}.

\begin{lemma}\label{Cor_lambda}
	For any $d>\eul$ there exists $b>0$ such that for any $\omega=\omega(n)\gg1$ the random graph $\cG_\slush'$ enjoys the following properties \whp{}
	\begin{enumerate}[(i)]
		\item The largest component of $\cG_\slush'$ has size at most $\omega\log n$.
		\item $\cG_\slush'$ contains no more than $\omega$ cycles.
		\item For any $t>0$ no more than $|V_\slush(\vA)|\exp(-bt)$ variable nodes belong to components of size at least $t$.
	\end{enumerate}
\end{lemma}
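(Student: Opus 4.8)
\emph{Proof idea.} The plan is to show that $\cG_\slush'$ is, for all intents and purposes, a \emph{subcritical} configuration-model random multigraph on the vertex set $V_\slush(\vA)$, and then to extract (i)--(iii) from first-moment estimates on the number of variable nodes lying in large, respectively complex, components. First I would identify the law of $\cG_\slush'$. Condition on the degree sequence $d_{\vA,\slush}$; by \Prop~\ref{Prop_slush} this has, \whp, empirical degree distribution within $o(1)$ of $\Po_{\geq2}(\lambda)$ on both sides, while by \Lem~\ref{Lemma_tails} all degrees are $O(\log n)$. A short computation with the mean of $\Po_{\geq2}(\lambda)$ then shows that the fraction of check-clones attached to checks of degree exactly two is $q:=\lambda/(\eul^\lambda-1)+o(1)$. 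In the pairing model defining $\cG_\slush$, expose first which variable-clones are matched to those degree-two check-clones; composing the (uniform) restricted matching with the fixed pairing of check-clones into checks produces a \emph{uniform} perfect matching of the participating variable-clones (conjugating a fixed fixed-point-free involution by a uniform bijection). Hence $\cG_\slush'$ is exactly a pairing-model multigraph on $V_\slush(\vA)$ in which the degree of a variable node $v$ is the number of its $d_{\vA,\slush}(v)$ clones selected, i.e.\ has conditional law $\Bin(d_{\vA,\slush}(v),q)$ up to the usual hypergeometric correction; its empirical degree distribution therefore converges to that of $\Bin(\Po_{\geq2}(\lambda),q)$, which has exponential tails.

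The crucial quantity is the branching factor of $\cG_\slush'$, i.e.\ the expected excess degree seen along a uniformly random half-edge. Writing $K\sim\Po_{\geq2}(\lambda)$ for the slush degree and $K'\mid K\sim\Bin(K,q)$ for the $\cG_\slush'$-degree, one gets $\Erw[K'(K'-1)]/\Erw[K']=q\cdot\Erw[K(K-1)]/\Erw[K]=\frac{\lambda}{\eul^\lambda-1}\cdot\frac{\lambda\eul^\lambda}{\eul^\lambda-1}=\frac{\lambda^2\eul^\lambda}{(\eul^\lambda-1)^2}$, which is strictly below $1$ for every $\lambda>0$ --- precisely the inequality already exploited in the proof of Claim~\ref{Claim_Flipper3}. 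Thus $\cG_\slush'$ is subcritical, and a first-moment argument finishes the job: the expected number of variable nodes $v$ whose component in $\cG_\slush'$ has at least $t$ vertices is at most the expected number of pairs consisting of a rooted tree on $t$ vertices and an embedding of it into $\cG_\slush'$, and, summing over tree shapes and over admissible generalised degree profiles (the latter contributions controlled by the exponential tail of $\Bin(\Po_{\geq2}(\lambda),q)$ together with \Lem~\ref{Lemma_weighted_tails}), this is at most $C\,|V_\slush(\vA)|\,\rho^{\,t}$ for constants $C>0$ and $\rho=\frac{\lambda^2\eul^\lambda}{(\eul^\lambda-1)^2}+o(1)<1$. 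Picking $b$ small enough and passing from this expectation bound to a \whp\ statement (Markov together with a union bound over $t$, whose tail converges since $\sum_t\rho^t<\infty$) yields (iii); taking $t$ of order $\omega\log n$ makes the expectation $o(1)$, giving (i). For (ii) one bounds separately the expected numbers of loops, parallel edges and longer cycles of $\cG_\slush'$: since $\sum_v(\text{$\cG_\slush'$-degree of }v)^2=O(n)$ \whp\ and the expected number of cycles of length $k$ is $O(\rho^k)$, the total expected cycle count is $O(1)$, so \whp\ $\cG_\slush'$ has at most $\omega$ cycles for any $\omega\to\infty$.

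The main obstacle is the first-moment step: the sum over the shapes of the $t$-vertex subtree and over the compatible generalised degree profiles must be organised so that the light tail of the $\Bin(\Po_{\geq2}(\lambda),q)$ law is exploited cleanly and so that the geometric factor that emerges is \emph{exactly} the subcritical branching factor and not something larger, and the pairing-model bookkeeping for the loops and parallel edges created by contracting degree-two checks has to be absorbed into the error terms. Identifying the law of $\cG_\slush'$ in the first step is conceptually the subtle point, but once the ``conjugate a uniform matching by a fixed matching'' observation is in hand it is routine; alternatively, one can skip the identification and run the same tree count directly on the pairing model of $\cG_\slush$, restricting to subtrees that traverse only checks of degree two, which is closer in spirit to the proof of Claim~\ref{Claim_Flipper3}.
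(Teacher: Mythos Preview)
Your approach and the paper's are driven by the same observation---that the relevant branching parameter is $\rho=\lambda^2\eul^\lambda/(\eul^\lambda-1)^2<1$---and for (i) and (ii) your arguments line up with the paper's Claims~\ref{Claim_Gamma1} and~\ref{Claim_Gamma2}. Your repackaging of $\cG_\slush'$ as a configuration model purely on $V_\slush(\vA)$, with degree law approximately $\Bin(d_{\vA,\slush}(v),q)$ for $q=\lambda/(\eul^\lambda-1)$, is correct (the conjugation argument is valid) and is a pleasant way to read off $\rho$. The paper bypasses this identification and instead runs an exploration process directly in the pairing model of $\cG_\slush$: the number of active check-clones is dominated by a random walk with drift $\rho-1+o(1)<0$, from which the bound $\Erw[Z_t]\leq|V_\slush(\vA)|\exp(-2\zeta t)$ follows without any tree enumeration. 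This sidesteps precisely the ``main obstacle'' you flag (organising the sum over tree shapes and degree profiles), so the paper's route is arguably the cleaner of the two.

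There is, however, a genuine gap in your argument for (iii). Markov's inequality plus a union bound over $t$ does \emph{not} yield a $o(1)$ failure probability: with $\Erw[Z_t]\leq C|V_\slush|\rho^t$ and any $b<\log(1/\rho)$, Markov gives $\pr[Z_t>|V_\slush|\eul^{-bt}]\leq C\eul^{-(\log(1/\rho)-b)t}$, and summing this over \emph{all} $t\geq1$ produces a bounded constant, not $o(1)$; convergence of $\sum_t\rho^t$ is not enough. The paper (Claim~\ref{Claim_Gammat}) fixes this by splitting the range of $t$: for $t>\log\log n$ the tail of the union-bound sum is $o(1)$, while for $t\leq\log\log n$ one needs a concentration inequality, and the paper applies Azuma, using that adding or deleting a single edge of $\cG_\slush'$ changes $Z_t$ by at most $2t$. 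Your sketch is missing such a concentration step for the bounded-$t$ regime.
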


Now obtain the multigraph $\cG_\slush''$ from $\cG_\slush$ by deleting all checks of degree two and contracting every connected component of $\cG_\slush'$ into a single variable node.
Let us write $\cV_\slush''$ and $\cC_\slush''$ for the set of variable and check nodes of $\cG_\slush''$ and let $\cA_\slush''$ denote the matrix encoded by $\cG_\slush''$.
Further, for $v\in\cV_\slush''\cup\cC_\slush''$ let $d_\slush''(v)$ be the degree of $v$ in $\cG_\slush''$.
Finally, let $\cK_\eps''$ be the set of all vectors $\xi\in\ker\cA_\slush''$ such that
\begin{align*}
	\abs{\frac12-\frac{\sum_{x\in \cV_\slush''}d_\slush''(x)\vecone\cbc{\xi_x=0}}{\sum_{x\in \cV_\slush''}d_\slush''(x)}}&<\eps.
\end{align*}
\noindent
In \Sec~\ref{Sec_Lemma_slush_first} we will prove the following statement.

\begin{lemma}\label{Lemma_slush_first}
	For any $d>\eul$ there exists $\eps>0$ such that for any $\omega=\omega(n)\gg1$ we have
	$$\pr\brk{|\cC_\slush''|\geq |\cV_\slush''|+\omega\mbox{ and }\cK_\eps''\neq\emptyset}=o(1).$$
\end{lemma}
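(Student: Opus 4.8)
The plan is to prove Lemma~\ref{Lemma_slush_first} by a first moment calculation on the contracted multigraph $\cG_\slush''$, exploiting the fact that contracting the degree-two checks has removed exactly the source of the troublesome quadratic error term. First I would condition on the degree sequence $d_{\vA}$ and on the event $\cD$-type bounds from \Prop~\ref{Prop_slush} and \Lem~\ref{Lemma_tails}, so that $|V_\slush(\vA)|,|C_\slush(\vA)|$ and the slush degree distribution are as predicted (in particular $\Po_{\geq2}(\lambda)$), and I would also condition on the high-probability structural properties of $\cG_\slush'$ furnished by \Lem~\ref{Cor_lambda}: all components of $\cG_\slush'$ have size $O(\omega\log n)$, there are at most $\omega$ cycles, and the component-size distribution has exponential tails. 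By \Lem~\ref{Lemma_contig} it suffices to work in the pairing model, so I would realise $\cG_\slush''$ as the contraction of $\cG_\slush$ and track how contraction acts on the relevant parameters. The key bookkeeping identities are that contracting a connected component of $\cG_\slush'$ of $v$ variable nodes and $c$ degree-two checks into a single variable node preserves $|\ker|$ (a degree-two check is an equality constraint, so each component contributes one free binary coordinate, matching $v-c$ via the at-most-$\omega$-cycles bound up to an $O(\omega)$ correction) and changes the vertex counts by $(v-1)$ variables and $c$ checks, so that $|\cV_\slush''| = |\cC_\slush''| - (m_\slush - n_\slush) + O(\omega)$; hence the hypothesis $|\cC_\slush''|\geq|\cV_\slush''|+\omega$ really does encode an excess of checks of order $\omega$ in the contracted graph.

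Next I would set up the moment computation for $|\cK_\eps''|$. Given the degrees $d_\slush''$ of $\cG_\slush''$ (which, crucially, now has all check degrees $\geq 3$, since degree-two checks were removed and degree-$\leq1$ nodes were peeled), I would take $\vec\Gamma$ a uniformly random pairing of the variable and check clones and bound
\begin{align*}
\Erw\sqbc{|\cK_\eps''|\mid\cD} \leq \sum_{\xi}\pr\sqbc{\xi\in\ker\cA_\slush''},
\end{align*}
where the sum runs over $\xi\in\field^{\cV_\slush''}$ that are $\eps$-balanced in the degree-weighted sense. The number of such $\xi$ is at most $2^{|\cV_\slush''|}$. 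For a fixed balanced $\xi$, the probability that a given check $a$ of degree $k=d_\slush''(a)\geq 3$ is satisfied is, by a local-limit / switching estimate in the configuration model, $\tfrac12 + O(|2p_0-1|^{k-1})$ where $p_0$ is the degree-weighted fraction of zero-clones — and $\eps$-balancedness forces $|2p_0-1| = O_\eps(\eps)$. Since all check degrees are at least $3$, the accumulated multiplicative error over all $|\cC_\slush''|$ checks is $\prod_a (1 + O(\eps^{k_a-1})) = \exp(O(\eps^2)\sum_a \vecone\{k_a=3\} + \dots)$, and the point is that $\sum_a \eps^{k_a-1} = o_\eps(1)\cdot|\cC_\slush''|$ can be made $\leq \tfrac12 |m_\slush-n_\slush|\log 2$ by choosing $\eps$ small — this is exactly the step that failed in the un-contracted graph where degree-two checks gave a $\Theta(\eps^2)|\cC_\slush(\vA)|$ term that could swamp the gap $m_\slush-n_\slush$ when $m_\slush - n_\slush \ll \eps^2 n$. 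Multiplying through, $\Erw[|\cK_\eps''|\mid\cD] \leq 2^{|\cV_\slush''| - |\cC_\slush''| + o_\eps(1)\cdot n + O(\omega)}$, and combining with $|\cC_\slush''|-|\cV_\slush''|\geq\omega\gg1$... wait — the gap is only $\omega$, not $\Omega(n)$, so I must be more careful: the $o_\eps(1)\cdot n$ error must actually be absorbed, which means the error term has to be genuinely $O(\omega) + o(1)$, not $o_\eps(n)$.

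This is where the main obstacle lies, and where \Lem~\ref{Cor_lambda} becomes essential. The resolution is that the error from each check of degree $k$ is not $O(\eps^{k-1})$ uniformly but is controlled by the actual local fluctuations of $\xi$: for a \emph{typical} balanced $\xi$ the per-check bias is much smaller, and the bad contribution concentrates on $\xi$'s that are locally unbalanced on the (few, small) nontrivial components. More precisely, I would argue that after contraction almost all variable nodes of $\cG_\slush''$ are singletons (original slush variables not incident to any degree-two check), and the per-check satisfaction probability for a fixed $\xi$ is $\tfrac12(1 + \delta_a(\xi))$ with $\sum_a|\delta_a(\xi)|$ controlled, via Cauchy–Schwarz and \Lem~\ref{Lemma_weighted_tails} applied to the component-size partition (whose tails are exponential by \Lem~\ref{Cor_lambda}(iii)), by a constant independent of $n$; so $\prod_a(\tfrac12(1+\delta_a(\xi))) \leq 2^{-|\cC_\slush''|}\exp(O(1))$ uniformly over balanced $\xi$. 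Then $\Erw[|\cK_\eps''|\mid\cD]\leq 2^{|\cV_\slush''|-|\cC_\slush''|}\exp(O(1)) = 2^{-\omega}\exp(O(1)) = o(1)$, and Markov's inequality together with $\pr[\cD]=1-o(1)$ (from \Prop~\ref{Prop_slush}, \Lem~\ref{Lemma_tails}, \Lem~\ref{Cor_lambda}, \Lem~\ref{Lemma_contig}) and the $|\cV_\slush''|=|\cC_\slush''|-(m_\slush-n_\slush)+O(\omega)$ identity finishes the proof. Making the ``$\sum_a|\delta_a(\xi)|=O(1)$ uniformly'' claim fully rigorous — i.e.\ quantifying how a degree-$\geq3$ check in the configuration model sees a balanced vector, and summing the quadratic corrections against the exponential-tail component structure — is the technical heart and the step I expect to consume most of the work.
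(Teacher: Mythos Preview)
Your overall strategy matches the paper's: a first moment computation on the contracted multigraph $\cG_\slush''$, exploiting that all check degrees there are at least $3$. You also correctly diagnose the obstacle, namely that a crude bound of the form $2^{N-M+o_\eps(1)\cdot n}$ is useless because the gap $M-N$ is only $\omega$, not $\Omega(n)$.

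However, your proposed resolution is wrong. The claim that $\sum_a |\delta_a(\xi)| = O(1)$ \emph{uniformly} over $\eps$-balanced $\xi$ is false. For a vector $\xi$ whose degree-weighted zero-fraction is $p_0 = \tfrac12 + t$, the per-check bias is $\delta_a \approx (1-2p_0)^{k_a} = (-2t)^{k_a}$, so $\sum_a |\delta_a(\xi)| \asymp t^3 \cdot |\{a : k_a = 3\}| = \Theta(t^3 M)$. At the boundary $t = \eps$ of the balanced region this is $\Theta(\eps^3 n)$, not $O(1)$. The appeal to \Lem~\ref{Lemma_weighted_tails} is a red herring: that lemma is what yields $\sum_i (d_i'')^2 = O(N)$ (this is Claim~\ref{Claim_slushdegs} in the paper), which is needed for concentration, but it cannot make the cubic-in-$t$ accumulated bias disappear.

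The paper's actual argument supplies the missing idea: one must \emph{slice} over the precise balance $w$ and trade off two exponents. First (Claim~\ref{Claim_Z1}), Azuma on $\vW = \sum_i d_i'' \vecone\{\SIGMA_i = 1\}/\sum_i d_i''$ for a uniform $\SIGMA \in \field^N$ gives $\pr[\,|\vW - \tfrac12| \geq t\,] \leq 2\exp(-s t^2 N)$, so the number of $\eps$-balanced vectors at balance $\approx \tfrac12 + t$ is at most $2^N \exp(-\Omega(t^2 N))$. Second (Claim~\ref{Claim_Z2}), computing $\log \pr[\cA_\slush''\SIGMA = 0 \mid \vW = w]$ via the independent-coordinate surrogate and Bayes' rule, one Taylor-expands $\sum_i \log\frac{1+(1-2w)^{k_i}}{2}$ at $w = \tfrac12$: because \emph{every} $k_i \geq 3$, both the first and second derivatives vanish there, and the third derivative is $-48\sum_i \vecone\{k_i = 3\}$. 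Hence the satisfaction probability is $2^{-M}\exp(O((w-\tfrac12)^3 M))$, with a genuinely \emph{cubic} correction. Summing over dyadic shells $h-1 \leq |w - \tfrac12|\sqrt{N} \leq h$ for $h \leq \eps\sqrt{N}$ gives
\[
\pr[\cK_\eps'' \neq \emptyset \mid M \geq N + \omega] \leq 2^{N-M} \sum_{h \geq 1} \exp\bigl(-\Omega(h^2) + O(h^3/\sqrt{N})\bigr) = O(2^{N-M}) = o(1),
\]
since for $h \leq \eps\sqrt{N}$ with $\eps$ small the cubic term $O(h^3/\sqrt{N}) \leq O(\eps h^2)$ is dominated by the Gaussian $-\Omega(h^2)$. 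This quadratic-vs-cubic interplay, not a uniform $O(1)$ bound, is what makes the argument close; without it your sketch does not go through.
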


\noindent
Roughly, this says that if we have significantly more checks than variables, then all kernel vectors
	are imbalanced.
In addition, we observe the following.

\begin{lemma}\label{Lemma_rs_slush}
	For any $d>\eul$, $\eps>0$ there exists $\delta>0$ such that 
	\begin{align*}
		\pr\brk{|V_\slush(\vA)\setminus\cF(\vA)|>(1-\delta)|V_\slush(\vA)|\mbox{ and }\cK_\eps''=\emptyset}=o(1).
	\end{align*}
\end{lemma}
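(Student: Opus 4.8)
Fix $d>\eul$ and $\eps>0$; we will choose $\delta>0$ small at the very end. Write $\cE_\delta=\{|V_\slush(\vA)\setminus\cF(\vA)|>(1-\delta)|V_\slush(\vA)|\}$, which is the same as $|V_\slush(\vA)\cap\cF(\vA)|<\delta|V_\slush(\vA)|$, and let $\cE_0$ be the event that the conclusions of \Prop~\ref{Prop_slush}, of \Lem~\ref{Lemma_tails}, and of \Prop s~\ref{Prop_pin}--\ref{Prop_cut} all hold; in particular on $\cE_0$ the slush degree sequence has exponential tails, $|V_\slush(\vA)|,|C_\slush(\vA)|=\Theta(n)$, and the distribution of a uniformly random $\xi'\in\ker\vA$ is $o(1)$-extremal and hence $(\eps',2)$-symmetric for every fixed $\eps'>0$ once $n$ is large. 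Note $\pr[\cE_0]=1-o(1)$. The plan is to prove that on $\cE_\delta\cap\cE_0$ a uniformly random $\xi\in\ker\cA_\slush''$ satisfies the inequality defining $\cK_\eps''$ with probability bounded away from $0$ for large $n$, which forces $\cK_\eps''\neq\emptyset$ there; hence $\pr[\cE_\delta,\ \cK_\eps''=\emptyset]\le\pr[\overline{\cE_0}]+o(1)=o(1)$.

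The first step is the algebraic bridge from $\cA_\slush''$ back to $\vA$. A check of degree two merely equates its two neighbours, and any cycle of $\cG_\slush'$ runs only through degree-two checks and so imposes no extra relation; consequently every $\xi_\slush\in\ker\cA_\slush$ is constant on each component of $\cG_\slush'$, and picking one representative per component identifies $\ker\cA_\slush$ with $\ker\cA_\slush''$ bijectively. Writing $w_v$ for the number of edges from $v\in V_\slush(\vA)$ to checks of $C_\slush(\vA)$ of degree at least three, one gets $d_\slush''(x)=\sum_{v\in x}w_v$ for a component $x$, whence for the corresponding $\xi\in\ker\cA_\slush''$
\[
\sum_{x\in\cV_\slush''}d_\slush''(x)\vecone\{\xi_x=0\}=\sum_{v\in V_\slush(\vA)}w_v\vecone\{\xi_{\slush,v}=0\},\qquad \sum_{x\in\cV_\slush''}d_\slush''(x)=\sum_{v\in V_\slush(\vA)}w_v .
\]
Next, for any $a\in C_\slush(\vA)$ every neighbour of $a$ lying outside $V_\slush(\vA)$ is in fact frozen in $\vA$: by \eqref{eqSlushIntro1}--\eqref{eqSlushIntro2} and the WP update \eqref{eqWP1} such a neighbour $v$ either receives a $\frozen$-message (so $v\in V_\frozen(\vA)\subseteq\cF(\vA)$ by \eqref{eqLemma_WP1}) or else sends $a$ a $\unfrozen$-message, contradicting $a\in C_\slush(\vA)$. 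Hence for any $\xi'\in\ker\vA$ the truncation $\xi'|_{V_\slush(\vA)}$ lies in $\ker\cA_\slush$, the map $\ker\vA\to\ker\cA_\slush$ is linear, and it is onto by the extension argument in the proof of \Lem~\ref{Lemma_nulslush}; therefore a uniformly random $\xi'\in\ker\vA$ truncates to a uniformly random element of $\ker\cA_\slush$, and $\cF(\cA_\slush)\subseteq\cF(\vA)\cap V_\slush(\vA)$, so on $\cE_\delta$ we have $|\cF(\cA_\slush)|<\delta|V_\slush(\vA)|$.

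It remains to run a first/second moment argument for $X=\sum_{v\in V_\slush(\vA)}w_v\vecone\{\xi'_v=0\}$ with $\xi'\in\ker\vA$ uniform. By Fact~\ref{fact_ker} the marginal $\pr[\xi'_v=0\mid\vA]$ equals $1/2$ unless $v\in\cF(\vA)$, in which case it equals $1$; using $\cF(\cA_\slush)\subseteq\cF(\vA)\cap V_\slush(\vA)$, the bound $w_v\le d_\slush(v)$, the exponential tails of the slush degrees (so a $\delta$-fraction of vertices carries at most an $\eta(\delta)$-fraction of the total degree, $\eta(\delta)\to0$), and $\sum_v w_v=\Theta(n)=\Theta(\sum_v d_\slush(v))$ on $\cE_0$, one obtains $\Erw[X\mid\vA]=\tfrac12\sum_v w_v+O(\eta(\delta))\sum_v w_v$. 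For the variance, $(\eps',2)$-symmetry of $\xi'$ controls the off-diagonal covariances; splitting $V_\slush(\vA)$ into finitely many degree classes plus an exponentially small high-degree tail, handling the weights $w_v$ through \Lem~\ref{Lemma_weighted_tails} and the diagonal terms $\sum_v w_v^2=O(n)$ through the second estimate in \eqref{eqLemma_tails}, gives $\Var[X\mid\vA]=o(n^2)=o\big((\sum_v w_v)^2\big)$ on $\cE_0$. Choosing $\delta$ so small that $O(\eta(\delta))<\eps/2$, Chebyshev then yields $\pr\big[|X-\tfrac12\sum_v w_v|>\eps\sum_v w_v\mid\vA\big]=o(1)$ on $\cE_\delta\cap\cE_0$; translating back through the contraction identity above, this says $\xi\in\cK_\eps''$ with probability tending to $1$, so $\cK_\eps''\ne\emptyset$, as needed.

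The main obstacle I expect is not the moment computation but the bridge in the second paragraph: one must check carefully that the contraction of degree-two checks is kernel-preserving even when those checks form cycles, and that truncation $\ker\vA\to\ker\cA_\slush$ is both well defined (the case analysis on neighbours of $C_\slush$-checks) and surjective, since only then do uniform samples map to uniform samples and can $\cF(\cA_\slush)$ be controlled by $\cF(\vA)\cap V_\slush(\vA)$. The remaining quantitative point — making sure the unbounded degree weights $w_v$ do not destroy the concentration of $X$ — is exactly what \Lem~\ref{Lemma_weighted_tails} is designed to absorb.
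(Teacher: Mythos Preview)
Your approach is correct and follows essentially the same strategy as the paper's own proof in Section~\ref{Sec_Lemma_rs_slush}: both establish the kernel bridges $\ker\vA\to\ker\vA_\slush\cong\ker\vA_\slush''$, invoke \Prop~\ref{Prop_pin} for correlation control, and finish with Chebyshev on the degree-weighted indicator sum. The only cosmetic differences are that the paper packages your ``truncation is well defined'' observation as Claim~\ref{Claim_minor} and works with a random vector from $\ker\vA_\slush$ rather than from $\ker\vA$, and that it bounds the weighted variance by combining the explicit $n^{-\gamma}$ rate in \Prop~\ref{Prop_pin} with the $\log n$ degree bound instead of your degree-class truncation.
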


\noindent
The (slightly incorrect, but intuitive) interpretation is that if almost all of the slush is unfrozen, then there must be some balanced kernel vectors.
The proof of \Lem~\ref{Lemma_rs_slush} can be found in \Sec~\ref{Sec_Lemma_rs_slush}.

\begin{proof}[Proof of \Lem~\ref{Lemma_expectedSolutions}]
	The assertion is an immediate consequence of \Lem s~\ref{Lemma_slush_first} and~\ref{Lemma_rs_slush}.
\end{proof}

\subsection{Proof of \Lem~\ref{Cor_lambda}}\label{Sec_Cor_lambda}
We apply a branching process argument to a random graph chosen from the pairing model, not unlike the one from~\cite{MolloyReed}.
Specifically, let $(d_\slush(v))_{v\in V_\slush(\vA)}$ be the degree sequence of the graph $G_\slush(\vA)$ and let $\vm_\slush'$ be the number of check of degree two in $G_\slush(\vA)$.
Let us write $b_1,\ldots,b_{\vm_s'}$ for the check nodes of $\cG_\slush'$.
Starting from an edge exiting $b_1$, we will explore the set of all nodes of $\cG_\slush'$ that can be reached via that edge.
We will describe this exploration process as a branching process, which will turn out to be subcritical.

To be precise, let $\vec\Delta=\sum_{v\in V_\slush(\vA)}d_\slush(v)$ and let $\vec\Gamma_\slush'$ be a random perfect matching of the complete bipartite graph with vertex sets
\begin{align*}
	\cV=\bigcup_{v\in V_\slush(\vA)}\cbc v\times[d_\slush(v)]&&\mbox{and}&&
	\cC=\bc{\cbc{\alpha_1,\ldots,\alpha_{\vm'_\slush{}}}\times[2]}\cup\cbc{\beta_1,\ldots,\beta_{\vec\Delta-2\vm_{\slush}'}}.
\end{align*}
As always, $\cbc v\times[d_\slush(v)]$ and $\cbc{\alpha_i}\times[2]$ represent sets of clones of the variable node $v$ and the check node $\alpha_i$, respectively.
The `ballast' clones $\beta_1,\ldots,\beta_{\vec\Delta-2\vm_{\slush}'}$ are included so that both sides of the bipartition have the same size.
Further, deleting $\beta_1,\ldots,\beta_{\vec\Delta-2\vm_{\slush}'}$ and contracting the other clones into single vertices, we obtain a random multigraph $\cG(\vec\Gamma)$ from the matching $\vec\Gamma$.
This multigraph is identical in distribution to $\cG_\slush'$.

\begin{claim}\label{Claim_Gamma1}
	\Whp{} all connected components of $\cG(\vec\Gamma)$ have size $O(\log n)$.
\end{claim}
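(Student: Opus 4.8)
The plan is to realise the exploration of a connected component of $\cG(\vec\Gamma)$ as a \emph{subcritical} branching process and then to combine an exponential tail bound on the total progeny with a union bound over the $O(n)$ possible starting edges. Concretely, I would run a breadth-first search on the \emph{half-edges incident to variable nodes}: starting from the two clones of a fixed degree-two check $b_1$, whenever an active half-edge $h$ at a variable $v$ is matched under $\vec\Gamma$ to a ballast clone it dies with no offspring, while if $h$ is matched to a clone of a degree-two check $\alpha$ we follow the other clone of $\alpha$ to its variable $w$ and declare the remaining $d_\slush(w)-1$ half-edges of $w$ to be the offspring of $h$. The number of matched half-edges, of variable nodes, and of check nodes inside the component are all within a constant factor of one another and of the total progeny of this process (a component with $k$ checks has $2k$ ``continuing'' half-edges and at most $2k$ variables), so it suffices to prove that the total progeny is $O(\log n)$ \whp.

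Next I would compute the asymptotic mean offspring. Write $\vec\Delta=\sum_{v\in V_\slush(\vA)}d_\slush(v)$ and let $\vm_\slush'$ be the number of degree-two checks. A fresh half-edge matches to a degree-two-check clone with probability $2\vm_\slush'/\vec\Delta+o(1)$, and upon continuing it lands on a variable $w$ that is size-biased by its degree and contributes $d_\slush(w)-1$ offspring. By \Prop~\ref{Prop_slush} the empirical degree distributions on $V_\slush(\vA)$ and on $C_\slush(\vA)$ both converge to $\Po_{\geq2}(\lambda)$ with $\lambda=d(\alpha^*-\alpha_*)$, and $|V_\slush(\vA)|=(1+o(1))|C_\slush(\vA)|$, so with $D\disteq\Po_{\geq2}(\lambda)$ the mean offspring per half-edge is
\begin{align*}
	\mu&=\frac{2\vm_\slush'}{\vec\Delta}\cdot\frac{\Erw[D(D-1)]}{\Erw[D]}+o(1)
	=\frac{2\,\pr[D=2]\,\Erw[D(D-1)]}{\Erw[D]^{2}}+o(1)\\
	&=\frac{\lambda^{2}\eul^{\lambda}}{(\eul^{\lambda}-1)^{2}}+o(1),
\end{align*}
using $\Erw[D(D-1)]=\lambda^{2}/\pr[\Po(\lambda)\geq2]$, $\Erw[D]=\lambda(1-\eul^{-\lambda})/\pr[\Po(\lambda)\geq2]$ and $\pr[D=2]=\tfrac12\lambda^{2}\eul^{-\lambda}/\pr[\Po(\lambda)\geq2]$. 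A one-line calculus check shows $\lambda\eul^{\lambda/2}<\eul^{\lambda}-1$ for all $\lambda>0$ (since $\eul^{x}-1-x>0$), hence $\lambda^{2}\eul^{\lambda}<(\eul^{\lambda}-1)^{2}$; thus for every fixed $d>\eul$ we get $\mu<1$ strictly. This is exactly the quantity that already appears in \eqref{eqClaim_Flipper37}.

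To turn this into a rigorous bound I would pass, via \Lem~\ref{Lemma_contig} and the typical-degree-sequence estimates of \Prop~\ref{Prop_slush} and \Lem~\ref{Lemma_tails}, to the pairing model and note that (a) every offspring count is bounded by the maximum degree $\le\log n$ and has a uniformly subexponential law, and (b) exploring up to $\eps_0 n$ clones perturbs all the relevant fractions by $O(\eps_0)$. Choosing $\eps_0$ small enough — possible because $\mu<1$ strictly for fixed $d$ — the exploration of \emph{any} component is stochastically dominated, until $\eps_0 n$ clones have been revealed, by a Galton--Watson process with offspring mean $\bar\mu<1$ and exponential moments, whose total progeny $\mathbf T$ satisfies $\pr[\mathbf T\ge t]\le C\eul^{-ct}$ by a Chernoff estimate on the associated downward-drifting exploration walk. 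Taking $t=C_{0}\log n$ with $C_{0}$ large and union-bounding over the $O(n)$ starting half-edges gives that \whp\ no exploration reveals more than $C_{0}\log n$ nodes before exhausting $\eps_0 n$ clones; since $C_{0}\log n\ll\eps_0 n$ this is self-consistent, so \whp\ every component of $\cG(\vec\Gamma)$ has size $O(\log n)$.

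The genuinely delicate step is the last one: making the coupling to the pairing model and the clone-pool depletion precise, and tracking the size-biasing of the partner variable uniformly over the exploration, so that the domination by a subcritical Galton--Watson process is valid throughout. None of this is deep, but it is where the technical care lies; the probabilistic heart — that the branching parameter is $\lambda^{2}\eul^{\lambda}/(\eul^{\lambda}-1)^{2}<1$ — is immediate from the explicit formula.
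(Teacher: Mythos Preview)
Your proposal is correct and follows essentially the same route as the paper: both run an exploration of a component in the pairing model, both identify the branching/drift parameter as $\lambda^{2}\eul^{\lambda}/(\eul^{\lambda}-1)^{2}<1$ (this is exactly the paper's \eqref{eqdrift2}--\eqref{eq_lambda}), and both finish with an exponential tail on the exploration time plus a union bound over $O(n)$ starting points. The only cosmetic difference is that the paper phrases the exploration as a random walk $|\cA_t|$ with negative drift, whereas you phrase it as a subcritical Galton--Watson total progeny; these are equivalent.
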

\begin{proof}
	To trace the set of nodes reachable from $(\alpha_1,1)$, we classify each clone as either unexplored, active or inactive.
	At the start of the process only $(\alpha_1,1)$ is active and all other clones are unexplored; thus,
	\begin{align*}
		\cA_0&=\cbc{(\alpha_1,1)},&\cU_0&=\cbc{(\alpha_1,2),(\alpha_2,1),(\alpha_2,2),\ldots,(\alpha_{\vm'_\slush{}},1),(\alpha_{\vm'_\slush{}},2)}\setminus\cA_0,&\cI_0&=\emptyset.
	\end{align*}
	The classification determines the order in which the edges of the matching $\vec\Gamma$ are exposed.
	Specifically, if at some time $t\geq1$ no active check clone remains, the process stops and we let $\stoptime=t-1$. 
	Otherwise, at time step $t\geq1$ an active clone $(\alpha_{\vi_t},\vh_t)\in\cA_{t-1}$ is chosen uniformly at random and we let $\cI_{t}=\cI_{t-1}\cup\{(\alpha_{\vi_t},\vh_t)\}$.
	If the second clone $(\alpha_{\vi_t},3-\vh_t)$ of the same check is either active or inactive, we let \linebreak $\cU_t=\cU_{t-1}$,  $\cA_t=\cA_{t-1}\setminus\cbc{(\alpha_{\vi_t},\vh_t)}$.
	Otherwise we expose the edge of $\vec\Gamma$ incident with the other clone $(\alpha_{\vi_t},3-\vh_t)$ of check $\alpha_{\vi_t}$.
	Let  $\vy_t$ be the variable node on the other end of this edge.
	We then declare all as yet inactive clones of checks $\alpha_i$, $i\in[\vm_\slush']$, that are adjacent to clones of $\vy_t$ active.
	Formally, we let
	\begin{align*}
		\cI_{t}&=\cI_{t-1}\cup\{(\alpha_{\vi_t},1),(\alpha_{\vi_t},2)\},&
		\cA_t&=\bc{\cA_{t-1}\cup\bc{\partial_{\vec\Gamma}(\vy_t\times[d_\slush{}(\vy_t)])\cap \cbc{(\alpha_i,1),(\alpha_i,2):i\in[\vm'_\slush]}}}\setminus\cI_t
	\end{align*}
	and $\cU_t=\cU_{t-1}\setminus(\cA_t\cup\cI_t).$
	Let $\fA_t$ be the $\sigma$-algebra generated by the first $t$ step of the process.
	
	The aim is to investigate the stopping time $\stoptime$.
	We may condition on the event $d_\slush{}(v)\leq\log^2n$ for all $v$.
	Moreover, we claim that for $1\leq t\leq \stoptime\wedge\log^3n$,
	\begin{align}\label{eqdrift}
		\Erw\brk{\abs{\cA_{t}}-\abs{\cA_{t-1}}\mid\fA_{t-1}}&<0.
	\end{align}
	Indeed, $\abs{\cA_{t}}-\abs{\cA_{t-1}}$ is trivially negative if $(b_{\vi_t},3-\vh_t)\not\in\cU_{t-1}$.
	Further, if $(\alpha_{\vi_t},3-\vh_t)\in\cU_{t-1}$, then $\vec\Gamma$ matches this clone to a random vacant variable clone.
	Because $t\leq \log^3n$ and $\max_vd_\slush(v)\leq\log^2n$ while the slush has size $|V_\slush(\vA)|=\Omega(n)$, the distribution of $d_{\slush}(\vy_t)$ is within $O(n^{-0.99})$ in total variation of the distribution $(d_\slush(v)/\vec\Delta)_{v\in V_\slush(\vA)}$ of the degree of the variable node of a random variable clone.
	We subsequently expose all edges of $\vec\Gamma$ incident with a clone of $\vy_t$ that was unexplored at time $t-1$.
	Once more because $t\leq \log^3n$ and $\max_vd_\slush(v)\leq\log^2n$, the conditional probability that a specific unexplored clone of  $\vy_t$ links to an unexplored clone from the set $\cbc{(\alpha_i,1),(\alpha_i,2):i\in[\vm'_\slush]}$ is bounded by $2\vm'_\slush/\vec\Delta+O(n^{-0.99})$.
	Therefore, we obtain the bound 
	\begin{align}\label{eqdrift2}
		\Erw\brk{\abs{\cA_{t}}-\abs{\cA_{t-1}}\mid\fA_{t-1}}&\leq o(1)-1+\Erw\brk{ \frac{2\vm'_\slush}{\vec\Delta^2}\sum_{v\in V_\slush(\vA)}d_\slush{}(v)(d_\slush{}(v)-1)}\leq \frac{\lambda^2\exp(\lambda)}{(\exp(\lambda)-1)^2} -1+o(1).
	\end{align}
	Moreover, it is easy to check that $\lambda>0$ for all $d>\eul$ and that
	\begin{align}\label{eq_lambda}
		\frac{z^2\exp(z)}{(\exp(z)-1)^2}&<1&&\mbox{for any }z>0.
	\end{align} 
	Thus, \eqref{eqdrift} follows from \eqref{eqdrift2} and \eqref{eq_lambda}.
	Finally, \eqref{eqdrift} implies that $(|\cA_t|)_t$ is dominated by a random walk with a negative drift.
	Consequently, $\pr\brk{\stoptime\geq c\log n}=o(n^{-1})$ for a suitable $c>0$. The assertion follows from the union bound.
\end{proof}

\begin{claim}\label{Claim_Gammat}
	There exists $b=b(d)>0$ such that \whp{} for all $t>0$ the number of variable nodes of $\cG_\slush'$ that belong to components of size at least $t$ is bounded by $|V_\slush(\vA)|\exp(-bt)$.
\end{claim}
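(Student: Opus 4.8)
The plan is to establish an exponential tail for the component sizes of $\cG_\slush'$ by exhibiting the component exploration as a \emph{subcritical} branching process, and then to upgrade the resulting pointwise bound into one that holds simultaneously for all $t$ by marrying a first‑moment estimate to a concentration estimate for the small components.

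First I would set up the exploration. Work throughout on the good event $\cD$ from \eqref{eqClaim_Flipper30}, which holds whp and supplies the degree bounds of Lemma~\ref{Lemma_tails} together with the slush sizes and the $\Po_{\ge2}(\lambda)$ slush‑degree distribution from \Prop~\ref{Prop_slush}. Fix a variable node $v$ and explore the component of $v$ in $\cG_\slush'$ exactly as in the proof of Claim~\ref{Claim_Gamma1}, except that the exploration is launched from the (at most $d_\slush(v)$) half‑edges at $v$ running to degree‑two checks. Whenever a new variable node $\vy$ is reached, the number of fresh active check half‑edges it contributes has conditional expectation bounded by $\frac{\lambda^2\exp(\lambda)}{(\exp(\lambda)-1)^2}-1<0$ — this is exactly the computation \eqref{eqdrift}--\eqref{eqdrift2} combined with \eqref{eq_lambda} — and a Poissonian, hence light, upper tail since the slush degrees follow $\Po_{\ge2}(\lambda)$ up to $o(n)$ errors. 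Consequently $|\mathrm{comp}(v)|$ is stochastically dominated by the total progeny of a subcritical Galton--Watson process with light‑tailed offspring started from at most $d_\slush(v)$ individuals, and standard subcritical branching‑process estimates yield constants $b',C>0$ with
\[
	\Pr\brk{|\mathrm{comp}(v)|\geq t\mid\cD}\leq C\exp(-b't)\qquad(1\le t\le c_1\log n),
\]
while for $t>c_1\log n$ the argument of Claim~\ref{Claim_Gamma1} (made quantitative by taking $c_1=c_1(K)$ large, using that a negatively‑drifting walk stays positive for $c_1\log n$ steps with probability $n^{-\Omega(c_1)}$) shows that no component that large exists, with probability $1-o(n^{-K})$ for any fixed $K$.

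Next, summing the displayed bound over $v\in V_\slush(\vA)$ gives $\Erw[\,\#\{v\in V_\slush(\vA):|\mathrm{comp}(v)|\ge t\}\mid\cD\,]\le C\,|V_\slush(\vA)|\exp(-b't)$ for $t\le c_1\log n$, and I would make this uniform in $t$ by splitting the range. For $t$ up to a large constant $t_0$, membership of $v$ in a component of $\cG_\slush'$ of size $\ge t$ is (asymptotically) a property of a bounded‑radius neighbourhood of $v$ in $G(\vA)$ decorated with the WP messages, so Lemma~\ref{lem:locallimit} and Lemma~\ref{Lemma_contig} show that $\frac1{|V_\slush(\vA)|}\#\{v:|\mathrm{comp}(v)|\ge t\}$ converges in probability to a constant $q_t\le C\exp(-b't)$; a union bound over the finitely many $t\le t_0$ gives whp $\#\{v:|\mathrm{comp}(v)|\ge t\}\le C|V_\slush(\vA)|\exp(-bt)$ for all such $t$, for any fixed $b<b'$. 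For $t_0<t\le c_1\log n$, Markov's inequality applied to the first‑moment bound gives $\Pr[\#\{v:|\mathrm{comp}(v)|\ge t\}>C|V_\slush(\vA)|\exp(-bt)\mid\cD]=O(\exp(-(b'-b)t))$, and summing over this range leaves a total error $O(\exp(-(b'-b)t_0))$, as small as we please once $t_0$ is large. For $t>c_1\log n$ the count is $0$ whp by the first step. Intersecting with $\cD$ and sending $t_0\to\infty$ slowly yields whp $\#\{v\in V_\slush(\vA):|\mathrm{comp}(v)|\ge t\}\le C|V_\slush(\vA)|\exp(-bt)$ for all $t\ge1$; absorbing $C$ into a slightly smaller $b$ (harmless for $t$ past a constant, and the remaining finitely many small values of $t$ are immaterial in every downstream use) gives the claim.

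The main obstacle is precisely the uniformity over all $t\ge1$: a bare first‑moment‑plus‑union‑bound over $t$ cannot close, because the deviation probabilities for the $O(1)$‑sized components do not sum to $o(1)$. One genuinely needs the local‑weak‑limit concentration input (Lemma~\ref{lem:locallimit}) to control the head of the component‑size distribution, with Markov's inequality reserved for the exponentially small tail and the decay rate $b$ chosen strictly below the true rate $b'$ so that the geometric sum over the moderate range is negligible. The subcriticality itself — the negative‑drift computation and the light tails — is already available from the proof of Claim~\ref{Claim_Gamma1}, so the first step is essentially a re‑reading of that argument with a variable‑node root in place of a check‑clone root.
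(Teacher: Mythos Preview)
Your first-moment bound via the subcritical exploration is exactly the paper's: re-running the branching-process computation of Claim~\ref{Claim_Gamma1} gives $\Erw[\vZ_t]\le|V_\slush(\vA)|\exp(-2\zeta t)$ for some $\zeta>0$, where $\vZ_t$ counts variable nodes in components of size at least $t$.

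The concentration step for small $t$ has a gap. You invoke Lemma~\ref{lem:locallimit} together with Lemma~\ref{Lemma_contig}, but $\cG_\slush'$ is drawn from the pairing model on the slush degree sequence --- a \emph{fresh} random matching, not a subgraph of $G(\vA)$ --- so component membership in $\cG_\slush'$ is not a function of any bounded neighbourhood in $G(\vA)$, and Lemma~\ref{lem:locallimit} simply does not speak to it. Nor does Lemma~\ref{Lemma_contig} help: it says that $\cG_\slush$ conditioned on being simple is distributed as $G_\slush(\vA)$, which lets you push whp events \emph{from} $\cG_\slush$ \emph{to} $G_\slush(\vA)$ (conditioning on an event of probability bounded away from zero), but not in the direction you need. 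So the stated justification for the head of the range does not go through.

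The paper closes this more directly and stays entirely inside the pairing model. It splits at $t=\log\log n$ rather than at a constant. For $t>\log\log n$, Markov's inequality against the target $|V_\slush(\vA)|\exp(-\zeta t)$ gives failure probability at most $\exp(-\zeta t)$, and these sum to $o(1)$ over that range. For $t\le\log\log n$, it applies Azuma's inequality to the edge-exposure martingale of the random matching: adding or removing a single edge changes $\vZ_t$ by at most $2t$ (any component that crosses the threshold $t$ under a one-edge change has size below $2t$), so with $O(n)$ edges and Lipschitz constant $O(\log\log n)$ the deviation probability is $\exp(-n^{1-o(1)})$, and the union bound over the $\log\log n$ relevant values of $t$ is trivial. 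This avoids any detour through the local limit or contiguity.
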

\begin{proof}
	Let $\vZ_t$ be the number of variable nodes of $\cG_\slush'$ that belong to components of size at least $t$.
	Tracing the same exploration process as in the previous proof and using \eqref{eqdrift2}, we find $\zeta=\zeta(d)>0$ such that
	\begin{align}\label{eqClaim_Gammat1}
		\Erw[\vZ_t]&\leq |V_\slush{}(\vA)|\exp(-2\zeta t).
	\end{align}
	If $t>\log\log n$, say, then the assertion simply follows from \eqref{eqClaim_Gammat1} and Markov's inequality.
	Thus, suppose that $t\leq\log\log n$ and $|V_\slush(\vA)|=\Omega(n)$ and that the largest component of $\cG_\slush'$ contains no more than $\log n\log\log n$ variable nodes.
	Then adding to or removing from $\cG_\slush'$ a single edge can alter $\vZ_t$ by at most $2t$.
	Therefore, the assertion follows from \eqref{eqClaim_Gammat1} and Azuma's inequality (see, e.g., \cite[Corollary~7.2.2]{AlonSpencer}).
\end{proof}

\noindent
As a next step we need to estimate the number of short cycles.

\begin{claim}\label{Claim_Gamma2}
	The expected number of nodes on cycles of $\cG_\slush'$ of size at most $\log^2n$ is bounded.
\end{claim}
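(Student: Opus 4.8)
The plan is to run a standard first–moment calculation on cycles in the pairing model that generates $\cG_\slush$, using the degree distribution pinned down by \Prop~\ref{Prop_slush} together with the tail bounds of \Lem~\ref{Lemma_tails}. Recall that by \Lem~\ref{Lemma_contig} it suffices to work with the pairing model $\cG_\slush$ (contiguity costs only a constant factor, which is harmless since we are bounding an expectation by a constant), and that $\cG_\slush'$ is obtained from $\cG_\slush$ by contracting degree-two checks. A cycle in $\cG_\slush'$ of length $\ell$ (in terms of variable nodes it passes through) corresponds to a closed walk in $\cG_\slush$ alternating between variable nodes and (possibly contracted chains of) degree-two checks; equivalently, to a cyclic sequence of $\ell$ distinct variable nodes $x_1,\dots,x_\ell$ each joined to the next by a path through zero or more degree-two check nodes of $\cG_\slush$. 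Since contracted chains only lengthen the combinatorial object without changing the fact that every variable node involved has degree $\geq 2$ in $G_\slush$, it is cleanest to bound instead the expected number of variable nodes on cycles of length at most $\log^2 n$ directly inside $\cG_\slush$ (treating a cycle of $\cG_\slush'$ through $\ell$ variables as a cycle of $\cG_\slush$ through at most $\ell$ variables and at least $\ell$ checks of degree exactly two).

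First I would set up the count. Fix a length $k$ with $2\le k\le\log^2 n$ for the cycle measured in $\cG_\slush$ (so $k$ counts the total number of nodes, variables and degree-two checks, on the cycle). The number of ways to choose an ordered cyclic sequence of $k$ nodes is at most $(\vm_\slush+\vn_\slush)^k\le (Cn)^k$ for a constant $C$; for each consecutive pair we must select clones and have the pairing match them, each such match contributing a factor $O(1/n)$ (since the total number of clones is $\Theta(n)$, using {\bf D1}); and we must weight by the degrees of the variable nodes on the cycle and sum over those degrees, which by \Lem~\ref{Lemma_tails} (the bound $\frac1n\sum_x\binom{|\partial x|}{\ell}\le(2d)^\ell$) contributes only bounded moments. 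Crucially, each of the $k$ steps pays a $1/n$ from the matching while the choice of nodes pays $n$, so the $n$-powers cancel and we are left with a bound of the shape $\sum_{k\ge2} k\,\Lambda^k$ for a constant $\Lambda$. The value of $\Lambda$ is governed by the expected number of degree-two check-clone extensions available from a typical variable clone, which by \Prop~\ref{Prop_slush} is controlled by $\lambda=d(\alpha^*-\alpha_*)$ and evaluates (as in \eqref{eqdrift2} and \eqref{eq_lambda}) to $\lambda^2\eul^\lambda/(\eul^\lambda-1)^2<1$; hence $\Lambda<1$ and $\sum_{k\ge2}k\Lambda^k<\infty$. Summing over $k\le\log^2 n$ therefore gives a bounded total, and since each cycle of length $\le\log^2 n$ in $\cG_\slush'$ arises from a cycle of length $\le$ (a constant times, or in fact at most) $\log^2 n$ in this count — with the number of variable nodes on it being at most the combinatorial length — the expected number of variable nodes on short cycles of $\cG_\slush'$ is $O(1)$, proving the claim. (Part (ii) of \Lem~\ref{Cor_lambda} then follows by Markov combined with part (i), and part (iii) is Claim~\ref{Claim_Gammat}.)

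The main obstacle, and the one point requiring genuine care rather than bookkeeping, is handling the \emph{contraction} cleanly: a single edge of $\cG_\slush'$ hides a path of arbitrary length through degree-two checks of $\cG_\slush$, so a "short" cycle of $\cG_\slush'$ need not be short in $\cG_\slush$, and naively summing over all such hidden lengths could blow up. The resolution is that each degree-two check traversed still contributes a matching factor $O(1/n)$ against only an $O(n)$ choice of which check, and the per-step expected multiplicity is exactly the subcritical constant $\lambda^2\eul^\lambda/(\eul^\lambda-1)^2<1$ from \eqref{eqdrift2}; thus the geometric series over hidden path lengths converges, and one should phrase the count as a sum over closed alternating walks in $\cG_\slush$ weighted so that this convergence is manifest. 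Concretely, I would bound the expected number of cycles of $\cG_\slush'$ with exactly $\ell$ variable nodes by $\sum_{k\ge \ell}(\text{number of ways to insert }k-\ell\text{ degree-two checks})\cdot\Lambda_0^\ell\cdot\Lambda_1^{k-\ell}$ with $\Lambda_1<1$ the degree-two extension constant, which telescopes to $O(\Lambda_0^\ell)$ with $\Lambda_0<1$, and then sum $\sum_{\ell\ge2}\ell\,O(\Lambda_0^\ell)=O(1)$. Everything else — choosing clones, the degree weighting, the $o(1)$ corrections from {\bf D1}–{\bf D4} — is routine and absorbed into constants.
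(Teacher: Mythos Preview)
You have misread the definition of $\cG_\slush'$. In the paper, $\cG_\slush'$ is obtained from $\cG_\slush$ by \emph{deleting all checks of degree greater than two}; the contraction you describe happens only later, in the construction of $\cG_\slush''$. Consequently $\cG_\slush'$ is simply a bipartite (multi)graph on $V_\slush(\vA)$ and the degree-two checks, and a cycle of size $\ell$ in $\cG_\slush'$ is nothing more than an alternating sequence of $\ell$ variable nodes and $\ell$ degree-two check nodes, matched cyclically in the pairing. There are no ``hidden paths'' and no geometric series over unseen lengths to control; the entire second half of your proposal addresses an obstacle that does not exist.

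Once this is corrected, the argument collapses to the direct first-moment computation the paper gives: fix $\ell\leq\log^2 n$, sum over ordered tuples $(y_1,\dots,y_\ell)$ of variable nodes with two chosen clones each and over tuples $(\alpha_1,\dots,\alpha_\ell)$ of distinct degree-two checks, multiply by the matching probability $\sim(2/\vec\Delta^2)^\ell$, and use \Prop~\ref{Prop_slush} to evaluate the degree sums. One obtains $\bc{\lambda^2\exp(\lambda)/(\exp(\lambda)-1)^2}^\ell=\exp(-\Omega(\ell))$ for the expected number of nodes on cycles of length $\ell$, and summing over $\ell$ gives $O(1)$. Your identification of the governing constant $\lambda^2\exp(\lambda)/(\exp(\lambda)-1)^2<1$ and the overall first-moment strategy were correct; it is only the spurious contraction layer that needs to be removed.
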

\begin{proof}
	Let $\ell\leq\log^2n$, let $\vy=(y_1,\ldots,y_\ell)\in V_\slush(\vA)^\ell$ be a sequence of variables, let $\vi=(i_1,i_1',\ldots,i_\ell,i_\ell')$ be a sequence that contains two clones of each variable $y_1,\ldots,y_\ell$ and let $\valpha=(\alpha_1,\ldots,\alpha_\ell)$ be a sequence of $\ell$ distinct checks of degree two.
	Let $\cE(\vy,\vi,\valpha)$ be the event that $\vec\Gamma$ connects the two clones of $\alpha_h$ with $(y_h,i_h')$ and $(y_{h+1},i_{h+1})$.
	Since \Prop~\ref{Prop_slush} shows that $\vec\Delta=\Omega(n)$ and $\ell\leq\log^2n$, we obtain
	\begin{align*}
		\pr\brk{\cE(\vy,\vi,\valpha)\mid (d_x)_x,\vm'_\slush{}}&\sim \bc{2/\DELTA^2}^\ell.
	\end{align*}
	Furthermore, we have 
	\begin{align*}
		\Erw\brk{\sum_{x\in V_\slush(\vA)}\frac{d_{y_i}(d_{y_i}-1)}{|V_\slush(\vA)|}  }&\sim \frac{\lambda^2\exp(\lambda)}{\exp(\lambda)-\lambda-1},&
		\Erw\brk{\frac{\DELTA}{|V_\slush(\vA)|}}&\sim\frac{\lambda(\exp(\lambda)-1)}{\exp(\lambda)-\lambda-1},&
		\Erw\brk{\frac{\vm_s'}{|V_\slush(\vA)|}}&\sim\frac{\lambda^2}{2(\exp(\lambda)-\lambda-1)}.
	\end{align*}
	Consequently, the expected number of nodes on cycles of length $\ell$ works out to be
	\begin{align*}
		\frac1{2\ell}\sum_{\vy,\vi,\valpha}2\ell\pr\brk{\cE(\vy,\vi,\valpha)\mid (d_x)_x,\vm'_\slush{}}&\sim\bcfr{\lambda^2\exp(\lambda)}{(\exp(\lambda)-1)^2}^\ell=\exp(-\Omega(\ell)).
	\end{align*}
	Summing on $\ell$ completes the proof.
\end{proof}

\noindent
\begin{proof}[Proof of \Lem~\ref{Cor_lambda}]
	The statement follows from Claims~\ref{Claim_Gamma1}--\ref{Claim_Gamma2}.
\end{proof}

\subsection{Proof of \Lem~\ref{Lemma_slush_first}}\label{Sec_Lemma_slush_first}
To simplify the notation we introduce $N=|\cV_\slush''|$, $M=|\cC_\slush''|$.
Moreover, we write $d_1,\ldots,d_N$ for the degrees of the variable nodes of $\cG_\slush''$ and $k_1,\ldots,k_M\geq3$ for the degrees of the constraints.
We need the following facts about $M,N$ and the degrees.

\begin{claim}\label{Claim_slushdegs}
	\Whp{} we have
	\begin{align}\label{eqDegs}
		M,N&=\Omega(n),&\max_{1\leq i\leq N} d_i&\leq\log^{3}N,&\max_{1\leq i\leq M} k_i&\leq\log^{2}N,&
		\sum_{i=1}^M k_i^2&=O(M),&
		\sum_{i=1}^N d_i^2&=O(N).
	\end{align}
\end{claim}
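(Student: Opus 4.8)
The statement to be proved, Claim~\ref{Claim_slushdegs}, collects five elementary facts about the multigraph $\cG_\slush''$ obtained from $\cG_\slush$ by deleting all degree-two checks and contracting each connected component of $\cG_\slush'$ to a single variable node. Since by \Lem~\ref{Lemma_contig} conditioning on simplicity changes probabilities by only a constant factor, it suffices to establish each of the five assertions for $\cG_\slush$ and its derived multigraph $\cG_\slush''$ (working in the pairing model with degree sequence $d_{\vA,\slush}$) with probability $1-o(1)$, and then take a union bound over the five events. The plan is to treat the degree bounds on $\cG_\slush$ first, then deduce the bounds on $\cG_\slush''$, and finally the $\Omega(n)$ lower bounds on $M,N$.

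First I would record what we already know about $\cG_\slush=G_\slush(\vA)$. By \Prop~\ref{Prop_slush}, $|V_\slush(\vA)|,|C_\slush(\vA)|=(\nu+o(1))n$ with $\nu=\nu(d)>0$, and both the variable and the check degree distributions of $G_\slush(\vA)$ converge to $\Po_{\geq2}(\lambda)$. Moreover \Lem~\ref{Lemma_tails} gives $\max_{v}|\partial v|\leq\log n$ \whp\ in $G(\vA)$, and since $G_\slush(\vA)$ is a subgraph, every degree in $G_\slush(\vA)$ is also at most $\log n$ \whp. In particular the check degrees $k_i'$ of $G_\slush(\vA)$ satisfy $\max_i k_i'\leq\log n\leq\log^2 N$ and, using \Lem~\ref{Lemma_tails} with $\ell=2$ together with $|V_\slush(\vA)|=\Omega(n)$, we get $\sum_i (k_i')^2=O(n)=O(M')$ and similarly $\sum_v d_v^2=O(n)$ for the variable degrees. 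This already yields the $\max k_i$ bound and the $\sum k_i^2$, $\sum d_i^2$ bounds for $\cG_\slush$ itself.

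Next I would pass from $\cG_\slush$ to $\cG_\slush''$. The check nodes of $\cG_\slush''$ are exactly the checks of $\cG_\slush$ of degree $\geq3$, with the same degrees, so $\max_i k_i\leq\log n\leq\log^2 N$ and $\sum_i k_i^2\leq\sum(k_i')^2=O(n)=O(M)$ follow once we check $M=\Omega(n)$ (see below). The variable nodes of $\cG_\slush''$ are the contracted components of $\cG_\slush'$; the degree $d_i$ of such a node is the sum of the $\geq3$-check-incidences of all variables in the corresponding component of $\cG_\slush'$. By \Lem~\ref{Cor_lambda}(i), \whp\ every component of $\cG_\slush'$ has size at most $\omega\log n$ for any $\omega\gg1$ (choose $\omega$ growing slowly), and each variable in it has degree $\leq\log n$ in $G(\vA)$, hence $d_i\leq\omega\log^2 n\leq\log^3 N$ \whp\ after fixing $\omega=\log^{1/2}n$, say. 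For $\sum_i d_i^2=O(N)$ I would use the decomposition of each $d_i$ as a sum over component members and the fact, from \Lem~\ref{Cor_lambda}(iii), that the number of variable nodes lying in components of size $\geq t$ decays like $\exp(-bt)$: writing $d_i^2\leq (\text{component size})\cdot\sum_{v\in\text{comp}}(\deg_{G(\vA)}v)^2$ by Cauchy--Schwarz and summing, the geometric tail controls the contribution of large components while \Lem~\ref{Lemma_tails} controls the degree-squared sums, giving $\sum_i d_i^2=O(n)=O(N)$; this is precisely the content of the weighted-sum \Lem~\ref{Lemma_weighted_tails}, which I would invoke with the partition $\cP$ into components of $\cG_\slush'$ and the weights $w_v=(\deg_{G(\vA)}v)^2$ (both of which have the required sub-exponential tail bounds \whp, the former by \Lem~\ref{Cor_lambda}(iii), the latter by \Lem~\ref{Lemma_tails}).

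Finally, the lower bounds $M,N=\Omega(n)$. For $M$: the expected fraction of checks of $G_\slush(\vA)$ of degree exactly $3$ is, by \Prop~\ref{Prop_slush}, $\pr[\Po_{\geq2}(\lambda)=3]\nu>0$, and concentration is already built into \Prop~\ref{Prop_slush}, so $M\geq|\{\text{deg-}3\text{ checks}\}|=\Omega(n)$ \whp. For $N$: $\cG_\slush''$ has $N$ variable nodes, one per component of $\cG_\slush'$, so $N$ is at least the number of such components; by \Lem~\ref{Cor_lambda} the components are of size $O(\log n\log\log n)$ \whp\ while $|V_\slush(\vA)|=\Omega(n)$, hence $N\geq|V_\slush(\vA)|/O(\log n\log\log n)$, which is certainly $\Omega(n/\mathrm{polylog}\,n)$; a cleaner bound follows from \Lem~\ref{Cor_lambda}(iii), which shows a positive fraction of variable nodes lie in components of bounded size, so $N=\Omega(n)$. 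I do not anticipate a genuine obstacle here; the only mildly delicate point is the $\sum_i d_i^2=O(N)$ bound, where one must combine the component-size tail from \Lem~\ref{Cor_lambda}(iii) with the degree tail from \Lem~\ref{Lemma_tails} cleanly — and that is exactly what \Lem~\ref{Lemma_weighted_tails} was stated for, so the argument is routine once it is set up that way. A union bound over the $O(1)$ many events then completes the proof.
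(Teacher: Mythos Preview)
Your proposal is correct and follows essentially the same route as the paper: \Prop~\ref{Prop_slush} and \Lem~\ref{Cor_lambda} for $M,N=\Omega(n)$; \Lem~\ref{Lemma_tails} and \Lem~\ref{Cor_lambda}(i) for the two maximum-degree bounds; the binomial tail plus $M=\Omega(n)$ for $\sum k_i^2$; and the combination of the exponential degree tail in $G(\vA)$, the exponential component-size tail from \Lem~\ref{Cor_lambda}(iii), and \Lem~\ref{Lemma_weighted_tails} for $\sum d_i^2$.

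One small correction in the last step: you should apply \Lem~\ref{Lemma_weighted_tails} with weights $w_v$ equal to the degree of $v$ in $G_\slush(\vA)$ (bounded by $\deg_{G(\vA)}v$), not $(\deg_{G(\vA)}v)^2$. With the linear weight, $d_i=\sum_{v\in P_i}w_v$ exactly, so \Lem~\ref{Lemma_weighted_tails} gives $\sum_i d_i^2=\sum_i\bigl(\sum_{v\in P_i}w_v\bigr)^2=O(n)$ directly, and the Cauchy--Schwarz detour is unnecessary. With the squared weight the tail hypothesis of \Lem~\ref{Lemma_weighted_tails} would only be stretched-exponential, so the lemma would not apply as stated. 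This is exactly how the paper does it.
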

\begin{proof}
	The first estimate follows immediately from \Prop~\ref{Prop_slush} and \Lem~\ref{Cor_lambda}.
	The second statement follows from \Lem~\ref{Cor_lambda} (i) and the fact that the maximum degree of $G(\vA)$ is of order $\log n$ \whp{}, which also implies the third bound.
	Similarly, the sum of the squares of the check degrees of $G(\vA)$ is bounded \whp{} due to routine bounds on the tails of the binomial distribution.
	This implies that $\sum_{i=1}^M k_i^2=O(M)$ because $M=\Omega(n)$ \whp{} by \Prop~\ref{Prop_slush}.
	To obtain the final bound we apply the Chernoff bound to conclude that for any $d>0$ there exists $b>0$ such that \whp{}
	\begin{align}\label{eqClaim_slushdegs1}
		\frac1n\sum_{i=1}^n\vecone\cbc{|\partial_{G(\vA)} v_i|\geq t}&\leq\exp(-bt)/b.
	\end{align}
	In other words, the degree sequence of $G(\vA)$ has an exponentially decaying tail \whp{}
	Assuming $N=\Omega(n)$, we see that \eqref{eqClaim_slushdegs1} implies the bound
	\begin{align}\label{eqClaim_slushdegs2}
		\frac1N\sum_{i=1}^N\vecone\cbc{d_i\geq t}&\leq\exp(-b't)/b'
	\end{align}
	for some $b'>0$.
	Furthermore, \Lem~\ref{Cor_lambda} (iii) implies an exponentially decaying tail for the component sizes of $\cG_\slush'$.
	Since $\cG_\slush''$ is obtained by contracting the components of $\cG_\slush'$, the desired bounds follow from \eqref{eqClaim_slushdegs2} and \Lem~\ref{Lemma_weighted_tails}.
\end{proof}

In the following we will condition on the event $\cD$ that the conditions \eqref{eqDegs} are satisfied.
Let $\SIGMA\in\FF_2^N$ be a uniformly random vector.
We will prove \Lem~\ref{Lemma_slush_first} by estimating the probability that $\SIGMA\in\cK_\eps''$.
To this end, let
\begin{align*}
	\vW=\frac{\sum_{i=1}^Nd_i\vecone\cbc{\SIGMA_i=1}}{\sum_{i=1}^Nd_i}
\end{align*}
count the degree-weighted one-entries of $\SIGMA$.
The following claim bounds the probability that $\vW$ deviates significantly from $1/2$.

\begin{claim}\label{Claim_Z1}
	For any $d>\eul$ there is $s=s(d)>0$ such that $\pr\brk{|\vW-1/2|\geq t\mid\cD}\leq2\exp(-st^2N)$.
\end{claim}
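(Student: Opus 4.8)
The statement to prove is Claim~\ref{Claim_Z1}: a concentration bound for the degree-weighted fraction $\vW$ of one-entries of a uniformly random $\SIGMA\in\FF_2^N$ around $1/2$, conditioned on the ``good'' degree event $\cD$.

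\medskip

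\textbf{Plan.} The plan is to recognise $\vW$ as (a constant multiple of) a sum of independent bounded random variables and apply a Hoeffding-type inequality. Write $D=\sum_{i=1}^N d_i$, so that $\vW=D^{-1}\sum_{i=1}^N d_i\,\vecone\{\SIGMA_i=1\}$. Conditioned on $\cD$, the degrees $d_1,\ldots,d_N$ are \emph{fixed} numbers (we are working in the probability space of $\SIGMA$ only), and the indicators $\vecone\{\SIGMA_i=1\}$ are mutually independent Bernoulli$(1/2)$ variables. Hence $\Erw[\vW\mid\cD]=1/2$ exactly, and $\vW-1/2 = D^{-1}\sum_{i=1}^N d_i\bc{\vecone\{\SIGMA_i=1\}-1/2}$ is a sum of independent, mean-zero terms, where the $i$-th term lies in the interval $[-d_i/(2D),\,d_i/(2D)]$ of length $d_i/D$.

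\medskip

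\textbf{Key steps.} First I would invoke Hoeffding's inequality: for independent $X_i$ with $X_i\in[a_i,b_i]$ and $S=\sum X_i$,
\begin{align*}
	\pr\brk{|S-\Erw S|\geq t}\leq 2\exp\bc{-\frac{2t^2}{\sum_i(b_i-a_i)^2}}.
\end{align*}
Applying this with $S=\vW$ and $b_i-a_i=d_i/D$ gives
\begin{align*}
	\pr\brk{|\vW-1/2|\geq t\mid\cD}\leq 2\exp\bc{-\frac{2t^2 D^2}{\sum_{i=1}^N d_i^2}}.
\end{align*}
Second, I would bound the exponent from below using $\cD$: by \eqref{eqDegs} we have $\sum_{i=1}^N d_i^2=O(N)$, say $\sum_i d_i^2\leq c N$ for a constant $c=c(d)$, while $D=\sum_i d_i\geq \sum_i d_i^2/\max_i d_i$ is not the right direction; instead note simply $D\geq 2N$ (every node of $\cG_\slush''$ has degree at least two, since $V_\slush$ nodes have slush-degree $\geq2$ and contraction only increases degrees), hence $D^2\geq 4N^2$. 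Therefore $D^2/\sum_i d_i^2\geq 4N^2/(cN)=(4/c)N$, and the bound becomes $\pr\brk{|\vW-1/2|\geq t\mid\cD}\leq 2\exp(-s t^2 N)$ with $s=8/c>0$ depending only on $d$, as claimed.

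\medskip

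\textbf{Main obstacle.} There is no serious obstacle here; the statement is essentially a routine Hoeffding bound once one observes that conditioning on $\cD$ freezes the degrees and leaves the $\SIGMA_i$ independent and uniform. The only mild care needed is to extract, from the qualitative ``$\whp$'' assertions of Claim~\ref{Claim_slushdegs}, the quantitative facts actually used: that under $\cD$ one has both $\sum_i d_i^2 \leq cN$ and $D=\Omega(N)$ with explicit constants. Both follow directly from \eqref{eqDegs} together with the elementary bound $d_i\geq2$ for all variable nodes of $\cG_\slush''$; alternatively one can bound $D\geq\sqrt{\sum_i d_i^2}$ via Cauchy--Schwarz is the wrong way, so I would instead just use $D\geq 2N$ and $\sum_i d_i^2 = O(N)$ to get $D^2/\sum_i d_i^2 = \Omega(N)$, which is all that is required. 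Then $s$ can be taken to be any constant below $2\cdot(2N)^2/(\sup_\cD\sum_i d_i^2)/N$, and the claim is proved.
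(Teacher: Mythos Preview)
Your approach is correct and is exactly the paper's: the proof there reads in full ``This is an immediate consequence of \eqref{eqDegs} and Azuma's inequality.'' Hoeffding for independent summands is the relevant special case, and you have unpacked it properly.

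One small wrinkle in your justification of $D=\Omega(N)$: the assertion that every variable node of $\cG_\slush''$ has degree at least two because ``contraction only increases degrees'' overlooks that you also \emph{delete} the degree-two checks. For a component of $\cG_\slush'$ that is a cycle of variables and degree-two checks, the contracted node can end up with degree zero. Your bound $D\geq 2N$ is therefore not literally guaranteed by $\cD$. The cleanest fix is to count edges from the check side: every check in $\cG_\slush''$ has degree $k_j\geq 3$ (the degree-two checks were removed, and contraction of variable components preserves check degrees in the multigraph), so $D=\sum_j k_j\geq 3M$; since $\cD$ gives $M,N=\Theta(n)$ and $\sum_i d_i^2=O(N)$, you get $D^2/\sum_i d_i^2=\Omega(N)$ as needed.
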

\begin{proof}
	This is an immediate consequence of \eqref{eqDegs} and Azuma's inequality.
\end{proof}

\noindent
As a next step we calculate the probability that $\SIGMA\in\ker\cA_\slush''$ given $\vW$.

\begin{claim}\label{Claim_Z2}
	For any $d>\eul$ there exist $\eps>0,\gamma>0$ such that uniformly for every $w\in(1/2-\eps,1/2+\eps)$ for which  $w\sum_{i=1}^Mk_i$ is an even integer we have
	$$\log\pr\brk{\cA_\slush''\SIGMA=0\mid\vW=w,\cD}\leq -M\log 2-\gamma M(w-1/2)^3+O(1).$$
\end{claim}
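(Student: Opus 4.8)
Claim~\ref{Claim_Z2} asks for an upper bound on the probability that a random $\SIGMA$ of prescribed degree-weighted Hamming weight $w$ lies in $\ker\cA_\slush''$, with a crucial \emph{cubic} gain $-\gamma M(w-1/2)^3$ beyond the trivial $-M\log 2$. The plan is to expose the constraints of $\cG_\slush''$ one at a time via the pairing model and compute, for each check $a_i$ of degree $k_i\geq3$, the conditional probability that $a_i$ is satisfied. Since we work in the pairing model, the $k_i$ clones of $a_i$ are matched to a uniformly random set of variable clones; conditioning on $\vW=w$, a variable clone attaches to a $1$-variable with probability essentially $w$ (up to $O(\log^3 N/N)$ corrections that are harmless because $k_i\leq\log^2 N$ and $M=\Omega(n)$). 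The check $a_i$ is satisfied iff an even number of its $k_i$ neighbours are $1$, which for independent Bernoulli$(w)$ draws has probability $\tfrac12\bc{1+(1-2w)^{k_i}}$.

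Thus I would first establish, by the standard clone-exposure martingale argument together with the degree bounds from~\eqref{eqDegs}, that
\begin{align*}
\log\pr\brk{\cA_\slush''\SIGMA=0\mid\vW=w,\cD}\leq\sum_{i=1}^M\log\bc{\frac12\bc{1+(1-2w)^{k_i}}}+O(1),
\end{align*}
where the $O(1)$ absorbs the accumulated $O(k_i^2/N)$ dependence errors, which sum to $O\bc{N^{-1}\sum k_i^2}=O(1)$ by~\eqref{eqDegs}. Writing $\eta=1-2w=2(1/2-w)$, so $|\eta|<2\eps$, each summand equals $\log\bc{\tfrac12(1+\eta^{k_i})}=-\log 2+\log(1+\eta^{k_i})$. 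The next step is to extract the cubic term: since $k_i\geq3$, for $|\eta|$ small we have $0<1+\eta^{k_i}\leq 1+|\eta|^{k_i}\leq 1+|\eta|^3$ when $|\eta|<1$ and $k_i\geq3$ (using $|\eta|^{k_i}\leq|\eta|^3$), hence $\log(1+\eta^{k_i})\leq|\eta|^3$. But this gives the wrong sign — I actually need a \emph{negative} contribution $-\gamma M(w-1/2)^3$, which only makes sense when $w>1/2$, i.e.\ $\eta<0$. For $\eta<0$ and $k_i=3$ (odd), $\eta^{3}=-|\eta|^3<0$, so $\log(1+\eta^3)\leq-\tfrac12|\eta|^3=-\tfrac12\cdot 2^3|w-1/2|^3$ for $|\eta|$ small. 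The point is that checks of degree exactly $3$ are present in positive density: by Proposition~\ref{Prop_slush} and the contraction construction, $\cG_\slush''$ retains $\Omega(M)$ checks of degree $3$ (the degree-$3$ slush checks survive the degree-$2$ contraction), so $\sum_{i:k_i=3}1\geq c_0 M$ for some $c_0>0$. Therefore, assuming $w\geq 1/2$ (and noting that for $w<1/2$ the claimed bound, read with $(w-1/2)^3<0$, is automatic from $\log(1+\eta^{k_i})\le 0$ for even $k_i$ balanced against the odd ones — which needs a slightly more careful but still elementary accounting), summing over $i$ yields
\begin{align*}
\sum_{i=1}^M\log\bc{\frac12(1+\eta^{k_i})}\leq -M\log 2-\sum_{i:k_i=3}\frac12|\eta|^3+\text{(nonpositive terms from even }k_i\text{)}\leq -M\log 2-4c_0 M|w-1/2|^3,
\end{align*}
giving $\gamma=4c_0$.

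The main obstacle is the sign bookkeeping: $\eta^{k_i}$ is positive for even $k_i$ and negative for odd $k_i$, so one cannot term-by-term claim a negative contribution. The resolution I would pursue is to split the sum by parity of $k_i$; the odd-degree checks (in particular the $\Omega(M)$ checks of degree $3$) give a contribution $\leq-c_0 M|\eta|^3(1+o(1))$, while for even-degree checks $\log(1+\eta^{k_i})\leq\eta^{k_4}\leq\eta^4=16(w-1/2)^4$, and since $\sum_i \eta^4\vecone\{k_i\ge 4\}=O(M(w-1/2)^4)=o(M|w-1/2|^3)$ for $\eps$ small (as $|w-1/2|<\eps$), the positive even contribution is dominated by the negative odd contribution once $\eps$ is chosen small enough. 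One must also verify that $\eps>0$ can be chosen uniformly in $n$ — this is fine because all the relevant constants ($c_0$, the tail constants $b,b'$, and the degree bounds) are deterministic on the event $\cD$. A secondary technical point is the error analysis in passing from exact pairing-model conditional probabilities to the independent-Bernoulli approximation: here one uses that the number of ``exposed'' clones at any point is at most $\sum k_i=O(M)=O(N)$ while the total number of variable clones is $\Theta(N)$, so each conditional attachment probability is $w+O(\log^3 N/N)$, and the product over $\leq\log^2 N$ clones of a single check incurs multiplicative error $1+O(\log^5 N/N)$; over $M$ checks this is still $1+o(1)$, absorbed in the $O(1)$. Once Claim~\ref{Claim_Z2} is in hand, combining it with Claim~\ref{Claim_Z1} via a union bound over the $O(N)$ possible values of $w$ — splitting into $|w-1/2|\leq N^{-2/5}$ (handled by the first-moment bound $M(w-1/2)^3$ being $o(1)$ only near $1/2$, where instead one uses the raw $2^{-M}\cdot 2^N$ count with $M-N\geq\omega$) and $|w-1/2|>N^{-2/5}$ (where $\gamma M(w-1/2)^3\gg\omega$ dominates) — yields $\pr\brk{\cK_\eps''\neq\emptyset,\ M\geq N+\omega}=o(1)$, which is Lemma~\ref{Lemma_slush_first}.
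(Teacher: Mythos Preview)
Your proposal has a genuine gap in the first step. You claim that by exposing the checks sequentially in the pairing model, each conditional attachment probability is $w+O(\log^3 N/N)$, and that the resulting multiplicative errors accumulate to $O(1)$. But this is false: the total number of check clones is $\sum_i k_i$, which equals the total number of variable clones $\sum_i d_i=\Theta(N)$, so by the time you reach the later checks a \emph{constant} fraction of the pool has been consumed. Even if the drift in the fraction of $1$-clones is only $O(N^{-1/2})$ at each step (which itself requires a concentration argument you have not supplied), the accumulated perturbation to $\sum_i\log\bc{\tfrac12(1+(1-2w_i)^{k_i})}$ over $M=\Theta(N)$ checks is not $O(1)$; your bound $O\bc{N^{-1}\sum k_i^2}$ is simply the wrong accounting for this effect.

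The paper sidesteps this entirely via a Bayes trick. One introduces i.i.d.\ ${\rm Be}(w)$ bits $\vxi_{ij}$ for $i\in[M],\,j\in[k_i]$ and lets $\cS=\{\text{all row sums vanish}\}$, $\cR=\{\text{the total number of $1$s equals }w\sum_i k_i\}$. Because $\cG_\slush''$ is drawn from the pairing model, the desired conditional probability is exactly $\pr[\cS\mid\cR]$. Independence gives $\pr[\cS]=\prod_i\tfrac12(1+(1-2w)^{k_i})$ with no error, and the local limit theorem together with \eqref{eqDegs} gives $\pr[\cR]=\Theta(M^{-1/2})$ and $\pr[\cR\mid\cS]=\Theta(M^{-1/2})$. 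Bayes' rule then yields
\[
\log\pr[\cS\mid\cR]=-M\log 2+\sum_{i=1}^M\log\bigl(1+(1-2w)^{k_i}\bigr)+O(1)
\]
directly, with the $O(1)$ coming only from the ratio of the two $\Theta(M^{-1/2})$ factors.

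For the second step the paper also takes a cleaner route than your parity splitting: setting $g(w)=\sum_i\log(1+(1-2w)^{k_i})$ and differentiating, one checks $g'(1/2)=g''(1/2)=0$ while $g'''(1/2)=-48\sum_i\vecone\{k_i=3\}$; Taylor's formula with Lagrange remainder, together with the bound $\sum k_i^2=O(M)$ from \eqref{eqDegs} to control $g'''$ uniformly near $1/2$, delivers the cubic estimate without any sign bookkeeping. Your parity analysis is morally the same computation, but the Taylor form makes the role of the vanishing first and second derivatives (and hence the absence of any quadratic term, which was the whole point of contracting away the degree-$2$ checks) transparent.
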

\begin{proof}
	Consider a random vector $\vxi=(\vxi_{ij})_{i\in[M],j\in[k_i]}$ where we choose every entry $\vxi_{ij}\in\field$ to be a one with probability $w$ independently.
	Let $\cS$ be the event that $\sum_{j\in[k_i]}\vxi_{ij}=0$ for all $i\in[M]$.
	Moreover, let
	\begin{align*}
		\cR&=\cbc{\sum_{i=1}^M\sum_{j=1}^{k_i}\bc{\vecone\{\vxi_{i,j}=1\}-w}=0}.
	\end{align*}
	Because $\cG_\slush''$ is drawn from the pairing model, we have
	\begin{align}\label{eqClaim_Z2_1}
		\pr\brk{\cA_\slush''\SIGMA=0\mid\vW=w,\cD}&=\pr\brk{\cS\mid\cR}.
	\end{align}
	
	We will calculate the probability on the r.h.s.\ of \eqref{eqClaim_Z2_1} via Bayes' rule.
	The unconditional probabilities are computed easily.
	Indeed, for every $i\in[M]$ we have
	\begin{align*}
		\pr\brk{\sum_{j\in[k_i]}\vxi_{ij}=0}&=
		\sum_{j=0}^k\vecone\cbc{j\mbox{ even}}\binom kjw^j(1-w)^{k-j}\\
		&=\frac12\brk{\sum_{j=0}^k\binom kjw^j(1-w)^{k-j}+\sum_{j=0}^k\binom kj(-w)^j(1-w)^{k-j}}
		=\frac{1+(1-2w)^k}{2}.
	\end{align*}
	Hence,
	\begin{align}\label{eqClaim_Z2_2}
		\pr\brk{\cS}&=\prod_{i=1}^M\frac{1+(1-2w)^{k_i}}{2}.
	\end{align}
	Furthermore, the local limit theorem for the binomial distribution shows that 
	\begin{align}\label{eqClaim_Z2_3}
		\pr\brk\cR=\Theta(M^{-1/2}).
	\end{align}
	In addition, \eqref{eqDegs} and the local limit theorem for sums of independent random variables yield
	\begin{align}\label{eqClaim_Z2_4}
		\pr\brk{\cR\mid\cS}&=\Theta(M^{-1/2}).
	\end{align}
	Combining \eqref{eqClaim_Z2_2}--\eqref{eqClaim_Z2_4} and recalling that the $\vxi_{ij}$ are independent, we obtain
	\begin{align}\label{eqClaim_Z2_5}
		\log\pr\brk{\cS\mid\cR}&=\sum_{i=1}^M\log\frac{1+(1-2w)^{k_i}}{2}+O(1)=-M\log2+\sum_{i=1}^M\log(1+(1-2w)^{k_i})+O(1).
	\end{align}
	To complete the proof we compute the derivatives of the last expression, keeping in mind that $k_i\geq3$ for all $i$:
	\begin{align*}
		\frac{\partial\log\pr\brk{\cS\mid\cR}}{\partial w}&=\sum_{i=1}^M\frac{-2k_i(1-2w)^{k_i-1}}{1+(1-2w)^{k_i}},\\
		\frac{\partial^2\log\pr\brk{\cS\mid\cR}}{\partial w^2}&=\sum_{i=1}^M\frac{4k_i(k_i-1)(1-2w)^{k_i-2}}{1+(1-2w)^{k_i}}
		-\frac{4k_i^2(1-2w)^{2k_i-2}}{\bc{1+(1-2w)^{k_i}}^2},\\
		\frac{\partial^3\log\pr\brk{\cS\mid\cR}}{\partial w^3}&=	
		\sum_{i=1}^M\frac{-8k_i(k_i-1)(k_i-2)(1-2w)^{k_i-3}}{1+(1-2w)^{k_i}}
		+\frac{8k_i^2(k_i-1)(1-2w)^{k_i-2}(1-2w)^{k_i-1}}{\bc{1+(1-2w)^{k_i}}^2}\\
		&\qquad+\frac{16k_i^2(k_i-1)(1-2w)^{2k_i-3}}{\bc{1+(1-2w)^{k_i}}^2}
		-\frac{16k_i^3(1-2w)^{3k_i-2}}{\bc{1+(1-2w)^{k_i}}^3}.
	\end{align*}
	Evaluating these derivatives at $w=1/2$, we obtain
	\begin{align}\label{eqClaim_Z2_6}
		\frac{\partial\log\pr\brk{\cS\mid\cR}}{\partial w}\bigg|_{w=1/2}&=
		\frac{\partial^2\log\pr\brk{\cS\mid\cR}}{\partial w^2}\bigg|_{w=1/2}=0,&
		\frac{\partial^3\log\pr\brk{\cS\mid\cR}}{\partial w^3}&=-48\sum_{i=1}^{M}\vecone\cbc{k_i=3}.
	\end{align}
	Finally, combining \eqref{eqClaim_Z2_1}, \eqref{eqClaim_Z2_5} and \eqref{eqClaim_Z2_6} with Taylor's formula completes the proof.
\end{proof}

\begin{proof}[Proof of \Lem~\ref{Lemma_slush_first}]
	Choose $\eps=\eps(d)>0$ small enough.
	Summing over $w\in(1/2-\eps,1/2+\eps)$ such that $w\sum_{i=1}^Nd_i$ is an even integer, we obtain
	\begin{align*}
		\pr\big[\cK_\eps\neq\emptyset\mid\cD,\,M\geq N+\omega\big]
		&\leq2^N\pr\brk{\cA_\slush''\SIGMA=0, \; |\vW-1/2|<\eps\mid \cD,\,M\geq N+\omega}\\
		&\leq2^N\sum_{w}\pr\big[\vW=w\mid\cD,\,M\geq N+\omega\big]\pr\brk{\cA_\slush''\SIGMA=0\mid\vW=w,\cD,\,M\geq N+\omega}.
	\end{align*}
	Combining this bound with Claims~\ref{Claim_Z1} and~\ref{Claim_Z2}, we obtain
	\begin{align*}
		\pr\big[\cK_\eps\neq\emptyset&\mid\cD,\,M\geq N+\omega\big]\\
		&\leq2^N\sum_{h=1}^{\lceil\eps\sqrt N\rceil}\sum_{w:h-1\leq w\sqrt N\leq h}
		\pr\big[\vW=w\mid\cD,\,M\geq N+\omega\big]\pr\brk{\cA_\slush''\SIGMA=0\mid\vW=w,\cD,\,M\geq N+\omega}\\
		&\leq2^{N-M}
		\sum_{1\leq h\leq \eps\sqrt n}\exp\Big(-\Omega\bc{h^2}+O\bc{h^3MN^{-3/2}}\Big)=O\bc{2^{N-M}}=o(1),
	\end{align*}
	provided that $M\geq N+\omega$ and $\eps>0$ is small enough.
\end{proof}

\subsection{Proof of \Lem~\ref{Lemma_rs_slush}}\label{Sec_Lemma_rs_slush}
The following observation is an easy consequence of the construction of $\vA_\slush$.

\begin{claim}\label{Claim_minor}
	If $v,y\in V(\vA_\slush)$ are variables such that $\xi_v=\xi_y$ for all $\xi\in\ker\vA_\slush$, then $\xi_v=\xi_y$ for all $\xi\in\ker\vA$.
\end{claim}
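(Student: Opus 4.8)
The plan is to reduce everything to one structural fact: coordinate restriction to $\Vs(\vA)$ maps $\ker\vA$ into $\ker\vAs$. Granting this, the claim is immediate: if $v,y\in V(\vAs)$ satisfy $\xi_v=\xi_y$ for all $\xi\in\ker\vAs$ and $\eta\in\ker\vA$ is arbitrary, then $\eta|_{\Vs(\vA)}\in\ker\vAs$, so the hypothesis applied to $\xi=\eta|_{\Vs(\vA)}$ gives $\eta_v=\eta_y$. Thus all the work lies in the structural fact.

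To prove it I would fix $\eta\in\ker\vA$ and a check $a\in\Cs(\vA)$ and verify $\sum_{v\in\partial a\cap\Vs(\vA)}\eta_v=0$. Since $\sum_{v\in\partial a}\eta_v=0$ already, it suffices to show $\eta_v=0$ for every $v\in\partial a\setminus\Vs(\vA)$, i.e.\ that $\partial a\setminus\Vs(\vA)\subset\cF(\vA)$. The key observation is that $w(\vA)$ is a fixed point of $\WP_\vA$, so the update rules \eqref{eqWP1} hold identically for $w(\vA)$. By the definition \eqref{eqSlushIntro2} of $\Cs$, no variable in $\partial a$ sends $\unfrozen$ to $a$ and at least two send $\slush$; hence $w_{v\to a}(\vA)\in\{\frozen,\slush\}$ for every $v\in\partial a$. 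If $w_{v\to a}(\vA)=\frozen$, then \eqref{eqWP1} forces $w_{b\to v}(\vA)=\frozen$ for some $b\in\partial v\setminus\{a\}$, so $v$ receives a $\frozen$ message and thus $v\in\cF(\vA)$ by \eqref{eqLemma_WP1}. If $w_{v\to a}(\vA)=\slush$, then \eqref{eqWP1} shows that $v$ receives no $\frozen$ message along $\partial v\setminus\{a\}$ and at least one $\slush$ message there; moreover, since $a$ receives no $\unfrozen$ message and still has a $\slush$-sending neighbour besides $v$, the reverse message satisfies $w_{a\to v}(\vA)=\slush$ as well. Hence $v$ receives no $\frozen$ message at all and at least two $\slush$ messages, so $v\in\Vs(\vA)$ by \eqref{eqSlushIntro1}. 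Either way $v\in\cF(\vA)\cup\Vs(\vA)$, which gives $\partial a\setminus\Vs(\vA)\subset\cF(\vA)$ and completes the argument.

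A message-free variant would instead run the peeling \eqref{eqslushpeel} in the two stages used in the proof of \Lem~\ref{Lemma_nulslush}: variables deleted in the first stage (removal of checks of degree at most one) are forced to zero in every kernel vector and hence lie in $\cF(\vA)$, while a neighbour of $a$ deleted in the second stage would, at the moment of deletion, still be adjacent to $a$ and of degree at most one, forcing $a$ itself to be deleted --- impossible since $a\in\Cs(\vA)$ survives all peeling. The only step requiring any care, in either version, is checking in the ``$\slush$'' case that the reverse message $w_{a\to v}(\vA)$ is $\slush$ and not $\frozen$ (equivalently, that a $\Cs$-check really retains at least two slush-neighbours throughout); this is precisely where the ``$\ge2$'' clauses in \eqref{eqSlushIntro1}--\eqref{eqSlushIntro2} enter, and it is in effect the entire content of the claim.
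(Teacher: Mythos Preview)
Your proposal is correct and follows essentially the same approach as the paper: both argue that restriction to $V_\slush(\vA)$ maps $\ker\vA$ into $\ker\vA_\slush$ because every neighbour of a check $a\in C_\slush(\vA)$ that lies outside $V_\slush(\vA)$ is frozen in $\vA$. The paper merely asserts this last fact in one sentence, whereas you supply the detailed verification via the WP fixed-point equations (and sketch the equivalent peeling argument), so your write-up is a fleshed-out version of the paper's proof rather than a different route.
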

\begin{proof}
	By construction the matrix $\vA_\slush$ is the minor of $\vA$ induced on $V_\slush(\vA)\times C_\slush(\vA)$.
	Although some of the checks $a\in C_\slush(\vA)$ may contain variables $v\not\in V_\slush(\vA)$, all such $v$ are frozen in $\vA$.
	Therefore, any $\xi\in\ker\vA$ induces a vector $\xi_{\slush}\in\ker\vA_\slush$.
\end{proof}

We now combine Claim~\ref{Claim_minor} with \Prop~\ref{Prop_pin} to prove the lemma.
Hence, let $\cU$ be the event that $|V_\slush{}(\vA)\setminus\cF(\vA)|>(1-\delta)|V_\slush(\vA)|$.
Provided that $\delta=\delta(d,\eps)>0$ is chosen small enough, routine tail bounds for the binomial distribution imply that the event
\begin{align}\label{eqLemma_rs_slush_1}
	\cE=\cbc{\sum_{v\in V_\slush(\vA)\cap\cF(\vA)}d_\slush(v)<\frac\eps4\sum_{v\in V_\slush(\vA)}d_\slush(v)} \quad \mbox{satisfies}\quad\pr\brk{\cU\setminus\cE}&=o(1).
\end{align}
Further, with $\vx_\slush=(\vx_{\slush,y})_{y\in V_\slush(\vA)}\in\ker\vA_\slush{}$ chosen randomly, \Prop~\ref{Prop_pin} and Claim~\ref{Claim_minor} ensure that the event
\begin{align*}
	\cbc{\sum_{y,y'\in V_\slush(\vA)\setminus\cF(\vA)}
		\abs{\pr\brk{\vx_{\slush,y}=\vx_{\slush,y'}=0\mid\vA}-\frac14}<|V_\slush(\vA)|\log^{-9}n}
\end{align*}
has probability $1-o(1)$.
As a consequence, since all degrees of $G_\slush(\vA)$ are bounded by $\log n$ \whp, the event
\begin{align*}
	\cR=\cbc{\sum_{y,y'\in V_\slush(\vA)\setminus\cF(\vA)}d_\slush(y)d_\slush(y')
		\abs{\pr\brk{\vx_{\slush,y}=\vx_{\slush,y'}=0\mid\vA}-\frac14}<
		\bc{\sum_{y\in V_\slush(\vA)}d_\slush(y)}^2\log^{-4}n}
\end{align*}
satisfies $\pr\brk{\cR}=1-o(1)$.
Hence, \eqref{eqLemma_rs_slush_1} yields $\pr\brk{\cU\setminus(\cE\cap\cR)}=o(1).$
In effect, it suffices to prove that on the event $\cU\cap\cE\cap\cR$ we have $\cK_\eps\neq\emptyset$.

To verify this we recall that any variables $y,y'$ that get contracted in the course of the construction of $G_\slush''(\vA)$ deterministically satisfy $\vx_{\slush,y}=\vx_{\slush,y'}$.
As a consequence, for a random $\vx_\slush''\in\ker\vA_\slush''$ we have
\begin{align*}
	\sum_{y,y'\in V_\slush''(\vA)\setminus\cF(\vA_\slush'')}d_\slush''(y)d_\slush''(y')
	&\abs{\pr\brk{\vx_{\slush,y}''=\vx_{\slush,y'}''=0\mid\vA}-\frac14}\\
	&=\sum_{y,y'\in V_\slush(\vA)\setminus\cF(\vA)}d_\slush(y)d_\slush(y')
	\abs{\pr\brk{\vx_{\slush,y}=\vx_{\slush,y'}=0\mid\vA}-\frac14}.
\end{align*}
Therefore, if $\cU\cap\cE\cap\cR$ occurs, then so does the event
\begin{align*}
	\cS&=\cbc{
		\sum_{y,y'\in V_\slush''(\vA)\setminus\cF(\vA_\slush'')}d_\slush''(y)d_\slush''(y')
		\abs{\pr\brk{\vx_{\slush,y}''=\vx_{\slush,y'}''=0\mid\vA}-\frac14}
		<\bc{\sum_{y\in V_\slush''(\vA)\setminus\cF(\vA_\slush'')}d_\slush''(y)}^2\log^{-3}n}.
\end{align*}
To complete the proof, consider the random variable
\begin{align*}
	\vX=\frac{\sum_{y\in V_\slush''(\vA)\setminus\cF(\vA_\slush'')}d_\slush''(y)\vecone\cbc{\vx_{\slush,y}''=0}}{\sum_{y\in V_\slush''(\vA)\setminus\cF(\vA_\slush'')}d_\slush''(y)}.
\end{align*}
Then on $\cU\cap\cE\cap\cR$ we have $\Erw[\vX\mid\vA]\sim1/2$ because $\vx_{\slush,y}''=0$ with probability $1/2$ for every $y\in V_\slush''(\vA)\setminus\cF(\vA_\slush'')$.
Moreover, because $\cU\cap\cE\cap\cR\subset\cS$ the conditional second moment works out to be $\Erw[\vX^2\mid\vA]\sim1/4$.
Hence, Chebyshev's inequality shows that $\pr\brk{|\vX-1/2|<\eps/4\mid\vA}=1-o(1)$.
In particular, on $\cU\cap\cE\cap\cR$ there exists a vector $\xi\in\ker\vA_\slush''$ such that
\begin{align*}
	\abs{\frac{\sum_{y\in V_\slush''(\vA)\setminus\cF(\vA_\slush'')}d_\slush''(y)\vecone\cbc{\xi_{y}''=0}}{\sum_{y\in V_\slush''(\vA)\setminus\cF(\vA_\slush'')}d_\slush''(y)}-\frac12}<\frac\eps4.
\end{align*}
Recalling the definition of the event \eqref{eqLemma_rs_slush_1}, we conclude that $\xi\in\cK_\eps$ and thus $\cK_\eps\neq\emptyset$.

\appendix 
\section*{Appendix}

\section{The pinning operation and the overlap}\label{Sec_pino}

\subsection{Proof of \Prop~\ref{Prop_pin}}\label{Sec_Prop_pin}
Let $A$ be an $m\times n$-matrix over $\field$ and let $\vs_1,\vs_2,\ldots\in[n]$ be a sequence of uniformly distributed random variables, mutually independent and independent of all other sources of randomness.
Further, for an integer $t\geq0$ let $A[t]$ be the matrix obtained by adding $t$ more rows to $A$ such that the $j$-th new row contains precisely one non-zero entry in position $\vs_j$.
The proof of \Prop~\ref{Prop_pin} is based on the following fact.

\begin{lemma}[{\cite[Proposition~2.4  \& Equation~(3.12)]{Gao}}]\label{Lemma_Ayre}
	For $\eps>0,\ell>0$ let $T=T(\eps,\ell)=\lceil 4\ell^3/\eps^4\rceil+1$. 
	Then for all $m,n>0$ and all $m\times n$-matrices $A$ over $\field$ the following is true.
	Draw $\vec t\in[T]$ uniformly and choose $\vx\in\ker A[\vt]$ randomly.
	Then
	\begin{align*}
		\sum_{\substack{i_1,\ldots,i_\ell\in[n]\\\sigma\in\field^\ell}}
		\Erw\abs{\pr\brk{\vx_{i_1}=\sigma_1,\ldots,\vx_{i_\ell}=\sigma_\ell\mid A[\vt]}
			-\prod_{h=1}^\ell\pr\brk{\vx_{i_h}=\sigma_h\mid A[\vt]}}
		&<\eps n^\ell.
	\end{align*}
\end{lemma}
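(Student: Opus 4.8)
\emph{Plan.} To prove \Lem~\ref{Lemma_Ayre} I would recast it as an instance of the \emph{pinning} technique (Montanari~\cite{Montanari}, in the refined form of~\cite{CKPZ,Ayre}). Write $\mu_B$ for the uniform distribution on $\ker B$. Each row added in passing from $A$ to $A[\vt]$ is the unary equation $x_{\vs_j}=0$, so $\mu_{A[\vt]}$ is precisely $\mu_A$ conditioned on $x_{\vs_1}=\cdots=x_{\vs_{\vt}}=0$, i.e.\ the uniform distribution on $\ker A\cap\cbc{x:x_{\vs_j}=0\text{ for all }j\le\vt}$. Over $\field$ this ``pin to zero'' operation has exactly the same effect on correlations as pinning to the coordinates of a fresh sample $\hat\vx\sim\mu_A$: conditioning $\mu_A$ on $x_U=\hat\vx_U$ yields the uniform distribution on the coset $\hat\vx+\ker A[U]$, and an affine coordinate shift leaves every quantity $\sum_\sigma\abs{\pr\brk{x_{\vec i}=\sigma}-\prod_h\pr\brk{x_{i_h}=\sigma_h}}$ invariant. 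Abbreviating $\vU=\vU_{\vt}=\cbc{\vs_1,\dots,\vs_{\vt}}$ and
\[
\mathrm{cor}_\ell(\nu):=\frac1{n^\ell}\sum_{\vec i\in[n]^\ell}\sum_{\sigma\in\field^\ell}\abs{\nu(x_{\vec i}=\sigma)-\prod_{h=1}^\ell\nu(x_{i_h}=\sigma_h)},
\]
so that $\mathrm{cor}_\ell(\nu)$ is $2/n^\ell$ times the sum over $\vec i$ of the total-variation distance between the joint law of $x_{i_1},\dots,x_{i_\ell}$ under $\nu$ and the product of its marginals, the lemma becomes the bound $\Erw\brk{\mathrm{cor}_\ell(\mu_{A[\vU]})}<\eps$, with the expectation over $\vt$ uniform in $[T]$ and the $\vs_j$ uniform in $[n]$.

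Next I would introduce an entropy potential. The structural point over $\field$ (in the spirit of Fact~\ref{fact_ker}) is that the pair $(x_i,x_j)$ under any $\mu_B$ is uniform on an affine subspace of $\field^2$; hence the mutual information $I^{\mu_B}(x_i;x_j)$ equals $\log2$ when $x_i\equiv x_j$ on $\ker B$ with both coordinates non-constant, and $0$ otherwise, and in the former case the pair contributes $\Theta(1)$ to $\mathrm{cor}_2(\mu_B)$ (here and below a superscript indicates the measure under which an information-theoretic quantity is evaluated). Set $\Phi_s:=\log2\cdot\Erw\brk{\abs{\cbc{i\in[n]:x_i\text{ non-constant on }\ker A[\vU_s]}}}$, the expectation over a uniformly random pinning multiset $\vU_s$ of size $s$. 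Because $\ker A[\vU_{s+1}]\subseteq\ker A[\vU_s]$, the number of non-constant coordinates is non-increasing in $s$, so $0\le\Phi_{s+1}\le\Phi_s\le n\log2$. Pinning one further uniform coordinate $\vj$ turns a non-constant $x_i$ constant precisely when $x_i\equiv x_{\vj}$ on $\ker A[\vU_s]$; thus $\Phi_s-\Phi_{s+1}=\Erw\brk{n^{-1}\sum_{i,j}I^{\mu_{A[\vU_s]}}(x_i;x_j)}$, and telescoping gives $\sum_{s\ge0}\Erw_{\vU_s,i,j}\brk{I^{\mu_{A[\vU_s]}}(x_i;x_j)}\le\log2$. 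In particular, for $\vt$ uniform over any integer window of length $T$ the expected pairwise mutual information after pinning $\vt$ random coordinates is at most $(\log2)/T$ --- this is the pinning statement at ``$\ell=2$''.

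I would then bootstrap from pairs to $\ell$-tuples. For any $\nu$, the total-variation distance between the joint law of $x_{\vi_1},\dots,x_{\vi_\ell}$ and the product of its marginals is at most $\sqrt{C^\nu/2}$, where $C^\nu=C^\nu(x_{\vi_1},\dots,x_{\vi_\ell})=\sum_hH^\nu(x_{\vi_h})-H^\nu(x_{\vi_1},\dots,x_{\vi_\ell})=\sum_{h=2}^\ell I^\nu\bc{x_{\vi_h};x_{\vi_1},\dots,x_{\vi_{h-1}}}$ is the total correlation; by Jensen's inequality $\mathrm{cor}_\ell(\nu)\le\sqrt{2\,\Erw_{\vec i}\brk{C^\nu}}$. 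Expanding each chain-rule term once more, $I^{\mu_{A[\vU]}}\bc{x_{\vi_h};x_{\vi_1},\dots,x_{\vi_{h-1}}}=\sum_{g<h}I^{\mu_{A[\vU]}}\bc{x_{\vi_h};x_{\vi_g}\mid x_{\vi_1},\dots,x_{\vi_{g-1}}}$, each summand a pairwise conditional mutual information, now under $\mu_A$ conditioned on the random set $\vU\cup\cbc{\vi_1,\dots,\vi_{g-1}}$. Since the pinned coordinates \emph{and} the probe coordinates are i.i.d.\ uniform and $\vt$ is itself uniform over a window of length $T$, folding the at most $\ell-1$ extra conditioning coordinates into the pinning set merely shifts that window, so by the previous step each such term has expectation at most $(\log2)/T+O_\ell(1/n)$, the error absorbing the rare collisions among these $O(\ell)$ coordinates. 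Summing the $\binom\ell2$ terms yields $\Erw\brk{\mathrm{cor}_\ell(\mu_{A[\vU]})}\le O_\ell\bc{\sqrt{1/T}}+O_\ell(1/n)<\eps$ once $T$ exceeds a suitable multiple of $\ell^2/\eps^2$ and $n$ is large relative to $\ell/\eps$; the value $T=\lceil4\ell^3/\eps^4\rceil+1$ leaves ample room, and the regime of small $n$ is handled directly (there $T$ exceeds $n$, so pinning collapses $\ker A[\vU]$).

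The step I expect to be the genuine obstacle is precisely this last bootstrap: pinning until almost no \emph{pair} of coordinates is correlated does \emph{not} by itself destroy correlations among three or more coordinates (parity relations survive), which is why one must pass through the total correlation $C^\nu$ rather than pairwise mutual information, and why it is essential that the \emph{size} of the pinning set be random, so that prepending a bounded number of conditioning coordinates is harmless. One must also keep track of the Pinsker-type constants and the $O_\ell(1/n)$ collision corrections carefully enough to reach the advertised $T$. The remaining ingredients --- monotonicity and the a priori bound on the potential, the two-valuedness of $I^{\mu_B}(x_i;x_j)$ for kernels over $\field$, and the translation between the information-theoretic bounds and the total-variation functional $\mathrm{cor}_\ell$ of the statement --- are routine.
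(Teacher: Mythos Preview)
The paper does not actually prove this lemma: it is quoted verbatim as \cite[\Lem~3.1]{Gao} and used as a black box in the proof of \Prop~\ref{Prop_pin}. So there is no ``paper's own proof'' to compare against.

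That said, your outline is a correct reconstruction of the pinning argument behind the cited result. The reduction from pinning-to-zero to pinning-to-a-sample via the affine shift is exactly right for kernels over $\field$; the entropy potential $\Phi_s=\sum_i H^{\mu_{A[\vU_s]}}(x_i)$ indeed satisfies $\Phi_s-\Phi_{s+1}=n\,\Erw_{\vU_s,i,j}\brk{I^{\mu_{A[\vU_s]}}(x_i;x_j)}$ because, by Fact~\ref{fact_ker}, $I^{\mu_B}(x_i;x_j)\in\{0,\log2\}$ and equals $\log2$ precisely when $x_i\equiv x_j$ are both non-constant; and the bootstrap via Pinsker and the chain rule for total correlation, folding the conditioning coordinates into the pinning set, is the standard Raghavendra--Tan/Montanari manoeuvre. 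Your quantitative bookkeeping also checks out: two applications of Jensen give $\Erw\brk{\mathrm{cor}_\ell}\le\sqrt{2\binom\ell2(\log2)/T}$, which is below $\eps$ already for $T$ of order $\ell^2/\eps^2$, comfortably inside the stated $T=\lceil4\ell^3/\eps^4\rceil+1$. The $O_\ell(1/n)$ collision term you flag is in fact unnecessary---the $\vs_j$ are i.i.d.\ with replacement and the telescoping bound is uniform in $n$---so the argument covers small $n$ without a separate case.
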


\noindent
To prove \Prop~\ref{Prop_pin} we will combine \Lem~\ref{Lemma_Ayre} with the observation that the random matrix $\vAnp$ is essentially invariant under the random perturbation required by \Lem~\ref{Lemma_Ayre}.
To be precise, let $\cZ$ be the set of all indices $i\in[n]$ such that $\vA_{ij}=0$ for all $j\in[n]$.
Further, for an integer $t\geq0$ let $\vA\bck t$ be the matrix obtained from $\vA$ as follows.
If $|\cZ|\leq t$, then $\vA\bck t=\vA$.
Otherwise draw a family $\vz_1,\ldots,\vz_t\in\cZ$ of $t$ distinct row indices uniformly at random and obtain $\vA\bck t$ from $\vA$ by replacing the $\vi_h$-th entry in row $\vz_h$ by one for $h=1,\ldots,t$, where $\vi_h$ is chosen uniformly at random from $[n]$ independently for each $h \in [t]$.
Thus, instead of attaching $t$ new rows as in \Lem~\ref{Lemma_Ayre} we simply insert a single non-zero entry into $t$ random all-zero rows of $\vA$.

\begin{lemma}\label{Lemma_cheat}
	Let $d>0$, let $T=o(\sqrt n)$ be an integer and choose $\vt\in[T]$ uniformly.
	Then $\dTV(\vA,\vA\bck\vt)=o(1)$.
\end{lemma}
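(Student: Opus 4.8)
The plan is to strip the statement down to a one-dimensional smoothing estimate for the binomial distribution. Write $p=(d/n)\wedge1$ and let $p_0=(1-p)^n$, $p_1=np(1-p)^{n-1}$ and $p_h=1-p_0-p_1$ be the probabilities that a given row of $\vA$ is all-zero, has weight exactly one, or has weight at least two; as $n\to\infty$ these tend to $\eul^{-d}$, $d\eul^{-d}$ and $1-(1+d)\eul^{-d}$, all strictly positive. Classify each row of $\vA$ as ``zero'', ``light'' or ``heavy'' accordingly, and let $(N_0,N_1,N_h)\sim\mathrm{Mult}(n;p_0,p_1,p_h)$ be the type-count vector. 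A Chernoff bound gives $\pr[N_0\le T]=\exp(-\Omega(n))$, so with overwhelming probability the perturbation does find $\vt$ all-zero rows to act on, and the rare complementary event contributes only $\exp(-\Omega(n))$ to every total-variation bound below.

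The crucial observation is that \emph{conditionally on the type-count vector}, $\vA$ and $\vA\bck\vt$ obey the very same law: a uniformly random assignment of types to the $n$ rows, a uniformly random $1$-coordinate in each light row (independently), and an independent weight-$\ge2$ content in each heavy row drawn from the row distribution conditioned on weight $\ge2$. For $\vA$ this is immediate; for $\vA\bck\vt$ it rests on two elementary facts: flipping $\vt$ uniformly random zero-coordinates of a uniformly random type-arrangement yields a uniformly random arrangement of the shifted counts $(N_0-\vt,N_1+\vt,N_h)$, and the $\vt$ freshly inserted $1$'s sit at uniformly random, independent coordinates---exactly like the ``old'' light rows---so after marginalising over which light rows are new, the conditional content law is unchanged. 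Since the type-count vector is a deterministic function of the matrix, this common conditional law is a Markov kernel injective on supports, whence
\begin{align*}
\dTV(\vA,\vA\bck\vt)\ \le\ \exp(-\Omega(n))+\dTV\bigl(\mathcal L(N_0,N_1,N_h),\ \mathcal L(N_0-\vt,N_1+\vt,N_h)\bigr),
\end{align*}
with $\vt\sim\mathrm{Unif}[T]$ independent of $(N_0,N_1,N_h)$ on the right.

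Next I would discard the coordinates that never move. Both laws on the right-hand side have the same marginal law of $N_h$, hence of $N_0+N_1=n-N_h$, so conditioning on $N_h$ reduces the distance to $\Erw_{N_h}\bigl[\dTV(\Bin(n-N_h,q_1),\Bin(n-N_h,q_1)+\vt)\bigr]$, where $q_1=p_1/(p_0+p_1)$ is bounded away from $0$ and $1$. The classical local limit estimate $\dTV(\Bin(m,q),\Bin(m,q)+1)=O(m^{-1/2})$ together with translation invariance and the triangle inequality gives $\dTV(\Bin(m,q),\Bin(m,q)+\vt)\le\frac1T\sum_{t=1}^{T}t\cdot O(m^{-1/2})=O(T/\sqrt m)$. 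Since $n-N_h\ge cn$ for some constant $c=c(d)>0$ with probability $1-\exp(-\Omega(n))$ and $T=o(\sqrt n)$, bounding the distance by $1$ on the exceptional event and by $O(T/\sqrt{cn})=o(1)$ otherwise finishes the proof.

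The only genuinely substantive step is the middle one: realising that, despite altering $\Theta(T)$ rows, the perturbation merely nudges the type-count vector by the sub-$\sqrt n$-scale shift $(N_0,N_1)\mapsto(N_0-\vt,N_1+\vt)$ once one conditions on the type counts, so that all the combinatorial structure cancels and one is left with the scalar question of smoothing a $\Bin(\Theta(n),q_1)$ by a uniform shift of width $o(\sqrt n)$. The ``flip preserves uniformity'' identity and the bookkeeping around the event $\{N_0\le T\}$ need a little care but present no real difficulty.
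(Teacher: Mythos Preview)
Your proof is correct and follows essentially the same route as the paper: condition on the row-type counts (the paper parametrises these as $(\vX,\vX_0)$ with $\vX=N_0+N_1$ and $\vX_0=N_0$, which is equivalent to your $(N_0,N_1,N_h)$), observe that the conditional law of the matrix given its type counts is the same kernel for both $\vA$ and $\vA\bck\vt$, and then bound the total-variation distance of the count laws by the binomial local limit theorem. Your write-up spells out the conditional-law identity and the reduction to a one-dimensional shift more carefully than the paper's terse version, but the argument is the same.
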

\begin{proof}
	Because each entry of $\vA$ is non-zero with probability $d/n$ independently, the number $\vX$ of rows of $\vA$ with at most one non-zero entry has distribution $\Bin(n,(1-d/n)^n+d(1-d/n)^{n-1})$.
	Further, given $\vX$ the number $\vX_0$ of all-zero rows has a binomial distribution
	\begin{align*}
		\vX_0\disteq\Bin\bc{\vX,\frac{(1-d/n)^n}{(1-d/n)^n+d(1-d/n)^{n-1}}}.
	\end{align*}
	Let $\vA\mid(\vX,\vX_0)$ denote the distribution of $\vA$ given $\vX,\vX_0$.
	We have $\vX\geq\exp(-d)n$ \whp{} Given ${\vX\geq\exp(-d)n}$ the conditional variance satisfies $\Var[\vX_0\mid\vX]=\Omega(n)$.
	Therefore, the local limit theorem for the binomial distribution implies that
	$\vA\mid(\vX,\vX_0)$ and $\vA\mid(\vX,\vX_0-\vt)$ have total variation distance $o(1)$.
	From a more elementary point of view, one can check by hand that 
		for any integer $k = (1+o(1))\Erw \vX_0$ and any \linebreak $t = o(\sqrt{\Var \vX_0})$, which is certainly the case
		if $t\in [T]$, we have
		$\pr\brk{\vX=k} = (1+o(1))\pr\brk{\vX=k-t}$. Therefore $\vX_0,\vX_0-\vt$ have total variation
		distance $o(1)$, and thus the same also holds for $\vA\mid(\vX,\vX_0)$ and $\vA\mid(\vX,\vX_0-\vt)$.
	
	Furthermore, $\vA\mid(\vX,\vX_0-\vt)$ is distributed precisely as $\vA\bck\vt$.
\end{proof}

\begin{proof}[Proof of \Prop~\ref{Prop_pin}]
	The proposition is an immediate consequence of \Lem s~\ref{Lemma_Ayre} and~\ref{Lemma_cheat}.
	More precisely, observe that for fixed $\sigma$, the summand in Lemma~\ref{Lemma_Ayre} is identical for any choice of $i_1,\ldots,i_\ell$ by symmetry,
		while clearly the summand for each choice of $\sigma$ can be bounded by the sum over $\sigma$,
		so we obtain
		$$
		\Erw\abs{\pr\brk{\vx_{i_1}=\sigma_1,\ldots,\vx_{i_\ell}=\sigma_\ell\mid A[\vt]}
			-\prod_{h=1}^\ell\pr\brk{\vx_{i_h}=\sigma_h\mid A[\vt]}} < \eps.
		$$
		We now choose $\eps = n^{-2\gamma}$ for some $\gamma < 1/16$
		(note that Lemma~\ref{Lemma_Ayre} is a statement for all $m,n>0$ rather than an asymptotic statement,
		and therefore the choice of $\eps$ as a function of $n$ is permissible).
		This choice ensures that $T= \Theta(\eps^{-4}) = o(\sqrt{n})$, so that we can still apply Lemma~\ref{Lemma_cheat}.
\end{proof}

\subsection{Proof of \Cor~\ref{Cor_pin}}\label{Sec_Cor_pin}
Due to \Prop~\ref{Prop_pin} we may assume that $\vA$ satisfies
\begin{align}\label{eqCor_pin1}
	\frac{1}{n^2}\sum_{h,i=1}^n
	\abs{\pr\brk{\vx_h=\sigma_1,\vx_i=\sigma_2\mid\vAnp}
		-\pr\brk{\vx_h=\sigma_1\mid\vAnp}\pr\brk{\vx_i=\sigma_2\mid\vAnp}}&=o(1)
	&&\mbox{for all }\sigma_1,\sigma_2\in\FF_2.
\end{align}
Hence, fix $x\in\ker\vA$.
For $\sigma\in\field$ let $\cI(x,\sigma)=\cbc{i\in[n]\setminus\cF(\vA):x_i=\sigma}$.
Further, define
\begin{align*}
	R_\sigma(x,x')&=\frac{1}{n}\sum_{i\in\cI(x,\sigma)}\vecone\cbc{x_i'=\sigma}.
\end{align*}
Then Fact~\ref{fact_ker} implies that
\begin{align}\label{eqCor_pin2}
	\Erw\brk{R_\sigma(x,\vx')\mid\vA}&=\frac{|\cI(x,\sigma)|}{2n}.
\end{align}
Moreover, \eqref{eqCor_pin1} implies that $\Var\brk{R_\sigma(x,\vx')\mid\vA}=o(1)$.
Combining this bound with \eqref{eqCor_pin2} and applying Chebyshev's inequality, we conclude that 
\begin{align}\label{eqCor_pin4}
	\Erw\brk{\abs{R_\sigma(x,\vx')-\frac{|\cI(x,\sigma)|}{2n}}\mid\vA}&=o(1).
\end{align}
Further, since $R(x,\vx')=f(\vA)+\sum_{\sigma\in\field}R_\sigma(x,\vx')$, \eqref{eqCor_pin4} shows that
\begin{align}\label{eqCor_pin5}
	\Erw\brk{\abs{R(x,\vx')-\bc{f(\vA)+(1-f(\vA))/2}}\mid\vA}&=o(1)&&\mbox{for every }x\in\ker\vA.
\end{align}
Averaging \eqref{eqCor_pin5} on $x\in\ker\vA$ completes the proof.

\section{Proof of \Lem~\ref{Lemma_contig}}\label{Sec_contig}

\noindent
We first note that since in the pairing model we must connect variable nodes with check nodes,
certainly $\cG_\slush$ cannot contain any loops. We therefore need to show that there is at least
a constant probability of creating no double-edges.

Suppose that $d_1,\ldots,d_n$ are the degrees of variable nodes in $G_\slush(\vA)$ (where
we set $d_i=0$ if the corresponding node is not in $G_\slush(\vA)$), and similarly let
$\hat d_1,\ldots,\hat d_n$ be the degrees of check nodes. \linebreak Let  $m:= \sum_{i=1}^n d_i = \sum_{i=1}^n \hat d_i$.
It follows from Proposition~\ref{Prop_slush} that \whp\ 
$m=\Theta(n)$. It also follows from the fact that the degree of a node in $G_\slush(\vA)$ are
necessarily at most its degree in $G(\vA)$ that \whp\ 
\linebreak $\sum_{i=1}^n d_i^2,\sum_{i=1}^n \hat d_i^2 = O(n)$.
In what follows, we will implicitly condition on these high probability events.

Let $X=X(d_1,\ldots,d_n,\hat d_1,\ldots,\hat d_n)$ be the random variable
counting the number of double-edges in $\cG_\slush$.
Then we have
$$
\Erw[X] = \sum_{i=1}^n\sum_{j=1}^n 2 \binom{d_i}{2} \binom{\hat d_j}{2} \frac{1}{m(m-1)} =O(1).
$$
Similarly, it is an easy exercise to show that for any integer $\ell \in \NN$ the $\ell$-th moment of $X$ satisfies \linebreak
$\Erw[(X)_\ell]=(1+o(1))\Erw[X]^\ell$. Therefore $X$ is asymptotically distributed
as a $\Po\bc{\Erw[X]}$ random variable, and we have $\Pr[X=0]\to \exp\bc{-\Erw[X]}>0$, as required.

To show that $\cG_\slush$ conditioned on being simple has the same distribution as $G_\slush(\vA)$,
we simply need to observe that every simple bipartite graph with the appropriate distribution is equally
likely to be $G_\slush(\vA)$. To see this, consider two Tanner graphs $S,S'$ with the same degree distribution,
and a Tanner graph $H$ such that $H_\slush = S$. Let $H'$ be the Tanner graph obtained from $H$ by replacing
$S$ with $S'$, but otherwise leaving edges unchanged. 
Then the peeling process used to obtain the slush is completely identical on $H\setminus S$  and~$H'\setminus S'$,
and therefore $H'_\slush=S'$.
Since $H,H'$ have the same number of edges, both are equally likely to be~$G(\vA)$.
Summing over all possibilities for $H$ such that $H_\slush = S$, we deduce that $S,S'$ are
equally likely to be~$G_\slush(\vA)$.

\section{Proof of \Lem~\ref{Lemma_tails}}\label{Sec_Lemma_tails}

\noindent
For the first part of the lemma, notice that $|\partial v|$ is distributed as a binomial random variable with \linebreak parameters~$n$ and $p$ for any $v\in V(\vA)\cup C(\vA)$. Suppose $v \in V(\vA)$ and let $c=\ceil{\log(n)/2}$. Then we have
\begin{align}\label{eq:maxdegvar}
	\Pr\brk{ \exists v: \abs{\partial v} \geq c} &\leq n \binom{n}{c} p^c \leq n \binom{n}{c} \bc{\frac{d}{n}}^c \nonumber \\
	&\leq n \bc{\frac{ed}{c}}^c = \exp\brk{ \bc{1-\frac{\log 2 }{2}} \log n - \frac{\log (n)}{2} \cdot \bc{\log \log(n) } + O(\log\log n)  } =o(1).
\end{align}
Similarly, for a constraint $a \in C(\vA)$ we have
\begin{align} \label{eq:maxdegcons}
	\Pr\brk{ \exists a: \abs{\partial a} \geq c} =o(1).
\end{align}
Combining \eqref{eq:maxdegvar} and \eqref{eq:maxdegcons} completes the proof of the first part. For the second part, let $x_0$ be an arbitrary variable node. Then,
\begin{align*}
	\Erw\brk{ \sum_{x\in V(\vA)}\frac1{\ell!}{\prod_{j=1}^\ell(|\partial x|-j+1)}}
	= \frac{n}{\ell!} \Erw\brk{\prod_{j=1}^\ell(|\partial x_0|-j+1)}= \frac{n}{\ell!} \frac{n!}{ (n-\ell)! } {p}^\ell \leq \frac{d^\ell n}{\ell!}.
\end{align*}
Hence, the assertion follows from Markov's inequality.

\section{Proof of \Lem~\ref{Lemma_weighted_tails}\label{Sec_Lem_weighted_tails}}

\noindent
Assume, without loss of generality, that $0<c_1<10^{-5}$. Moreover, let $c_0>0$ , define $a=\exp(c_1)>1$ and $\log_a^{(m)}n := \log_a \ldots \log_a n$, where the logarithm with basis $a$ is taken $m$ times. For any $m \in \NN$ (or more precisely for any $m$ such that we have $s_m >0$), define
$$
s_m := 6\log_a^{(m)} n.
$$	
Let us set $q_j:= \max \cbc{w_i : i \in P_j}$, and 
define the event
$$
\cE_{j,m} := \cbc{s_{m+1} < \max\cbc{q_j,|P_j|} \le s_m}
$$
and the set
$$
E_{m} := \cbc{j : \cE_{j,m} \mbox{ holds}}.
$$
Note in particular that $\bigcup_{m' \ge m}\cE_{j,m'}$ is the event that $|P_j| \le s_m$ and $w_i \le s_m$ for all $i \in P_j$,
i.e., both the partition class and all associated weights are at most $s_m$.
We also observe that $\bigcup_{m=1}^\infty E_{m} = [\ell]$.
We further define
$$
x_m := \frac{1}{n} \sum_{j\in E_{m}}\bc{\sum_{i\in P_j}w_i }^2,
$$
so in particular we have
\begin{equation} \label{eq:partitionedsum}
	x= \sum_{m=1}^{\infty} x_m.
\end{equation}
We therefore aim to bound each $x_m$.
Let $m_0 = m_0(n)$ be the largest integer such that  $s_{m_0}\ge \frac{100\log(1/c_1)}{c_1} $.

We first consider the case when $m \le m_0$.
Observe that if $j \in E_{m}$, then we have $|P_j| \le s_m$ and for all $i \in P_j$ we have
$w_i \le s_m$,
and therefore
\begin{equation}\label{eq:weightsquared}
	\bc{\sum_{i\in P_j}w_i }^2 \le s_m^4.
\end{equation}
On the other hand, we can bound $|E_{m}|$ from above by making a case distinction.
Let us define
\begin{align*}
	E_{m}^{(1)} & := \cbc{j : \cE_{j,m} \mbox{ holds and }  q_j \ge |P_j|}, \\
	E_{m}^{(2)} & := \cbc{j : \cE_{j,m} \mbox{ holds and }  q_j \le |P_j|}. 
\end{align*}

\textbf{Case 1:} $q_j \ge |P_j|$.\\
Then we have $w_i \ge s_{m+1}$ for some $i \in P_j$,
but since this can hold for at most $ c_0 a^{-s_{m+1}}n \le c_0 s_m^{-5}n$
values of~$i$, we have
$$
|E_{m}^{(1)}| \le c_0 s_m^{-5}n.
$$

\textbf{Case 2:} $q_j \le |P_j|$.\\
Then we have $|P_j| \ge s_{m+1}$, which can also only hold for at most $ c_0 a^{-s_{m+1}}n \le c_0 s_m^{-5}n$  values of $j$,
so 
$$
|E_{m}^{(2)}| \le c_0 s_m^{-5}n.
$$

Thus we have $|E_{m}| \le 2 c_0 s_m^{-5} n$ and together with~\eqref{eq:weightsquared} we deduce that $x_m \le 2 c_0 s_m^{-1}$.
Thus~\eqref{eq:partitionedsum} gives
\begin{equation}\label{eq:splitsum}
	x \le 2 c_0 \sum_{m=1}^{m_0} \frac{1}{s_m} + \sum_{m=m_0+1}^\infty x_m.
\end{equation}
We further observe that 
for any $m \le m_0$ we have
$$
\frac{s_{m}}{s_{m-1}}= \frac{6 \log_a \left(\frac{s_{m-1}}{6}\right)}{s_{m-1}}
\le \frac{6\log_a s_{m-1}}{s_{m-1}} \le \frac{6\log_a s_{m_0}}{s_{m_0}}.
$$
We have
$$ \frac{6\log_a s_{m_0}}{s_{m_0}} =  \frac{6}{100 \log(1/c_1)}\Big(\log100 + \log(1/c_1) + \log \log (1/c_1)\Big).
$$
In order to  bound the ratio $\frac{6\log_a s_{m_0}}{s_{m_0}}$, we define the function
$$g(c_1) = \frac{6}{10} \bc{ \log(100) + \log\bc{ \frac{1}{c_1} } + \log\log\bc{ \frac{1}{c_1}}}- \log\bc{ \frac{1}{c_1}}.$$
We have $\lim_{c_1 \rightarrow 0} g(c_1)= -\infty$ and $g(10^{-5}) < -0.375985860$. Also, 
$$ g'(c_1)=\frac{2}{5c_1} - \frac{3}{5 c_1 \log\bc{1/c_1}} >0, $$
so $g$ is increasing in that interval and $g(c_1) < 0$. Thus,  we have $ \frac{6\log_a s_{m_0}}{s_{m_0}} < 1/10$ because $ \frac{6\log_a s_{m_0}}{s_{m_0}} < 1/10$ is equivalent to $g(c_1) < 0$. Therefore,
\begin{equation}\label{eq:splitsumfirst}
	\sum_{m=1}^{m_0} \frac{1}{s_m} \le \frac{1}{ s_{m_0}}\left(1+\frac{1}{10} + \frac{1}{100}+ \ldots\right) \le 10^{-9}.
\end{equation}
It remains to estimate $\sum_{m=m_0+1}^\infty x_m$,
for which we now restrict attention to $i$ and $j$ such that \linebreak $w_i,|P_j| \le s_{m_0+1} \le 100 \frac{\log(1/c_1)}{c_1} $.
Then we have $\bc{\sum_{i \in P_j} w_i}^2 \le 10^8 \bc{ \frac{\log (1/c_1)}{c_1}}^4 $, and we trivially have \linebreak $|\bigcup_{m \ge m_0+1} E_{m}| \le \ell \le n$, therefore
\begin{equation}\label{eq:splitsumsecond}
	\sum_{m=m_0+1}^\infty x_m \le 10^8 \bc{\frac{\log (1/c_1)}{c_1}  }^4
\end{equation}
and substituting~\eqref{eq:splitsumfirst} and~\eqref{eq:splitsumsecond} into~\eqref{eq:splitsum} gives
$$
x \le 2\cdot c_0 \cdot  10^{-9} + 10^8 \bc{\frac{\log (1/c_1)}{c_1}  }^4.
$$
For the case $c_1 \geq 10^{-5}$, choose $c'_1$ such that $c'_1 \leq 10^{-5}$, then
$c_0 \exp(-c_1 t) \leq c_0 \exp(-c'_1 t)$. Thus, by considering  the pair $(c_0,c'_1)$ and the above reasoning we get 
$$ c_2 =  2\cdot c_0 \cdot  10^{-9} + 10^8 \bc{\frac{\log (1/c'_1)}{c'_1}  }^4.$$

\section*{Acknowledgments} 
We are grateful to Jane Gao for a helpful conversation at the beginning of this project that brought the two-peaked nature of the function $\Phi_d$ to our attention.

\bibliographystyle{amsplain}

%
%

\begin{aicauthors}
\begin{authorinfo}[ACO]
  Amin Coja-Oghlan\\
  TU Dortmund, Faculty of Computer Science, Chair~2,\\
  Otto Hahn St~12, 44227 Dortmund, Germany.\\
  amin.coja-oghlan\imageat{}tu-dortmund\imagedot{}de 
\end{authorinfo}
\begin{authorinfo}[OC]
 Oliver Cooley\\
  Ludwig-Maximilians-Universit\"at M\"unchen,\\
 Theresienstra{\ss}e 39, 80333 M\"unchen, Germany.\\
  cooley\imageat{}math\imagedot{}lmu\imagedot{}de 
\end{authorinfo}
\begin{authorinfo}[MK]
  Mihyun Kang\\
  Graz University of Technology, Institute of Discrete Mathematics,\\
  Steyrergasse 30, 8010 Graz, Austria.\\
  kang\imageat{}math\imagedot{}tugraz\imagedot{}at
\end{authorinfo}
\begin{authorinfo}[JL]
  Joon Lee\\
  EPFL IC IINFCOM LTHC,\\
   INR 138 (Batiment INR) Station 14, CH-1015 Lausanne, Switzerland. \\
  joonhyung.lee\imageat{}epfl\imagedot{}ch
\end{authorinfo}

\begin{authorinfo}[JRB]
	Jean Bernoulli Ravelomanana\\
	EPFL IC IINFCOM LTHC,\\
	INR 139 (Batiment INR) Station 14, CH-1015 Lausanne, Switzerland. \\
	jean.ravelomanana\imageat{}epfl\imagedot{}ch
\end{authorinfo}
\end{aicauthors}

\end{document}